\documentclass[12pt,oneside]{amsart}
\usepackage{amsthm,amsmath,amssymb,enumerate}
\usepackage{mathrsfs}
\usepackage[letterpaper,margin=1in]{geometry}
\usepackage{microtype}
\usepackage{amscd}
\usepackage{mathtools}
\usepackage{cases}
\usepackage{xfrac}
\usepackage{color}
\usepackage{enumitem}
\usepackage{xr-hyper}
\usepackage[colorlinks=true,allcolors=black]{hyperref}
\usepackage[nameinlink,capitalize,noabbrev]{cleveref}

\theoremstyle{plain}
\newtheorem{thm}{Theorem}[section]
\newtheorem{theorem}[thm]{Theorem}

\newtheorem{corollary}[thm]{Corollary}

\newtheorem{lemma}[thm]{Lemma}

\newtheorem{proposition}[thm]{Proposition}

\newtheorem{introtheorem}{Theorem}

\theoremstyle{definition}
\newtheorem{definition}[thm]{Definition}

\theoremstyle{plain}

\crefname{assumption}{assumption}{assumptions}
\Crefname{assumption}{Assumption}{Assumptions}

\theoremstyle{definition}
\newtheorem{remark}[thm]{Remark}
\newtheorem{example}[thm]{Example}

\numberwithin{equation}{section}

\makeatletter
\def\input@path{{./}{output/}}
\makeatother

\newcommand{\ZZ}{\mathbb{Z}}
\newcommand{\QQ}{\mathbb{Q}}
\newcommand{\RR}{\mathbb{R}}

\newcommand{\ad}{_\mathrm{ad}}            

\newcommand{\barint}{\mbox{$ave \int$}}  
\def\barint_#1{\mathchoice
	{\mathop{\vrule width 6pt
			height 3 pt depth -2.5pt
			\kern -8.8pt
			\intop}\nolimits_{#1}}%
	{\mathop{\vrule width 5pt height
			3 pt depth -2.6pt
			\kern -6.5pt
			\intop}\nolimits_{#1}}%
	{\mathop{\vrule width 5pt height
			3 pt depth -2.6pt
			\kern -6pt
			\intop}\nolimits_{#1}}%
	{\mathop{\vrule width 5pt height
			3 pt depth -2.6pt
			\kern -6pt \intop}\nolimits_{#1}}}

\newcommand{\BP}{{\mathbb {P}}}

\newcommand{\rank}{{\mathrm{rank}}}

\newcommand{\Pic}{\mathrm{Pic}}

	\newcommand{\Spec}{{\mathrm{Spec \,}}}
	\newcommand{\Sym}{{\mathrm{Sym}}}

	\newcommand{\Td}{{\mathrm{Td}}}

	\newcommand{\norm}[1]{\|{#1}\|}

	\newcommand{\ch}{\mathrm{ch}}
	
	\newcommand{\Num}{\mathrm{Num}}

\newcommand{\intg}{\mathrm{int}}
\newcommand{\nef}{\mathrm{nef}}

\newcommand{\whdeg}{\widehat{\deg}}
\newcommand{\whc}{\widehat{c}}
\newcommand{\whs}{\widehat{s}}
\newcommand{\whch}{\widehat{\ch}}
\newcommand{\whTd}{\widehat{\Td}}
\newcommand{\Pj}{\BP}

\title[Characteristic classes of adelic vector bundles and applications]
{Characteristic classes of adelic vector bundles and applications.}
\author{Jiahui Gao}

\subjclass[2020]{14G40, 14C17, 14D20, 14H60}
\keywords{adelic vector bundle, numerical characteristic class,
Bogomolov--Gieseker inequality, projective bundle}

\begin{document}

\begin{abstract}
We define numerical adelic characteristic classes for vector bundles on
projective varieties over number fields.  The metric data are carried by the
tautological quotient line bundle on the
associated projective bundle.  The construction uses the established
intersection theory of adelic line bundles.  Higher characteristic classes
are multilinear intersection functionals.  They are continuous for
simultaneously controlled model sequences that are Cauchy for the supremum
norm.  We construct
the resulting tautological numerical intersection algebra.  We also prove
controlled numerical Bogomolov--Gieseker inequalities.  We first treat the case of a curve over
$K$,
where the numerical inequality is combined with the Deligne pairing and the
adelic Hodge index theorem.  We then pass to higher dimension, using
Moriwaki's model-level dimension induction before taking the controlled
Zhang limit.  The curve theorem includes equality and uniform-gap
criteria.
\end{abstract}
\maketitle

\setcounter{tocdepth}{1}
\tableofcontents

\section*{Introduction}

This paper constructs numerical characteristic classes of adelic vector
bundles on projective varieties over number fields.  The construction uses
intersection numbers of the tautological quotient line bundle on a
projective bundle.  We use these classes to prove a numerical
Bogomolov--Gieseker inequality under explicit conditions on the
approximating arithmetic models.

Gillet and Soul\'e constructed arithmetic characteristic classes for
Hermitian vector bundles on regular arithmetic varieties
\cite{GSCharI,GSCharII}.  Zhang developed intersection theory for adelic
line bundles on projective varieties by approximation with model metrics
\cite{Zhang}.  We also use the intersection and Deligne pairing formalism
of Yuan and Zhang \cite{YZ}.  These theories suggest the following
question: which characteristic numbers of vector bundles can we define
using only adelic intersections of line bundles, and which inequalities
on arithmetic models pass to those numbers?

Throughout the paper, $K$ is a number field.  Let $X$ be a projective
integral $K$-variety of dimension $d$.  Let $E$ be a vector bundle of
rank $r>0$ on $X$.
Put $\pi:Y:=\mathbb P_X(E)\to X$ and
$\xi_E:=\mathcal O_Y(1)$, with the convention that $Y$ parametrizes line
quotients of $E$.  We equip $\xi_E$ with an integrable adelic metric
$\overline\xi_E$ and write $\overline E=(E,\overline\xi_E)$.
In particular, an integrable adelic metric on $E$ gives such data through
its quotient metric.

When $X$ is normal, the numerical Segre class of degree $i$ is the
functional
$$
\left\langle\widehat s_i(\overline E),\overline L_1,\ldots,\overline L_{d+1-i}\mid X\right\rangle
:=\widehat{\deg}_Y\!\left(\widehat c_1(\overline\xi_E)^{r-1+i}\prod_{j=1}^{d+1-i}\widehat c_1(\pi^*\overline L_j)\right),
$$
where $0\leq i\leq d+1$ and the $\overline L_j$ are integrable adelic
line bundles on $X$.  For each integral closed subvariety of $X$, we use
the same formula on its normalization.  Fibre products of projective
bundles define the corresponding Segre monomial numbers.  Universal
polynomials in these monomials then define numerical Chern, Chern
character, and Todd classes.

\begin{introtheorem}[\Cref{thm:main}]
\label{intro:characteristic-classes}
The numerical characteristic functionals of $\overline E$ are well
defined and depend only on $\overline\xi_E$.  They are symmetric and
multilinear in the adelic line bundles used in the pairing.  Their model evaluations converge
when the factors on every fibre product in the defining expansion satisfy
the simultaneous control condition of
\cref{def:controlled-adelic-convergence}.  The limits do not depend on the
models used to compute these evaluations.  The functionals satisfy the
finite normalized base change formula and the generically finite
projection formula.  Segre monomial numbers are nonnegative when the
tautological line bundles and the adelic line bundles used in the pairing
are all nef.
\end{introtheorem}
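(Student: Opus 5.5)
The plan is to reduce every assertion to the corresponding property of Zhang's (and Yuan--Zhang's) adelic intersection numbers for line bundles on the projective bundles $Y=\mathbb P_X(E)$ and on their fibre products $Y^{(k)}:=Y\times_X\cdots\times_X Y$. Each Segre monomial number is by definition an adelic intersection number on some $Y^{(k)}$ (or on the normalization of a subvariety). Universal polynomials in these monomials define the Chern, Chern character and Todd functionals. Hence each property only has to be checked on the monomials; linearity of the polynomial expressions then transfers it to the derived classes.

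\textbf{Well-definedness and dependence only on $\overline\xi_E$.} The integrable adelic line bundles entering the formula are $\overline\xi_E$, its pullbacks $p_a^*\overline\xi_E$ along the projections $Y^{(k)}\to Y$, and the pullbacks $\pi^*\overline L_j$. Zhang's intersection pairing on the projective integral variety $Y^{(k)}$ (or its normalization) is well defined for integrable adelic line bundles. It is also independent of the decomposition of an integrable metric as a difference of nef ones. Consequently no choice enters the definition except $\overline\xi_E$ itself; in particular a metric on $E$ matters only through its quotient metric.

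\textbf{Symmetry, multilinearity, and the two functorial formulas.} Symmetry and multilinearity in the $\overline L_j$ follow from the symmetric multilinearity of the adelic intersection pairing together with the fact that $\pi^*$ is additive on adelic line bundles. For a finite morphism $f:X'\to X$, base change gives $\mathbb P_{X'}(f^*E)=Y\times_X X'$ with $\xi_{f^*E}$ the pullback of $\xi_E$. The adelic projection formula $\widehat{\deg}(g^*\cdots)=\deg(g)\,\widehat{\deg}(\cdots)$ for generically finite $g:Y'\to Y$ then yields both the normalized base change formula and the generically finite projection formula. The only care needed is normalization: one passes to normalizations before comparing, and generically finite maps between normalizations have the expected degree.

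\textbf{Convergence, model independence, positivity.} The model evaluations are arithmetic intersection numbers of model metrics on $Y^{(k)}$. Zhang's continuity argument bounds a difference of such numbers telescopically by sup-norm distances times products of heights. \cref{def:controlled-adelic-convergence} is designed so that these height factors stay uniformly bounded simultaneously on every fibre product appearing in the expansion. The sequences are therefore Cauchy, and interleaving two controlled sequences shows the limit does not depend on the models. I expect this step to be the main obstacle. One must verify that the control condition indeed bounds every mixed term uniformly, including those involving pullbacks of $\overline\xi_E$ along different projections. One must also verify that interleaving two admissible sequences still satisfies the control condition.

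Finally, suppose $\overline\xi_E$ and all $\overline L_j$ are nef. Then every factor on $Y^{(k)}$ is a pullback of a nef adelic line bundle, hence nef. Intersection numbers of nef adelic line bundles are nonnegative, as limits of nonnegative model numbers, so each Segre monomial number is $\geq 0$.
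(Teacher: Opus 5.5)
Most of your proposal is the paper's argument. Every number is a top adelic intersection on some $P_{Z,\lambda}$, and by \cref{lem:pullback} its tautological inputs are $p_a^*\iota_Z^*\overline\xi_E$. The standard properties of the adelic pairing then give well-definedness, dependence only on $\overline\xi_E$, symmetry, multilinearity, both projection formulas, and nonnegativity for nef data. Two small corrections in that part:
\begin{itemize}
\item The factor $1/[K':K]$ in \eqref{eq:segre-monomial-base-change} comes from normalizing degrees over $K'$. It does not come from passing to normalizations of subvarieties, and your sketch never produces it.
\item The paper also checks, via \cref{thm:segre-basic}(e), that $\widehat s_0(\overline E)$ as defined in \cref{def:segre} coincides with the unit given by the empty partition. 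This is a Deligne-pairing computation, not a formality.
\end{itemize}

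The real problem is the convergence and model-independence step.
\begin{itemize}
\item \textbf{The paper's route.} The paper does not reprove Zhang's continuity. It applies \eqref{eq:controlled-intersection-continuity} on each $P_{Z,\lambda}$ and sums over the finitely many $\lambda$ with $a_\lambda\neq0$.
\item \textbf{Your obstacle is a hypothesis.} Uniform control of every mixed term, including pullbacks of $\overline\xi_E$ along different projections $p_a$, is exactly the hypothesis of the statement ("the factors on every fibre product in the defining expansion satisfy the simultaneous control condition"). So it is assumed, not something to prove. Also, in Zhang's telescoping estimate the factors multiplying the sup-norm distances are geometric degrees of the remaining nef factors, not heights. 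They stay bounded because the control data fix the underlying $N^\pm$.
\item \textbf{Interleaving is unnecessary.} \eqref{eq:controlled-intersection-continuity} identifies the limit with the adelic intersection number of the limits $\overline N^{+}-\overline N^{-}=\overline\xi$. As you note yourself, that number does not depend on the nef decomposition. So the limit is independent of the approximating sequence without any interleaving.
\item \textbf{Interleaving fails as stated.} Suppose two controlled sequences have nef limits $\overline N^{\pm}$ and $\overline N'^{\pm}$ with $\overline N^{+}-\overline N^{-}=\overline N'^{+}-\overline N'^{-}$ but $\overline N^{+}\neq\overline N'^{+}$ (or different fixed $N^\pm$). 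Then the interleaved signed sequences are not Cauchy, so the interleaved sequence is not controlled. You would first have to re-decompose both, for instance as $(\overline N^{+}_m+\overline N'^{-}_m)-(\overline N^{-}_m+\overline N'^{-}_m)$ and $(\overline N'^{+}_m+\overline N^{-}_m)-(\overline N'^{-}_m+\overline N^{-}_m)$.
\item \textbf{A missing piece.} The clause that the limits do not depend on the models used to compute the evaluations also covers a fixed-$n$ statement (\cref{prop:model-independence}). The model intersection on $P_{Z,\lambda}$ must not depend on which projective model carries the pulled-back model bundles. The paper proves this by passing to a common dominating model and using the projection formula for proper birational maps. Your proposal does not address it.
\end{itemize}
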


We also construct a commutative graded real algebra
$\widehat N^\bullet\ad(X)$ that contains the divisor and tautological
Segre classes.  Intersections on fibre products of projective bundles
define its degree functional.  We quotient the formal algebra by the
elements that pair to zero with every element of complementary degree.
The resulting pairings are nondegenerate
(\cref{prop:complementary-duality}).  This algebra defines mixed products
of characteristic classes of different vector bundles.

Our main application concerns the discriminant.  Assume now that $X$ is
smooth and geometrically integral, with $d\geq1$, and that $r\geq2$.
Let $L$ be ample and let $\overline L$ be a nef adelic metric on $L$.
Define the numerical discriminant by
$$
\Delta^\tau_{\overline L}(E,\overline\xi_E)
:=\left\langle 2r\widehat c_2(\overline E)-(r-1)\widehat c_1(\overline E)^2,\overline L^{d-1}\mid X\right\rangle.
$$
Here $\overline L^{d-1}$ denotes $d-1$ copies of $\overline L$.
By \cref{lem:bg-numerical-bridge}, this is the tautological intersection
number in \cref{eq:bg-tautological-discriminant}.

For this application, we require a model system as in
\cref{def:bg-admissible-model-system}.  The models are regular and carry
smooth Hermitian vector bundles extending $E$.  After pullback to suitable
dominating models, each metrized tautological quotient line bundle is a
difference of two nef Hermitian rational line bundles.
For each sign, the generic fibre stays fixed.
The induced model adelic metrics converge in the sense of
\cref{def:cauchy-boundary-topology}.
When $d\geq2$, the polarizations also come from a convergent sequence of
arithmetically ample Hermitian rational model line bundles.

\begin{introtheorem}[\Cref{thm:curve-controlled-bg,thm:controlled-numerical-bg}]
\label{intro:bg}
Assume that $E_{\overline K}$ is slope semistable with respect to
$L_{\overline K}$.  If $(\overline L,\overline\xi_E)$ admits a model
system satisfying \cref{def:bg-admissible-model-system}, then
$$
\Delta^\tau_{\overline L}(E,\overline\xi_E)\geq0.
$$
\end{introtheorem}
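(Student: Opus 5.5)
The plan is to prove the inequality at the level of arithmetic models and then pass to the limit using the continuity in \cref{intro:characteristic-classes}. By \cref{lem:bg-numerical-bridge}, $\Delta^\tau_{\overline L}(E,\overline\xi_E)$ is the tautological intersection number of \cref{eq:bg-tautological-discriminant}. This number is a polynomial in intersections of $\widehat c_1(\overline\xi_E)$ and $\pi^*\overline L$ on $Y$ and on the fibre product $Y\times_X Y$. Fix a model system as in \cref{def:bg-admissible-model-system}. It consists of regular models $\mathcal X_n$ and smooth Hermitian bundles $\overline{\mathcal E}_n$ extending $E$. For $d\geq2$ it also includes arithmetically ample Hermitian polarizations $\overline{\mathcal L}_n$. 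For each $n$ the model discriminant is then
\[
\widehat\Delta_n:=\widehat{\deg}\bigl((2r\widehat c_2(\overline{\mathcal E}_n)-(r-1)\widehat c_1(\overline{\mathcal E}_n)^2)\cdot\widehat c_1(\overline{\mathcal L}_n)^{d-1}\bigr).
\]
By the model-level identity behind \cref{lem:bg-numerical-bridge}, $\widehat\Delta_n$ equals the same tautological expression computed with the model metrics. The difference-of-nef decomposition in the definition gives simultaneous control of all factors on every fibre product. Together with Cauchy convergence for the boundary topology, this places us in \cref{def:controlled-adelic-convergence}. Hence $\widehat\Delta_n\to\Delta^\tau_{\overline L}(E,\overline\xi_E)$, and it suffices to show $\widehat\Delta_n\geq0$ for every $n$, or at least $\liminf\widehat\Delta_n\geq0$.

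For $d=1$, the base $X$ is a curve, and I would follow \cref{thm:curve-controlled-bg}. The model $\mathcal X_n$ is an arithmetic surface, and $\widehat\Delta_n$ is a number with no polarization. Push forward along $\pi$ using the Deligne pairing to write the discriminant as the adelic self-intersection of a degree-zero class. Concretely, pass to the fibre product $Y\times_XY$, or equivalently to $\mathrm{End}(E)$ with the induced metric. Compute $\widehat c_1(\overline\xi_1)-\widehat c_1(\overline\xi_2)$ there; after pushing forward it has vanishing generic-fibre degree along $X$. Semistability of $E_{\overline K}$ is what guarantees the required geometric degree condition; for instance, $\mathcal O_{\mathbb P(\mathrm{End}E)}(1)$ becomes nef on the generic fibre. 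The adelic Hodge index theorem then gives a sign: the self-intersection of a numerically trivial class is $\leq0$. After the sign bookkeeping this yields $\Delta\geq0$. This is essentially Miyaoka's argument, with the adelic Hodge index theorem standing in for Hodge index on the surface.

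For $d\geq2$, I would use Moriwaki's dimension induction at the model level. Take $m\gg0$ and a small section of $\overline{\mathcal L}_n^{\otimes m}$ that is arithmetically ample. Its divisor is a general hypersurface $\mathcal H$ with regular generic fibre. By Mehta--Ramanathan, $E|_{H}$ remains semistable on $H_{\overline K}$. The arithmetic restriction formula gives
\[
m\,\widehat\Delta_n(\mathcal X_n)=\widehat\Delta_n(\mathcal H)+\int_{\mathcal X_n(\mathbb C)}(-\log\|s\|)\,\omega,
\]
where $\omega$ is the discriminant Chern form wedge $c_1(\overline{\mathcal L}_n)^{d-2}$. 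Bogomolov--Gieseker for Hermitian metrics provides the needed semipositivity on the archimedean side: one uses the Hermitian--Einstein metric or Kobayashi's pointwise inequality after a metric change. The metric change itself is controlled by Bott--Chern secondary forms. This closes the induction to curves, where the previous step applies, and then the controlled Zhang limit finishes.

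The main obstacle is this archimedean term. For arbitrary smooth metrics the discriminant form is not semipositive. The plan would be to replace $\overline{\mathcal E}_n$ by its Hermitian--Einstein, or approximately Hermitian--Einstein, metric and track the Bott--Chern correction exactly. I would first try to show that this correction contributes a nonnegative amount, or one that vanishes in the limit $m\to\infty$. This is what Moriwaki's argument supplies, and I would import it verbatim at each model level.
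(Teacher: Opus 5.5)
Your architecture matches the paper's: nonnegativity of a Gillet--Soul\'e discriminant on each regular model, identification with the model tautological number $B_m$, then the controlled limit via the fixed nef decompositions of \textup{(BG3)}. Two steps, however, do not go through as written.

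\textbf{The model identity is missing.} The identification $\widehat\Delta_n=B_n$ is not contained in \cref{lem:bg-numerical-bridge}. That lemma is only the formal identity $2r\widehat c_2-(r-1)\widehat c_1^2=(r+1)\widehat s_1^2-2r\widehat s_2$ among numerical classes. On a regular model, the classes $\widehat\pi_*(\widehat\xi^{\,r-1+i})$ are \emph{not} the Gillet--Soul\'e Segre classes; they differ by Bott--Chern terms. For example, $\widehat\pi_*(\widehat\xi^{\,r})=\widehat c_1^{\mathrm{GS}}(\overline{\mathcal E})+a(\kappa_r)$, and $a(\kappa_r)$ has degree $\kappa_r/2\neq0$ already for the trivial Hermitian bundle on $\operatorname{Spec}\mathbb Z$. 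You need two further steps:
\begin{enumerate}[label=\textup{(\alph*)}]
\item the fibre-square identity $q_*\bigl(p_1^*\widehat\xi^{\,r}\,p_2^*\widehat\xi^{\,r}\bigr)=\bigl(\widehat\pi_*\widehat\xi^{\,r}\bigr)^2$ in $\widehat{\mathrm{CH}}^2(\mathcal X)_{\mathbb Q}$, proved by arithmetic flat base change on cycles and Green currents (\cref{lem:bg-fiber-square-realization});
\item Mourougane's $R_1=-\kappa_r$ and $R_2=-(1+\frac1r)\kappa_r c_1(\mathcal E,h)$, together with $a(\kappa_r)^2=0$ and $\widehat c_1^{\mathrm{GS}}a(\kappa_r)=a(\kappa_r c_1(\mathcal E,h))$. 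These show that the secondary terms, with coefficients $\pm2(r+1)\kappa_r$, cancel exactly in $(r+1)\widehat s_1^2-2r\widehat s_2$ (\cref{prop:bg-bott-chern-cancellation}).
\end{enumerate}
Without this, Moriwaki's inequality, which is stated in Gillet--Soul\'e classes, says nothing about $B_m$ or about $\Delta^\tau$.

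\textbf{The $d=1$ base case does not work.} The Hodge index theorem on $C$ only controls $\overline A^2$. By \cref{eq:curve-discriminant-deligne-form}, $\Delta_C^\tau=(r+1)\overline A^2-2r\,\overline\xi^{\,r+1}$. The bound $\overline A^2\le0$ goes in the wrong direction, and $\overline\xi^{\,r+1}$ is not an intersection on $C$ at all.

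Your fibre-product idea does give a correct identity. Put $\overline\xi':=\overline\xi-\frac1r\pi^*\overline A$ and $D:=p_1^*\overline\xi-p_2^*\overline\xi$. Then $\Delta_C^\tau=-r\,D^2\cdot(p_1^*\overline\xi')^{r-1}(p_2^*\overline\xi')^{r-1}$, and $D$ has degree zero against these factors on the generic fibre. To conclude, though, you would need the adelic Hodge index theorem with the $2r-2$ auxiliary factors $p_a^*\overline\xi'$. As stated, that theorem requires them to be nef adelic line bundles with big generic fibre, and both conditions fail:
\begin{itemize}
\item they are pulled back from $Y$, so they are not big on $W$;
\item for a general smooth $h$, let alone an integrable adelic $\overline\xi$, they are not nef. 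The archimedean quotient metric is not semipositive, and the special fibres of $\mathcal E_m$ need not be semistable.
\end{itemize}
Semistability of $E_{\overline K}$ gives only geometric nefness of $\xi'$.

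The arithmetic-surface inequality is genuinely arithmetic and holds for every smooth metric. The paper obtains it by citing Moriwaki's theorem on each $\mathcal X_m$ (\cref{thm:bg-moriwaki-surface-input}). For $d\ge2$, your plan amounts to citing Moriwaki's higher-dimensional theorem, as the paper does (\cref{thm:bg-moriwaki-input}). But as you set it up, the induction bottoms out in your own $d=1$ step, so it inherits this gap unless you take Moriwaki's surface theorem as the base case.
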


When $d=1$, the discriminant contains no polarization factor.  We write
$C=X$ and $\Delta_C^\tau=\Delta^\tau_{\overline L}$ in this case.
The Deligne pairing gives an additional conclusion on curves.

\begin{introtheorem}[\Cref{thm:curve-pb}]
\label{intro:curve-refinement}
Under the assumptions of the preceding theorem, suppose that $X=C$ is
a curve and $\det E\simeq\mathcal O_C$.  Let
$\overline A:=\langle\overline\xi_E,\ldots,\overline\xi_E\rangle_{Y/C}$
be the Deligne pairing with $r$ factors.
Write $\overline\xi_E^{\,r+1}$ for the normalized top intersection number
on $Y$ and $\overline A^2$ for the normalized self-intersection number on $C$.
Then
$$
\overline\xi_E^{\,r+1}
=\frac{r+1}{2r}\overline A^2-\frac{1}{2r}\Delta_C^\tau(E,\overline\xi_E)\leq0.
$$
The number $\overline\xi_E^{\,r+1}$ is zero if and only if both $\overline A^2$ and
$\Delta_C^\tau(E,\overline\xi_E)$ vanish.
\end{introtheorem}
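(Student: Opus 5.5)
We have to prove Theorem C: on a curve $C$ with $\det E\simeq\mathcal O_C$, the stated identity holds, together with the sign and the equality criterion. The plan is to express everything through the tautological Segre numbers on $Y=\mathbb P_C(E)$ and the Deligne pairing.

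\textbf{Step 1: the identity.} Since $d=1$, the Segre numbers are $s_0=\overline\xi_E^{\,r}\cdot\pi^*\overline L$ (rank normalisation), $s_1=\langle \widehat s_1(\overline E),\overline L\rangle$ and $s_2=\overline\xi_E^{\,r+1}$. We will:
\begin{enumerate}[label=(\roman*)]
\item Invoke \cref{lem:bg-numerical-bridge}, which writes $\Delta_C^\tau$ as an explicit combination of $\overline\xi_E^{\,r+1}$ and the self-intersection of the "$\widehat c_1$-part" $\pi_*\widehat c_1(\overline\xi_E)^r$ on $C$.
\item Identify that $\widehat c_1$-part with the Deligne pairing $\overline A=\langle\overline\xi_E,\ldots,\overline\xi_E\rangle_{Y/C}$, using the Yuan--Zhang formula
$$\widehat{\deg}_Y\bigl(\widehat c_1(\overline\xi_E)^{r}\,\pi^*\widehat c_1(\overline M)\bigr)=\widehat{\deg}_C\bigl(\widehat c_1(\overline A)\,\widehat c_1(\overline M)\bigr).$$
\item Use $\det E\simeq\mathcal O_C$ to get $\deg A=\deg\xi_E^r=\deg\det E=0$, so $A$ has degree zero on the curve.
\item Note that the top number $\overline\xi_E^{\,r+1}$ is not a pairing against a pullback, so the formula in (ii) does not reach it directly. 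On models, apply the Grothendieck relation $\sum_i(-1)^i\widehat c_1(\xi)^{r-i}\pi^*\widehat c_i(\overline E)=\widehat c_r$-correction (Gillet--Soulé). This expresses $\xi^{r+1}$ as $-\widehat s_2$, i.e.\ $\widehat c_1^2-\widehat c_2$ of $\overline E$. For the numerical version, compute on models and pass to the controlled limit.
\item Collect terms. With $\widehat c_1(\overline E)\leftrightarrow\overline A$ and $\widehat c_2\leftrightarrow(\Delta+(r-1)\overline A^2)/2r$, we get
$$\overline\xi^{\,r+1}=\overline A^2-\frac{\Delta+(r-1)\overline A^2}{2r}=\frac{r+1}{2r}\overline A^2-\frac{1}{2r}\Delta.$$
\end{enumerate}
The sign conventions in (iv)--(v) need care; calibrate them by checking the case $E=\mathcal O^r$ with a flat metric, where both sides vanish.

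\textbf{Step 2: the inequality.} We have $\Delta\ge0$ by Theorem B (\cref{thm:curve-controlled-bg}). For $\overline A^2$, use the adelic Hodge index theorem on $C$: $\overline A$ has degree-zero generic fibre, so $\overline A^2\le0$. Both terms in the identity are then $\le0$.

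\textbf{Step 3: equality.} If $\overline\xi_E^{\,r+1}=0$, then both nonpositive terms vanish, since their coefficients are strictly positive. The converse is immediate from the identity.

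\textbf{Main obstacle.} The hard step is the numerical form of the Grothendieck relation in Step 1(iv). The metric is given only on $\xi_E$, not on $E$, so $\widehat c_2$ is defined through Segre numbers and $\overline\xi^{\,r+1}$ is essentially the definition of $\widehat s_2$. The real content is therefore matching $\widehat s_1$ with the Deligne pairing $\overline A$ as adelic line bundles, not just as numbers. This requires:
\begin{itemize}
\item approximating by the admissible model system of \cref{def:bg-admissible-model-system};
\item applying the model Deligne pairing identity;
\item taking the limit via continuity under the simultaneous control condition of \cref{def:controlled-adelic-convergence}.
\end{itemize}
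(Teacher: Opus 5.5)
Your outer structure is the paper's: $\Delta_C^\tau\geq0$ from \cref{thm:curve-controlled-bg}, $A\simeq\det E$ and hence $\deg A=0$, the adelic Hodge index theorem for $\overline A^2\leq0$, and the equality case from two nonpositive terms with positive coefficients. The gap is in the identity, which is the only step with real content.

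By \eqref{eq:bg-tautological-discriminant}, to which \cref{lem:bg-numerical-bridge} reduces the numerical pairing,
\[
\Delta_C^\tau(E,\overline\xi)=(r+1)\,\widehat{\deg}_W\bigl(\widehat c_1(p_1^*\overline\xi)^r\,\widehat c_1(p_2^*\overline\xi)^r\bigr)-2r\,\overline\xi^{\,r+1},\qquad W=Y\times_CY.
\]
So the first term is an intersection on the fibre square. It is not a self-intersection on $C$, as your step (i) asserts.

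Likewise, $\widehat c_2=\widehat s_1^2-\widehat s_2$ and $\widehat s_2=\overline\xi^{\,r+1}$ hold by definition, so step (iv) adds nothing. Passing to Gillet--Soul\'e classes on models would only create Bott--Chern terms, such as $\widehat\pi_*(\widehat\xi^{\,r})=\widehat c_1^{\mathrm{GS}}(\overline{\mathcal E})+a(\kappa_r)$, which you would then have to cancel. Note also that $\widehat\pi_*(\widehat\xi^{\,r+1})=+\widehat s_2$ in this convention, not $-\widehat s_2$.

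Everything therefore hinges on
\[
\widehat{\deg}_W\bigl(\widehat c_1(p_1^*\overline\xi)^r\,\widehat c_1(p_2^*\overline\xi)^r\bigr)=\overline A^2.
\]
Your tool (ii) cannot produce this. It evaluates products on $Y$ in which one factor is pulled back from $C$, whereas on $W$ both blocks $p_1^*\overline\xi$ and $p_2^*\overline\xi$ come from $Y$.

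The missing idea is base change of the adelic Deligne pairing along $\pi\colon Y\to C$. Since $W\simeq\mathbb P_Y(\pi^*E)$ is integral, \cite[Theorem~4.1.3]{YZ} gives an isometry $\pi^*\overline A\simeq\langle p_2^*\overline\xi,\ldots,p_2^*\overline\xi\rangle_{W/Y}$, with $r$ factors, for the pairing along $p_1$. The projection formula for $p_1$ converts the $W$-intersection into $\widehat{\deg}_Y\bigl(\widehat c_1(\pi^*\overline A)\,\widehat c_1(\overline\xi)^r\bigr)$. Your formula (ii) with $\overline M=\overline A$ then converts that into $\overline A^2$. This is \cref{prop:curve-fibre-square}, and it is purely adelic, with no approximation.

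Your proposed fallback through the model system does not close the step as written. It needs the model fibre-square identity (the analogue of \cref{lem:bg-fiber-square-realization}) and the identification of $\pi_{m*}\widehat\xi_m^{\,r}$ with the model Deligne pairing $\overline A_m$. Beyond these, it requires $\overline A_m^2\to\overline A^2$, that is, controlled convergence of the Deligne pairings themselves. That is not among the continuity statements you cite, which concern intersection numbers of simultaneously controlled families, and you do not establish it.

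Finally, ``matching $\widehat s_1$ with $\overline A$ as adelic line bundles'' is not meaningful as stated, because $\widehat s_1(\overline E)$ is a numerical functional. What you need is the numerical equality displayed above.
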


The proof of the discriminant inequality has two steps.  On each regular
arithmetic model, Mourougane's calculation \cite{Mourougane} compares the
tautological Segre classes with the corrected arithmetic classes.
The individual classes differ by Bott--Chern terms.  We prove that the
first two correction terms cancel in the discriminant
(\cref{prop:bg-bott-chern-cancellation}).  This identifies its
tautological expression with the Gillet--Soul\'e discriminant degree.
Moriwaki's inequalities give nonnegativity of this degree on arithmetic
surfaces and in higher dimension
\cite{MoriwakiSurface,MoriwakiHigherBG}.  We then pass to the adelic limit
using the fixed nef decompositions.  The resulting limit is the numerical
discriminant in Theorem~\ref{intro:bg} (\cref{prop:bg-controlled-transfer}).

For the curve refinement, the underlying line bundle $A$ of
$\overline A$ is $\det E$.  The determinant hypothesis gives $\deg A=0$.
The adelic Hodge index theorem \cite{YuanZhangHodge} therefore gives
$\overline A^2\leq0$.  The identity in Theorem~\ref{intro:curve-refinement} then proves the
nonpositivity of $\overline\xi_E^{\,r+1}$ and characterizes its vanishing.

Section~1 constructs the numerical characteristic functionals.
Section~2 constructs their tautological numerical intersection algebra.
Section~\ref{sec:curve-pb} proves the model comparison, the passage to
the limit, and the curve results.  Section~\ref{sec:controlled-numerical-bg}
applies the same comparison and limit argument in higher dimension.

\label{chap:numerical-ad-char}

\section{Numerical adelic characteristic classes}

Let $K$ be a number field.
All absolute values and degrees are normalized so that heights are invariant
under finite extension of $K$.  Let $X$ be a projective integral $K$-variety
of dimension $d$.  Throughout this section, every vector bundle has positive
rank.  Let
$$
  \widehat{\Pic}(X)_{\intg,\RR}
$$
be the real vector space of integrable adelic line bundles, and let
$\widehat{\Pic}(X)_{\nef,\RR}$ be its nef cone.

Let the absolute adelic intersection pairing be
\begin{equation}
  (\overline L_0,\ldots,\overline L_d)
  \longmapsto
  \whdeg_X\!\left(
    \whc_1(\overline L_0)\cdots\whc_1(\overline L_d)
  \right)\in\RR.
  \label{eq:adelic-intersection}
\end{equation}
Zhang constructs the classical adelic intersection pairing using
semipositive model approximations. He extends this pairing to integrable
metrics by multilinearity; see
\cite[Theorem~1.4 and Section~1.5]{Zhang}.
We state Zhang's convergence condition for model metrics in
\cref{def:cauchy-boundary-topology} below.
For projective varieties over number fields, this construction agrees with
the adelic intersection pairing of \cite[Proposition~4.1.1]{YZ}.
The following properties hold throughout.

\begin{enumerate}[label=\textup{(I\arabic*)},leftmargin=2.2em]
\item The pairing is symmetric and multilinear.
\item It depends only on the isometry classes of the adelic line bundles.
\item It is continuous when the varying model sequences are simultaneously
controlled in the sense of \cref{def:controlled-adelic-convergence} below.
\item Pullback and proper pushforward satisfy the projection formula.
\item The intersection number of nef adelic line bundles is nonnegative.
\item Normalized intersections are invariant under finite extension of $K$.
\end{enumerate}

\begin{definition}
\label{def:cauchy-boundary-topology}
Let $X$ be a projective integral variety over the number field $K$.
Let $N$ be a rational line bundle on $X$.
Let $(\overline N_m)_{m\geq1}$ be a sequence of model adelic line bundles
on $N$, with fixed identifications on generic fibre.
Write $\|\cdot\|_{m,v}$ for the metric at a place $v$ of $K$.
We call this sequence a \emph{Cauchy sequence for the supremum norm}
if the following two conditions hold.

First, choose a finite set $S$ of places of $K$ containing all archimedean
places.  Let $U\subseteq\operatorname{Spec}\mathcal O_K$ be the complement
of the finite primes in $S$.
Choose a projective integral flat model $\mathcal X_U$ of $X$ over $U$
and a rational line bundle $\mathcal N_U$ extending $N$.
For every $m$ and every $v\notin S$, require $\|\cdot\|_{m,v}$ to equal
the model metric induced by $(\mathcal X_U,\mathcal N_U)$.
Keep $S$ and this model fixed as $m$ varies.

Second, require the metrics to be uniformly Cauchy at each place in $S$.
More precisely, let $X_v^{\mathrm{an}}$ be the analytic space of $X$ at $v$.
For an ordinary line bundle $N$, define
$$
 g_{j,m,v}(x):=-\log\frac{\|s(x)\|_{j,v}}{\|s(x)\|_{m,v}},
 \qquad x\in X_v^{\mathrm{an}},
$$
where $s$ is a nowhere vanishing local section of $N$ near $x$.
The ratio does not depend on the choice of $s$.
For a rational line bundle $N$, choose a positive integer $q$ such that
$N^{\otimes q}$ is an ordinary line bundle.
Compute the negative logarithmic ratio for the induced metrics on $N^{\otimes q}$
and divide by $q$.  This defines $g_{j,m,v}$ independently of $q$.
Put $g_{j,m}:=(g_{j,m,v})_{v\in S}$.
Choose positive rational numbers $\epsilon_m\to0$ such that
\begin{equation}
 \|g_{j,m}\|_{\sup,S}
 :=\max_{v\in S}\sup_{x\in X_v^{\mathrm{an}}}|g_{j,m,v}(x)|
 \leq\epsilon_m
 \qquad (j\geq m\geq1).
 \label{eq:cauchy-boundary-bounds}
\end{equation}

The limit metric equals the fixed model metric at every $v\notin S$.
For each $v\in S$, the functions $g_{m,1,v}$ converge uniformly to a
continuous function $h_v$, since $X_v^{\mathrm{an}}$ is compact.
Define the limit metric at $v$ by
$$
 \|s(x)\|_v:=e^{-h_v(x)}\|s(x)\|_{1,v}.
$$
We use this meaning of convergence throughout the paper.
\end{definition}

This is Zhang's convergence condition on a projective variety
\cite[Section~1.2]{Zhang}.
Under the fixed model condition outside $S$, the supremum norm condition
also agrees with the Cauchy condition for the boundary topology;
see \cite[Theorem~3.5.2 and its proof]{YZ}.

We use the positivity conventions for projective varieties in
\cite[Appendices~A.4.1 and~A.5.2]{YZ}.
Let $V$ be a projective integral variety over $K$.
A model adelic rational line bundle on $V$ is \emph{nef} if a nef
Hermitian rational line bundle on a projective arithmetic model over
$\mathcal O_K$ induces it.
Let $\overline N$ be an adelic rational line bundle on $V$.
We call $\overline N$ \emph{nef} if a sequence of nef model adelic
rational line bundles satisfies \cref{def:cauchy-boundary-topology} and
converges to $\overline N$.
We call $\overline N$ \emph{integrable} if it is the difference of two
nef adelic rational line bundles.
For projective varieties over number fields, this nefness also agrees with
strong nefness in \cite[Section~3.5.2, after Theorem~3.5.2]{YZ}.

\begin{definition}
\label{def:controlled-adelic-convergence}
Let $V$ be a projective integral variety over $K$, and let $A$ be a finite
index set.  For each $\alpha\in A$, let
$$
 \bigl(\overline N_{\alpha,m}\bigr)_{m\geq1}
$$
be a sequence of model adelic $\mathbb Q$-line bundles on a fixed
line bundle $N_\alpha$ on $V$.  We say that these sequences are
\emph{simultaneously controlled} if the following data exist.  For every
$\alpha\in A$ there are fixed $\mathbb Q$-line bundles $N_\alpha^+$ and
$N_\alpha^-$ and nef model adelic line bundles
$\overline N_{\alpha,m}^\pm$ such that
\begin{equation}
 \overline N_{\alpha,m}
 =\overline N_{\alpha,m}^+-\overline N_{\alpha,m}^-,
 \qquad
 N_\alpha=N_\alpha^+-N_\alpha^-.
 \label{eq:controlled-fixed-decomposition}
\end{equation}
We fix the identifications on generic fibre.
For every $\alpha$ and each sign, require
$(\overline N_{\alpha,m}^\pm)_{m\geq1}$ to be a Cauchy sequence for the
supremum norm in the sense of \cref{def:cauchy-boundary-topology}.
Choose the same finite set $S$ and sequence $(\epsilon_m)$ for all these
sequences.  Over the resulting open subset $U$ of
$\operatorname{Spec}\mathcal O_K$, choose one projective integral flat
model carrying all the fixed rational line bundles.
Require these model line bundles to induce the fixed metrics outside $S$.
Write $\overline N_\alpha^\pm$ for the limit of
$(\overline N_{\alpha,m}^\pm)_{m\geq1}$.
These limit adelic line bundles are nef by the preceding definition.
We then write
$$
 \overline N_{\alpha,m}\xrightarrow{\mathrm{ctrl}}
 \overline N_\alpha
 :=\overline N_\alpha^+-\overline N_\alpha^-.
$$
We call the chosen decompositions, model data, finite set $S$, and sequence
$(\epsilon_m)$ the control data.
\end{definition}

We use the same notation when some factors are fixed integrable adelic line
bundles.  For each such factor, choose an expression as a difference of two
nef adelic line bundles.
For each summand, choose a sequence of nef model adelic line bundles that
converges to it and is Cauchy for the supremum norm.
Together with the varying model sequences, these sequences form one
simultaneously controlled family.
Pullback preserves this simultaneous control.
Enlarge $S$ so that the morphism extends to the fixed models over $U$.
Pullback then preserves the fixed model metrics outside $S$.
It does not increase the supremum of a logarithmic metric ratio.
Zhang proves convergence for semipositive model approximations
\cite[Theorem~1.4]{Zhang}.
For the nef model metrics used here, \cite[Proposition~4.1.1]{YZ} gives
this convergence, including for continuous Hermitian model metrics.
A finite multilinear expansion gives
\begin{equation}
 \lim_{m\to\infty}
 \whdeg_V\left(\prod_{j=0}^{\dim V}
 \whc_1(\overline N_{j,m})\right)
 =
 \whdeg_V\left(\prod_{j=0}^{\dim V}
 \whc_1(\overline N_j)\right)
 \label{eq:controlled-intersection-continuity}
\end{equation}
whenever the varying model sequences are simultaneously controlled.
The fixed nef decompositions are part of this continuity statement; see
\cite[Appendix~A.5]{YZ}.

For an integral closed subvariety $Z\subseteq X$, all intersections are taken
on its normalization $\nu_Z\colon Z^\nu\to Z$.  Thus we denote
$\nu_Z^*(E|_Z)$ by $E|_Z$ below.

\subsection{Adelic vector bundles on projective varieties}

Let $\Pj(E):=\operatorname{Proj}_X(\operatorname{Sym}E)$ be the projective
bundle associated with $E$, and let $\pi:\Pj(E)\to X$ be its projection.
Let $\mathcal O_{\Pj(E)}(1)$ be the tautological quotient line bundle.  Let
\begin{equation}
\phi:  \pi^*E\twoheadrightarrow\mathcal O_{\Pj(E)}(1)
  \label{eq:taut-quotient}
\end{equation}
be the canonical surjection.

\begin{definition}[Quotient metric]
Let $v$ be a place of $K$.  A continuous norm on $E$ induces a norm on
$\mathcal O_{\Pj(E)}(1)$ by
\begin{equation}
  \norm{\ell}_v^Q
  =\inf\{\|e\|_{E,v}: \phi(e)=\ell\}.
  \label{eq:quotient-metric}
\end{equation}
We call the induced metric the \emph{quotient metric}.  Denote the resulting
metrized tautological quotient line bundle by
$$
  \overline\xi_E:=\overline{\mathcal O}_{\Pj(E)}(1).
$$
\end{definition}

\begin{definition}
\label{def:adelic-vector-bundle}
Let $E$ be a metrized vector bundle on $X$.

\begin{enumerate}[label=\textup{(\roman*)}]
	\item The metric on $E$ is called \emph{adelic} if the induced metrized
	line bundle $\overline\xi_E$
	is adelic.
	
	\item A \emph{model of $(X,E)$} consists of data
	$(\mathcal X,\mathcal E,h)$ as follows.
	The scheme $\mathcal X$ is integral, projective, and flat over
	$\operatorname{Spec}\mathcal O_K$.
	The sheaf $\mathcal E$ is a vector bundle on $\mathcal X$.
	We fix identifications $\mathcal X_K\simeq X$ and
	$\mathcal E_K\simeq E$.
	For each embedding $\sigma:K\hookrightarrow\mathbb C$, put
	$X_\sigma:=X\times_{K,\sigma}\mathbb C$ and let $E_\sigma$ be the
	pullback of $E$ to $X_\sigma$.
	The family $h=(h_\sigma)_\sigma$ consists of continuous positive definite
	Hermitian metrics on $E_\sigma$ over $X_\sigma(\mathbb C)$.

	Write $\bar\sigma$ for the conjugate embedding.
	Let $c_\sigma:X_\sigma(\mathbb C)\to X_{\bar\sigma}(\mathbb C)$ be
	complex conjugation, and denote its induced antilinear map on vector
	bundle fibres by the same symbol.
	For every $x\in X_\sigma(\mathbb C)$ and $e\in E_{\sigma,x}$, require
	$$
	\|c_\sigma(e)\|_{h_{\bar\sigma},c_\sigma(x)}
	=\|e\|_{h_\sigma,x}.
	$$

	At a nonarchimedean place $v$, the model vector bundle $\mathcal E$
	defines the norm on $E_v^{\mathrm{an}}$.
	Let $x\in X_v^{\mathrm{an}}$, and let $\mathcal H(x)$ be its completed
	residue field.
	Use the absolute value on $\mathcal H(x)$ defined by $x$, and denote it
	by $|\cdot|_v$.
	Properness of $\mathcal X$ gives $x$ a unique centre on $\mathcal X$.
	We call this centre the reduction of $x$.
	Choose a local basis $e_1,\ldots,e_r$ of $\mathcal E$ near the reduction
	of $x$, where $r=\operatorname{rk}E$.
	The model norm is
	$$
	\left\|\sum_{i=1}^r a_i e_i(x)\right\|_{v,x}
	=\max_{1\leq i\leq r}|a_i|_v,
	\qquad a_i\in\mathcal H(x).
	$$
	A change of model basis and its inverse have coefficients of absolute
	value at most one at $x$.
	Hence this formula does not depend on the chosen model basis.
	At an archimedean place represented by $\sigma$, use the norm from
	$h_\sigma$.
	We call the resulting family of norms on $E$ a \emph{model metric}.

	Let $\pi_{\mathcal X}:\mathbb P_{\mathcal X}(\mathcal E)\to\mathcal X$
	be the projection.
	At every place, give the tautological line bundle the quotient metric
	of \cref{eq:quotient-metric} through the canonical surjection
	$$
	\pi_{\mathcal X}^*\mathcal E
	\twoheadrightarrow
	\mathcal O_{\Pj_{\mathcal X}(\mathcal E)}(1).
	$$
	At finite places, this is the model metric induced by
	$\mathcal O_{\Pj_{\mathcal X}(\mathcal E)}(1)$.
	At archimedean places, it is the Hermitian quotient metric induced by
	$h_\sigma$.
	The metrics $h_\sigma$ need not be smooth in this definition.
	We impose smoothness explicitly when we use differential forms.
	
	\item An adelic metric on $E$ is called \emph{integrable}, respectively
	\emph{nef}, if $\overline\xi_E$ is an integrable, respectively nef, adelic
	line bundle on $\mathbb P_X(E)$ in the sense of
	\cite[Appendix~A.5.2]{YZ}.
\end{enumerate}
\end{definition}

\begin{definition}
We say that a metric on $E$ is \emph{controlled by vector-bundle models} if
we can choose the following data.
Choose a sequence of models $(\mathcal X_m,\mathcal E_m,h_m)$ in the sense
of \cref{def:adelic-vector-bundle}\textup{(ii)}.
Require their induced quotient metrics on $\mathcal O_{\mathbb P(E)}(1)$
to converge to $\overline\xi_E$ under the control condition of
\cref{def:controlled-adelic-convergence}.
\end{definition}

All constructions below depend on the metric on $E$ only through the induced
metrized tautological line bundle $\overline\xi_E$.  Hence we may start
directly with an integrable adelic metric on $\mathcal O_{\Pj(E)}(1)$.
When a result requires model control, this metric must satisfy the
corresponding hypothesis.

\begin{lemma}[Pullback]
\label{lem:pullback}
Let $Y$ be a projective integral $K$-variety, and let
$f\colon Y\to X$ be a morphism.  Equip $f^*E$ with the pullback metric.
Then there is a cartesian diagram
$$
\begin{array}{ccc}
\Pj(f^*E) & \xrightarrow{\ \widetilde f\ } & \Pj(E)\\
\pi_Y\downarrow\phantom{\pi_Y} && \phantom{\pi_X}\downarrow\pi_X\\
Y & \xrightarrow{\ f\ } & X
\end{array}
$$
and a canonical isometry
\begin{equation}
  \overline\xi_{f^*E}\simeq\widetilde f^*\overline\xi_E.
  \label{eq:pullback-taut}
\end{equation}
Hence integrability and nefness are preserved by pullback.
\end{lemma}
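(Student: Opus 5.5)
The plan has three steps: build the cartesian square from the universal property of $\operatorname{Proj}$, compare the quotient metrics place by place, and then transport integrability and nefness through model-level pullback.

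\emph{Cartesian square.} Since $\operatorname{Sym}$ commutes with base change, $f^*\operatorname{Sym}E\simeq\operatorname{Sym}f^*E$, and $\operatorname{Proj}$ of a quasi-coherent graded algebra commutes with base change. This gives a canonical isomorphism $\Pj(f^*E)\simeq\Pj(E)\times_XY$, under which $\mathcal O_{\Pj(f^*E)}(1)\simeq\widetilde f^*\mathcal O_{\Pj(E)}(1)$. Equivalently, by the universal property, a line quotient of $f^*E$ over a $Y$-scheme $T$ is the same as a line quotient of $E$ pulled back to $T$. The same construction shows that the canonical surjection $\phi_Y\colon\pi_Y^*f^*E\twoheadrightarrow\mathcal O_{\Pj(f^*E)}(1)$ is identified with $\widetilde f^*\phi_X$, using $\pi_Y^*f^*E=\widetilde f^*\pi_X^*E$.

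\emph{Isometry.} Fix a place $v$ and a point $y\in\Pj(f^*E)_v^{\mathrm{an}}$, and put $x=\widetilde f(y)$. The fibre $(\pi_Y^*f^*E)_y$ is $(\pi_X^*E)_x\otimes\mathcal H(x)\mathcal H(y)$, with the norm extended by scalars from $\mathcal H(x)$ to $\mathcal H(y)$; this is what the pullback metric means. At archimedean places the extension is trivial, since the residue fields are $\mathbb C$, so the two infima \cref{eq:quotient-metric} coincide because both are computed in the same normed space.

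At nonarchimedean places the field genuinely enlarges. I would reduce to the case where the metric on $E$ is a model metric, or more generally a diagonalizable one. In that case the norm is a max-norm in a basis, and after choosing the basis adapted to the quotient line, the quotient norm is $\max|a_i|$ over the appropriate coordinates. This expression is visibly stable under extension of the complete field. The general case then follows by uniform approximation, or simply by stating the lemma for the metrics of \cref{def:adelic-vector-bundle}. In either case the norms of the generator $\phi(e)$ agree, which yields \cref{eq:pullback-taut}.

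\emph{Integrability and nefness.} Since all constructions depend only on $\overline\xi_E$, it suffices to show that $\widetilde f^*$ preserves nef and integrable adelic line bundles. Take a controlled sequence of nef model adelic line bundles converging to $\overline\xi_E^\pm$. Choose models $\mathcal Y_m$ of $\Pj(f^*E)$ dominating the closure of the graph of $\widetilde f$, so that $\widetilde f$ extends to morphisms $\mathcal Y_m\to\mathcal X_m$. Pullback of a nef Hermitian line bundle along these morphisms is nef, and pullback preserves the fixed model outside $S$ and does not increase the supremum bounds \cref{eq:cauchy-boundary-bounds}, as noted after \cref{def:controlled-adelic-convergence}. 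Hence the pulled-back sequences are Cauchy with limit $\widetilde f^*\overline\xi_E^\pm$, which is therefore nef, and differences stay differences.

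The main obstacle is the nonarchimedean isometry under field extension, that is, that the quotient norm commutes with base change of $\mathcal H(x)$. I will settle it via the model/orthogonal-basis argument above. For general continuous metrics, I will note that an arbitrary continuous norm over a non-spherically-complete field need not be diagonalizable, and will restrict to the class actually used.
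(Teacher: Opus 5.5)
Your cartesian square and your argument for integrability and nefness are fine. The latter spells out at the level of models what the paper simply asserts, namely that pullback preserves nef and integrable adelic line bundles.

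The gap is in the nonarchimedean isometry. You prove that the quotient norm is compatible with the field extension $\mathcal H(x)\subseteq\mathcal H(y)$ only for diagonalizable norms. You then either appeal to an unspecified ``uniform approximation'' or restrict the lemma to model metrics. The first would need a global uniform approximation of an arbitrary continuous metric on the vector bundle $E$ by model metrics, and nothing in the paper establishes this. The second proves less than the lemma, which concerns the pullback of an arbitrary metric on $E$: the paper only asks that $\overline\xi_E$ be adelic, not that the metric on $E$ come from a model.

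The obstruction you perceive is not there. Define the pullback norm on $E(x)\otimes_{\mathcal H(x)}\mathcal H(y)$ by $\|w\|=\inf\{\max_i|a_i|\,\|e_i\|: w=\sum_i a_i\otimes e_i,\ a_i\in\mathcal H(y),\ e_i\in E(x)\}$. With this definition, the quotient norm commutes with extension of scalars for every ultrametric norm.
\begin{itemize}
\item If $e$ lifts $\ell$, then $a\otimes e$ lifts $a\ell$. Taking the infimum over $e$, the quotient of the extended norm is at most $|a|\,\|\ell\|^{Q}$.
\item Conversely, suppose $\sum_i a_i\otimes e_i$ lifts $a\ell$, and write $\phi(e_i)=b_i\ell$. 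Then $|a|\le\max_i|a_i|\,|b_i|$ and $|b_i|\,\|\ell\|^{Q}\le\|e_i\|$. Hence $|a|\,\|\ell\|^{Q}\le\max_i|a_i|\,\|e_i\|$.
\end{itemize}
No orthogonal basis is needed.

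Alternatively, your own diagonalizable computation already suffices on a dense set. The points of the analytification whose residue fields are finite over $K_v$ are dense. These fields are locally compact, hence spherically complete, so every norm over them is diagonalizable. Continuity of both metrics then gives equality everywhere.

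The paper argues in this last spirit. It compares the two quotient norms at $\mathbb C_v$-points, where both are infima over the same set in the same normed $\mathbb C_v$-vector space. It then concludes by density of these points and continuity of the two metrics.
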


\begin{proof}
The canonical isomorphism of graded $\mathcal O_Y$-algebras
$$
\operatorname{Sym}(f^*E)\simeq f^*\operatorname{Sym}(E)
$$
and base change for relative Proj give
$$
\Pj(f^*E)\simeq Y\times_X\Pj(E).
$$
Let $\phi_X$ and $\phi_Y$ denote the tautological quotient maps on
$\Pj(E)$ and $\Pj(f^*E)$, respectively.  Under the above isomorphism, the
canonical identifications fit into the commutative diagram
$$
\begin{array}{ccc}
\pi_Y^*f^*E & \xrightarrow{\ \phi_Y\ } &
\mathcal O_{\Pj(f^*E)}(1)\\
\downarrow && \downarrow\\
\widetilde f^*\pi_X^*E &
\xrightarrow{\ \widetilde f^*\phi_X\ } &
\widetilde f^*\mathcal O_{\Pj(E)}(1).
\end{array}
$$
Fix a place $v$ of $K$, and let $\mathbb C_v$ be the completion of an
algebraic closure of $K_v$.  For a point
$z\in\Pj(f^*E)(\mathbb C_v)$, the left vertical identification is an
isometry because both metrics are the pullback of the metric on $E$ along
$f\circ\pi_Y=\pi_X\circ\widetilde f$.  After identifying the two line-bundle
fibers by the right vertical arrow, both quotient norms are computed in the
same normed $\mathbb C_v$-vector space and with the same quotient map.
Thus \cref{eq:quotient-metric} gives, for every $\ell$ in that fiber,
$$
\begin{aligned}
\|\ell\|_{\overline\xi_{f^*E},v,z}
&=\inf_{\phi_{Y,z}(e)=\ell}
  \|e\|_{\pi_Y^*f^*E,v,z}\\
&=\inf_{(\widetilde f^*\phi_X)_z(e)=\ell}
  \|e\|_{\widetilde f^*\pi_X^*E,v,z}
 =\|\ell\|_{\widetilde f^*\overline\xi_E,v,z}.
\end{aligned}
$$
The $\mathbb C_v$-points are dense in
$\Pj(f^*E)_v^{\mathrm{an}}$, and both metrics are continuous.  The equality
therefore holds at every analytic point, so the right vertical identification
is an isometry.  Pullback preserves integrable and nef adelic line bundles.
The final assertion now follows from the definition of integrable and nef
adelic metrics on vector bundles.
\end{proof}

\begin{lemma}[Twisting]
\label{lem:twisting}
For an integrable adelic line bundle $\overline M$ on $X$, equip
$E\otimes M$ with the tensor product metric.  Let
$\tau\colon\Pj(E\otimes M)\xrightarrow{\sim}\Pj(E)$ be the canonical
isomorphism, and let $\pi_{E\otimes M}\colon\Pj(E\otimes M)\to X$ be the
projection.  There is a canonical isometry
\begin{equation}
  \overline\xi_{E\otimes M}
  \simeq\tau^*\overline\xi_E\otimes
  \pi_{E\otimes M}^*\overline M.
  \label{eq:twisting-taut}
\end{equation}
\end{lemma}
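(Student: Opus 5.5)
I will mirror the proof of \cref{lem:pullback}: first produce the algebraic isomorphism together with a compatible identification of tautological quotients, and then compare quotient norms pointwise.

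For the algebraic part, I will use the canonical isomorphism of graded $\mathcal O_X$-algebras $\operatorname{Sym}(E\otimes M)\simeq\bigoplus_n\operatorname{Sym}^n(E)\otimes M^{\otimes n}$. Relative Proj is insensitive to this twist of the grading, so this gives $\tau$. Under $\tau$, the degree-one twisting sheaf becomes $\mathcal O_{\Pj(E)}(1)\otimes\pi^*M$. Put $\pi_E:=\pi_{E\otimes M}$, viewed on $\Pj(E)$ via $\tau$. The canonical surjection $\phi_{E\otimes M}\colon\pi_E^*E\otimes\pi_E^*M\to\mathcal O_{\Pj(E\otimes M)}(1)$ is then identified with $\tau^*\phi_E\otimes\mathrm{id}_{\pi_{E\otimes M}^*M}$. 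Concretely, I will check the identification on a trivializing open set where $M\simeq\mathcal O_X$ and the glueing cocycles match. This yields a commutative square analogous to the one in \cref{lem:pullback}.

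For the metric comparison, I fix a place $v$ and a point $z\in\Pj(E\otimes M)(\mathbb C_v)$ lying over $x\in X$. I choose a nonzero $m\in M_x$. Every preimage of $\ell\otimes m$ under $\phi_{E\otimes M,z}$ has the form $e\otimes m$ with $\phi_{E,z}(e)=\ell$, because $M_x$ is one-dimensional. The tensor product norm satisfies $\|e\otimes m\|=\|e\|\cdot\|m\|$, both for Hermitian norms at archimedean places and for ultrametric or model norms at finite places, since $M$ has rank one. Hence
$$
\|\ell\otimes m\|^Q_{E\otimes M}=\inf_{\phi_E(e)=\ell}\|e\|\,\|m\|=\|\ell\|^Q_E\,\|m\|_{\overline M}.
$$
This is exactly the metric on the right side of \cref{eq:twisting-taut}. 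Density of $\mathbb C_v$-points in the analytic space, together with continuity of both metrics, extends the equality to all of $\Pj(E\otimes M)_v^{\mathrm{an}}$, as in \cref{lem:pullback}.

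The main obstacle is the bookkeeping in the algebraic identification: one must check that the isomorphism $\operatorname{Proj}(\operatorname{Sym}(E\otimes M))\simeq\operatorname{Proj}(\operatorname{Sym}E)$ carries $\mathcal O(1)$ to $\mathcal O(1)\otimes\pi^*M$ compatibly with the quotient maps, rather than merely abstractly. I would handle this by using the universal property of $\Pj$ (line quotients of $E$ correspond to line quotients of $E\otimes M$ by tensoring with $M$), which makes the compatibility tautological. A second, minor point is that for integrable $\overline M$ the metric is only defined as a formal difference. When $\overline M$ is merely integrable, I interpret the statement via the pointwise norm on $M$, which still exists as a continuous metric; so the computation above applies verbatim.
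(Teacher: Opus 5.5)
Your proposal is correct and follows essentially the same route as the paper. The paper also obtains $\tau$ and the identification $\mathcal O_{\Pj(E\otimes M)}(1)\simeq\tau^*\mathcal O_{\Pj(E)}(1)\otimes\pi_{E\otimes M}^*M$ from the functor of line-bundle quotients (tensoring with $p^*M$, with inverse given by tensoring with $p^*M^{-1}$), and then computes the quotient norm at a $\mathbb C_v$-point by fixing a nonzero $m\in M_x$, obtaining $\|\ell\|_{\overline\xi_E}\|m\|_{\overline M}$, and concluding by density and continuity; your graded-algebra description via $\operatorname{Sym}(E\otimes M)$ is optional, since the universal-property argument you fall back on already makes the compatibility of the quotient maps automatic.
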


\begin{proof}
Let $p\colon T\to X$ be an $X$-scheme.  Tensoring a line-bundle quotient
$q\colon p^*E\twoheadrightarrow Q$ with $p^*M$ gives
$$
q\otimes\operatorname{id}_{p^*M}\colon
p^*(E\otimes M)\twoheadrightarrow Q\otimes p^*M.
$$
This construction is functorial in $T$, and tensoring with $p^*M^{-1}$ gives
its inverse.  Since $\Pj(E)$ represents the functor of line-bundle quotients
of $E$, the construction defines $\tau^{-1}$, whose inverse is $\tau$.
Applying it to the universal quotient yields a canonical isomorphism
$$
\mathcal O_{\Pj(E\otimes M)}(1)
\simeq
\tau^*\mathcal O_{\Pj(E)}(1)\otimes\pi_{E\otimes M}^*M.
$$
Fix a place $v$ of $K$, let $\mathbb C_v$ be the completion of an algebraic
closure of $K_v$, and take
$z\in\Pj(E\otimes M)(\mathbb C_v)$.  Put $x=\pi_{E\otimes M}(z)$, and write
$q_z\colon E_x\twoheadrightarrow L_z$ for the quotient represented by
$\tau(z)$.  The quotient represented by $z$ is
$q_z\otimes\operatorname{id}_{M_x}$.  If $m\in M_x$ is nonzero, then
$e\mapsto e\otimes m$ identifies $E_x$ with $E_x\otimes M_x$.  Hence, for
every $\ell\in L_z$, the definition of the quotient metric gives
$$
\begin{aligned}
\|\ell\otimes m\|_{\overline\xi_{E\otimes M},v,z}
&=\inf_{q_z(e)=\ell}
  \|e\otimes m\|_{E\otimes M,v,x}\\
&=\left(\inf_{q_z(e)=\ell}\|e\|_{E,v,x}\right)
  \|m\|_{\overline M,v,x}\\
&=\|\ell\|_{\overline\xi_E,v,\tau(z)}
  \|m\|_{\overline M,v,x}.
\end{aligned}
$$
The last expression is the tensor product norm on the right-hand side of
\cref{eq:twisting-taut}.  Since the $\mathbb C_v$-points are dense in the
corresponding analytification and both metrics are continuous, the algebraic
isomorphism above is an isometry at every place.
\end{proof}

\subsubsection{Numerical adelic Segre classes}

\begin{definition}
For $i\geq0$, define $\Num^i\ad(X)$ to be the product, over all integral
closed subvarieties $Z\subseteq X$ of dimension $e\geq i-1$, of the spaces of
symmetric multilinear maps satisfying controlled continuity in the sense of
\cref{def:controlled-adelic-convergence}
\begin{equation}
  \widehat{\Pic}(Z^\nu)_{\intg,\RR}^{\,e+1-i}\longrightarrow\RR.
  \label{eq:num-space}
\end{equation}
Since $Z$ has arithmetic dimension $e+1$, the corresponding functional takes
$e+1-i$ integrable adelic line bundles as arguments.
When $e+1-i=0$, the corresponding factor is $\RR$.  An element of
$\Num^i\ad(X)$ is called a \emph{numerical adelic class of codimension
$i$}.
\end{definition}

\begin{definition}[Numerical tautological Segre classes]\label{def:segre}
		Let $\overline E$ be an integrable adelic vector bundle of rank $r$ on $X$.
		For each integral closed subvariety $Z\subseteq X$, write $e:=\dim Z$, let
		$\nu_Z:Z^\nu\to Z$ be its normalization and set
	$$
	P_Z:=\mathbb P_{Z^\nu}(\nu_Z^*E),\qquad
	\pi_Z:P_Z\to Z^\nu,\qquad  \overline\xi_Z:=\overline\xi_{E|_{Z^\nu}}.
$$

		For every $i\geq0$, we define $\widehat s_i(\overline E)$ by specifying its
		component on each integral closed subvariety $Z\subseteq X$ with
		$e=\dim Z\geq i-1$.  For integrable adelic line bundles
		$\overline L_1,\ldots,\overline L_{e+1-i}$ on $Z^\nu$, set
			\begin{equation}
			\left\langle \widehat s_i(\overline E),\overline L_1,\ldots,\overline L_{e+1-i}\mid Z\right\rangle
			:=\widehat{\deg}_{P_Z}\!\left(\widehat c_1(\overline\xi_Z)^{r-1+i}
			\prod_{j=1}^{e+1-i}\widehat c_1(\pi_Z^*\overline L_j)\right).
		\label{eq:segre-definition}
		\end{equation}

	Equivalently,
	\[
	\widehat s_i(\overline E)
	:=
	\left[
	(Z;\overline L_1,\ldots,\overline L_{e+1-i})
	\longmapsto
		\widehat{\deg}_{P_Z}\left(
		\widehat c_1(\overline\xi_Z)^{r-1+i}
		\prod_{j=1}^{e+1-i}
		\widehat c_1\bigl(\pi_Z^*\overline L_j\bigr)
		\right)
	\right]
	\in \operatorname{Num}_{\mathrm{ad}}^i(X).
	\]
	It is called the $i$-th numerical tautological Segre class of
	$\overline E$.
\end{definition}

The number of factors in \ref{eq:segre-definition} is
$$
  (r-1+i)+(e+1-i)=e+r=\dim P_Z+1,
$$
Thus the right-hand side is a top adelic intersection number.

\begin{theorem}
\label{thm:segre-basic}
The class in \cref{def:segre} is well defined and has the following
properties.

\begin{enumerate}[label=\textup{(\alph*)}]
\item It depends only on the isometry class of $\overline\xi_E$.
\item It is symmetric and multilinear in the adelic line bundles used in
the pairing.  It is continuous for simultaneously controlled model sequences.
\item Let $\overline\xi_{E,n}$ be model metrics on the fixed line bundle
$\mathcal O_{\Pj(E)}(1)$.  If
$\overline\xi_{E,n}\xrightarrow{\mathrm{ctrl}}\overline\xi_E$, then, for
every fixed $Z$ and every fixed collection of integrable adelic line bundles
$\overline L_1,\ldots,\overline L_{e+1-i}$ on $Z^\nu$,
\[
\left\langle
\widehat s_i(\overline E_n),
\overline L_1,\ldots,\overline L_{e+1-i}\mid Z
\right\rangle
\longrightarrow
\left\langle
\widehat s_i(\overline E),
\overline L_1,\ldots,\overline L_{e+1-i}\mid Z
\right\rangle .
\]
\item If $\overline E$ and the adelic line bundles
$\overline L_1,\ldots,\overline L_{e+1-i}$ used in the pairing are nef, then
  \cref{eq:segre-definition} is nonnegative.
\item Its degree-zero component is the identity:
\begin{equation}
\left\langle\whs_0(\overline E),\overline L_1,\ldots,\overline L_{e+1}\mid Z\right\rangle
=\whdeg_{Z^\nu}\!\left(\prod_{j=1}^{e+1}\whc_1(\overline L_j)\right).
\label{eq:s-zero}
\end{equation}
\end{enumerate}
\end{theorem}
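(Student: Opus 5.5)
The plan is to reduce every assertion to the listed properties (I1)--(I6) of the absolute adelic intersection pairing on $P_Z$, together with \cref{lem:pullback}. Each component of $\widehat s_i(\overline E)$ is a top intersection number on the projective integral variety $P_Z$, which has dimension $e+r-1$. Indeed, $P_Z$ is integral because it is a projective bundle over the integral scheme $Z^\nu$.

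\emph{Well-definedness.} We first observe that $\overline\xi_Z=\overline\xi_{E|_{Z^\nu}}$ is integrable. By \cref{lem:pullback}, applied to $f=\iota_Z\circ\nu_Z\colon Z^\nu\to X$, we have $\overline\xi_Z\simeq\widetilde f^*\overline\xi_E$. Pullback preserves integrability, and it also preserves integrability of each $\pi_Z^*\overline L_j$. The number of factors was already counted to be $\dim P_Z+1$, so \eqref{eq:segre-definition} is a well-defined real number.

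\emph{Properties (a), (b) and (d).} For (a), an isometry $\overline\xi_E\simeq\overline\xi'_E$ pulls back to an isometry of the $\overline\xi_Z$, so (I2) applies. For (b), symmetry and multilinearity in the $\overline L_j$ follow from (I1), because $\pi_Z^*$ is additive on $\widehat{\Pic}_{\intg,\RR}$. Controlled continuity in the $\overline L_j$ follows from \eqref{eq:controlled-intersection-continuity}. We apply it on $P_Z$ to the family consisting of the varying sequences $\pi_Z^*\overline L_{j,m}$ together with fixed control sequences for $\overline\xi_Z$. This is legitimate because pullback preserves simultaneous control, as noted after \cref{def:controlled-adelic-convergence}. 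This also shows that the component lies in the space \eqref{eq:num-space}, so $\widehat s_i(\overline E)\in\Num^i\ad(X)$. For (d), if $\overline\xi_E$ and the $\overline L_j$ are nef, then so are their pullbacks, and (I5) gives nonnegativity.

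\emph{Property (c).} Pull the control data back along $\widetilde f$. The fixed nef decompositions $\overline\xi_{E,n}=\overline\xi_{E,n}^+-\overline\xi_{E,n}^-$ pull back to nef decompositions on $P_Z$. After enlarging $S$ so that $\widetilde f$ extends over $U$, the fixed model outside $S$ is preserved. Moreover, the sup bounds \eqref{eq:cauchy-boundary-bounds} do not increase under pullback. We combine these with fixed control sequences for the $\pi_Z^*\overline L_j$, sharing a common $S$ and common $(\epsilon_m)$; we may take maxima of the finitely many $\epsilon$ sequences and union the sets $S$. Then \eqref{eq:controlled-intersection-continuity}, with $r-1+i$ copies of the varying factor, gives the convergence. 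I expect the main obstacle to be the bookkeeping needed to obtain one common integral model over $U$ carrying all pulled-back and fixed line bundles. I would handle it by taking a model of $P_Z$ over $U$ that dominates the pullback of the given model of $\Pj(E)$ (for example, the closure of the graph) together with the models for the $\overline L_j$, and then pulling everything back to it.

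\emph{Property (e).} For $i=0$ the exponent is $r-1$. Use the projection formula (I4) for $\pi_Z$:
\[
\widehat{\deg}_{P_Z}\Bigl(\widehat c_1(\overline\xi_Z)^{r-1}\prod_j\widehat c_1(\pi_Z^*\overline L_j)\Bigr)=\deg\bigl(\xi_Z^{r-1}|_{\text{fibre}}\bigr)\cdot\widehat{\deg}_{Z^\nu}\Bigl(\prod_j\widehat c_1(\overline L_j)\Bigr).
\]
Here $e+1$ factors pulled back from $Z^\nu$ (of dimension $e$) leave a fibre integral of $\xi^{r-1}$ over $\Pj^{r-1}$, which equals $1$. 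If (I4) is only available in the simple form $\pi_*\pi^*=\deg$, I would verify this at model level: for model metrics, $\widehat c_1(\overline\xi)^{r-1}$ pushes forward to $1$ plus a term that pairs to zero against $e+1$ pulled-back classes, by a dimension count of the generic-fibre degree. The general case then follows by controlled continuity, i.e. by (c) and (b).
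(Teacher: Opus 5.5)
Your proposal is correct and follows the paper's proof essentially step for step. Well-definedness and (a)--(d) are reduced, exactly as in the paper, to the isometry $\overline\xi_Z\simeq\widetilde\nu_Z^*\overline\xi_E$ from \cref{lem:pullback}, combined with symmetry, multilinearity, controlled continuity, isometry invariance and nef positivity on $P_Z$. For (e), the relative projection formula with generic-fibre degree $1$ that you invoke is what the paper uses, in the form of Yuan--Zhang's Deligne-pairing projection formula (their Lemma~4.6.1(3), with $r-1$ copies of $\overline\xi_Z$ as relative inputs), so your model-level fallback is not needed.
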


\begin{proof}
	Fix an integral closed subvariety $Z\subseteq X$ of dimension
	$e\geq i-1$, and put
	$$
	Y:=Z^\nu,\qquad
	P_Z:=\mathbb P_Y(\nu_Z^*E),\qquad
	m:=r-1+i,\qquad
	q:=e+1-i.
	$$
	Then
	$$
	m+q=e+r=\dim P_Z+1.
	$$
	Thus the expression in \cref{def:segre} has the correct number of
	factors to define a top adelic intersection number on $P_Z$.
	
	Let
	$$
	\widetilde\nu_Z:P_Z\longrightarrow\mathbb P(E)
	$$
	be the morphism induced by $Y\to Z\hookrightarrow X$. By the
	base-change property of projective bundles and \cref{lem:pullback},
	there is a canonical isometry
	$$
	\overline\xi_Z
	\simeq
	\widetilde\nu_Z^*\overline\xi_E.
	$$
	Thus $\overline\xi_Z$ is integrable. If
	$\overline L_1,\ldots,\overline L_q$ are integrable adelic line
	bundles on $Y$, then each $\pi_Z^*\overline L_j$ is integrable.
	Consequently,
	$$
	F_{Z,i}(\overline L_1,\ldots,\overline L_q)
	:=
	\widehat{\deg}_{P_Z}
	\left(
	\widehat c_1(\overline\xi_Z)^m
	\prod_{j=1}^q
	\widehat c_1(\pi_Z^*\overline L_j)
	\right)
	$$
	is well defined.
	
	The adelic intersection pairing is symmetric and multilinear.  Moreover,
	$$
	\pi_Z^*:
	\widehat{\operatorname{Pic}}(Y)_{\mathrm{int},\mathbb R}
	\longrightarrow
	\widehat{\operatorname{Pic}}(P_Z)_{\mathrm{int},\mathbb R}
	$$
	is linear.  Therefore $F_{Z,i}$ is symmetric and multilinear.
	
	Pullback preserves simultaneous control.  Therefore
	\cref{eq:controlled-intersection-continuity} shows that $F_{Z,i}$ has the
	asserted controlled continuity.  Hence the collection
	$$
	\bigl(F_{Z,i}\bigr)_{
		\substack{Z\subseteq X\ \mathrm{integral}\\
			\dim Z\geq i-1}}
	$$
	defines an element of $\operatorname{Num}_{\mathrm{ad}}^i(X)$.
	
	If $\overline\xi_E'\simeq\overline\xi_E$ is an isometry, then
	$$
	\widetilde\nu_Z^*\overline\xi_E'
	\simeq
	\widetilde\nu_Z^*\overline\xi_E.
	$$
	Since adelic intersection numbers depend only on isometry classes,
	the resulting numerical class is unchanged.  This proves~(a).  The
	preceding multilinearity and controlled-continuity statement prove~(b).
	
	For~(c), let
	$$
	\overline\xi_{E,n}\xrightarrow{\mathrm{ctrl}}\overline\xi_E
	$$
	and set
	$$
	\overline\xi_{Z,n}:=
	\widetilde\nu_Z^*\overline\xi_{E,n}.
	$$
	Pulling back the fixed nef summands in the control data gives
	$$
	\overline\xi_{Z,n}\xrightarrow{\mathrm{ctrl}}\overline\xi_Z.
	$$
	\Cref{eq:controlled-intersection-continuity} then implies
		$$
		\widehat{\deg}_{P_Z}\!\left(\widehat c_1(\overline\xi_{Z,n})^m
		\prod_{j=1}^q\widehat c_1(\pi_Z^*\overline L_j)\right)
		\longrightarrow\widehat{\deg}_{P_Z}\!\left(\widehat c_1(\overline\xi_Z)^m
		\prod_{j=1}^q\widehat c_1(\pi_Z^*\overline L_j)\right).
		$$
	This is precisely the assertion in~(c).
	
	Suppose that $\overline E$ and
	$\overline L_1,\ldots,\overline L_q$ are nef.  Then
	$\overline\xi_E$ is nef by definition. Nefness is preserved by
	restriction and pullback, so $\overline\xi_Z$ and all
	$\pi_Z^*\overline L_j$ are nef. The nonnegativity of top
	intersections of nef adelic line bundles therefore gives
	$$
	F_{Z,i}(\overline L_1,\ldots,\overline L_q)\geq 0,
	$$
	which proves~(d).
	
	It remains to prove~(e).
	Put $i=0$.
	Let $p:Y\to\operatorname{Spec}K$ be the structure morphism.
	The morphisms $p$ and $p\circ\pi_Z$ are projective and flat.
	The schemes $Y$ and $P_Z$ are integral.
	The base $\operatorname{Spec}K$ is normal.
	First take integrable adelic line bundles
	$\overline L_1,\ldots,\overline L_{e+1}\in\widehat{\Pic}(Y)_{\mathrm{int}}$.
	We extend to real coefficients at the end.

	Let $\eta$ be the generic point of $Y$.
	The fibre $(P_Z)_\eta$ is $\mathbb P^{r-1}_{K(Y)}$.
	The restriction of the underlying line bundle $\xi_Z$ to this fibre is
	$\mathcal O_{\mathbb P^{r-1}_{K(Y)}}(1)$.
	Hence its intersection number on the generic fibre is
	$$
	\deg_{(P_Z)_\eta}\!\left(c_1(\xi_Z|_{(P_Z)_\eta})^{r-1}\right)=1.
	$$
	For $r=1$, the morphism $\pi_Z$ is an isomorphism.
	In this case, the displayed number is the degree of the point
	$\operatorname{Spec}K(Y)$ over $K(Y)$.

	Write $\langle\cdots\rangle_{P_Z/K}$ and
	$\langle\cdots\rangle_{Y/K}$ for the adelic Deligne pairings associated
	with $p\circ\pi_Z$ and $p$, respectively.
	Apply \cite[Lemma~4.6.1(3)]{YZ} to these morphisms.
	We use its extension to $\operatorname{Spec}K$ by the direct limit over
	projective and flat models in \cite[Section~4.5]{YZ}.
	Use $r-1$ copies of the original integrable adelic line bundle
	$\overline\xi_Z$ as the relative inputs.
	The generic fibre calculation above gives the coefficient $1$ in that formula.
	We obtain an isometry of adelic line bundles on $\operatorname{Spec}K$:
	$$
	\left\langle
	\underbrace{\overline\xi_Z,\ldots,\overline\xi_Z}_{r-1\ \mathrm{factors}},
	\pi_Z^*\overline L_1,\ldots,\pi_Z^*\overline L_{e+1}
	\right\rangle_{P_Z/K}
	\simeq
	\left\langle\overline L_1,\ldots,\overline L_{e+1}\right\rangle_{Y/K}.
	$$
	The degree of a Deligne pairing equals the corresponding top intersection
	number; see \cite[Lemma~4.4.3(1) and Section~4.5]{YZ}.
	Taking normalized adelic degrees therefore gives
	$$
	\widehat{\deg}_{P_Z}
	\left(
	\widehat c_1(\overline\xi_Z)^{r-1}
	\prod_{j=1}^{e+1}
	\widehat c_1(\pi_Z^*\overline L_j)
	\right)
	=
	\widehat{\deg}_{Y}
	\left(
	\prod_{j=1}^{e+1}
	\widehat c_1(\overline L_j)
	\right).
	$$
	Both sides are multilinear in the adelic line bundles
	$\overline L_1,\ldots,\overline L_{e+1}$ on $Y$.
	Finite real linear expansion therefore gives the same identity for
	$\overline L_j\in\widehat{\Pic}(Y)_{\mathrm{int},\RR}$.
	Since $Y=Z^\nu$, this proves \cref{eq:s-zero} and completes the proof.
\end{proof}


\begin{proposition}
\label{prop:rank-one}
If $\overline E=\overline M$ has rank one, then
\begin{equation}
\left\langle\whs_i(\overline M),\overline L_1,\ldots,\overline L_{e+1-i}\mid Z\right\rangle
=\whdeg_{Z^\nu}\!\left(\whc_1(\overline M|_{Z^\nu})^i\prod_j\whc_1(\overline L_j)\right).
\label{eq:rank-one}
\end{equation}
\end{proposition}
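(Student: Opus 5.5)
The plan is to reduce everything to the case $r=1$ of \cref{def:segre}, where the projective bundle is trivial. With $r=1$ the exponent in \cref{eq:segre-definition} is $r-1+i=i$. The goal is to identify $P_Z$ with $Z^\nu$ and $\overline\xi_Z$ with $\overline M|_{Z^\nu}$ isometrically. Fix an integral closed subvariety $Z\subseteq X$ with $e=\dim Z\geq i-1$, and put $Y:=Z^\nu$. Since $\nu_Z^*M$ is a line bundle, $\operatorname{Sym}(\nu_Z^*M)=\bigoplus_{k\geq0}(\nu_Z^*M)^{\otimes k}$. Hence $\pi_Z\colon P_Z=\operatorname{Proj}_Y\operatorname{Sym}(\nu_Z^*M)\to Y$ is an isomorphism, as already noted in the proof of \cref{thm:segre-basic}(e). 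Under this isomorphism the canonical surjection $\phi\colon\pi_Z^*\nu_Z^*M\twoheadrightarrow\mathcal O_{P_Z}(1)$ of \cref{eq:taut-quotient} is an isomorphism of line bundles.

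Next I would check that this identification is an isometry. At each place $v$ and each $\mathbb C_v$-point $z$, the map $\phi_z$ is bijective. The infimum in \cref{eq:quotient-metric} is therefore taken over the single preimage $e=\phi_z^{-1}(\ell)$, so $\|\ell\|^Q_v=\|\phi_z^{-1}(\ell)\|_{M,v}$. Density of $\mathbb C_v$-points and continuity extend this to all of the analytic space, exactly as in the proof of \cref{lem:pullback}. Combining this with \cref{lem:pullback} for $Y\to X$ gives a canonical isometry $\overline\xi_Z\simeq\pi_Z^*(\overline M|_{Y})$. Alternatively, one can apply \cref{lem:twisting} with $E=\mathcal O_X$ carrying the trivial metric, for which $\overline\xi_{\mathcal O_X}$ is trivial.

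Finally, by property (I2) the intersection number depends only on isometry classes. Since $\pi_Z$ is an isomorphism, the projection formula (I4), in its degree-one birational form, then gives
\[
\widehat{\deg}_{P_Z}\Bigl(\widehat c_1(\pi_Z^*\overline M|_Y)^i\prod_j\widehat c_1(\pi_Z^*\overline L_j)\Bigr)=\widehat{\deg}_{Y}\Bigl(\widehat c_1(\overline M|_Y)^i\prod_j\widehat c_1(\overline L_j)\Bigr).
\]
This gives \cref{eq:rank-one} for integrable $\overline L_j$. Both sides are multilinear, so the identity extends to real coefficients by finite linear expansion.

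The only delicate point is the metric comparison. Here one must make sure the quotient-metric infimum is taken in the correct direction, with $\mathcal O(1)$ as the quotient of $\pi^*M$ and not its dual. With the quotient convention of this paper, $\mathcal O_{P_Z}(1)\cong M$ rather than $M^{-1}$, so no sign appears.
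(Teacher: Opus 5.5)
Your proposal is correct and follows essentially the same route as the paper. You identify $P_Z$ with $Z^\nu$ via $\pi_Z$, note that the canonical surjection $\pi_Z^*\nu_Z^*M\to\mathcal O_{P_Z}(1)$ is bijective so the infimum in \cref{eq:quotient-metric} reduces to the transported norm, and then read off \cref{eq:rank-one} from \cref{def:segre} with exponent $r-1+i=i$. The paper argues directly at analytic points rather than through density of $\mathbb C_v$-points. It also does not need your twisting alternative or the final real-coefficient extension, but these extra steps are harmless.
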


\begin{proof}
	Put
	$$
	Y:=Z^\nu,
	\qquad
	\overline M_Y:=\nu_Z^*\overline M.
	$$
	Since $M_Y$ has rank one, relative Proj gives the canonical isomorphism
	$$
	\pi_Z:\mathbb P_Y(M_Y)\longrightarrow Y
	$$
	
	Under this identification, the canonical surjection
	$$
	\pi_Z^*M_Y\twoheadrightarrow
	\mathcal O_{\mathbb P_Y(M_Y)}(1)
	$$
	is an isomorphism of line bundles.  We claim that it is also an
	isometry.  Indeed, let $v$ be a place of $K$, and let
	$$
	q:(M_Y)_y\longrightarrow
	\mathcal O_{\mathbb P_Y(M_Y)}(1)_y
	$$
	be the induced map at an analytic point $y$.  Since $q$ is an
	isomorphism, the defining formula for the quotient metric gives
	$$
	\|\ell\|_{\xi_Z,v}
	=
	\inf\bigl\{
	\|m\|_{M_Y,v}:q(m)=\ell
	\bigr\}
	=
	\|q^{-1}(\ell)\|_{M_Y,v}.
	$$
	Thus, under the canonical identification
	$\mathbb P_Y(M_Y)\simeq Y$, one has a canonical isometry
	$$
	\overline\xi_Z
	\simeq
	\overline M_Y.
	$$
	
	Since $\operatorname{rk}(M)=1$, the exponent appearing in
	\cref{def:segre} is
	$$
	r-1+i=i.
	$$
	Moreover, under $\pi_Z\simeq\operatorname{id}_Y$, one has
	$$
	\pi_Z^*\overline L_j\simeq\overline L_j.
	$$
	Therefore \cref{def:segre} gives
		$$
		\left\langle\widehat s_i(\overline M),\overline L_1,\ldots,
		\overline L_{e+1-i}\mid Z\right\rangle
		=\widehat{\deg}_{Y}\!\left(\widehat c_1(\overline M_Y)^i
		\prod_{j=1}^{e+1-i}\widehat c_1(\overline L_j)\right).
		$$
	Since $Y=Z^\nu$ and
	$\overline M_Y=\overline M|_{Z^\nu}$, this is exactly
	\cref{eq:rank-one}.
\end{proof}

\begin{proposition}[Finite normalized base change]
\label{prop:field-extension}
Let $K'/K$ be a finite extension, and let $p:X_{K'}\to X$ be the canonical
projection.  Let $Z'\subseteq X_{K'}$ be an integral closed subvariety, put
$Z:=p(Z')$ and $e:=\dim Z$, and let
$$
\rho:(Z')^\nu\longrightarrow Z^\nu
$$
be the induced finite morphism.  For $0\leq i\leq e+1$ and integrable adelic
line bundles $\overline L_1,\ldots,\overline L_{e+1-i}$ on $Z^\nu$, one has
\begin{equation}
\begin{aligned}
&\left\langle
\widehat s_i(p^*\overline E),
\rho^*\overline L_1,\ldots,\rho^*\overline L_{e+1-i}\mid Z'
\right\rangle_{K'}\\
&\qquad=
\frac{\deg(\rho)}{[K':K]}
\left\langle
\widehat s_i(\overline E),
\overline L_1,\ldots,\overline L_{e+1-i}\mid Z
\right\rangle_K.
\end{aligned}
\label{eq:segre-base-change}
\end{equation}
Thus numerical Segre pairings are compatible with finite normalized base
change.
\end{proposition}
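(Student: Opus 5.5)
The plan is to reduce the identity to the projection formula (I4) for the finite morphism between projective bundles, combined with invariance of normalized intersections under base field extension (I6). Put $Y:=Z^\nu$, $Y':=(Z')^\nu$, $P_Z=\mathbb P_Y(\nu_Z^*E)$ and $P_{Z'}=\mathbb P_{Y'}(\nu_{Z'}^*p^*E)$. Since $Z'\to Z$ is dominant (because $Z=p(Z')$) and $Y'$ is normal, the composite $Y'\to Z'\to Z$ lifts through the normalization. This gives the finite surjective morphism $\rho$, and $\dim Z'=\dim Z=e$ because $p$ is finite. The composite $Y'\to X_{K'}\to X$ equals $Y'\to Y\to X$. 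Hence $\nu_{Z'}^*p^*E\simeq\rho^*\nu_Z^*E$, and \cref{lem:pullback} supplies a cartesian square with a morphism $\widetilde\rho\colon P_{Z'}\to P_Z$ lying over $\rho$. It also gives an isometry $\overline\xi_{Z'}\simeq\widetilde\rho^*\overline\xi_Z$, where the metric on $p^*E$ is the pullback metric, and $\pi_{Z'}^*\rho^*\overline L_j=\widetilde\rho^*\pi_Z^*\overline L_j$. Because the square is cartesian, $\widetilde\rho$ is finite of degree $\deg(\widetilde\rho)=\deg(\rho)$.

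The left side of \cref{eq:segre-base-change} then becomes the normalized $K'$-degree of $\widetilde\rho^*\bigl(\widehat c_1(\overline\xi_Z)^{r-1+i}\prod_j\widehat c_1(\pi_Z^*\overline L_j)\bigr)$ on $P_{Z'}$. Two normalizations must be separated. First, view $P_{Z'}$ as a $K$-variety through $K'\to K$ (it is integral; here $P_{Z'}\to P_Z$ is a finite morphism of $K$-varieties). Its $K'$-normalized degree equals $[K':K]^{-1}$ times the $K$-normalized degree, since the sum over places of $K'$ regroups as a sum over places of $K$ with local weights $[K'_w:K_v]$. This is exactly the content of (I6) applied to the structure morphism. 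Second, apply the projection formula (I4) over $K$ to $\widetilde\rho$: the top intersection of pullbacks equals $\deg(\widetilde\rho)$ times the top intersection on $P_Z$. Combining the two gives the factor $\deg(\rho)/[K':K]$.

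I would first carry this out for model adelic line bundles. There the statement is the arithmetic projection formula on models: choose models of $P_Z$, $P_{Z'}$ and a finite extension of $\widetilde\rho$, after normalizing the model of $P_{Z'}$ in the function field. I would then pass to integrable bundles by multilinearity and the controlled continuity \cref{eq:controlled-intersection-continuity}. Pullback preserves simultaneous control, and the finite set $S$ may be enlarged to include the places above it in $K'$. Since both sides are multilinear and controlled-continuous, the identity passes to the limits and then to real coefficients.

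The main obstacle is the bookkeeping between the two base fields. On the $K'$ side, $P_{Z'}$ need not be geometrically irreducible over $K$, and the Arakelov model of $P_{Z'}$ lives over $\mathcal O_{K'}$. One must check that the normalization conventions in (I6) are exactly the ones producing $[K':K]^{-1}$ rather than $1$. I would resolve this by regarding $\operatorname{Spec}\mathcal O_{K'}$-models as $\operatorname{Spec}\mathcal O_K$-schemes. Then the arithmetic degree over $\mathcal O_K$ of a $0$-cycle equals its degree over $\mathcal O_{K'}$, while the normalizations divide by $[K':K]$ and $1$, respectively. With this check, the projection formula for the finite flat generic morphism of degree $\deg(\rho)$ finishes the argument.
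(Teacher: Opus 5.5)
Your proposal is correct and follows the paper's proof: base change of projective bundles gives the finite morphism $\widetilde\rho\colon P_{Z'}\to P_Z$ of degree $\deg(\rho)$, which pulls back $\overline\xi_Z$ and $\pi_Z^*\overline L_j$ isometrically, and the finite projection formula combined with passing from $K$- to $K'$-normalized degrees gives the factor $\deg(\rho)/[K':K]$. The only difference is that you first check the identity for model metrics and then pass to the limit by controlled continuity, whereas the paper applies the adelic projection formula (I4) directly to the integrable bundles; your explicit accounting of the two normalizations is what the paper's final sentence states without detail.
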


\begin{proof}
The base-change property of relative projective bundles gives a canonical
isomorphism
$$
P_{Z'}\simeq P_Z\times_{Z^\nu}(Z')^\nu.
$$
Let $\widetilde\rho:P_{Z'}\to P_Z$ be the induced finite morphism.  It has
degree $\deg(\rho)$ and satisfies
$$
\widetilde\rho^*\overline\xi_Z\simeq\overline\xi_{Z'},
\qquad
\widetilde\rho^*\pi_Z^*\overline L_j
\simeq\pi_{Z'}^*\rho^*\overline L_j.
$$
Applying the finite projection formula to the top intersection defining the
left-hand side of \eqref{eq:segre-base-change}, and using the normalization
of adelic degrees over $K'$, gives the factor
$\deg(\rho)/[K':K]$.  This proves \eqref{eq:segre-base-change}.
\end{proof}

\begin{proposition}[Generically finite projection formula]
\label{prop:genfinite}
Let $f:Y\to X$ be a dominant generically finite morphism of integral
projective $K$-varieties.  Let $V\subseteq Y$ be integral, put
$Z:=\overline{f(V)}$, and suppose that $f|_V:V\to Z$ is generically finite
of degree $\delta_V$.  Denote the induced morphism by
$$
g:V^\nu\longrightarrow Z^\nu,
$$
and put $e:=\dim V=\dim Z$.  For $0\leq i\leq e+1$ and integrable adelic
line bundles $\overline L_1,\ldots,\overline L_{e+1-i}$ on $Z^\nu$, one has
\begin{equation}
\begin{aligned}
&\left\langle
\widehat s_i(f^*\overline E),
g^*\overline L_1,\ldots,g^*\overline L_{e+1-i}\mid V
\right\rangle\\
&\qquad=
\delta_V
\left\langle
\widehat s_i(\overline E),
\overline L_1,\ldots,\overline L_{e+1-i}\mid Z
\right\rangle.
\end{aligned}
\label{eq:segre-projection}
\end{equation}
\end{proposition}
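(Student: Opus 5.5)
We follow the pattern of \cref{prop:field-extension}, replacing the finite morphism by the generically finite morphism $g$ and using the generically finite projection formula (I4) for adelic intersections.

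First we lift $g$ to the projective bundles. Write $E_V:=(f\circ\nu_V)^*E$. Since $f\circ\nu_V=\nu_Z\circ g$ as maps $V^\nu\to X$, we have $E_V\simeq g^*\nu_Z^*E$. Base change for relative Proj then gives a cartesian square
\[
P_V:=\mathbb P_{V^\nu}(E_V)\simeq P_Z\times_{Z^\nu}V^\nu,
\qquad
\widetilde g\colon P_V\longrightarrow P_Z,
\qquad
\pi_Z\circ\widetilde g=g\circ\pi_V .
\]
By \cref{lem:pullback}, applied once to $f\circ\nu_V$ and once to $\nu_Z\circ g$, both identifications of $\overline\xi_V$ are the pullback of $\overline\xi_E$. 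Hence there is a canonical isometry $\overline\xi_V\simeq\widetilde g^*\overline\xi_Z$. Likewise $\pi_V^*g^*\overline L_j\simeq\widetilde g^*\pi_Z^*\overline L_j$.

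Next we check that $\widetilde g$ is generically finite of degree $\delta_V$. Over a dense open $W\subseteq Z^\nu$ on which $g$ is finite flat of degree $\delta_V$, the map $\widetilde g$ is the base change of $g$ along the flat morphism $\pi_Z$. So $\widetilde g$ is finite flat of degree $\delta_V$ over $\pi_Z^{-1}(W)$, and $P_V$, $P_Z$ are integral of the same dimension $e+r-1$.

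Now the left side of \cref{eq:segre-projection} is
\[
\widehat{\deg}_{P_V}\Bigl(\widehat c_1(\widetilde g^*\overline\xi_Z)^{r-1+i}\prod_j\widehat c_1(\widetilde g^*\pi_Z^*\overline L_j)\Bigr).
\]
This is a top intersection of pullbacks along a generically finite morphism between integral projective varieties of equal dimension. The projection formula (I4) gives $\deg(\widetilde g)=\delta_V$ times the corresponding intersection on $P_Z$, which is the right side. Both sides are over the same field $K$, so no normalization factor appears.

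The main obstacle is justifying (I4) in the generically finite, not necessarily flat or finite, setting for integrable adelic metrics. To handle it we would reduce to nef model metrics by multilinearity, writing each factor as a difference of nef adelic line bundles, and then use controlled continuity \cref{eq:controlled-intersection-continuity}, since pullback preserves control. On models, we would invoke the arithmetic projection formula $\widetilde g_*[\mathcal P_V]=\delta_V[\mathcal P_Z]$ for a proper morphism of arithmetic models extending $\widetilde g$, obtained after enlarging and dominating models. The pushforward cycle has degree $\delta_V$ because the contributions of the exceptional locus vanish for dimension reasons. Equivalently, we could cite \cite[Proposition~4.1.1]{YZ} and its projection formula for adelic intersections under generically finite morphisms.
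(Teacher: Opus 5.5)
Your proposal is correct and follows essentially the same route as the paper: identify $P_V\simeq P_Z\times_{Z^\nu}V^\nu$ by projective-bundle base change, use \cref{lem:pullback} to obtain $\widetilde g^*\overline\xi_Z\simeq\overline\xi_V$ and $\widetilde g^*\pi_Z^*\overline L_j\simeq\pi_V^*g^*\overline L_j$, and then apply the adelic projection formula to the generically finite map $\widetilde g$ of degree $\delta_V$. You go further than the paper in two places: you check $\deg\widetilde g=\delta_V$ by base change from a dense open set over which $g$ is finite flat, and you sketch a justification of (I4) via nef model approximations; the paper takes both points as given.
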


\begin{proof}
The equality $(f|_V)\circ\nu_V=\nu_Z\circ g$ gives a canonical identification
$$
(f^*E)|_V\simeq g^*(E|_Z).
$$
Thus, if $P_V$ is formed from $(f^*E)|_V$ and $P_Z$ is formed from $E|_Z$,
projective-bundle base change gives
$$
P_V\simeq P_Z\times_{Z^\nu}V^\nu.
$$
The morphism $g$ therefore induces a proper generically finite morphism
$$
\widetilde g:P_V\longrightarrow P_Z
$$
of degree $\delta_V$.  Projective-bundle base change and
Lemma~\ref{lem:pullback} give canonical isometries
$$
\widetilde g^*\overline\xi_Z\simeq\overline\xi_V,
\qquad
\widetilde g^*\pi_Z^*\overline L_j
\simeq\pi_V^*g^*\overline L_j.
$$
The adelic projection formula applied to the defining top intersection on
$P_V$ now gives \eqref{eq:segre-projection}.
\end{proof}

\subsubsection{Products of Segre classes}
\label{sec:segre-products}
Let $X$ be a projective integral variety over $K$, and let $\overline E$ be an
integrable adelic vector bundle of rank $r$ on $X$.  Let
$Z\subseteq X$ be an integral closed subvariety of dimension $e$.  Write its
normalization as
$$
\nu_Z:Z^\nu\longrightarrow Z.
$$
Set
$$
P_Z:=\mathbb P_{Z^\nu}(\nu_Z^*E),
\qquad
\pi_Z:P_Z\longrightarrow Z^\nu,
$$
and let $\overline\xi_Z=\overline{\mathcal O}_{P_Z}(1)$ be the tautological
quotient line bundle equipped with the quotient metric induced by
$\nu_Z^*\overline E$.

Let $\lambda$ be a partition of $i=|\lambda|$, choose an ordering
$\lambda=(\lambda_1,\ldots,\lambda_\ell)$ of its positive parts, and assume
that $i\leq e+1$.  If $\ell\geq1$, set
$$
P_{Z,\lambda}
:=
\underbrace{
	P_Z\times_{Z^\nu}\cdots\times_{Z^\nu}P_Z
}_{\ell\ \mathrm{factors}}.
$$
When $\ell\geq1$, denote by $q:P_{Z,\lambda}\to Z^\nu$ the structure
morphism, by $p_a:P_{Z,\lambda}\to P_Z$ the $a$-th projection, and put
$\overline\xi_a:=p_a^*\overline\xi_Z$.
For the empty partition, set
$$
P_{Z,\varnothing}:=Z^\nu,
\qquad
q:=\operatorname{id}_{Z^\nu};
$$
there are then no maps $p_a$ and no line bundles $\overline\xi_a$.

\begin{definition}[Segre monomial]
\label{def:segre-monomial}
For every collection of integrable adelic line bundles
$$
\overline L_1,\ldots,\overline L_{e+1-i}
\in
\widehat{\operatorname{Pic}}(Z^\nu)_{\mathrm{int},\mathbb R},
$$
define
\begin{equation}
	\label{eq:segre-monomial}
	\begin{aligned}
		\left\langle
		\prod_{a=1}^{\ell}\whs_{\lambda_a}(\overline E),
		\overline L_1,\ldots,\overline L_{e+1-i}\mid Z
		\right\rangle
		&:=
		\whdeg_{P_{Z,\lambda}}\!\left(
		\prod_{a=1}^{\ell}
		\whc_1(\overline\xi_a)^{r-1+\lambda_a}
		\prod_{j=1}^{e+1-i}\whc_1(q^*\overline L_j)
		\right).
	\end{aligned}
\end{equation}

For the fixed subvariety $Z$, Equation~\eqref{eq:segre-monomial} defines the
numerical pairing of the Segre monomial
$\prod_{a=1}^{\ell}\widehat s_{\lambda_a}(\overline E)$ with the adelic
line bundles $\overline L_1,\ldots,\overline L_{e+1-i}$ on $Z^\nu$.

For the empty partition, the right-hand side is
$$
\widehat{\deg}_{Z^\nu}
\left(
\prod_{j=1}^{e+1}\widehat c_1(\overline L_j)
\right).
$$
\end{definition}

The right-hand side contains
$$
  \ell(r-1)+i+e+1-i
  =\dim P_{Z,\lambda}+1
$$
first Chern classes.  Thus it is a top intersection number.

\begin{lemma}
\label{lem:fiber-product}
With the notation of Definition~\ref{def:segre-monomial}, the number in
\eqref{eq:segre-monomial} is invariant under every permutation of
$\lambda_1,\ldots,\lambda_\ell$.
\end{lemma}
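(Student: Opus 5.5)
The plan is to realize each permutation of the factors as an automorphism of the fibre product and then invoke invariance of adelic intersection numbers under isomorphisms. Let $\sigma$ be a permutation of $\{1,\ldots,\ell\}$; the cases $\ell=0$ and $\ell=1$ are trivial. The universal property of the fibre product over $Z^\nu$ gives a morphism
$$
\tau_\sigma\colon P_{Z,\lambda}\longrightarrow P_{Z,\lambda},
\qquad p_a\circ\tau_\sigma=p_{\sigma(a)}\quad(1\leq a\leq\ell).
$$
It satisfies $q\circ\tau_\sigma=q$. Moreover $\tau_{\sigma^{-1}}$ is inverse to $\tau_\sigma$, since both composites are compatible with all projections. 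Hence $\tau_\sigma$ is an isomorphism of integral projective $K$-varieties over $Z^\nu$. Here $P_{Z,\lambda}$ is an iterated projective bundle over the integral $Z^\nu$, so it is integral.

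Next we compare the metrized line bundles. From $p_a\circ\tau_\sigma=p_{\sigma(a)}$ we obtain canonical isometries
$$
\tau_\sigma^*\overline\xi_a=\tau_\sigma^*p_a^*\overline\xi_Z\simeq p_{\sigma(a)}^*\overline\xi_Z=\overline\xi_{\sigma(a)}.
$$
Pullback of adelic metrics is functorial, so this is an identity of metrics and not merely of line bundles. Similarly $\tau_\sigma^*q^*\overline L_j\simeq q^*\overline L_j$.

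We then apply the projection formula (I4) to the degree-one morphism $\tau_\sigma$. Equivalently, an adelic intersection number on $P_{Z,\lambda}$ equals the intersection number of the pulled-back bundles along an isomorphism. This gives
$$
\whdeg\Bigl(\prod_a\whc_1(\overline\xi_a)^{r-1+\lambda_a}\prod_j\whc_1(q^*\overline L_j)\Bigr)
=\whdeg\Bigl(\prod_a\whc_1(\overline\xi_{\sigma(a)})^{r-1+\lambda_a}\prod_j\whc_1(q^*\overline L_j)\Bigr).
$$
Reindexing with $b=\sigma(a)$, the right-hand side becomes the number \eqref{eq:segre-monomial} for the reordered partition $(\lambda_{\sigma^{-1}(1)},\ldots,\lambda_{\sigma^{-1}(\ell)})$. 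The factors can be reordered freely because the pairing is symmetric (I1). Since $\sigma$ is arbitrary, this proves invariance under every permutation.

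The only step needing care is the use of (I4) for an isomorphism with integrable, rather than nef, inputs and real coefficients. I would reduce to the nef model case. First expand by multilinearity into differences of nef adelic line bundles. Each of these is a limit of nef model metrics, and their pullbacks under $\tau_\sigma$ are again nef models that converge under the same control data. For models, $\tau_\sigma$ extends to an isomorphism of suitable arithmetic models, namely the fibre product of the models of $P_Z$. There the arithmetic intersection numbers visibly agree. The equality then passes to the limit by \cref{eq:controlled-intersection-continuity}.
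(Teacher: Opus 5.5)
Your proposal is correct and follows the paper's proof. The paper likewise uses the canonical $Z^\nu$-automorphism $\tau_\sigma$ of $P_{Z,\lambda}$ that permutes the factors, satisfies $q\circ\tau_\sigma=q$ and permutes the $\overline\xi_a$, and then concludes by invariance of adelic intersection numbers under isomorphisms. Your last paragraph, which reduces to nef models, is extra justification that the paper does not give, because the paper treats invariance under isomorphisms as a standing property of the pairing.
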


\begin{proof}
If $\ell=0$, the assertion is vacuous.  Let $\sigma\in S_\ell$.
Permuting the factors gives a canonical $Z^\nu$-automorphism
$$
\tau_\sigma:P_{Z,\lambda}\longrightarrow P_{Z,\lambda}.
$$
It satisfies $q\circ\tau_\sigma=q$ and permutes the metrized line bundles
$\overline\xi_1,\ldots,\overline\xi_\ell$.  Pullback by $\tau_\sigma$
therefore identifies the top adelic intersection in
\eqref{eq:segre-monomial} with the one obtained by applying the same
permutation to $\lambda_1,\ldots,\lambda_\ell$.  Invariance of adelic
intersection numbers under isomorphisms proves the assertion.
\end{proof}

\begin{proposition}
\label{prop:segre-monomial-properties}
With the notation of Definition~\ref{def:segre-monomial}, the following
properties hold.

\begin{enumerate}[label=\textup{(\roman*)},leftmargin=2.2em]
\item The pairing in \eqref{eq:segre-monomial} is symmetric and multilinear
in $\overline L_1,\ldots,\overline L_{e+1-i}$.  Let
$\overline\xi_{Z,m}\to\overline\xi_Z$ and
$\overline L_{j,m}\to\overline L_j$ be model approximations.  If the
sequences
$$
p_a^*\overline\xi_{Z,m}\quad(1\leq a\leq\ell),
\qquad
q^*\overline L_{j,m}\quad(1\leq j\leq e+1-i)
$$
form a simultaneously controlled family on $P_{Z,\lambda}$, then the
corresponding model intersection numbers converge to
\eqref{eq:segre-monomial}.

\item Let $K'/K$ be finite, let $p:X_{K'}\to X$, and let
$Z'\subseteq X_{K'}$ be integral with $p(Z')=Z$.  If
$$
\rho:(Z')^\nu\longrightarrow Z^\nu
$$
is the induced finite morphism, then
\begin{equation}
\begin{aligned}
&\left\langle
\prod_{a=1}^{\ell}\widehat s_{\lambda_a}(p^*\overline E),
\rho^*\overline L_1,\ldots,\rho^*\overline L_{e+1-i}\mid Z'
\right\rangle_{K'}\\
&\qquad=
\frac{\deg(\rho)}{[K':K]}
\left\langle
\prod_{a=1}^{\ell}\widehat s_{\lambda_a}(\overline E),
\overline L_1,\ldots,\overline L_{e+1-i}\mid Z
\right\rangle_K.
\end{aligned}
\label{eq:segre-monomial-base-change}
\end{equation}

\item Let $f:Y\to X$ be a morphism of integral projective $K$-varieties.
Let $V\subseteq Y$ be integral with $\overline{f(V)}=Z$, and suppose that
$f|_V:V\to Z$ is generically finite of degree $\delta_V$.  If
$$
g:V^\nu\longrightarrow Z^\nu
$$
is the induced morphism, then
\begin{equation}
\begin{aligned}
&\left\langle
\prod_{a=1}^{\ell}\widehat s_{\lambda_a}(f^*\overline E),
g^*\overline L_1,\ldots,g^*\overline L_{e+1-i}\mid V
\right\rangle\\
&\qquad=
\delta_V
\left\langle
\prod_{a=1}^{\ell}\widehat s_{\lambda_a}(\overline E),
\overline L_1,\ldots,\overline L_{e+1-i}\mid Z
\right\rangle.
\end{aligned}
\label{eq:segre-monomial-projection}
\end{equation}

\item If $\overline\xi_Z$ and all $\overline L_j$ are nef, then the number
in \eqref{eq:segre-monomial} is nonnegative.
\end{enumerate}

Consequently, as $Z$ varies over the integral closed subvarieties of $X$ with
$\dim Z\geq i-1$, the pairings in \eqref{eq:segre-monomial} define an
element of $\Num^i\ad(X)$.
\end{proposition}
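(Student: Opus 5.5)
The plan is to treat \eqref{eq:segre-monomial} exactly as the single Segre number was treated in \cref{thm:segre-basic}, \cref{prop:field-extension} and \cref{prop:genfinite}. The only new input is the fibre product $P_{Z,\lambda}$, and everything will be reduced to base change of fibre products of projective bundles together with the properties (I1)--(I6) of the adelic pairing. First I record the integrability of the inputs. By \cref{lem:pullback}, $\overline\xi_Z\simeq\widetilde\nu_Z^*\overline\xi_E$ is integrable, so each $\overline\xi_a=p_a^*\overline\xi_Z$ is integrable. The same holds for $q^*\overline L_j$, since pullback preserves integrability and nefness. The count of factors displayed after \cref{def:segre-monomial} shows that \eqref{eq:segre-monomial} is a well-defined top intersection on the integral projective variety $P_{Z,\lambda}$; it is integral because it is an iterated projective bundle over the integral $Z^\nu$.

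For (i), symmetry and multilinearity in the $\overline L_j$ follow from (I1) and the linearity of $q^*$. For the convergence statement, the hypothesis is precisely that the displayed family is simultaneously controlled on $P_{Z,\lambda}$, so I apply \cref{eq:controlled-intersection-continuity} on $V=P_{Z,\lambda}$ directly. For (iv), nefness of $\overline\xi_Z$ and of the $\overline L_j$ passes to $\overline\xi_a$ and $q^*\overline L_j$ under pullback, and (I5) then gives nonnegativity. When $\ell=0$, every statement reduces to the corresponding statement for $\whdeg_{Z^\nu}$.

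For (ii) and (iii) I build the comparison morphism. In (iii), the identity $(f^*E)|_{V^\nu}\simeq g^*(E|_{Z^\nu})$ and base change give $P_V\simeq P_Z\times_{Z^\nu}V^\nu$. Taking $\ell$-fold fibre products then gives
\[
P_{V,\lambda}\simeq P_{Z,\lambda}\times_{Z^\nu}V^\nu,
\]
with the first projection $\widetilde g_\lambda$ generically finite of degree $\delta_V$, because the degree of $g$ is preserved under flat base change along $P_{Z,\lambda}\to Z^\nu$. Compatibility of projections with this isomorphism and \cref{lem:pullback} give isometries $\widetilde g_\lambda^*\overline\xi_a\simeq\overline\xi_{V,a}$ and $\widetilde g_\lambda^*q_Z^*\overline L_j\simeq q_V^*g^*\overline L_j$. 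The adelic projection formula (I4) for the generically finite morphism $\widetilde g_\lambda$ then yields \eqref{eq:segre-monomial-projection}. Part (ii) is the same argument with $\rho$ finite. There the degree is normalized over $K'$, and by (I6) this produces the factor $\deg(\rho)/[K':K]$, exactly as in \cref{prop:field-extension}.

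The main obstacle is making sure that the degree of $\widetilde g_\lambda$ equals $\delta_V$. I would argue this at the generic point: $P_{Z,\lambda}\to Z^\nu$ is flat with integral generic fibre, so the function field of $P_{V,\lambda}$ is a degree-$\delta_V$ extension of that of $P_{Z,\lambda}$, since it is a product of rational function fields over $K(V)\supseteq K(Z)$. The final statement then combines two facts. Symmetry and multilinearity from (i), together with the controlled continuity applied to pullbacks of controlled families (pullback preserves control), give a symmetric multilinear controlled-continuous functional for each $Z$ with $\dim Z\geq i-1$. The product of these functionals over such $Z$ is an element of $\Num^i\ad(X)$, independent of the ordering of $\lambda$ by \cref{lem:fiber-product}.
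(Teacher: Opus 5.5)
Your proposal is correct and follows essentially the same route as the paper. You check integrability and the factor count, use \eqref{eq:controlled-intersection-continuity} on $P_{Z,\lambda}$ for (i), and get (iv) because pullback preserves nefness; for (ii)--(iii) you use the base-change isomorphisms $P_{Z',\lambda}\simeq P_{Z,\lambda}\times_{Z^\nu}(Z')^\nu$ and $P_{V,\lambda}\simeq P_{Z,\lambda}\times_{Z^\nu}V^\nu$ together with \cref{lem:pullback} and the projection formula, and your function-field check that $\widetilde g_\lambda$ has degree $\delta_V$ is a small detail the paper only asserts.
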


\begin{proof}
Pullback preserves integrability, so every factor in
\eqref{eq:segre-monomial} is integrable.  The degree count after
\cref{def:segre-monomial} shows that this is a top intersection number.
Symmetry and multilinearity of adelic intersections give the corresponding
properties in~\textup{(i)}.

In the model intersections, the factor $p_a^*\overline\xi_{Z,m}$ occurs
$r-1+\lambda_a$ times.  Each factor $q^*\overline L_{j,m}$ occurs once.
Finite repetition preserves the simultaneous control data.  Hence
\cref{eq:controlled-intersection-continuity} gives the convergence
in~\textup{(i)}.

For finite base change, relative projective bundles give a canonical
isomorphism
$$
P_{Z',\lambda}
\simeq
P_{Z,\lambda}\times_{Z^\nu}(Z')^\nu.
$$
Let $\widetilde\rho_\lambda:P_{Z',\lambda}\to P_{Z,\lambda}$ be the induced
map, and let $q':P_{Z',\lambda}\to(Z')^\nu$ be the structure morphism.  The
map $\widetilde\rho_\lambda$ is finite of degree $\deg(\rho)$.
By \cref{lem:pullback}, it pulls back each metrized tautological line bundle
on $P_{Z,\lambda}$ isometrically to the corresponding line bundle on
$P_{Z',\lambda}$.  For each adelic line bundle $\overline L_j$ on $Z^\nu$,
functoriality of pullback gives
$$
\widetilde\rho_\lambda^*q^*\overline L_j
\simeq
(q')^*\rho^*\overline L_j.
$$
The finite projection formula contributes the factor $\deg(\rho)$.
The normalization of degrees over $K'$ contributes $1/[K':K]$.
This proves~\textup{(ii)}.

For the generically finite formula, the canonical identification
$(f^*E)|_V\simeq g^*(E|_Z)$ and projective-bundle base change give
$$
P_{V,\lambda}
\simeq
P_{Z,\lambda}\times_{Z^\nu}V^\nu.
$$
The induced morphism
$\widetilde g_\lambda:P_{V,\lambda}\to P_{Z,\lambda}$ is generically finite
of degree $\delta_V$.  By \cref{lem:pullback}, it identifies the metrized
tautological line bundles by pullback.  Functoriality identifies the pullback
of $q^*\overline L_j$ along $\widetilde g_\lambda$ with the pullback of
$g^*\overline L_j$ to $P_{V,\lambda}$.  The adelic projection formula therefore gives the
factor $\delta_V$, which proves~\textup{(iii)}.

If the data are nef, their pullbacks are nef.  Nonnegativity of top adelic
intersections proves~\textup{(iv)}.  Finally, part~\textup{(i)} gives the
properties required in the definition of $\Num^i\ad(X)$.
\end{proof}

\begin{proposition}[Twisting formula]
\label{prop:twist-formula}
Let $\overline M$ be an integrable adelic line bundle on $X$.  Let
$Z\subseteq X$ be integral of dimension $e$, let $0\leq i\leq e+1$, and let
$\overline L_1,\ldots,\overline L_{e+1-i}$ be integrable adelic line bundles
on $Z^\nu$.  Then
\begin{equation}
\begin{aligned}
&\left\langle
\widehat s_i(\overline E\otimes\overline M),
\overline L_1,\ldots,\overline L_{e+1-i}\mid Z
\right\rangle\\
&\qquad=
\sum_{k=0}^{i}\binom{r-1+i}{i-k}
\left\langle
\widehat s_k(\overline E),
\underbrace{\overline M|_{Z^\nu},\ldots,\overline M|_{Z^\nu}}_{i-k\ \mathrm{times}},
\overline L_1,\ldots,\overline L_{e+1-i}\mid Z
\right\rangle.
\end{aligned}
\label{eq:twist-pairing}
\end{equation}
The repeated list is empty when $k=i$.  Equivalently, with every product on
the right interpreted by this pairing, we write
\begin{equation}
  \whs_i(\overline E\otimes\overline M)
  =
  \sum_{k=0}^{i}\binom{r-1+i}{i-k}
  \whc_1(\overline M)^{i-k}\whs_k(\overline E)
  \label{eq:twist-formula}
\end{equation}
\end{proposition}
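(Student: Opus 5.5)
Fix $Z$ with $\dim Z=e$, put $Y:=Z^\nu$ and $m:=r-1+i$. By \cref{lem:pullback}, applied to $\nu_Z$, and the compatibility of tensor product metrics with pullback, we reduce to $Z^\nu$. There we work with $\nu_Z^*(E\otimes M)=\nu_Z^*E\otimes\nu_Z^*M$ and the pulled-back metrics. Let
\[
\tau\colon P':=\mathbb P_Y(\nu_Z^*E\otimes\nu_Z^*M)\xrightarrow{\sim}P_Z
\]
be the canonical isomorphism over $Y$, and write $\pi'=\pi_Z\circ\tau$. \Cref{lem:twisting} gives a canonical isometry
\[
\overline\xi_{E\otimes M,Z}\simeq\tau^*\overline\xi_Z\otimes\pi'^*\overline M_Y,
\qquad \overline M_Y:=\overline M|_{Z^\nu}.
\]
Since $\tau$ is an isomorphism compatible with the projections, invariance of adelic intersection numbers under isomorphisms (the case of degree one in the projection formula (I4)) transports the left side of \cref{eq:twist-pairing} to $P_Z$. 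It becomes
\[
\widehat{\deg}_{P_Z}\Bigl(\bigl(\widehat c_1(\overline\xi_Z)+\widehat c_1(\pi_Z^*\overline M_Y)\bigr)^{m}\prod_{j}\widehat c_1(\pi_Z^*\overline L_j)\Bigr).
\]

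Next, expand by symmetry and multilinearity (I1) of the adelic pairing. The binomial expansion gives
\[
\sum_{a=0}^{m}\binom{m}{a}\,
\widehat{\deg}_{P_Z}\Bigl(\widehat c_1(\overline\xi_Z)^{m-a}\widehat c_1(\pi_Z^*\overline M_Y)^{a}\prod_j\widehat c_1(\pi_Z^*\overline L_j)\Bigr).
\]
Write $m-a=r-1+k$, so $k=i-a$ and $\binom{m}{a}=\binom{r-1+i}{i-k}$. For $0\le k\le i$ the summand is, by \cref{def:segre}, exactly
\[
\left\langle\widehat s_k(\overline E),\overline M_Y^{\,i-k},\overline L_1,\ldots,\overline L_{e+1-i}\mid Z\right\rangle .
\]
The argument count is $(i-k)+(e+1-i)=e+1-k$, which is the correct number. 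Here we need $k\le e+1$, and this holds since $k\le i\le e+1$.

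The one genuine point is the vanishing of the terms with $a>i$, that is, those with $k<0$. In such a term $\xi_Z$ appears with exponent $m-a<r-1$, and the factors pulled back from $Y$ number $a+(e+1-i)>e+1=\dim Y+1$. I would show this vanishing with the Deligne pairing argument used in the proof of \cref{thm:segre-basic}(e). Apply \cite[Lemma~4.6.1]{YZ} to $p\circ\pi_Z$ with fewer than $r-1$ relative inputs from $\overline\xi_Z$. Then the generic fibre degree of $c_1(\xi_Z)^{m-a}$ on $\mathbb P^{r-1}_{K(Y)}$ is zero, because the dimension is wrong. Equivalently, the top intersection contains more than $\dim Y+1$ factors pulled back from $Y$, and for pullbacks along a morphism of relative dimension $r-1$ such a product vanishes.

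To make this rigorous, I would first check the vanishing for nef model metrics on a model $\mathcal P\to\mathcal Y$. There the pushforward of a product of more than $\dim\mathcal Y$ pulled-back arithmetic classes, capped with classes of too small degree, is zero by the arithmetic projection formula, since $\pi_*$ of a cycle class of dimension greater than $\dim\mathcal Y$ vanishes. I would then pass to integrable adelic data by multilinearity and the controlled continuity (I3). This vanishing step is the main obstacle; the remainder is bookkeeping. Finally, \cref{eq:twist-formula} is simply the notation for \cref{eq:twist-pairing}.
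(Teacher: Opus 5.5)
Your proposal is correct and follows essentially the same route as the paper: the twisting isometry of \cref{lem:twisting}, a binomial expansion on $P_Z$, identification of the terms with at least $r-1$ tautological factors as Segre pairings, and vanishing of the remaining terms via \cite[Lemma~4.6.1(3)]{YZ} applied to $P_Z\to Z^\nu\to\operatorname{Spec}K$. One point should be stated more precisely: that lemma takes exactly $r-1$ relative inputs, so you must complete the missing ones with copies of $\pi_Z^*\overline M_Y$, which leaves exactly $e+1$ inputs pulled back from $Z^\nu$; the generic-fibre coefficient then vanishes because $\pi_Z^*M_Y$ is trivial on $(P_Z)_\eta$, and with this your model-level alternative is unnecessary.
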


\begin{proof}
Put $Y:=Z^\nu$ and $\overline M_Z:=\overline M|_Y$.  Under the canonical
identification
$$
\mathbb P_{Z^\nu}\bigl((E\otimes M)|_{Z^\nu}\bigr)
\simeq P_Z,
$$
Equation~\eqref{eq:twisting-taut} gives the isometry
$$
\overline\xi_{(E\otimes M)|_{Z^\nu}}
\simeq
\overline\xi_Z\otimes\pi_Z^*\overline M_Z.
$$

First assume $\overline L_j\in\widehat{\Pic}(Y)_{\intg}$ for every $j$.
For $0\leq t\leq r-1+i$, set
$$
J_t:=
\widehat{\deg}_{P_Z}\left(
\widehat c_1(\overline\xi_Z)^t
\widehat c_1(\pi_Z^*\overline M_Z)^{r-1+i-t}
\prod_{j=1}^{e+1-i}\widehat c_1(\pi_Z^*\overline L_j)
\right).
$$
Suppose $t<r-1$.
Apply \cite[Lemma~4.6.1(3), pp.~128--129]{YZ} to
$P_Z\xrightarrow{\pi_Z}Y\to\operatorname{Spec}K$.
We use its arithmetic version over $\operatorname{Spec}K$, obtained by
the direct limit over projective and flat models in \cite[Section~4.5]{YZ}.
The schemes $P_Z$ and $Y$ are integral and projective over $K$.
Their structure morphisms to $\operatorname{Spec}K$ are flat, and
$\operatorname{Spec}K$ is normal.
Take $t$ copies of the original integrable adelic line bundle
$\overline\xi_Z$ and $r-1-t$ copies of $\pi_Z^*\overline M_Z$ as the
$r-1$ relative inputs.
The remaining inputs are pulled back from $i$ copies of $\overline M_Z$
and the adelic line bundles $\overline L_1,\ldots,\overline L_{e+1-i}$ on $Y$.
Thus there are exactly $e+1$ inputs from $Y$.

Let $\eta$ be the generic point of $Y$.
The line bundle $\pi_Z^*M_Z$ is trivial on $(P_Z)_\eta$.
At least one relative input has this underlying line bundle.
Hence the intersection number of the relative inputs on $(P_Z)_\eta$ is zero.
The Deligne projection formula therefore identifies the Deligne pairing
of all the factors defining $J_t$ with the trivial metrized line bundle
on $\operatorname{Spec}K$.
Its section $1$ has norm $1$ at every place.
Taking normalized adelic degrees gives $J_t=0$.
Here we use the compatibility between the degree of a Deligne pairing and
the top intersection number; see
\cite[Lemma~4.4.3(1) and Section~4.5]{YZ}.

If $t=r-1+k$ with $0\leq k\leq i$, then the definition of the Segre
pairing gives
$$
J_{r-1+k}
=\left\langle\widehat s_k(\overline E),
\underbrace{\overline M_Z,\ldots,\overline M_Z}_{i-k\ \mathrm{times}},
\overline L_1,\ldots,\overline L_{e+1-i}\mid Z\right\rangle.
$$
Expand the tautological tensor product using multilinearity.
The term $J_t$ has coefficient $\binom{r-1+i}{t}$.
The terms with $t<r-1$ vanish, and
$$
\binom{r-1+i}{r-1+k}=\binom{r-1+i}{i-k}.
$$
This proves \cref{eq:twist-pairing}.
For $r=1$, there are no terms with $t<r-1$, so the same expansion applies.
Finite real linear expansion in $\overline L_1,\ldots,\overline L_{e+1-i}$
gives the formula for real coefficients.
Equation~\eqref{eq:twist-formula} is the notation for this pairing identity.
\end{proof}

\subsubsection{Universal characteristic polynomials}

Let
$$
\mathcal U:=\QQ[S_1,S_2,\ldots],
\qquad
\deg S_j=j,
$$
and denote by $\mathcal U_i$ its homogeneous component of weighted degree
$i$.  Here $S_i$ is a formal variable of weighted degree $i$ and represents the
$i$-th Segre class.
For a partition
$$
\lambda=(\lambda_1,\ldots,\lambda_\ell),
\qquad
\lambda_1\geq\cdots\geq\lambda_\ell\geq1,
$$
write
$$
|\lambda|:=\sum_{a=1}^{\ell}\lambda_a,
\qquad
S_\lambda:=\prod_{a=1}^{\ell}S_{\lambda_a}.
$$
Then the monomials $S_\lambda$, with $\lambda$ ranging over the partitions of
$i$, form a $\QQ$-basis of $\mathcal U_i$.  Consequently, every
$P\in\mathcal U_i$ admits a unique expression
$$
P=\sum_{\lambda\vdash i}a_\lambda S_\lambda,
\qquad
a_\lambda\in\QQ.
$$

Let $\overline E$ be an integrable adelic vector bundle on $X$, let
$Z\subseteq X$ be an integral closed subvariety of dimension $e$, and assume
$0\leq i\leq e+1$.  For integrable adelic line bundles
$$
\overline L_1,\ldots,\overline L_{e+1-i}
\in
\widehat{\operatorname{Pic}}(Z^\nu)_{\mathrm{int},\mathbb R},
$$
define
\begin{equation}
\left\langle P\bigl(\whs(\overline E)\bigr),\overline L_1,\ldots,
\overline L_{e+1-i}\mid Z\right\rangle
:=\sum_{\lambda\vdash i}a_\lambda\left\langle
\prod_{a=1}^{\ell(\lambda)}\whs_{\lambda_a}(\overline E),
\overline L_1,\ldots,\overline L_{e+1-i}\mid Z\right\rangle.
\label{eq:polynomial-evaluation}
\end{equation}
For each partition $\lambda\vdash i$, the pairing in the corresponding
summand is the top adelic intersection number on $P_{Z,\lambda}$ defined by
\eqref{eq:segre-monomial}.

\begin{proposition}
\label{prop:universal-polynomial-evaluation}
The evaluation in Equation~\eqref{eq:polynomial-evaluation} is well defined
and $\QQ$-linear in $P$.  For fixed $P$, it is symmetric and multilinear in
$\overline L_1,\ldots,\overline L_{e+1-i}$.

Let $\overline\xi_{Z,m}\to\overline\xi_Z$ and
$\overline L_{j,m}\to\overline L_j$ be model approximations.  Assume that,
for every partition $\lambda$ with $a_\lambda\neq0$, the sequences
$$
p_a^*\overline\xi_{Z,m}\quad(1\leq a\leq\ell(\lambda)),
\qquad
q^*\overline L_{j,m}\quad(1\leq j\leq e+1-i)
$$
form a simultaneously controlled family on $P_{Z,\lambda}$.  Then the model
evaluations of $P$ converge to \eqref{eq:polynomial-evaluation}.  The
evaluation also satisfies the finite-base-change formula
\eqref{eq:segre-monomial-base-change} and the generically finite projection
formula \eqref{eq:segre-monomial-projection}, with every Segre-monomial
expression replaced by the corresponding evaluation of $P$.  Consequently,
as $Z$ varies over the integral closed subvarieties with $\dim Z\geq i-1$,
these pairings define an element of $\Num^i\ad(X)$.
\end{proposition}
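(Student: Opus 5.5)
The proof reduces every claim to the corresponding property of the Segre monomial pairings in \cref{prop:segre-monomial-properties}, using that \eqref{eq:polynomial-evaluation} is a finite $\QQ$-linear combination of them. Well-definedness comes first. Since the monomials $S_\lambda$ with $\lambda\vdash i$ form a $\QQ$-basis of $\mathcal U_i$, the coefficients $a_\lambda$ are uniquely determined by $P$. Each summand on the right of \eqref{eq:polynomial-evaluation} is a top adelic intersection number on $P_{Z,\lambda}$, which is well defined by the degree count after \cref{def:segre-monomial}. One small point needs checking: \eqref{eq:segre-monomial} was defined using a chosen ordering of the parts of $\lambda$. By \cref{lem:fiber-product} the value does not depend on this ordering, so the summand attached to $\lambda$ is unambiguous. $\QQ$-linearity in $P$ then follows, because the coordinate map $P\mapsto(a_\lambda)_\lambda$ is linear.

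Symmetry and multilinearity in $\overline L_1,\ldots,\overline L_{e+1-i}$ hold because each monomial pairing has these properties by \cref{prop:segre-monomial-properties}(i), and they are preserved under finite linear combinations with fixed coefficients. For convergence, fix a partition $\lambda$ with $a_\lambda\neq0$. By hypothesis, the pulled-back sequences on $P_{Z,\lambda}$ form a simultaneously controlled family. Part~(i) of \cref{prop:segre-monomial-properties} then shows that the model monomial pairing converges to the limit monomial pairing. There are only finitely many partitions of $i$, and the terms with $a_\lambda=0$ contribute nothing. Hence the finite sum of the model pairings converges to \eqref{eq:polynomial-evaluation}.

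For base change and the projection formula, I would first check that the coefficients $a_\lambda$ depend only on $P$, and not on $\overline E$, $Z$ or $K$. Then \eqref{eq:segre-monomial-base-change} and \eqref{eq:segre-monomial-projection} hold summand by summand, each with the same scalar factor, $\deg(\rho)/[K':K]$ or $\delta_V$. Multiplying the identity for $\lambda$ by $a_\lambda$ and summing over $\lambda$ gives the formulas for $P$.

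The final membership in $\Num^i\ad(X)$ follows in the same way. For each $Z$ with $\dim Z\geq i-1$, the $Z$-component is a symmetric multilinear map on $\widehat{\Pic}(Z^\nu)_{\intg,\RR}^{\,e+1-i}$. It is controlled-continuous because it is a finite linear combination of the components from \cref{prop:segre-monomial-properties}, each of which lies in $\Num^i\ad(X)$, and $\Num^i\ad(X)$ is a real vector space (a product of spaces of such maps, closed under linear combinations).

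The only step needing some care is the convergence statement. The control data must be understood partition by partition, on different spaces $P_{Z,\lambda}$, and the hypothesis supplies exactly this. So no obstacle beyond bookkeeping is expected.
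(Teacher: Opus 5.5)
Your proposal is correct and follows the paper's proof essentially step for step. Like the paper, you use uniqueness of the coefficients $a_\lambda$ from the basis $S_\lambda$, \cref{lem:fiber-product} for independence of the ordering (the empty partition handles $i=0$), and \cref{prop:segre-monomial-properties}\textup{(i)--(iii)} summand by summand, with the $\lambda$-independent factors $\deg(\rho)/[K':K]$ and $\delta_V$. The only cosmetic difference is at the end: you get membership in $\Num^i\ad(X)$ by taking finite linear combinations of the monomial classes, whereas the paper reads it off directly from the symmetry, multilinearity and continuity it has just proved.
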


\begin{proof}
The coefficients $a_\lambda$ in \cref{eq:polynomial-evaluation} are unique
because the monomials $S_\lambda$ form a basis of $\mathcal U_i$.
By \cref{lem:fiber-product}, each monomial pairing is independent of the
ordering of the parts of $\lambda$.
For $i=0$, use the empty partition in \cref{def:segre-monomial}.
Thus the evaluation is well defined and $\QQ$-linear in $P$.
\Cref{prop:segre-monomial-properties}\textup{(i)} gives symmetry and
multilinearity for each summand, and hence for their sum.

For each partition $\lambda\vdash i$, let $I_{\lambda,m}$ denote the
model intersection on $P_{Z,\lambda}$ and let $I_\lambda$ denote the adelic
intersection.  For $a_\lambda\neq0$, the control hypothesis and
\cref{prop:segre-monomial-properties}\textup{(i)} give
$I_{\lambda,m}\to I_\lambda$.
There are finitely many such partitions, so
$$
\lim_{m\to\infty}\sum_{\lambda\vdash i}a_\lambda I_{\lambda,m}=\sum_{\lambda\vdash i}a_\lambda I_\lambda.
$$
This proves the asserted continuity.

\Cref{prop:segre-monomial-properties}\textup{(ii)--(iii)} gives the two
projection formulas for each summand.
Their respective factors $\deg(\rho)/[K':K]$ and $\delta_V$ do not depend
on $\lambda$.  Summing with coefficients $a_\lambda$ proves both formulas
for $P$.  The symmetry, multilinearity, and continuity just proved give an
element of $\Num^i\ad(X)$.
\end{proof}

The adjective \emph{universal} means that the coefficients of $P$ are
independent of $K$, $X$, and $\overline E$.  Thus the same polynomial defines
a numerical characteristic class for every integrable adelic vector bundle.
For example,
$$
\mathcal U_1=\QQ S_1,\qquad
\mathcal U_2=\QQ S_1^2\oplus\QQ S_2,
$$
and
$$
\mathcal U_3
=
\QQ S_1^3\oplus\QQ S_1S_2\oplus\QQ S_3.
$$
We evaluate the universal polynomials defining the Chern classes, Chern
character, and Todd class in this sense.

\subsubsection{Chern classes}

For the projective bundle defined above, let the universal polynomials $C_j$
be determined by
\begin{equation}
  (1+C_1t+C_2t^2+\cdots)
  (1-S_1t+S_2t^2-S_3t^3+\cdots)=1.
  \label{eq:chern-segre}
\end{equation}
Equivalently,
$$
  C_m=\sum_{j=1}^m(-1)^{j-1}C_{m-j}S_j,
  \qquad C_0=1.
$$

\begin{definition}[Tautological Chern classes]
If $\overline E$ has rank $r$, define, for $0\leq i\leq r$,
\begin{equation}
  \whc_i(\overline E)
  :=
  C_i\bigl(\whs_1(\overline E),\ldots,
           \whs_i(\overline E)\bigr)
  \in\Num^i\ad(X).
  \label{eq:taut-chern}
\end{equation}
Set $\whc_0=1$ and $\whc_i=0$ for $i>r$.
\end{definition}

For indices not exceeding the rank $r$, the first formulas are
\begin{equation}
\whc_1=\whs_1,\qquad \whc_2=(\whs_1)^2-\whs_2,\qquad
\whc_3=(\whs_1)^3-2\whs_1\whs_2+\whs_3.
\label{eq:first-chern}
\end{equation}
Here products mean the fiber-product evaluation of
\cref{def:segre-monomial}.

\begin{remark}[Sign convention]
For a line bundle $M$, one has
$\whs_i(\overline M)=\whc_1(\overline M)^i$.  Hence the correct
formal identity is
$$
  c_t(E)s_{-t}(E)=1,
$$
not $c_t(E)s_t(E)=1$.
\end{remark}

\subsubsection{Chern character and Todd class}

Let $x_1,\ldots,x_r$ be formal Chern roots.  Define
$$
  \ch_m(c_1,\ldots,c_m)=\frac{1}{m!}\sum_{a=1}^r x_a^m,
  \qquad \ch_0=r,
$$
and define the Todd polynomials by
$$
  \prod_{a=1}^r\frac{x_a}{1-e^{-x_a}}
  =\sum_{m\geq0}\Td_m(c_1,\ldots,c_m).
$$

\begin{definition}
Set
\begin{equation}
\whch_m(\overline E):=\ch_m(\whc_1,\ldots,\whc_m),\qquad
\whTd_m(\overline E):=\Td_m(\whc_1,\ldots,\whc_m).
\label{eq:ch-td}
\end{equation}
where $\whc_j=0$ for $j>r$ and all monomials are evaluated through
\cref{eq:polynomial-evaluation}.  Only $m\leq d+1$ contributes to numerical
pairings on $X$.
\end{definition}

For ranks large enough for the displayed indices,
\begin{align}
 \whch_0&=r,\notag\\
 \whch_1&=\whs_1,\notag\\
 \whch_2&=\whs_2
 -\frac12(\whs_1)^2,\notag\\
 \whch_3&=\frac16\left(
 (\whs_1)^3
 -3\whs_1\whs_2
 +3\whs_3\right),
 \label{eq:first-ch}
\end{align}
and
\begin{align}
 \whTd_0&=1,\notag\\
 \whTd_1&=\frac12\whs_1,\notag\\
 \whTd_2&=\frac1{12}
 \left(2(\whs_1)^2-\whs_2\right),\notag\\
 \whTd_3&=\frac1{24}
 \left((\whs_1)^3
 -\whs_1\whs_2\right).
 \label{eq:first-td}
\end{align}

\begin{theorem}[Numerical adelic characteristic classes]
\label{thm:main}
Let $K$ be a number field.
Let $X$ be a projective integral $K$-variety.
Let $\overline E$ be an integrable adelic vector bundle on $X$.  All numerical pairings
associated with
$$
  \whs_i(\overline E),\qquad
  \whc_i(\overline E),\qquad
  \whch_i(\overline E),\qquad
  \whTd_i(\overline E)
$$
are well defined.  They depend only on the induced tautological adelic
metric.  They are continuous along model approximations whose pullbacks to
every fiber product $P_{Z,\lambda}$ occurring in the defining finite
expansion form a simultaneously controlled family.  They satisfy the finite
normalized base-change formula in \cref{eq:segre-monomial-base-change} and the
generically finite projection formula.  Segre monomial numbers are
nonnegative for nef data.
\end{theorem}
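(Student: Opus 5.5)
The proof will reduce everything to the Segre monomial pairings of \cref{def:segre-monomial} and then apply \cref{prop:universal-polynomial-evaluation}. First, each of the classes $\whs_i(\overline E)$, $\whc_i(\overline E)$, $\whch_i(\overline E)$ and $\whTd_i(\overline E)$ is, by construction, the evaluation of a universal polynomial $P\in\mathcal U_i$ in the formal Segre variables. For $\whs_i$ we take $P=S_i$. For $\whc_i$ we take $P=C_i$, defined through \cref{eq:chern-segre}. For $\whch_m$ and $\whTd_m$ we substitute the expressions of $C_1,\ldots,C_m$ in terms of the $S_j$ into $\ch_m$ and $\Td_m$. When $m>r$ we set $C_j=0$ for $j>r$, and this again gives an element of $\mathcal U_m$ with rational coefficients. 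The expansion $P=\sum_{\lambda\vdash i}a_\lambda S_\lambda$ is unique, so every pairing is a finite $\QQ$-linear combination of the numbers \eqref{eq:segre-monomial}. Well-definedness then follows from \cref{prop:universal-polynomial-evaluation}, using \cref{lem:fiber-product} for independence of the ordering of parts. Symmetry and multilinearity in the $\overline L_j$ follow the same way, and we obtain an element of $\Num^i\ad(X)$.

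Second, we show dependence only on $\overline\xi_E$. Each $\overline\xi_a=p_a^*\overline\xi_Z$ on $P_{Z,\lambda}$ is determined up to isometry by $\overline\xi_E$, through \cref{lem:pullback} applied to $P_Z\to\Pj(E)$ and then to $p_a$. Top adelic intersections depend only on isometry classes (property (I2)), so each monomial, and hence every linear combination, depends only on $\overline\xi_E$.

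Third, continuity is exactly the hypothesis-to-conclusion statement of \cref{prop:universal-polynomial-evaluation}. The hypothesis of the theorem gives simultaneous control on every $P_{Z,\lambda}$ with $a_\lambda\ne0$, and finiteness of the expansion lets the limits pass through the sum. The finite normalized base-change formula and the generically finite projection formula come from \cref{prop:segre-monomial-properties}(ii)--(iii) summed with the coefficients $a_\lambda$. This works because the factors $\deg(\rho)/[K':K]$ and $\delta_V$ do not depend on $\lambda$. Here we must check that pulling back $\overline E$ commutes with forming the polynomial. This holds because the universal coefficients $a_\lambda$ depend only on $P$ and the rank $r$, and pullback preserves the rank.

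Finally, nonnegativity for nef data applies only to the Segre monomials themselves, not to their signed combinations. It follows from \cref{prop:segre-monomial-properties}(iv), since $\overline\xi_E$ nef implies that each $\overline\xi_a$ is nef by \cref{lem:pullback}. I expect the only delicate point to be bookkeeping rather than a genuine obstacle. The continuity hypothesis must be stated per fibre product $P_{Z,\lambda}$, because control of $\overline\xi_{Z,m}$ on $P_Z$ does not obviously give a common control datum on all fibre products at once. We must also make sure the rank truncation $\whc_j=0$ for $j>r$ is consistent with the Segre expansion. I would handle the latter by defining the classes directly through the truncated polynomials, so that no identity between Segre monomials beyond the definitions is needed.
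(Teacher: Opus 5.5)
Your proposal is correct and follows the paper's proof essentially step for step. The steps match: reduction to universal polynomials in $\mathcal U_i$; \cref{prop:universal-polynomial-evaluation} for well-definedness, continuity and both projection formulas; \cref{lem:pullback} for dependence on $\overline\xi_E$ alone; and \cref{prop:segre-monomial-properties}\textup{(iv)} for nonnegativity.

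The one detail you pass over is $i=0$. Since $S_0$ is not a generator of $\mathcal U$, taking $P=S_i$ does not cover this case. The paper instead invokes \cref{thm:segre-basic}\textup{(e)} to identify $\whs_0(\overline E)$ with the unit given by the empty partition.
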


\begin{proof}
Put $r:=\operatorname{rank}E$.
The recursion in \cref{eq:chern-segre} gives $C_j\in\mathcal U_j$.
Use $C_j$ for $j\leq r$ and zero for $j>r$, as in
\cref{eq:taut-chern}.
Substituting these expressions into the Chern character and Todd
polynomials in \cref{eq:ch-td} gives an element of $\mathcal U_i$ in each
degree $i$.  The degree zero polynomials are $r$ and $1$, respectively.
\Cref{prop:universal-polynomial-evaluation} therefore proves that all the
stated characteristic pairings are well defined.
For the Segre classes of positive degree, use $S_i$.
\Cref{thm:segre-basic}\textup{(e)} identifies $\whs_0$ with the unit
defined by the empty partition.

Fix an integral closed subvariety $Z\subseteq X$.
Let $\iota_Z:P_Z\to\mathbb P_X(E)$ be the natural morphism.
Projective bundle base change and \cref{lem:pullback} give
$$
P_Z\simeq\mathbb P_X(E)\times_X Z^\nu,\qquad \overline\xi_Z\simeq\iota_Z^*\overline\xi_E.
$$
For every nonempty partition $\lambda$, the tautological input on its
$a$-th factor is therefore
$$
\overline\xi_a\simeq p_a^*\iota_Z^*\overline\xi_E.
$$
Thus, for fixed adelic line bundles $\overline L_j$ in the pairing,
\cref{eq:segre-monomial} depends only on the metrized line
bundle $\overline\xi_E$.
The same holds for its finite linear combinations in
\cref{eq:polynomial-evaluation}.

The theorem assumes simultaneous control on every $P_{Z,\lambda}$ that
occurs in each defining polynomial.
These are precisely the continuity hypotheses of
\cref{prop:universal-polynomial-evaluation}.
That proposition gives the asserted convergence and both projection
formulas.  In particular, the factors are $\deg(\rho)/[K':K]$ in
\cref{eq:segre-monomial-base-change} and $\delta_V$ in
\cref{eq:segre-monomial-projection}.
Finally, \cref{prop:segre-monomial-properties}\textup{(iv)} gives
nonnegativity for Segre monomial pairings with nef inputs.
\end{proof}

\begin{remark}
	Nef inputs give nonnegative Segre monomial numbers.
	The universal expressions for the Chern classes, the Chern character,
	and the Todd class in the Segre classes can have coefficients of both signs.
\end{remark}

\subsubsection{Approximation by models}

\begin{proposition}[Model independence]
	\label{prop:model-independence}
	Retain the notation of
	Definition~\ref{def:segre-monomial}.  Let
	$\overline\xi_{E,n}$ be integrable adelic line bundles on $\Pj(E)$ induced by
	model line bundles, and, for $1\leq j\leq e+1-i$, let
	$\overline L_{j,n}$ be integrable adelic line bundles on $Z^\nu$ induced by
	model line bundles.  Let
	$$
	\iota_Z:P_Z\longrightarrow\Pj(E)
	$$
	be the natural morphism and set
	$$
	\overline\xi_{Z,n}:=\iota_Z^*\overline\xi_{E,n},
	\qquad
	\overline\xi_{a,n}
	:=p_a^*\overline\xi_{Z,n}.
	$$
	Assume that
	$$
	\overline\xi_{Z,n}\longrightarrow\overline\xi_Z,
	\qquad
	\overline L_{j,n}\longrightarrow\overline L_j,
	$$
	and that the sequences
	$$
	(\overline\xi_{a,n})_{n\geq1}\quad(1\leq a\leq\ell),
	\qquad
	(q^*\overline L_{j,n})_{n\geq1}\quad(1\leq j\leq e+1-i)
	$$
	form a simultaneously controlled family on $P_{Z,\lambda}$, with respective
	limits $\overline\xi_a$ and $q^*\overline L_j$.
	Then
	\begin{align*}
		&\lim_{n\to\infty}
		\whdeg_{P_{Z,\lambda}}\left(
		\prod_{a=1}^{\ell}
		\whc_1(\overline\xi_{a,n})^{r-1+\lambda_a}
		\prod_{j=1}^{e+1-i}
		\whc_1(q^*\overline L_{j,n})
		\right)                                                     \\
		&\qquad=
		\left\langle
		\prod_{a=1}^{\ell}\whs_{\lambda_a}(\overline E),
		\overline L_1,\ldots,\overline L_{e+1-i}\mid Z
		\right\rangle .
	\end{align*}
	For each $n$, the intersection number on the left may be computed on any
	projective model on which the finitely many model line bundles involved are
	simultaneously represented.  Its value, and hence its limit, is independent
	of all such choices.
\end{proposition}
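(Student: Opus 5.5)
The plan has two parts: first check that each model intersection number is independent of the auxiliary choices, then invoke the controlled continuity \cref{eq:controlled-intersection-continuity} on $P_{Z,\lambda}$ to identify the limit.

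\emph{Independence of the model.} For fixed $n$, each factor $\overline\xi_{a,n}$ and $q^*\overline L_{j,n}$ is a model adelic line bundle on $P_{Z,\lambda}$. It is obtained by pulling back $\overline\xi_{E,n}$ along $\iota_Z\circ p_a$, or $\overline L_{j,n}$ along $q$. Suppose $\mathcal P$ and $\mathcal P'$ are two projective arithmetic models of $P_{Z,\lambda}$ on which all these finitely many Hermitian model line bundles are represented. I would choose a third model $\mathcal P''$ dominating both, for instance the Zariski closure of the diagonal in $\mathcal P\times_{\mathcal O_K}\mathcal P'$. The induced morphisms $\mathcal P''\to\mathcal P$ and $\mathcal P''\to\mathcal P'$ are birational, projective, and the identity on the generic fibre. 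By the arithmetic projection formula (I4) for this degree-one map, the Gillet--Soul\'e intersection number on $\mathcal P$ equals that of the pullbacks on $\mathcal P''$, and the same holds for $\mathcal P'$. The two pullbacks induce the same model adelic line bundles, since pullback along a morphism that is the identity on generic fibres does not change the induced adelic metric. Hence both computations agree with the adelic intersection of the induced model adelic line bundles, which by (I2) depends only on their isometry classes. This is the statement that the pairing of model adelic line bundles is well defined. It is built into Zhang's and Yuan--Zhang's construction \cite[Proposition~4.1.1]{YZ}, and I would cite it rather than redo it. For Hermitian metrics that are only continuous, I would refer to the same source.

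\emph{Passage to the limit.} The hypothesis states that the listed sequences form a simultaneously controlled family on $P_{Z,\lambda}$ with limits $\overline\xi_a$ and $q^*\overline L_j$. Repeating the factor $\overline\xi_{a,n}$ exactly $r-1+\lambda_a$ times keeps the same control data, as noted in the proof of \cref{prop:segre-monomial-properties}(i). So \cref{eq:controlled-intersection-continuity} applied on $V=P_{Z,\lambda}$ gives convergence of the model intersection numbers to
$$\whdeg_{P_{Z,\lambda}}\Bigl(\prod_a\whc_1(\overline\xi_a)^{r-1+\lambda_a}\prod_j\whc_1(q^*\overline L_j)\Bigr).$$
It remains to identify this with the Segre monomial in \cref{def:segre-monomial}. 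By \cref{lem:pullback}, the limit $\overline\xi_a$ is isometric to $p_a^*\iota_Z^*\overline\xi_E$, and this is the tautological input used in \eqref{eq:segre-monomial}. That is the last point needed.

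The main obstacle is uniqueness of limits. A limit under control is defined as $\overline N^+-\overline N^-$ through the chosen decompositions, and one must know that this agrees with the adelic line bundle obtained as the pullback of the limit $\overline\xi_Z$. I would argue as follows. On each summand the convergence is uniform in the sense of \cref{def:cauchy-boundary-topology}, so the metrics of the difference converge uniformly to those of $\overline\xi_Z$ pulled back. Uniform limits of metrics are unique. Therefore the difference of the limits is isometric to $p_a^*\overline\xi_Z$, and by (I2) the intersection number is the same.
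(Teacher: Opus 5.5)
Your proposal is correct and follows essentially the same route as the paper: a common dominating model together with the birational projection formula gives independence of the model, and then \cref{eq:controlled-intersection-continuity} (through \cref{prop:segre-monomial-properties}\textup{(i)}) on $P_{Z,\lambda}$ identifies the limit. The differences are minor. The paper makes the two pullbacks to the dominating model literally coincide, because the maps to the separated models carrying $\overline\xi_{E,n}$ and $\overline L_{j,n}$ agree on the generic fibre, so it needs no appeal to ``same induced adelic metric''. It also takes the identification of the controlled limits with $\overline\xi_a$ and $q^*\overline L_j$ directly from the hypothesis, which makes your uniqueness-of-limits paragraph correct but redundant.
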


\begin{proof}
	Put
	$$
	P:=P_{Z,\lambda}.
	$$
	Since
	$$
	\dim P=e+\ell(r-1),
	$$
	the number of first Chern class factors occurring in the intersection is
	$$
	\sum_{a=1}^{\ell}(r-1+\lambda_a)+(e+1-i)
	=
	\ell(r-1)+i+e+1-i
	=
	\dim P+1.
	$$
	Thus the expression is a top adelic intersection
	number on $P$.
	
	We first verify that the model intersection is well defined.  Fix $n$ and
	choose projective models representing $\overline\xi_{E,n}$ and the
	$\overline L_{j,n}$.  Starting from
	any projective model of $P$, take the normalization of the closure of the
	graph of the morphisms
	$$
		\iota_Z\circ p_a:P\longrightarrow\Pj(E),
		\qquad
		q:P\longrightarrow Z^\nu,
	$$
	with respect to these finitely many models.  This gives a projective model
	$\mathcal P_n$ of $P$ that carries all these pullbacks.
	
	Suppose that $\mathcal P_n'$ is another such model.  There exists a
	projective model $\mathcal P_n''$ dominating both $\mathcal P_n$ and
	$\mathcal P_n'$.  The two morphisms from $\mathcal P_n''$ to each of the
	models carrying the original line bundles agree on the generic fibre and
	therefore agree everywhere, since the target models are separated.
	Consequently, the corresponding model line bundles have isometric
	pullbacks to $\mathcal P_n''$.  The projection formula for the proper
	birational morphisms
	$$
	\mathcal P_n''\longrightarrow\mathcal P_n,
	\qquad
	\mathcal P_n''\longrightarrow\mathcal P_n'
	$$
	shows that the two model intersection numbers coincide; see
	\cite[Proposition~4.1.2]{YZ}.
	
	By hypothesis, the sequences $p_a^*\overline\xi_{Z,n}$ and
	$q^*\overline L_{j,n}$ form a simultaneously controlled family on
	$P_{Z,\lambda}$.  Proposition~\ref{prop:segre-monomial-properties}
	therefore identifies the limit of the model intersection numbers with
	$$
	\left\langle
	\prod_{a=1}^{\ell}\whs_{\lambda_a}(\overline E),
	\overline L_1,\ldots,\overline L_{e+1-i}\mid Z
	\right\rangle .
	$$
	This proves the assertion.
\end{proof}

\begin{corollary}
\label{cor:model-polynomial-convergence}
Fix an integral closed subvariety $Z\subseteq X$ of dimension $e$, an
integer $0\leq i\leq e+1$, and let
$$
P=\sum_{\lambda\vdash i}a_\lambda S_\lambda\in\mathcal U_i.
$$
Let $\overline\xi_{E,n}\to\overline\xi_E$ be model approximations induced
by vector-bundle models, and let
$\overline L_{j,n}\to\overline L_j$ $(1\leq j\leq e+1-i)$ be model
approximations of integrable adelic line bundles on $Z^\nu$.
Assume that, for every $\lambda$ with $a_\lambda\neq0$, these approximations
satisfy the hypotheses of Proposition~\ref{prop:model-independence} on
$P_{Z,\lambda}$.  Denote the model intersection in that proposition by
$I_{\lambda,n}$.  Then
$$
\lim_{n\to\infty}
\sum_{\substack{\lambda\vdash i\\a_\lambda\neq0}}
a_\lambda I_{\lambda,n}
=
\left\langle
P\bigl(\widehat s(\overline E)\bigr),
\overline L_1,\ldots,\overline L_{e+1-i}\mid Z
\right\rangle.
$$
\end{corollary}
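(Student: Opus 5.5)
The argument reduces to applying Proposition~\ref{prop:model-independence} one partition at a time and then summing finitely many limits. By the uniqueness of the expansion $P=\sum_{\lambda\vdash i}a_\lambda S_\lambda$ in the basis of $\mathcal U_i$, and by the definition \eqref{eq:polynomial-evaluation}, the right-hand side equals
$$
\sum_{\lambda\vdash i}a_\lambda\left\langle\prod_{a=1}^{\ell(\lambda)}\whs_{\lambda_a}(\overline E),\overline L_1,\ldots,\overline L_{e+1-i}\mid Z\right\rangle .
$$
The partitions with $a_\lambda=0$ contribute nothing, so it suffices to treat the finitely many $\lambda$ with $a_\lambda\neq0$.

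Fix such a $\lambda$. The approximations $\overline\xi_{E,n}$ come from vector-bundle models, so they are integrable adelic line bundles on $\Pj(E)$ induced by model line bundles. The same holds for the $\overline L_{j,n}$ on $Z^\nu$. By hypothesis, the pullbacks $p_a^*\iota_Z^*\overline\xi_{E,n}$ and $q^*\overline L_{j,n}$ form a simultaneously controlled family on $P_{Z,\lambda}$. This is exactly the setting of Proposition~\ref{prop:model-independence}. That proposition shows that $I_{\lambda,n}$ is well defined, independent of the auxiliary projective model of $P_{Z,\lambda}$. It also shows that $I_{\lambda,n}$ converges to the Segre monomial pairing for $\lambda$.

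For the empty partition, which occurs only when $i=0$, we have $P_{Z,\varnothing}=Z^\nu$ and no tautological factors. The convergence then follows directly from \eqref{eq:controlled-intersection-continuity} applied to the controlled sequences $\overline L_{j,n}$. Alternatively, one can read Proposition~\ref{prop:model-independence} with $\ell=0$.

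Finally, a finite linear combination of convergent sequences converges to the corresponding combination of the limits. This gives the claimed equality.

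There is no genuine obstacle. The only point to check is bookkeeping: the hypothesis must be imposed on every $P_{Z,\lambda}$ with $a_\lambda\neq0$, and not merely on $P_Z$. The reason is that control on $P_Z$ pulled back along different projections $p_a$ need not automatically be simultaneous with respect to a single model of the fibre product. The statement assumes this, so the proof is just the assembly described above. It is in substance the same as the continuity part of Proposition~\ref{prop:universal-polynomial-evaluation}, with Proposition~\ref{prop:model-independence} supplying model independence of each term.
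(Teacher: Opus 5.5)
Your proposal is correct and follows the paper's proof: you apply Proposition~\ref{prop:model-independence} to each partition with $a_\lambda\neq0$ to get model independence and $I_{\lambda,n}\to I_\lambda$, then pass the limit through the finite sum and identify the result using \eqref{eq:polynomial-evaluation}. Your closing remark that pulled-back control along different $p_a$ need not be simultaneous is unnecessary, since the paper notes that pullback preserves control and that finitely many controlled sequences can share control data, but it plays no role in your argument.
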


\begin{proof}
Proposition~\ref{prop:model-independence} makes each $I_{\lambda,n}$
independent of the model used to compute it and gives
$$
I_{\lambda,n}\longrightarrow
I_\lambda:=\left\langle
\prod_{a=1}^{\ell(\lambda)}\whs_{\lambda_a}(\overline E),
\overline L_1,\ldots,\overline L_{e+1-i}\mid Z
\right\rangle
$$
for every $\lambda$ with $a_\lambda\neq0$.  Since there are only finitely
many such partitions,
$$
\begin{aligned}
\lim_{n\to\infty}
\sum_{\substack{\lambda\vdash i\\a_\lambda\neq0}}
a_\lambda I_{\lambda,n}
&=\sum_{\substack{\lambda\vdash i\\a_\lambda\neq0}}
a_\lambda I_\lambda\\
&=\left\langle
P\bigl(\widehat s(\overline E)\bigr),
\overline L_1,\ldots,\overline L_{e+1-i}\mid Z
\right\rangle,
\end{aligned}
$$
by Equation~\eqref{eq:polynomial-evaluation}.
\end{proof}


\subsection{The Bott--Chern boundary}
\label{sec:bott-chern}

The classes constructed above are numerical adelic functionals.
Let $\mathcal X$ be a regular projective arithmetic variety and let
$\overline{\mathcal E}$ be a smooth hermitian vector bundle of rank $r$.
Put
$$
  \widehat\pi\colon\Pj(\mathcal E)\longrightarrow\mathcal X,
  \qquad
  \widehat\xi=\whc_1(\overline{\mathcal O}(1)).
$$
For a Hermitian vector bundle $\overline{\mathcal F}$ on $\mathcal X$, let
$\widehat c_i^{\mathrm{GS}}(\overline{\mathcal F})$ be the arithmetic Chern
class of degree $i$ defined by Gillet and Soul\'e.  Set
$\widehat c_0^{\mathrm{GS}}(\overline{\mathcal F})=1$.  Define
\begin{equation*}
 \widehat c_t^{\mathrm{GS}}(\overline{\mathcal F})
 :=\sum_{i=0}^{\operatorname{rk}\mathcal F}
 \widehat c_i^{\mathrm{GS}}(\overline{\mathcal F})t^i.
\end{equation*}
We equip $\mathcal E^{\vee}$ with the dual metric.  Arithmetic Chern classes
satisfy
$$
\widehat c_i^{\mathrm{GS}}(\overline{\mathcal E}^{\vee})=(-1)^i\widehat c_i^{\mathrm{GS}}(\overline{\mathcal E}),\qquad \widehat c_t^{\mathrm{GS}}(\overline{\mathcal E}^{\vee})=\widehat c_{-t}^{\mathrm{GS}}(\overline{\mathcal E});
$$
see \cite{GSCharI,GSCharII}.  We use the same sign convention for arithmetic
Segre classes as for the numerical classes.  Thus we define
$\widehat s_t^{\mathrm{GS}}(\overline{\mathcal E})
:=\sum_{i\geq0}\widehat s_i^{\mathrm{GS}}(\overline{\mathcal E})t^i$ by
$$
\widehat c_t^{\mathrm{GS}}(\overline{\mathcal E})\widehat s_{-t}^{\mathrm{GS}}(\overline{\mathcal E})=1.
$$
The proposition below computes
$$
\widehat s_i^{\mathrm{GS}}(\overline{\mathcal E})
-\widehat\pi_*(\widehat\xi^{r-1+i})
\quad\text{in }\widehat{\mathrm{CH}}^i(\mathcal X)_{\QQ}.
$$
The hermitian relative Euler sequence is generally not orthogonally split,
and its Bott--Chern class contributes a secondary term.
Let $F_\infty$ denote complex conjugation on $\mathcal X(\mathbb C)$.
For $p\geq0$, let $A^{p,p}(\mathcal X)$ be the space of smooth real
differential forms $\eta$ of type $(p,p)$ on $\mathcal X(\mathbb C)$ satisfying
$$
F_\infty^*\eta=(-1)^p\eta.
$$
We use this convention throughout; see \cite[Section~3.2.1]{GS}.  Define
$$
\widetilde A^{p,p}(\mathcal X):=A^{p,p}(\mathcal X)\big/\bigl(A^{p,p}(\mathcal X)\cap(\operatorname{im}\partial+\operatorname{im}\overline\partial)\bigr).
$$
We take the two images among smooth complex differential forms of bidegree
$(p,p)$.  For $i\geq1$ and
$\eta\in\widetilde A^{i-1,i-1}(\mathcal X)$, let
$a(\eta)\in\widehat{\mathrm{CH}}^i(\mathcal X)_{\QQ}$ be the class represented
by $(0,\eta)$.

\begin{proposition}
\label{prop:bott-chern}
For every $i\geq1$, there is a canonical class
$\eta_i(\overline{\mathcal E})\in\widetilde A^{i-1,i-1}(\mathcal X)$.  It is
functorial under smooth pullback and satisfies
\begin{equation}
  \widehat s_i^{\mathrm{GS}}(\overline{\mathcal E})
  =
  \widehat\pi_*(\widehat\xi^{r-1+i})
  +a\bigl(\eta_i(\overline{\mathcal E})\bigr)
  \quad\text{in }\widehat{\mathrm{CH}}^i(\mathcal X)_{\QQ}.
  \label{eq:bott-chern}
\end{equation}
With this correction, arithmetic Segre and Chern classes satisfy the inverse
relation in the arithmetic Chow ring.  The correction need not vanish.
\end{proposition}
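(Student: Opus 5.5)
The plan is to compare two arithmetic classes on $\Pj(\mathcal E)$: the arithmetic Chern classes of the hermitian universal quotient bundle, and the Gillet--Soul\'e arithmetic Chern classes of $\widehat\pi^*\overline{\mathcal E}$. Write $\overline{\mathcal S}$ for the kernel of $\widehat\pi^*\mathcal E\twoheadrightarrow\mathcal O(1)$, with the induced sub-metric, and $\overline{\mathcal O}(1)$ for the quotient metric. The hermitian short exact sequence
\begin{equation*}
\overline{\varepsilon}\colon\ 0\to\overline{\mathcal S}\to\widehat\pi^*\overline{\mathcal E}\to\overline{\mathcal O}(1)\to0
\end{equation*}
gives, by Gillet--Soul\'e \cite{GSCharI,GSCharII},
\begin{equation*}
\widehat c^{\mathrm{GS}}(\widehat\pi^*\overline{\mathcal E})
=\widehat c^{\mathrm{GS}}(\overline{\mathcal S})\,(1+\widehat\xi)+a\bigl(\widetilde{c}(\overline\varepsilon)\bigr),
\end{equation*}
where $\widetilde c(\overline\varepsilon)$ is the Bott--Chern secondary class. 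This class is canonical and functorial under smooth pullback, since the sequence and its metrics are. Since $\mathcal S$ has rank $r-1$, we have $\widehat c_r^{\mathrm{GS}}(\overline{\mathcal S})=0$. Expanding the identity as a power series in $\widehat\xi$ and eliminating $\widehat c^{\mathrm{GS}}(\overline{\mathcal S})$ gives the arithmetic Grothendieck relation
\begin{equation*}
\sum_{k=0}^{r}(-1)^k\widehat\xi^{\,r-k}\,\widehat\pi^*\widehat c_k^{\mathrm{GS}}(\overline{\mathcal E})=a(\theta),
\end{equation*}
with $\theta$ an explicit polynomial in $\widetilde c(\overline\varepsilon)$, the form $c_1(\overline{\mathcal O}(1))$, and the Chern forms. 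I expect the sign to match the convention $c_t s_{-t}=1$, but I would check this on the rank-one case (\cref{prop:rank-one}).

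Next, multiply the relation by $\widehat\xi^{\,j}$ and apply $\widehat\pi_*$. This uses the projection formula for the smooth projective morphism $\widehat\pi$ and the fact that $\widehat\pi_*(\widehat\xi^{\,k})=0$ for $k<r-1$ and $=1$ for $k=r-1$ modulo $a$ of forms. The latter holds because the fibre integral of $c_1(\overline{\mathcal O}(1))^{r-1}$ is $1$, and $\widehat\pi_*$ of a lower power is $a$ of a fibre integral of a lower-degree form. Setting $\sigma_i:=\widehat\pi_*(\widehat\xi^{\,r-1+i})$, this yields the recursion
\begin{equation*}
\sigma_i=\sum_{k\ge1}(-1)^{k-1}\widehat c_k^{\mathrm{GS}}(\overline{\mathcal E})\,\sigma_{i-k}+a(\theta_i),\qquad \theta_i:=\int_{\widehat\pi}\theta\wedge c_1(\overline{\mathcal O}(1))^{i-1}+\cdots.
\end{equation*}
The same recursion without the form corrections defines $\widehat s_i^{\mathrm{GS}}$ through $\widehat c_t^{\mathrm{GS}}\widehat s^{\mathrm{GS}}_{-t}=1$.

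Then argue by induction on $i$. Because $a(\eta)\cdot x=a(\eta\wedge\omega(x))$, the differences $\widehat s_i^{\mathrm{GS}}-\sigma_i$ stay in $a\bigl(\widetilde A^{i-1,i-1}\bigr)$. Define $\eta_i(\overline{\mathcal E})$ as the resulting explicit combination of the fibre integrals and the Chern forms $c_k(\overline{\mathcal E})$. Canonicity and functoriality under smooth pullback follow because every ingredient, namely the Bott--Chern forms, the fibre integration, and the Chern forms, commutes with smooth base change. The inverse relation between Segre and Chern classes then holds by construction once the correction is included. To show the correction need not vanish, take an example on $\Pj^1$-bundles, or $\mathcal X=\Spec\mathcal O_K$ with a non-orthonormal metric, where $\eta_1$ is a nonzero constant.

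The main obstacle is the bookkeeping with forms. It has to be shown that the $\widehat\pi_*(\widehat\xi^{\,k})$ for $k<r-1$, together with the Bott--Chern terms, assemble into a class that is well defined in $\widetilde A^{i-1,i-1}$ modulo $\operatorname{im}\partial+\operatorname{im}\bar\partial$, independently of representatives. I would handle this using the fact that $a$ vanishes on $\partial$- and $\bar\partial$-exact forms, and that fibre integration preserves these images.
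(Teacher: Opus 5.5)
Your main argument is correct and follows the same outline as the paper: obtain an arithmetic Grothendieck relation up to $a$ of a Bott--Chern form, multiply by powers of $\widehat\xi$, push forward, and invert. The difference is the exact sequence you start from.

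The paper uses the twisted relative Euler sequence $0\to\overline{\mathcal O}_{\mathcal Y}\to\widehat\pi^*\overline{\mathcal E}^{\vee}\otimes\overline{\mathcal O}(1)\to\overline T_{\mathcal Y/\mathcal X}\to0$. This is the dual of your sequence $\overline\varepsilon$, tensored with $\overline{\mathcal O}(1)$. Its subbundle is trivial with the flat metric, so only the top Bott--Chern class $\widetilde c_r(\overline\Sigma(1))$ enters, and the relation comes out in one step. The duality $\widehat c^{\mathrm{GS}}_t(\overline{\mathcal E}^\vee)=\widehat c^{\mathrm{GS}}_{-t}(\overline{\mathcal E})$ then fixes the signs. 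The generating functions $S_t$ and $R_t=c_{-t}(\mathcal E,h)^{-1}S_t$ replace your induction.

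You instead use the full Whitney formula with the total Bott--Chern class. You then eliminate $\widehat c^{\mathrm{GS}}(\overline{\mathcal S})$ through $\widehat c_r^{\mathrm{GS}}(\overline{\mathcal S})=0$, which is legitimate because $\widehat\xi$ is nilpotent. The sign you wanted to check is right: $(-1)^r$ times the degree-$r$ part of $\widehat\pi^*\widehat c^{\mathrm{GS}}(\overline{\mathcal E})(1+\widehat\xi)^{-1}$ is exactly $\sum_k(-1)^k\widehat\xi^{\,r-k}\widehat\pi^*\widehat c_k^{\mathrm{GS}}(\overline{\mathcal E})$.

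Your route is slightly more elementary. However, your $\theta$ mixes all components of $\widetilde c(\overline\varepsilon)$ and is not literally Mourougane's form. The paper's choice is exactly Mourougane's setup, which is what lets it quote $R_1=-\kappa_r$ here and $R_2$ in \cref{prop:bg-bott-chern-cancellation}.

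A minor point: $\widehat\pi_*(\widehat\xi^{\,k})=0$ for $k<r-1$ and $\widehat\pi_*(\widehat\xi^{\,r-1})=1$ hold exactly. This is because $\widehat{\mathrm{CH}}^p(\mathcal X)=0$ for $p<0$ and $\widehat{\mathrm{CH}}^0(\mathcal X)=\mathrm{CH}^0(\mathcal X)$. No fibre integrals of forms are involved.

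The step you have not established is the final claim that the correction need not vanish. You assert that $\eta_1$ is a nonzero constant without computing it, and the mechanism you suggest is misleading. The fibre integral defining $\eta_1$ only sees the restriction of the Bott--Chern form to the fibres $\mathbb P(E_x)$. By functoriality and unitary invariance, that restriction is the standard one on $\mathbb P^{r-1}$. So $\eta_1$ is a universal constant depending only on $r$, and a non-orthonormal metric plays no role.

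What is needed is the actual value of this constant. For the standard metric on $\mathcal O_{\mathbb Z}^{\oplus r}$ one has $\widehat c_1^{\mathrm{GS}}=0$. Meanwhile $\widehat{\deg}\,\widehat\pi_*(\widehat\xi^{\,r})$ is the Fubini--Study height of $\mathbb P^{r-1}_{\mathbb Z}$, which equals $\kappa_r/2$. Equivalently, Mourougane's computation $R_1=-\kappa_r$, which the paper cites, gives $\widehat{\deg}\,a(\eta_1)=-\kappa_r/2\neq0$ for $r\geq2$. A nonzero form alone would not suffice, because $a$ has a kernel; it is the nonzero arithmetic degree that shows $a(\eta_1)\neq0$.
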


\begin{proof}
Set $\mathcal Y=\Pj(\mathcal E)$.  The metric on $\mathcal E$ induces the
submetric and quotient metric in the following relative Euler sequence:
$$
\overline\Sigma(1)\colon 0\longrightarrow\overline{\mathcal O}_{\mathcal Y}\longrightarrow\widehat\pi^*\overline{\mathcal E}^{\vee}\otimes\overline{\mathcal O}_{\mathcal Y}(1)\longrightarrow\overline T_{\mathcal Y/\mathcal X}\longrightarrow0.
$$
The first term has the flat metric, and the last term has the quotient metric
induced by the middle term.  Let
$\widetilde c_r(\overline\Sigma(1))\in
\widetilde A^{r-1,r-1}(\mathcal Y)$ be the top Bott--Chern class.  Its
transgression equation is
$$
 c_r\bigl(\widehat\pi^*\mathcal E^{\vee}\otimes\mathcal O_{\mathcal Y}(1)\bigr)=-\frac{\sqrt{-1}}{2\pi}\partial\overline\partial\,\widetilde c_r(\overline\Sigma(1)).
$$
Equivalently, the defining relation for arithmetic Chern classes gives
$$
\sum_{p+q=r}\widehat\pi^*\widehat c_p^{\mathrm{GS}}(\overline{\mathcal E}^{\vee})\widehat\xi^q=-a\bigl(\widetilde c_r(\overline\Sigma(1))\bigr).
$$
These two identities follow from the Bott--Chern formalism of
\cite{BGSI,GSCharI,GSCharII}.  Put
$\alpha=c_1(\mathcal O_{\mathcal Y}(1),h_Q)$, and let $\pi_{\mathbb C}$ denote the
projection of complex fibres.  Define
$$
S_{m+1}:=\pi_{\mathbb C,*}\bigl(\alpha^m\widetilde c_r(\overline\Sigma(1))\bigr),\qquad S_t:=\sum_{m\geq0}S_{m+1}t^m.
$$
Set
$\widehat s_i'=\widehat\pi_*(\widehat\xi^{r-1+i})$ and
$\widehat s_t'=\sum_{i\geq0}\widehat s_i't^i$.  Multiplication of the last
arithmetic identity by $\widehat\xi^m$, followed by the projection formula,
gives a relation between the coefficients.  The duality identity above
rewrites this relation as
$$
\widehat c_t^{\mathrm{GS}}(\overline{\mathcal E})\widehat s_{-t}'=1+a(tS_{-t}).
$$
Mourougane defines the forms $R_i$ by
$$
R_t:=\sum_{m\geq0}R_{m+1}t^m=c_{-t}(\mathcal E_{\mathbb C},h)^{-1}S_t.
$$
The identity $x\,a(\gamma)=a(\omega(x)\gamma)$ in the arithmetic Chow ring
then implies
$$
\widehat c_t^{\mathrm{GS}}(\overline{\mathcal E})\bigl(\widehat s_{-t}'-a(tR_{-t})\bigr)=1.
$$
The inverse series is unique.  Therefore
$\widehat s_i^{\mathrm{GS}}(\overline{\mathcal E})
=\widehat s_i'+a(R_i)$, and \cref{eq:bott-chern} holds with
$\eta_i(\overline{\mathcal E})=R_i$; see
\cite[Theorem~3 in Section~7.2, Proposition~6 in Section~7.1,
and Theorem~4 in Section~8.1]{Mourougane}.

Bott--Chern classes commute with pullback, and fibre integration on a
projective bundle commutes with smooth base change.  Thus, for every smooth
morphism $g$, one has
$R_i(g^*\overline{\mathcal E})=g^*R_i(\overline{\mathcal E})$.  Finally,
Mourougane computes, in \cite[Section~8.2]{Mourougane},
$$
R_1=-\kappa_r,\qquad \kappa_r:=\sum_{j=1}^{r-1}\sum_{\ell=1}^j\frac1\ell.
$$
For $r\geq2$, this correction is nonzero.  For example, on
$\mathcal X=\operatorname{Spec}\mathbb Z$ one has
$\widehat{\deg}\,a(R_1)=-\kappa_r/2\neq0$.
\end{proof}

\subsection{Examples and consistency checks}

\begin{example}[A line bundle]
For a metrized line bundle $\overline M$,
\cref{prop:rank-one} gives
$$
  \whs_i(\overline M)=\whc_1(\overline M)^i.
$$
The rank convention gives $\whc_1(\overline M)
=\whc_1(\overline M)$ and $\whc_i=0$ for $i>1$.  Consequently,
at the level of numerical polynomial evaluations,
$$
  \whch(\overline M)=e^{\whc_1(\overline M)},
  \qquad
  \whTd(\overline M)
  =\frac{\whc_1(\overline M)}
  {1-e^{-\whc_1(\overline M)}}.
$$
\end{example}

\begin{example}[The arithmetic point]
Let $X=\operatorname{Spec}K$.  For an integrable adelic vector bundle
$\overline E$ of rank $r$ on $X$,
the positive-codimension Segre number is
\begin{equation}
  \whs_1(\overline E)
  =
  \whdeg_{\Pj(E)}
  \whc_1(\overline{\mathcal O}_{\Pj(E)}(1))^r.
  \label{eq:point-example}
\end{equation}
It is the adelic height of the metrized projective space.  It is nonnegative
when $\overline{\mathcal O}(1)$ is nef.  Its possible discrepancy from the
degree of $\det\overline E$ is the simplest instance of
\cref{sec:bott-chern}.
\end{example}

\begin{example}[Twisting in low degree]
\Cref{eq:twist-formula} gives
\begin{align*}
 \whs_1(\overline E\otimes\overline M)
 &=\whs_1(\overline E)+r\whc_1(\overline M),\\
 \whs_2(\overline E\otimes\overline M)
 &=\whs_2(\overline E)
 +(r+1)\whc_1(\overline M)\whs_1(\overline E)
 +\binom{r+1}{2}\whc_1(\overline M)^2.
\end{align*}
\end{example}

\begin{proposition}
\label{prop:algebraic-component}
Let $E$ be a vector bundle of rank $r\geq1$ on $X$.
Let $\pi:\Pj(E)\to X$ be the projective bundle that parametrizes
line bundle quotients of $E$.  Put
$$
\xi=c_1\!\left(\mathcal O_{\Pj(E)}(1)\right).
$$
For $i\geq0$, define the Segre operator on $\alpha\in\mathrm{CH}_k(X)_{\QQ}$ by
\begin{equation}
  s_i(E)(\alpha):=\pi_*\!\left(\xi^{r-1+i}\cap\pi^*\alpha\right)
  \in\mathrm{CH}_{k-i}(X)_{\QQ}.
  \label{eq:algebraic-segre}
\end{equation}
Let $C_j$ be the universal polynomials defined by \cref{eq:chern-segre}.
For every $j\geq1$, we have
$$
C_j\!\left(s_1(E),\ldots,s_j(E)\right)(\alpha)=c_j(E)\cap\alpha.
$$
Here products of operators mean composition.
The class on the right is the ordinary algebraic Chern class acting on
Chow homology.
\end{proposition}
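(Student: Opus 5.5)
The plan is to reduce to the classical theory of Segre classes in Fulton's \emph{Intersection Theory}, taking care with the sign convention and with the fact that the paper's $\Pj(E)$ parametrizes line quotients. First I would check that the operator \cref{eq:algebraic-segre} is exactly Fulton's Segre operator of the dual bundle. Fulton's projective bundle $P(F)$ of lines in $F$ carries $\mathcal O(1)$, and he defines $s_i(F)\cap\alpha=p_*(c_1(\mathcal O(1))^{e-1+i}\cap p^*\alpha)$. Line quotients of $E$ are lines in $E^\vee$, so $\Pj(E)=P(E^\vee)$ with the same $\mathcal O(1)$. Hence $s_i(E)(\alpha)=s^{\mathrm F}_i(E^\vee)\cap\alpha$, where $s^{\mathrm F}$ is Fulton's Segre class. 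Fulton proves that $c^{\mathrm F}_t(F):=s^{\mathrm F}_t(F)^{-1}$ is the usual Chern polynomial (Proposition~3.1 and Section~3.2), and that $c_i(E^\vee)=(-1)^ic_i(E)$. Fulton's Segre operators commute with each other, and his inverse relation is a statement about composition of operators on $\mathrm{CH}_*(X)_\QQ$.

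Combining these facts gives the identity needed:
\[
\Bigl(\sum_{i\ge0}c_i(E)t^i\Bigr)\Bigl(\sum_{i\ge0}(-1)^is_i(E)t^i\Bigr)
=c_{-t}(E^\vee)\,s^{\mathrm F}_{-t}(E^\vee)=1
\]
as operator-valued power series. Comparing this with \cref{eq:chern-segre}, the classes $c_j(E)$ satisfy the same recursion $c_m=\sum_{j=1}^m(-1)^{j-1}c_{m-j}s_j$ with $c_0=1$ that defines $C_m$. By induction on $j$ it follows that $C_j(s_1(E),\dots,s_j(E))$, evaluated by composition, equals $c_j(E)\cap-$. The base case $j=1$ is $c_1=s_1$. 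Commutativity of the operators makes the evaluation of the commutative polynomial $C_j$ unambiguous. For $j>r$ the recursion forces $C_j$ to vanish on the operators, since $c_j(E)=0$ there. This also agrees with the remark on sign conventions: for a line bundle, $s_i(M)=c_1(M)^i$.

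The main obstacle is keeping the sign and duality bookkeeping correct, specifically checking that Fulton's $\mathcal O_{P(E^\vee)}(1)$ is the paper's tautological quotient $\mathcal O_{\Pj(E)}(1)$ rather than $\mathcal O(-1)$. I would settle this first by the rank-one test from \cref{prop:rank-one}: when $E=M$ is a line bundle, $\pi$ is an isomorphism and $\mathcal O(1)\simeq M$, so $s_1(M)=c_1(M)$, which is consistent with $c_1=s_1$. As an alternative that avoids citing Fulton's conventions, I could prove the identity directly. Let $Q:=\pi^*E\to\mathcal O(1)$ be the tautological quotient with kernel $S$, and note $c_r(\pi^*E^\vee\otimes\mathcal O(1))=0$, because the tautological map gives a nowhere-vanishing section. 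This yields $\sum_{p}\pi^*c_p(E^\vee)\xi^{r-p}=0$. Multiplying by $\xi^{m}$ and pushing forward with $\pi_*(\xi^{r-1}\cap\pi^*\alpha)=\alpha$ then gives the recursion. This is the algebraic shadow of the argument in \cref{prop:bott-chern}.
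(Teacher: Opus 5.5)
Your argument is correct, but your main route is not the paper's.

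\textbf{Your route.} You identify $\Pj(E)$ with Fulton's bundle $P(E^\vee)$ of lines in $E^\vee$. Then $\mathcal O_{\Pj(E)}(1)$ is Fulton's $\mathcal O(1)$ and $s_i(E)=s_i^{\mathrm F}(E^\vee)$. You then import three results from Fulton: the inverse relation $c^{\mathrm F}_t s^{\mathrm F}_t=1$, commutativity of Segre operators, and $c_i(E^\vee)=(-1)^ic_i(E)$. Your computation $c_t(E)s_{-t}(E)=c_{-t}(E^\vee)s^{\mathrm F}_{-t}(E^\vee)=1$ is right, and the rank-one check does pin down the $\mathcal O(\pm1)$ convention.

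\textbf{The paper's route.} The paper never passes to the dual. It takes from the Stacks Project the projective bundle formula, namely $s_0(E)=\mathrm{id}$ and $\pi_*(\xi^a\cap\pi^*\alpha)=0$ for $0\le a<r-1$. It also takes the quotient-bundle relation $\sum_{j=0}^r(-1)^j\xi^{r-j}\cap\pi^*(c_j(E)\cap\alpha)=0$. It caps this relation with $\xi^{m-1}$, pushes forward, and uses the projection formula to obtain $\sum_{j=0}^{\min(m,r)}(-1)^jc_j(E)\cap s_{m-j}(E)(\alpha)=0$, i.e.\ $c_{-t}(E)s_t(E)=\mathrm{id}$. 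Commutativity of the $s_m(E)$ is then deduced from this recurrence rather than cited, since the recurrence expresses them as polynomials in the commuting Chern operators.

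\textbf{Your fallback.} This is exactly the paper's argument; your nowhere-vanishing-section argument is one way to prove the cited relation. There, however, you should also invoke the vanishing $\pi_*(\xi^a\cap\pi^*\alpha)=0$ for $a<r-1$, not only $\pi_*(\xi^{r-1}\cap\pi^*\alpha)=\alpha$. This vanishing is what discards the terms with $p>m$.

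\textbf{Comparison.} Your Fulton reduction is shorter, but it rests entirely on matching conventions: lines versus quotients, and Fulton's definition $c_t:=s_t^{-1}$ as the ``ordinary'' Chern class. The paper's route is self-contained relative to the Grothendieck-style definition of Chern classes it cites, and it needs no duality bookkeeping.
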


\begin{proof}
We use only ordinary Chow groups in this proof.
The projection $\pi$ is flat of relative dimension $r-1$, so its flat
pullback is defined.
The projection is also proper, so its pushforward is defined.

The projective bundle formula gives
$$
s_0(E)(\alpha)=\alpha,\qquad
\pi_*\!\left(\xi^a\cap\pi^*\alpha\right)=0\quad(0\leq a<r-1).
$$
See \cite[Tag~02TW]{Stacks}.
The ordinary Chern classes satisfy the relation
$$
\sum_{j=0}^{r}(-1)^j\xi^{r-j}\cap\pi^*\!\left(c_j(E)\cap\alpha\right)=0.
$$
This is the relation for the projective bundle that parametrizes quotients;
see \cite[Tag~02U6]{Stacks}.

Fix $m\geq1$.
Apply $\xi^{m-1}\cap-$ to this relation.
Then apply $\pi_*$.
The projection formula moves each Chern operator outside the pushforward.
The vanishing above removes the terms with $j>m$.
We obtain
$$
\sum_{j=0}^{\min(m,r)}(-1)^j c_j(E)\cap s_{m-j}(E)(\alpha)=0.
$$
This recurrence expresses each $s_m(E)$ as a polynomial in the ordinary
Chern operators.
The Chern operators commute, so the operators $s_m(E)$ also commute.

Write
$$
s_t(E):=\sum_{i\geq0}s_i(E)t^i,\qquad
c_t(E):=\sum_{j=0}^{r}\bigl(c_j(E)\cap-\bigr)t^j.
$$
The recurrence and the identity $s_0(E)=\operatorname{id}$ give
$$
c_{-t}(E)s_t(E)=\operatorname{id}.
$$
Replace $t$ by $-t$.
This gives
$$
c_t(E)s_{-t}(E)=\operatorname{id}.
$$
Equation~\eqref{eq:chern-segre} defines the polynomials $C_j$ by the same
inverse identity.
A formal power series with constant term equal to the identity has a unique
inverse.
Comparing coefficients therefore proves the assertion.
\end{proof}

\section{Tautological numerical adelic intersection algebra}
\label{sec:numerical-adelic-chow-ring}

Section~1 defines numerical characteristic functionals by adelic intersection
pairings on fiber products of projective bundles.  We now organize these
functionals and their universal polynomial operations into a graded algebra.
Multiplication in this algebra is defined formally.  The degree-$(d+1)$
functional in \cref{def:top-degree} is the corresponding adelic intersection
number.

Let $K$ be a number field and let $X$ be a projective integral
$K$-variety of dimension $d$.  We retain the notation of the preceding
section.  Thus $\widehat{\Pic}(X)_{\intg,\RR}$ denotes the real vector space
of integrable adelic line bundles.  For an integrable adelic vector bundle
$\overline E$ on $X$, let
$\overline\xi_E:=\overline{\mathcal O}_{\BP(E)}(1)$ denote its tautological
adelic quotient line bundle.

This construction uses adelic intersection numbers on fiber products
of projective bundles.  The resulting ring is the maximal
numerically separated quotient of the graded algebra
generated by adelic divisor classes and tautological Segre symbols.  In
particular, it provides mixed products of characteristic classes attached to
different adelic vector bundles.
The required adelic intersection theory is supplied by
\cite{YZ}; our projective-bundle and Segre sign conventions follow
\cite[Chapter~3]{Fulton}.
For the classical theory of arithmetic Chow rings, see
\cite{BurgosChow,BKK,GS}.

\begin{definition}[Tautological algebra]
	\label{def:tautological-algebra}
	Let $\mathscr T^\bullet\ad(X)$ be the commutative graded $\RR$-algebra
	generated by
	\begin{enumerate}[label=\textup{(\roman*)},leftmargin=2.2em]
		\item $\lambda(\overline L)$ of degree $1$, for
		$\overline L\in\widehat{\Pic}(X)_{\intg,\RR}$;
		\item $\sigma_i(\overline E)$ of degree $i$, for $i\geq0$ and every
		integrable adelic vector bundle $\overline E$ of positive rank on $X$.
	\end{enumerate}
	We impose the relations
	\begin{align}
		\lambda(a\overline L+b\overline M)
		&=a\lambda(\overline L)+b\lambda(\overline M),
		\label{eq:lambda-linearity}\\
		\sigma_0(\overline E)&=1,
		\label{eq:sigma-zero}\\
		\sigma_i(\overline M)&=\lambda(\overline M)^i
		\qquad\text{if $\rank M=1$},
		\label{eq:rank-one-relation}
	\end{align}
	and identify symbols attached to isometric adelic data.
\end{definition}

The generator $\sigma_i(\overline E)$ is a formal lift of the numerical Segre
functional $\widehat s_i(\overline E)$ defined in Section~1.  We define
products of these symbols in the formal algebra.
Its image in the numerical quotient will again be denoted by
$\widehat s_i(\overline E)$.

\subsection{The top-degree functional}

It suffices to define the degree on a monomial
\begin{equation}
	M=
	\prod_{a=1}^{m}\sigma_{i_a}(\overline E_a)
	\prod_{j=1}^{n}\lambda(\overline L_j),
	\qquad
	\sum_{a=1}^{m}i_a+n=d+1.
	\label{eq:top-monomial}
\end{equation}
Write $r_a=\rank E_a$ and form
\begin{equation}
u:	Y_M=
	\BP(E_1)\times_X\cdots\times_X\BP(E_m)
	\to X.
	\label{eq:ring-fiber-product}
\end{equation}
Let $q_a\colon Y_M\to\BP(E_a)$ be the projection and put
$\overline\xi_a=q_a^*\overline\xi_{E_a}$.

\begin{definition}[Top degree]
	\label{def:top-degree}
	For the monomial in \cref{eq:top-monomial}, define
	\begin{equation}
		\widehat{\deg}_X(M):=
		\widehat{\deg}_{Y_M}\!\left(
		\prod_{a=1}^{m}
		\widehat c_1(\overline\xi_a)^{r_a-1+i_a}
		\prod_{j=1}^{n}
		\widehat c_1(u^*\overline L_j)
		\right).
		\label{eq:degree-monomial}
	\end{equation}
	For a finite formal sum of degree-$(d+1)$ monomials, write
	$$
	F=\sum_{\nu=1}^{N}a_\nu M_\nu,
	\qquad a_\nu\in\RR,
	$$
	and set
	$$
	\widetilde{\deg}_X(F):=
	\sum_{\nu=1}^{N}a_\nu\widehat{\deg}_X(M_\nu).
	$$
	\end{definition}

\begin{lemma}
	\label{lem:degree-dimension}
	The right-hand side of \cref{eq:degree-monomial} is a well-defined absolute
	adelic intersection number.
\end{lemma}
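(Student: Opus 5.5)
The plan is to check three things: that $Y_M$ is a projective integral $K$-variety, that every factor in \cref{eq:degree-monomial} is an integrable adelic line bundle on $Y_M$, and that the number of factors equals $\dim Y_M+1$. Once these hold, the right-hand side is an instance of the absolute pairing \cref{eq:adelic-intersection}. That pairing is defined on projective integral varieties and depends only on isometry classes by (I2), so the number is well defined.

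\emph{Step 1: the space.} Each $\BP(E_a)\to X$ is a projective bundle, hence smooth, projective, and surjective of relative dimension $r_a-1$. Fibre products of such morphisms are again projective bundles over the previous stage. Inductively, $u\colon Y_M\to X$ is smooth and projective of relative dimension $\sum_a(r_a-1)$. Over an integral base, an iterated projective bundle is integral: it is locally $X\times\BP^{r_1-1}\times\cdots$, so it is irreducible and reduced. Hence $Y_M$ is a projective integral $K$-variety of dimension $d+\sum_a(r_a-1)$. When $m=0$, set $Y_M=X$ and $u=\mathrm{id}$.

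\emph{Step 2: integrability of the factors.} The factors are pullbacks. $\overline\xi_a=q_a^*\overline\xi_{E_a}$ is integrable because $\overline\xi_{E_a}$ is integrable by assumption and pullback preserves integrability (as used in \cref{lem:pullback}). Likewise $u^*\overline L_j$ is integrable. Real coefficients are handled by multilinear extension, as in \cref{thm:segre-basic}.

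\emph{Step 3: the degree count.} There are $\sum_a(r_a-1+i_a)+n$ factors. Using $\sum_a i_a+n=d+1$, this equals $\sum_a(r_a-1)+d+1=\dim Y_M+1$, so the product is a top intersection.

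The remaining concern is independence of choices. The fibre product is canonical only up to reordering factors. I would argue as in \cref{lem:fiber-product}: a permutation isomorphism over $X$ identifies the two integrands, so (I2) gives the same number. Isometric replacement of some $\overline E_a$ is handled the same way, since isometric data give isometric pullbacks. The argument is routine throughout; the only step needing care is the integrality of $Y_M$, which the local triviality of projective bundles settles.
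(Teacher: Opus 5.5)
Your proposal is correct and matches the paper's proof. It uses the same three points: integrality of $Y_M$ from the Zariski-local product structure of projective bundles, integrability of every factor because pullback preserves integrability, and the count $\sum_a(r_a-1+i_a)+n=\dim Y_M+1$. Your closing remark on reordering and isometric replacement is not needed for this lemma; the paper treats it separately in \cref{prop:degree-well-defined}.
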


\begin{proof}
	The fiber product $Y_M$ is projective over $K$.  Since $X$ is integral
	and a projective bundle is Zariski locally a product with projective
	space, the iterated fiber product $Y_M$ is integral.  Moreover,
	$$
	\dim Y_M=d+\sum_{a=1}^{m}(r_a-1).
	$$
	The number of first Chern factors in \cref{eq:degree-monomial} is
	$$
	\sum_{a=1}^{m}(r_a-1+i_a)+n
	=\sum_{a=1}^{m}(r_a-1)+(d+1)
	=\dim Y_M+1.
	$$
	Every metrized line bundle in the intersection is integrable, because
	integrability is preserved by pullback.  Hence the absolute adelic
	intersection pairing of \cite{YZ} applies.
\end{proof}

\begin{proposition}
	\label{prop:degree-well-defined}
	The value $\widetilde{\deg}_X(F)$ is unchanged if the factors of the
	monomials are reordered or if $F$ is modified using the relations in
	Definition~\ref{def:tautological-algebra}.  Hence it depends only on the
	image of $F$ in $\mathscr T^{d+1}\ad(X)$ and defines a linear functional
	$$
	\widehat{\deg}_X\colon
	\mathscr T^{d+1}\ad(X)\longrightarrow\RR
	$$
	on the tautological algebra.
\end{proposition}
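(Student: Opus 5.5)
The plan is to show that $\widehat{\deg}_X$, defined on formal monomials by \cref{eq:degree-monomial}, factors through the ideal of relations. Since $\mathscr T^\bullet\ad(X)$ is the quotient of the free commutative polynomial algebra on the symbols $\lambda(\overline L)$ and $\sigma_i(\overline E)$ by the ideal generated by \cref{eq:lambda-linearity,eq:sigma-zero,eq:rank-one-relation} and by isometry identifications, it suffices to check three things. First, $\widehat{\deg}_X$ is well defined on monomials up to reordering. Second, it extends linearly to the free degree-$(d+1)$ part. Third, it kills every element of the form (relation)$\cdot$(monomial) of total degree $d+1$. Linearity is then automatic from the definition of $\widetilde{\deg}_X$ on formal sums.

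\emph{Reordering.} Permuting the $\lambda$ factors only permutes the inputs of a symmetric pairing, property (I1). Permuting the $\sigma$ factors induces an $X$-isomorphism of fibre products $Y_M\to Y_{M'}$ that permutes the projections $q_a$ and hence the $\overline\xi_a$. Invariance of adelic intersections under isomorphism gives equality, exactly as in \cref{lem:fiber-product}.

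\emph{Isometry.} Isometric adelic data give isometric pullbacks to $Y_M$, so (I2) applies.

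\emph{Linearity relation \cref{eq:lambda-linearity}.} Multiply by a monomial $N$. The resulting fibre product $Y$ is the same for all three terms, and $u^*$ is linear. Multilinearity (I1) therefore gives the identity.

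\emph{Relation \cref{eq:sigma-zero}.} Compare $\sigma_0(\overline E)N$ with $N$. Write $Y_{\sigma_0(\overline E)N}=\BP(E)\times_X Y_N$, with projection $\pi\colon Y_{\sigma_0(\overline E)N}\to Y_N$, which is $\BP(u_N^*E)$. By \cref{lem:pullback}, the relevant factor is $\overline\xi_{u_N^*E}$. Its exponent is $r-1$, and all remaining factors are pulled back from $Y_N$. \cref{thm:segre-basic}(e), applied with $Z=Y_N$ (normal or not, using the same Deligne-pairing argument as in that proof, which needs only integrality and flatness over $K$), gives
\[
\widehat{\deg}_{\BP(u_N^*E)}\bigl(\widehat c_1(\overline\xi)^{r-1}\cdot\pi^*(\cdots)\bigr)=\widehat{\deg}_{Y_N}(\cdots).
\]
One could also pass to the normalization via the birational projection formula. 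The same argument handles the case where $N$ has no $\sigma$ factors, with $Y_N=X$.

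\emph{Rank-one relation \cref{eq:rank-one-relation}.} For $\rank M=1$, the map $\BP(M)\to X$ is an isomorphism and $\overline\xi_M\simeq\overline M$ isometrically, as in \cref{prop:rank-one}. Hence $Y_{\sigma_i(\overline M)N}\simeq Y_N$, and $\overline\xi$ is identified with $u^*\overline M$ with exponent $0+i$. Both sides then become the same intersection number.

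\emph{Main obstacle.} The step I expect to be delicate is \cref{eq:sigma-zero}, because the fibre product $Y_N$ need not be normal. I would first try to invoke \cite[Lemma~4.6.1(3)]{YZ} directly for $\BP(u_N^*E)\to Y_N\to\operatorname{Spec}K$, as in the proof of \cref{thm:segre-basic}(e), noting that only integrality and flatness were used. Integrality of $Y_N$ is provided by \cref{lem:degree-dimension}.

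Finally, the check must cover all elements of the ideal, not only relation$\cdot$monomial products of total degree $d+1$. Every element of the ideal in degree $d+1$ is a finite sum of (relation)$\cdot$(monomial) terms, where the relation term is homogeneous; \cref{eq:rank-one-relation} and \cref{eq:sigma-zero} are homogeneous. So vanishing on these products suffices, and $\widehat{\deg}_X$ descends to a linear functional on $\mathscr T^{d+1}\ad(X)$.
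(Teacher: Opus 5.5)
Your proposal is correct and follows the paper's proof essentially step by step: reordering via the canonical $X$-isomorphism of fibre products together with symmetry; the linearity and isometry relations via multilinearity and isometry invariance; the rank-one relation via $\BP(M)=X$ with $\overline\xi_M\simeq\overline M$; and $\sigma_0(\overline E)=1$ via the Yuan--Zhang Deligne-pairing projection formula applied to $B\times_X\BP(E)\to B\to\Spec K$, which (as you anticipated) uses only integrality of $B$ and flatness over $K$, not normality of $B$. The paper carries out this last step first for genuine integrable adelic line bundles $\overline N_j\in\widehat{\Pic}(B)_{\intg}$ and then extends to $\widehat{\Pic}(B)_{\intg,\RR}$ by finite multilinear expansion, a detail worth stating explicitly in your write-up.
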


\begin{proof}
Let $\tau$ be a permutation of the Segre class factors in
\cref{eq:top-monomial}, and let $M^\tau$ denote the resulting monomial.
There is a canonical $X$-isomorphism
$$
\varphi_\tau\colon Y_M\xrightarrow{\sim}Y_{M^\tau}
$$
which permutes the corresponding factors and satisfies
$$
u_{M^\tau}\circ\varphi_\tau=u_M,
\qquad
q_{\tau(a)}^{M^\tau}\circ\varphi_\tau=q_a^M.
$$
Consequently, $\varphi_\tau$ identifies the metrized tautological line
bundles occurring in the two intersection products.  Invariance of adelic
intersection numbers under isomorphisms, together with their symmetry,
therefore shows that $\widehat{\deg}_X(M)$ is independent of the ordering
of all Segre and divisor factors.

For compatibility with the defining relations, let $M_0$ be a monomial of
degree $d$.  For
$\overline L,\overline M\in\widehat{\Pic}(X)_{\intg,\RR}$ and
$a,b\in\RR$, one has
$$
\widehat c_1\!\left(
u^*(a\overline L+b\overline M)
\right)
=
a\,\widehat c_1(u^*\overline L)
+
b\,\widehat c_1(u^*\overline M).
$$
The multilinearity of the adelic intersection pairing hence gives
$$
\widehat{\deg}_X\!\left(
M_0\lambda(a\overline L+b\overline M)
\right)
=
a\,\widehat{\deg}_X\!\left(M_0\lambda(\overline L)\right)
+
b\,\widehat{\deg}_X\!\left(M_0\lambda(\overline M)\right),
$$
which proves compatibility with \cref{eq:lambda-linearity}.  An
isometry $\overline E\simeq\overline E'$ induces an $X$-isomorphism
$\BP(E)\simeq\BP(E')$ identifying the metrized tautological quotient line
bundles, while an isometry $\overline L\simeq\overline L'$ identifies their
pulled-back metrized line bundles.  The resulting intersection numbers are
therefore unchanged, so the isometry relations are also respected.
	
	If $\overline M$ has rank one, then $\BP(M)=X$ and its tautological quotient
	line bundle is $\overline M$.  Thus replacing
	$\sigma_i(\overline M)$ by $\lambda(\overline M)^i$ leaves
	\cref{eq:degree-monomial} unchanged.

	It remains to check the relation $\sigma_0(\overline E)=1$.
	Let $H$ be a monomial of degree $d+1$.
	Form $u:B\to X$ from its Segre factors as in
	\cref{eq:ring-fiber-product}, and put $b:=\dim B$.
	If $H$ has no Segre factors, take $B=X$ and $u=\operatorname{id}_X$.
	List the integrable adelic line bundles in the intersection defining
	$\widehat{\deg}_X(H)$ as $\overline N_1,\ldots,\overline N_{b+1}$.
	First assume $\overline N_j\in\widehat{\Pic}(B)_{\intg}$ for every $j$.
	Let $r:=\operatorname{rk}E$ and set
	$$
	p:P:=B\times_X\mathbb P(E)\longrightarrow B.
	$$
	Let $\overline\xi$ be the pullback of the original metrized line bundle
	$\overline\xi_E$ to $P$.
	The schemes $B$ and $P$ are integral and projective over $K$.
	Their structure morphisms to $\operatorname{Spec}K$ are flat, and
	$\operatorname{Spec}K$ is normal.
	For the generic point $\eta$ of $B$, we have
	$$
	P_\eta\simeq\mathbb P^{r-1}_{K(B)},\qquad
	\xi|_{P_\eta}\simeq\mathcal O_{\mathbb P^{r-1}_{K(B)}}(1),\qquad
	\deg_{P_\eta}\!\left(c_1(\xi|_{P_\eta})^{r-1}\right)=1.
	$$
	For $r=1$, the last number is the degree of
	$\operatorname{Spec}K(B)$ over $K(B)$.
	Apply \cite[Lemma~4.6.1(3), pp.~128--129, and Section~4.5]{YZ} to
	$P\xrightarrow{p}B\to\operatorname{Spec}K$.
	Take $r-1$ copies of $\overline\xi$ as the relative inputs.
	The generic fibre coefficient is $1$.
	Writing the Deligne pairings over $K$ with subscripts $P/K$ and $B/K$,
	we obtain the isometry
	$$
	\left\langle\underbrace{\overline\xi,\ldots,\overline\xi}_{r-1\ \mathrm{times}},
	p^*\overline N_1,\ldots,p^*\overline N_{b+1}\right\rangle_{P/K}
	\simeq\left\langle\overline N_1,\ldots,\overline N_{b+1}\right\rangle_{B/K}.
	$$
	Taking normalized adelic degrees and using
	\cite[Lemma~4.4.3(1) and Section~4.5]{YZ} gives
	$$
	\widehat{\deg}_X\!\left(H\sigma_0(\overline E)\right)
	=\widehat{\deg}_X(H).
	$$
	Finite real linear expansion gives the same equality for real divisor
	factors and real linear combinations of monomials.
	Thus the relation $\sigma_0(\overline E)=1$ preserves the degree.
\end{proof}

\begin{proposition}
	\label{prop:degree-continuity-positivity}
	Fix a monomial $M$ as in \cref{eq:top-monomial}.  Let
	$\overline E_{a,\nu}$ and $\overline L_{j,\nu}$ be model approximations of
	$\overline E_a$ and $\overline L_j$.  Assume that all their pullbacks to
	$Y_M$ form a simultaneously controlled family.  Then the intersection
	numbers in \cref{eq:degree-monomial} converge to
	$\widehat{\deg}_X(M)$.  If every $\overline E_a$ and every
	$\overline L_j$ is nef, then $\widehat{\deg}_X(M)\geq0$.
\end{proposition}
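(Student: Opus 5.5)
The statement has two parts: convergence of the model intersection numbers, and nonnegativity for nef inputs. I treat convergence as an application of the controlled continuity \eqref{eq:controlled-intersection-continuity} on the integral variety $Y_M$, and positivity as property (I5) applied on $Y_M$.

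\textbf{Convergence.} By \cref{lem:degree-dimension}, $Y_M$ is a projective integral $K$-variety of dimension $d+\sum_a(r_a-1)$. The product in \cref{eq:degree-monomial} has exactly $\dim Y_M+1$ factors.

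For each $\nu$, the approximations $\overline E_{a,\nu}$ induce model metrics $\overline\xi_{E_a,\nu}$ on $\BP(E_a)$. By \cref{lem:pullback}, their pullbacks satisfy $q_a^*\overline\xi_{E_a,\nu}=:\overline\xi_{a,\nu}$, and these are model adelic line bundles on $Y_M$. The same holds for $u^*\overline L_{j,\nu}$.

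The model intersection numbers
$$\widehat{\deg}_{Y_M}\Bigl(\prod_a\widehat c_1(\overline\xi_{a,\nu})^{r_a-1+i_a}\prod_j\widehat c_1(u^*\overline L_{j,\nu})\Bigr)$$
are independent of the projective model used to compute them. This follows by the same dominating-model and projection-formula argument as in \cref{prop:model-independence}.

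Each factor $\overline\xi_{a,\nu}$ is repeated $r_a-1+i_a$ times. Finite repetition does not change the control data: the same decompositions, the same set $S$, and the same sequence $(\epsilon_m)$ still work. So the full list of factors is a simultaneously controlled family on $Y_M$, by hypothesis. Its limits are the pullbacks $q_a^*\overline\xi_{E_a}=\overline\xi_a$ and $u^*\overline L_j$. Here one checks that the limit of the pullbacks is the pullback of the limit: pullback commutes with the model metrics outside $S$ and does not increase the sup norms of logarithmic ratios. Then \eqref{eq:controlled-intersection-continuity} gives convergence to \cref{eq:degree-monomial}, which is $\widehat{\deg}_X(M)$ by \cref{def:top-degree}.

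\textbf{Positivity.} If $\overline E_a$ is nef, then $\overline\xi_{E_a}$ is nef by \cref{def:adelic-vector-bundle}(iii). Pullback preserves nefness, as noted in \cref{lem:pullback}. Hence each $\overline\xi_a$ and each $u^*\overline L_j$ is a nef adelic line bundle on $Y_M$. Property (I5) then gives $\widehat{\deg}_X(M)\geq0$.

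\textbf{Main obstacle.} The delicate point is matching the stated hypothesis with \cref{def:controlled-adelic-convergence} on $Y_M$. That definition requires one projective flat model over $U$ carrying all the fixed rational line bundles. I would obtain it by enlarging $S$ so that the fibre product $\BP(\mathcal E_1)\times\cdots\times\BP(\mathcal E_m)$ over the fixed model of $X$ extends over $U$, and then taking the closure of $Y_M$ in it. Enlarging $S$ keeps the supremum bounds at the old places, and the added places carry the fixed model metrics, so the Cauchy estimates \eqref{eq:cauchy-boundary-bounds} persist. Since the statement already assumes simultaneous control of the pullbacks, this can be taken as given, and the proof reduces to the invocation above.
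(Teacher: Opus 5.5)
Your proposal is correct and follows essentially the same route as the paper. Convergence is the controlled continuity \eqref{eq:controlled-intersection-continuity} applied on the fixed integral variety $Y_M$, after checking that there are $\dim Y_M+1$ factors and that repeated factors share the same control data; nonnegativity comes from nefness of $\overline\xi_{E_a}$ by definition, preservation of nefness under pullback, and property (I5). Your extra remarks on model independence and on enlarging $S$ are harmless: the paper treats them separately (in \cref{prop:model-independence} and at the start of \cref{sec:curve-pb}) or absorbs them into the hypothesis.
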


\begin{proof}
	For controlled continuity, fix the underlying vector bundles
	$E_1,\ldots,E_m$ and line bundles $L_1,\ldots,L_n$, so that the
	fiber product
	$$
	Y_M=\BP(E_1)\times_X\cdots\times_X\BP(E_m)
	$$
	is fixed. For each $\nu\geq1$, let
	$\overline E_{a,\nu}$ and $\overline L_{j,\nu}$ be integrable adelic
	model metrics.  Assume that the sequences
	$$
	\bigl(q_a^*\overline\xi_{E_{a,\nu}}\bigr)_{\nu\geq1}
	\quad(1\leq a\leq m),
	\qquad
	\bigl(u^*\overline L_{j,\nu}\bigr)_{\nu\geq1}
	\quad(1\leq j\leq n)
	$$
	form a simultaneously controlled family on $Y_M$, with respective limits
	$q_a^*\overline\xi_{E_a}$ and $u^*\overline L_j$.
	
	Set
	$$
	N=\dim Y_M
	=d+\sum_{a=1}^m(r_a-1).
	$$
	By \cref{eq:top-monomial}, the intersection product defining
	$\widehat{\deg}_X(M)$ contains
	$$
	\sum_{a=1}^m(r_a-1+i_a)+n=N+1
	$$
	metrized line-bundle factors.  \Cref{eq:controlled-intersection-continuity}
	on $Y_M$ therefore gives
	\begin{align*}
		&\widehat{\deg}_{Y_M}\!\left(
		\prod_{a=1}^m
		\widehat c_1(q_a^*\overline\xi_{E_{a,\nu}})^{r_a-1+i_a}
		\prod_{j=1}^n
		\widehat c_1(u^*\overline L_{j,\nu})
		\right)\\
		&\qquad\longrightarrow
		\widehat{\deg}_{Y_M}\!\left(
		\prod_{a=1}^m
		\widehat c_1(q_a^*\overline\xi_{E_a})^{r_a-1+i_a}
		\prod_{j=1}^n
		\widehat c_1(u^*\overline L_j)
		\right).
	\end{align*}
	Thus the expression has the asserted controlled continuity.
	
	Assume that every $\overline E_a$ is nef and every
	$\overline L_j$ is nef. By definition,
	$\overline\xi_{E_a}$ is a nef adelic line bundle on $\BP(E_a)$.
	Since nefness is preserved under pullback, all the adelic line bundles
	$$
	q_a^*\overline\xi_{E_a}
	\qquad\text{and}\qquad
	u^*\overline L_j
	$$
	are nef on $Y_M$. The nonnegativity theorem for top intersections of
	nef adelic line bundles consequently yields
	$$
	\widehat{\deg}_{Y_M}\!\left(
	\prod_{a=1}^m
	\widehat c_1(q_a^*\overline\xi_{E_a})^{r_a-1+i_a}
	\prod_{j=1}^n
	\widehat c_1(u^*\overline L_j)
	\right)\geq0.
	$$
	By \cref{eq:degree-monomial}, the left-hand side is
	$\widehat{\deg}_X(M)$.  This proves nonnegativity.
\end{proof}

\subsection{The numerical radical and the quotient ring}

\begin{definition}[Numerical radical]
	\label{def:numerical-radical}
	For $0\leq p\leq d+1$, set
	\begin{equation}
		\mathscr I^p_{\mathrm{num}}(X)=
		\left\{
		\alpha\in\mathscr T^p\ad(X):
		\widehat{\deg}_X(\alpha\beta)=0
		\text{ for every }
		\beta\in\mathscr T^{d+1-p}\ad(X)
		\right\}.
		\label{eq:numerical-radical}
	\end{equation}
	For $p>d+1$, put
	$\mathscr I^p_{\mathrm{num}}(X)=\mathscr T^p\ad(X)$, and write
	$$
	\mathscr I^\bullet_{\mathrm{num}}(X)
	=\bigoplus_{p\geq0}\mathscr I^p_{\mathrm{num}}(X).
	$$
\end{definition}

\begin{lemma}[Ideal property]
	\label{lem:radical-ideal}
	The graded vector space $\mathscr I^\bullet_{\mathrm{num}}(X)$ is a homogeneous
	ideal of $\mathscr T^\bullet\ad(X)$.
\end{lemma}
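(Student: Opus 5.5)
The plan is to check the two defining properties of a homogeneous ideal directly. First, each graded piece $\mathscr I^p_{\mathrm{num}}(X)$ must be a linear subspace. Second, multiplying by a homogeneous element must preserve the ideal. Homogeneity is built in, since $\mathscr I^\bullet_{\mathrm{num}}(X)$ is defined as a direct sum of graded pieces.

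\textbf{Linear subspace.} By \cref{prop:degree-well-defined}, $\widehat{\deg}_X\colon\mathscr T^{d+1}\ad(X)\to\RR$ is a well-defined linear functional on the quotient algebra. Multiplication in $\mathscr T^\bullet\ad(X)$ is bilinear. Hence, for fixed $\beta$, the map $\alpha\mapsto\widehat{\deg}_X(\alpha\beta)$ is linear. So $\mathscr I^p_{\mathrm{num}}(X)$ is an intersection of kernels of linear maps, and therefore a subspace. For $p>d+1$ the piece is all of $\mathscr T^p\ad(X)$, which is trivially a subspace.

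\textbf{Absorption.} Since the algebra is graded, it suffices to take $\alpha\in\mathscr I^p_{\mathrm{num}}(X)$ and a homogeneous $\gamma\in\mathscr T^q\ad(X)$, and show $\alpha\gamma\in\mathscr I^{p+q}_{\mathrm{num}}(X)$. There are two cases.
\begin{enumerate}[label=\textup{(\roman*)},leftmargin=2.2em]
\item If $p+q>d+1$, then $\alpha\gamma$ lies in $\mathscr T^{p+q}\ad(X)=\mathscr I^{p+q}_{\mathrm{num}}(X)$ by definition.
\item If $p+q\le d+1$, take any $\beta\in\mathscr T^{d+1-p-q}\ad(X)$. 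The algebra is commutative and associative, so $(\alpha\gamma)\beta=\alpha(\gamma\beta)$. Here $\gamma\beta\in\mathscr T^{d+1-p}\ad(X)$, so the hypothesis on $\alpha$ gives $\widehat{\deg}_X(\alpha(\gamma\beta))=0$.
\end{enumerate}
A general element of the algebra is a sum of homogeneous ones, so linearity gives the absorption property for arbitrary multipliers.

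\textbf{Main point to justify.} The only substantive step is that $\widehat{\deg}_X$ is well defined on the quotient $\mathscr T^{d+1}\ad(X)$, rather than only on formal monomials. This is needed so that the identity $(\alpha\gamma)\beta=\alpha(\gamma\beta)$, which holds in the algebra, transfers to equal degrees. This is exactly what \cref{prop:degree-well-defined} supplies: invariance under reordering and under the defining relations. With that in hand the remaining argument is formal, and I expect no further obstacle.
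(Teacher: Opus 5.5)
Your argument is correct and is the paper's proof: you split at $p+q>d+1$, and otherwise write $\widehat{\deg}_X((\alpha\gamma)\beta)=\widehat{\deg}_X(\alpha(\gamma\beta))=0$ using $\gamma\beta\in\mathscr T^{d+1-p}\ad(X)$. Your extra check that each $\mathscr I^p_{\mathrm{num}}(X)$ is a linear subspace, via the linearity of $\widehat{\deg}_X$ from \cref{prop:degree-well-defined}, is left implicit in the paper and is a harmless addition.
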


\begin{proof}
	The homogeneity of
	$\mathscr I^\bullet_{\mathrm{num}}(X)$ follows from its
	definition.  For the ideal property, let
	$\alpha\in\mathscr I^p_{\mathrm{num}}(X)$ and
	$\gamma\in\mathscr T^q\ad(X)$.  If $p+q>d+1$, the required containment
	holds by definition.  Otherwise, for every
	$\beta\in\mathscr T^{d+1-p-q}\ad(X)$,
	$$
	\widehat{\deg}_X((\alpha\gamma)\beta)
	=\widehat{\deg}_X(\alpha(\gamma\beta))=0,
	$$
	because $\gamma\beta$ has degree $d+1-p$.
\end{proof}

\begin{definition}[Tautological numerical adelic ring]
	\label{def:numerical-adelic-chow-ring}
	The \emph{tautological numerical adelic ring} of $X$ is the graded
	quotient
	\begin{equation}
		\widehat N^\bullet\ad(X)
		:=
		\mathscr T^\bullet\ad(X)\big/
		\mathscr I^\bullet_{\mathrm{num}}(X).
		\label{eq:numerical-adelic-chow-ring}
	\end{equation}
	For $\alpha\in\mathscr T^\bullet\ad(X)$, denote its image by
	$[\alpha]_{\mathrm{num}}$.  By \cref{lem:radical-ideal}, multiplication
	is induced from $\mathscr T^\bullet\ad(X)$.  Thus, for
	$\alpha\in\mathscr T^p\ad(X)$ and $\beta\in\mathscr T^q\ad(X)$,
	$$
	[\alpha]_{\mathrm{num}}[\beta]_{\mathrm{num}}
	:=
	[\alpha\beta]_{\mathrm{num}}
	\in\widehat N^{p+q}\ad(X).
	$$
\end{definition}

\begin{proposition}[Numerical duality]
	\label{prop:complementary-duality}
	For every $0\leq p\leq d+1$, multiplication followed by the
	top-degree functional induces a nondegenerate pairing
	$$
	\widehat N^p\ad(X)\times
	\widehat N^{d+1-p}\ad(X)
	\longrightarrow \RR,
	\qquad
	([\alpha]_{\mathrm{num}},[\beta]_{\mathrm{num}})
	\longmapsto
	\widehat{\deg}_X(\alpha\beta).
	$$
\end{proposition}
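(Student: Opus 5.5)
The plan is a formal linear-algebra argument: well-definedness of the induced pairing, then nondegeneracy read off from the definition of the numerical radical. No analytic input beyond \cref{prop:degree-well-defined} and \cref{lem:radical-ideal} is needed.

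First I will check that the pairing descends to the quotient. By \cref{prop:degree-well-defined}, $\widehat{\deg}_X$ is a well-defined linear functional on $\mathscr T^{d+1}\ad(X)$, and $\mathscr T^\bullet\ad(X)$ is commutative. Hence $(\alpha,\beta)\mapsto\widehat{\deg}_X(\alpha\beta)$ is a bilinear form on $\mathscr T^p\ad(X)\times\mathscr T^{d+1-p}\ad(X)$. Suppose $\alpha\in\mathscr I^p_{\mathrm{num}}(X)$. Then $\widehat{\deg}_X(\alpha\beta)=0$ for every $\beta$ of complementary degree, directly by \cref{eq:numerical-radical}. Suppose instead $\beta\in\mathscr I^{d+1-p}_{\mathrm{num}}(X)$. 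Commutativity gives $\widehat{\deg}_X(\alpha\beta)=\widehat{\deg}_X(\beta\alpha)=0$, again by \cref{eq:numerical-radical} applied with $d+1-p$ in place of $p$. So the form vanishes whenever either argument lies in the radical. It therefore induces a bilinear pairing on $\widehat N^p\ad(X)\times\widehat N^{d+1-p}\ad(X)$, and this agrees with multiplication in the quotient ring by \cref{def:numerical-adelic-chow-ring} and \cref{lem:radical-ideal}.

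Next comes nondegeneracy on the left. Suppose $[\alpha]_{\mathrm{num}}$ pairs to zero with every class in $\widehat N^{d+1-p}\ad(X)$. Every $\beta\in\mathscr T^{d+1-p}\ad(X)$ has a class $[\beta]_{\mathrm{num}}$, so $\widehat{\deg}_X(\alpha\beta)=0$ for all such $\beta$. This is exactly the condition $\alpha\in\mathscr I^p_{\mathrm{num}}(X)$, so $[\alpha]_{\mathrm{num}}=0$. The right kernel is handled by the same argument in degree $d+1-p$, using commutativity to swap the factors. Both kernels are therefore trivial. I will take nondegeneracy to mean exactly this: trivial left and right kernels, with no finite-dimensionality claim.

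The only point needing care is the boundary cases $p=0$ and $p=d+1$. The quotient is defined degreewise, so the complementary degree $d+1-p$ always lies in $[0,d+1]$, and the definition of $\mathscr I^p_{\mathrm{num}}$ for $p\leq d+1$ applies on both sides. In particular, $\mathscr T^0\ad(X)$ is spanned by $1$ and the $\sigma_0(\overline E)=1$, so $\widehat N^0\ad(X)$ is either $\RR$ or zero, and the argument covers it unchanged. The main obstacle is just being explicit that the pairing is well defined; this rests on commutativity of $\mathscr T^\bullet\ad(X)$ and on \cref{prop:degree-well-defined}.
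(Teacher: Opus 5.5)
Your proposal is correct and follows the paper's own argument: the pairing descends because the numerical radical pairs to zero with everything (you additionally make explicit the use of commutativity of $\mathscr T^\bullet\ad(X)$ for the right factor, which the paper leaves implicit). Nondegeneracy then holds because a class pairing to zero with all of $\mathscr T^{d+1-p}\ad(X)$ lies in $\mathscr I^p_{\mathrm{num}}(X)$ by definition, and symmetrically for the other factor.
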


\begin{proof}
	The pairing is well defined by the definition of the numerical radical.
	Indeed, replacing either $\alpha$ or $\beta$ by an element with the same
	numerical class does not change $\widehat{\deg}_X(\alpha\beta)$.
	
	Let $[\alpha]_{\mathrm{num}}$ belong to the left radical.  Then
	$$
	\widehat{\deg}_X(\alpha\beta)=0
	\qquad
	\text{for every }
	\beta\in\mathscr T^{d+1-p}\ad(X).
	$$
	Hence $\alpha\in\mathscr I^p_{\mathrm{num}}(X)$, and therefore
	$[\alpha]_{\mathrm{num}}=0$. The same argument applies to the second
	factor. Thus the pairing is nondegenerate.
\end{proof}

\noindent\textbf{The algebra without metrics.}
Put $\Pic(X)_{\RR}:=\Pic(X)\otimes_{\ZZ}\RR$.
Let $\mathscr T^\bullet_{\mathrm{alg}}(X)$ be the commutative graded
$\RR$-algebra generated by symbols
$$
\lambda_{\mathrm{alg}}(L)\quad\text{of degree }1,
\qquad
\sigma_{i,\mathrm{alg}}(E)\quad\text{of degree }i.
$$
Here $L$ ranges over $\Pic(X)_{\RR}$.
The symbol $E$ ranges over algebraic vector bundles of positive rank on $X$,
and $i\geq0$.
We identify symbols attached to isomorphic algebraic data.
For $a,b\in\RR$ and $L,M\in\Pic(X)_{\RR}$, impose the relation
$$
\lambda_{\mathrm{alg}}(aL+bM)
=a\lambda_{\mathrm{alg}}(L)+b\lambda_{\mathrm{alg}}(M).
$$
For every vector bundle $E$ and every algebraic line bundle $M$, impose
$$
\sigma_{0,\mathrm{alg}}(E)=1,
\qquad
\sigma_{i,\mathrm{alg}}(M)=\lambda_{\mathrm{alg}}(M)^i.
$$
We impose no other relations at this stage.

\noindent\textbf{The algebraic degree.}
First consider a monomial
$$
A=\prod_{a=1}^{m}\sigma_{i_a,\mathrm{alg}}(E_a)
\prod_{j=1}^{n}\lambda_{\mathrm{alg}}(L_j),
\qquad \sum_{a=1}^{m}i_a+n=d,
$$
where each $L_j$ is an algebraic line bundle.
Write $r_a:=\rank E_a$ and set
$$
u_A:Y_A:=\BP(E_1)\times_X\cdots\times_X\BP(E_m)\longrightarrow X.
$$
For $m=0$, set $Y_A:=X$ and $u_A:=\operatorname{id}_X$.
Let $q_a:Y_A\to\BP(E_a)$ be the projections and put
$\xi_a:=q_a^*\mathcal O_{\BP(E_a)}(1)$.
Define
$$
\deg_X(A):=\deg_{Y_A}\!\left(
\left(\prod_{a=1}^{m}c_1(\xi_a)^{r_a-1+i_a}
\prod_{j=1}^{n}c_1(u_A^*L_j)\right)\cap[Y_A]\right).
$$
Each first Chern class acts as an operator on Chow homology.
The symbol $\deg_{Y_A}$ denotes the degree over $K$ of a zero-cycle.
The scheme $Y_A$ is projective and integral.
The number of Chern operators is
$$
\sum_{a=1}^{m}(r_a-1+i_a)+n
=d+\sum_{a=1}^{m}(r_a-1)=\dim Y_A.
$$
Thus the displayed formula is defined even when $X$ is singular.
Extend it linearly to $L_j\in\Pic(X)_{\RR}$ and to finite real sums
of monomials.

We check the defining relations.
First Chern operators are linear in line bundle classes and commute with
one another.
Hence the formula respects the linearity relations and permutations of
the divisor factors.
Permuting the projective bundle factors induces an isomorphism of $Y_A$
that identifies their tautological line bundles.
This proves invariance under permutations of the Segre factors.
The same argument proves invariance under isomorphisms of the algebraic data.
For a line bundle $M$, the identification $\BP(M)=X$ identifies the
tautological line bundle with $M$.
It therefore proves the relation for a rank one bundle.
Finally, for any projective bundle $h:\BP_V(G)\to V$ of rank $s$,
the projective bundle formula gives
$$
h_*\!\left(c_1(\mathcal O_{\BP_V(G)}(1))^{s-1}\cap h^*\alpha\right)
=\alpha\qquad\text{for every }\alpha\in\mathrm{CH}_*(V)_{\RR};
$$
see \cite[Tag~02TW]{Stacks}.
Apply this formula with $V$ equal to the fibre product of the remaining
projective bundle factors.
It removes a factor $\sigma_{0,\mathrm{alg}}(E)$ without changing the degree.
We have therefore defined a linear functional
$$
\deg_X:\mathscr T^d_{\mathrm{alg}}(X)\longrightarrow\RR.
$$

\noindent\textbf{The algebraic numerical radical.}
For $0\leq p\leq d$, let $\mathscr I^p_{\mathrm{num},\mathrm{alg}}(X)$
consist of those $\alpha\in\mathscr T^p_{\mathrm{alg}}(X)$ such that
$$
\deg_X(\alpha\beta)=0
\qquad\text{for every }\beta\in\mathscr T^{d-p}_{\mathrm{alg}}(X).
$$
For $p>d$, put
$\mathscr I^p_{\mathrm{num},\mathrm{alg}}(X):=\mathscr T^p_{\mathrm{alg}}(X)$.
Set
$$
\mathscr I^\bullet_{\mathrm{num},\mathrm{alg}}(X)
:=\bigoplus_{p\geq0}\mathscr I^p_{\mathrm{num},\mathrm{alg}}(X).
$$
This is a homogeneous ideal.
Indeed, take $\alpha\in\mathscr I^p_{\mathrm{num},\mathrm{alg}}(X)$
and $\gamma\in\mathscr T^q_{\mathrm{alg}}(X)$.
If $p+q>d$, the ideal condition follows from the definition.
Otherwise, every $\beta\in\mathscr T^{d-p-q}_{\mathrm{alg}}(X)$ satisfies
$$
\deg_X((\alpha\gamma)\beta)=\deg_X(\alpha(\gamma\beta))=0,
$$
because $\gamma\beta$ has degree $d-p$.

Define the algebraic numerical ring by
$$
N^\bullet_{\mathrm{alg}}(X)_{\RR}
:=\mathscr T^\bullet_{\mathrm{alg}}(X)
/\mathscr I^\bullet_{\mathrm{num},\mathrm{alg}}(X).
$$
Its multiplication is induced from the formal algebra.
In particular, $N^p_{\mathrm{alg}}(X)_{\RR}=0$ for $p>d$.
Taking $\beta=1$ in degree $d$ shows that $\deg_X$ descends to
$N^d_{\mathrm{alg}}(X)_{\RR}$.
Write $[\alpha]_{\mathrm{num},\mathrm{alg}}$ for the image of a formal element
$\alpha$ in this quotient.
Define
$$
c_1(L):=[\lambda_{\mathrm{alg}}(L)]_{\mathrm{num},\mathrm{alg}},
\qquad
s_i(E):=[\sigma_{i,\mathrm{alg}}(E)]_{\mathrm{num},\mathrm{alg}}.
$$
Put $s_t(E):=\sum_{i\geq0}s_i(E)t^i$.
For a vector bundle $E$ of rank $r$, define
$$
c_i(E):=[t^i]s_{-t}(E)^{-1}\quad(0\leq i\leq r),
\qquad c_i(E):=0\quad(i>r).
$$
These are classes in the algebraic numerical ring.
For a line bundle, this definition of $c_1$ agrees with the preceding one
by the rank one relation.

Forgetting metrics now defines a graded homomorphism
$$
F:\mathscr T^\bullet\ad(X)\longrightarrow\mathscr T^\bullet_{\mathrm{alg}}(X),
\qquad
\lambda(\overline L)\longmapsto\lambda_{\mathrm{alg}}(L),
\quad
\sigma_i(\overline E)\longmapsto\sigma_{i,\mathrm{alg}}(E).
$$
The assignments respect all the defining relations.
An adelic lift of $A\in\mathscr T^\bullet_{\mathrm{alg}}(X)$ means an element
$\overline A\in\mathscr T^\bullet\ad(X)$ with $F(\overline A)=A$.

\begin{lemma}[Normalized vertical class]
	\label{lem:vertical-class}
	There exists
	$\overline L\in\widehat{\Pic}(\Spec K)_{\intg,\RR}$ such that
	$\widehat{\deg}_K(\overline L)=1$. Let
	$v\colon X\to\Spec K$ be the structure morphism and set
	$$
	\varepsilon
	:=
	[\lambda(v^*\overline L)]_{\mathrm{num}}
	\in\widehat N^1\ad(X).
	$$
	Then, for every degree-$d$ algebraic tautological monomial $A$ and every
	adelic lift $\overline A\in\mathscr T^d\ad(X)$ of $A$, one has
	$$
	\widehat{\deg}_X(\varepsilon\overline A)=\deg_X(A).
	$$
	In particular, the left-hand side is independent of the metrics defining
	$\overline A$.  We interpret $\varepsilon\overline A$ as
	$\varepsilon[\overline A]_{\mathrm{num}}$ in the numerical adelic ring.
\end{lemma}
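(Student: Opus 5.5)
The plan is to reduce to a single monomial and then apply the classical formula expressing the adelic intersection of a pullback from $\Spec K$ against a top product on the generic fibre.

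\textbf{Existence of $\overline L$.} Take the trivial line bundle $\mathcal O_{\Spec K}$ and rescale the metric at one archimedean place by a constant factor $e^{-c}$. Its normalized adelic degree is a nonzero multiple of $c$, and we may choose $c$ so that this degree equals $1$. This is a model (hence integrable) adelic line bundle. Real coefficients also allow us simply to scale by a real number.

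\textbf{Reduction to one monomial.} Since $\deg_X$ and $\widehat{\deg}_X$ are linear, it suffices to treat an adelic lift of the form
$$
\overline A=\prod_a\sigma_{i_a}(\overline E_a)\prod_j\lambda(\overline L_j).
$$
A general lift $\overline A$ of $A$ differs from such a monomial lift by an element of $\ker F$ in degree $d$. We must therefore show that $\widehat{\deg}_X(\varepsilon\gamma)=\deg_X(F\gamma)$ for every $\gamma$, which is exactly the monomial statement summed linearly. This works because $F$ is compatible with the relations and both sides are linear.

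\textbf{Main computation.} For a monomial, form $u\colon Y\to X$ as in \cref{eq:ring-fiber-product} and put $N=\dim Y$. The degree $\widehat{\deg}_X(\varepsilon\overline A)$ is the adelic intersection on $Y$ of $N$ integrable factors $\overline N_1,\ldots,\overline N_N$ together with one factor $w^*\overline L$, where $w\colon Y\to\Spec K$ is the structure morphism. Apply \cite[Lemma~4.6.1(3) and Section~4.5]{YZ} to $Y\to\Spec K\to\Spec K$, or equivalently the Deligne pairing projection formula already used in \cref{thm:segre-basic}(e). Take the $N$ factors $\overline N_k$ as relative inputs and $\overline L$ as the input from the base. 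This gives the isometry
$$
\langle \overline N_1,\ldots,\overline N_N,w^*\overline L\rangle_{Y/K}\simeq\overline L^{\otimes\deg(N_1\cdots N_N)},
$$
where the exponent is the geometric degree on $Y$ over $K$. Taking normalized degrees yields
$$
\widehat{\deg}_X(\varepsilon\overline A)=\deg(c_1(N_1)\cdots c_1(N_N)\cap[Y])\cdot\widehat{\deg}_K(\overline L)=\deg_X(A).
$$
One must check that the normalization in degree (by $[K:\QQ]$) makes the geometric degree appear over $K$, consistent with $\deg_X$ being the degree over $K$. The result is then extended to real divisor factors by multilinearity.

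\textbf{Expected obstacle.} The main difficulty is justifying the projection formula in the adelic setting with integrable (not model) factors. I would handle this either by citing the Yuan–Zhang Deligne pairing lemma directly, as the paper did before, or by first proving the identity for model metrics via the classical arithmetic projection formula. In that second route, I would pass to the limit using \cref{eq:controlled-intersection-continuity}: the right-hand side is independent of the metrics, so the limit is immediate.

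Independence from the metrics follows at once, since $\deg_X(A)$ involves no metric data.
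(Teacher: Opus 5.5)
Your proposal is correct and follows the paper's proof essentially step for step. The paper also rescales the trivial metric at one archimedean place (explicitly $\|1\|_{w_0}=e^{-1/n_{w_0}}$), applies the Deligne-pairing projection formula \cite[Lemma~4.6.1(3) and Section~4.5]{YZ} on the fibre product $Y_A$ to factor the degree as $\widehat{\deg}_K(\overline L)$ times the geometric degree, and extends to arbitrary lifts by linearity; your fallback route through model metrics and a limit is unnecessary, since the cited lemma already handles integrable inputs.
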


\begin{proof}
	To construct $\overline L$, choose an
	archimedean place $w_0$ and equip the trivial line $K$ with the standard
	norms away from $w_0$ and with
	$$
	\norm{1}_{w_0}=e^{-1/n_{w_0}},
	\qquad
	n_{w_0}=\frac{[K_{w_0}:\mathbb Q_{w_0}]}{[K:\mathbb Q]}.
	$$
	Then
	$$
	\widehat{\deg}_K(\overline L)
	=
	-\sum_w n_w\log\norm{1}_w
	=
	1.
	$$
	First take an adelic monomial $\overline A$ and put $A:=F(\overline A)$.
	Write
	$$
	A=
	\prod_{a=1}^m\sigma_{i_a,\mathrm{alg}}(E_a)
	\prod_{j=1}^n\lambda_{\mathrm{alg}}(L_j),
	\qquad
	\sum_{a=1}^m i_a+n=d.
	$$
	Put $r_a:=\rank E_a$ and set
	$$
	Y_A=
	\BP(E_1)\times_X\cdots\times_X\BP(E_m)
	\xrightarrow{\,u\,}X.
	$$
	Denote by $q_a:Y_A\to\BP(E_a)$ the projections, put
	$$
	\xi_a=q_a^*\mathcal O_{\BP(E_a)}(1),
	\qquad
	g=v\circ u,
	$$
	and let $\overline\xi_a$ be the corresponding metrized tautological line
	bundle. The definition of the adelic degree gives the first equality below.
	The projection formula for adelic Deligne pairings gives the second;
	see \cite[Lemma~4.6.1(3) and Section~4.5]{YZ}.
	\begin{align*}
		\widehat{\deg}_X(\varepsilon\overline A)
		&=
		\widehat{\deg}_{Y_A}\left(
		\widehat c_1(g^*\overline L)
		\prod_{a=1}^m
		\widehat c_1(\overline\xi_a)^{r_a-1+i_a}
		\prod_{j=1}^n
		\widehat c_1(u^*\overline L_j)
		\right)\\
		&=
		\widehat{\deg}_K(\overline L)\,
		\deg_{Y_A}\left(\left(
		\prod_{a=1}^m
		c_1(\xi_a)^{r_a-1+i_a}
		\prod_{j=1}^n
		c_1(u^*L_j)
		\right)\cap[Y_A]\right).
	\end{align*}
	The definition of the algebraic degree identifies the last factor with
	$\deg_X(A)$.
	Since $\widehat{\deg}_K(\overline L)=1$, we obtain
		$$
		\widehat{\deg}_X(\varepsilon\overline A)
		=\deg_X(A).
		$$
	Linearity now gives
	$$
	\widehat{\deg}_X\bigl(\varepsilon[\delta]_{\mathrm{num}}\bigr)
	=\deg_X(F(\delta))
	\qquad\text{for every }\delta\in\mathscr T^d\ad(X).
	$$
	Apply this identity to any adelic lift of $A$.
	The result depends only on $A$, not on the chosen lift.
\end{proof}

\begin{proposition}
	\label{prop:extreme-degrees}
	The unit class and the top-degree functional induce canonical isomorphisms
	$$
	\mathbb R
	\xrightarrow{\ \sim\ }
	\widehat N^0\ad(X),
	\qquad
	a\longmapsto a[1]_{\mathrm{num}},
	$$
	and
	$$
	\widehat{\deg}_X:
	\widehat N^{d+1}\ad(X)
	\xrightarrow{\ \sim\ }
	\mathbb R.
	$$
	Moreover,
	$$
	\widehat N^p\ad(X)=0
	\qquad\text{for every }p>d+1.
	$$
\end{proposition}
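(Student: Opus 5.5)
The proof splits into the three assertions, and each follows from the definition of the numerical radical together with the existence of a monomial of nonzero top degree.

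\emph{Vanishing above $d+1$.} For $p>d+1$, \cref{def:numerical-radical} puts $\mathscr I^p_{\mathrm{num}}(X)=\mathscr T^p\ad(X)$. Hence the quotient $\widehat N^p\ad(X)$ is zero by \cref{def:numerical-adelic-chow-ring}.

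\emph{Top degree.} For $p=d+1$ the complementary degree is $0$. First I show that $\mathscr T^0\ad(X)=\RR\cdot1$. Every generator has degree $1$, except the symbols $\sigma_0(\overline E)$, and these equal $1$ by \cref{eq:sigma-zero}. So every degree-zero element is a real multiple of $1$. It follows that
$$
\alpha\in\mathscr I^{d+1}_{\mathrm{num}}(X)
\iff
\widehat{\deg}_X(\alpha)=0 .
$$
By \cref{prop:degree-well-defined}, $\widehat{\deg}_X$ is a well-defined linear functional on $\mathscr T^{d+1}\ad(X)$. It therefore descends to an injective linear map $\widehat N^{d+1}\ad(X)\to\RR$.

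For surjectivity it suffices to find one degree-$(d+1)$ element of nonzero degree, since the map is linear. Take an ample line bundle $H$ on $X$ with a nef adelic metric $\overline H$, for instance one induced by an ample Hermitian model. Take the element $\overline L$ of \cref{lem:vertical-class} and consider $\lambda(v^*\overline L)\lambda(\overline H)^d$. Applying \cref{lem:vertical-class} to the algebraic monomial $\lambda_{\mathrm{alg}}(H)^d$, its degree is
$$
\deg_X\bigl(\lambda_{\mathrm{alg}}(H)^d\bigr)=\deg_H(X)>0 .
$$
Rescaling gives every real number, so the map is an isomorphism.

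\emph{Degree zero.} The map $\RR\to\widehat N^0\ad(X)$, $a\mapsto a[1]_{\mathrm{num}}$, is surjective because $\mathscr T^0\ad(X)=\RR\cdot1$. For injectivity, suppose $a\cdot1$ lies in $\mathscr I^0_{\mathrm{num}}(X)$. Then $a\,\widehat{\deg}_X(\beta)=0$ for every $\beta\in\mathscr T^{d+1}\ad(X)$. Choosing $\beta$ to be the element of nonzero degree constructed above forces $a=0$.

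The only substantive point is the existence of a top-degree element with nonzero degree, and \cref{lem:vertical-class} provides it. The other delicate step is checking that $\mathscr T^0\ad(X)$ is exactly $\RR\cdot1$: one must confirm that no other degree-zero generators appear in the presentation of \cref{def:tautological-algebra}, and the only ones are the $\sigma_0(\overline E)$, which \cref{eq:sigma-zero} identifies with $1$.
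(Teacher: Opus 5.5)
Your proof is correct and follows the paper's argument essentially step for step. You show $\mathscr T^0\ad(X)=\RR\cdot1$, use the test element $\lambda(v^*\overline L)\lambda(\overline H)^d$ of degree $\deg_H(X)>0$ from \cref{lem:vertical-class}, and read off all three assertions from the definition of the numerical radical. One harmless slip: the generator $\sigma_i(\overline E)$ has degree $i$, not $1$, so the correct statement, which is the one the paper uses, is that every generator other than the $\sigma_0(\overline E)$ has \emph{positive} degree; your argument is unaffected.
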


\begin{proof}
	By \cref{def:tautological-algebra}, every generator of
	$\mathscr T^\bullet\ad(X)$ has positive degree, except for
	the symbols $\sigma_0(E)$, all of which are identified with the unit.
	Consequently,
	$$
	\mathscr T^0\ad(X)=\mathbb R\cdot 1.
	$$
	
	Let $\overline L$ be as in \cref{lem:vertical-class}, and put
	$$
	\widetilde\varepsilon
	:=\lambda(v^*\overline L)
	\in\mathscr T^1\ad(X).
	$$
	Choose an ample line bundle $A$ on $X$ and equip it with an integrable
	model metric $\overline A$.  Then
	$$
	\Theta
	:=\widetilde\varepsilon\,\lambda(\overline A)^d
	\in\mathscr T^{d+1}\ad(X).
	$$
	\Cref{lem:vertical-class} gives
	$$
	\widehat{\deg}_X(\Theta)
	=
	\deg\bigl(c_1(A)^d\cap[X]\bigr)>0.
	$$
	In particular, $\widehat{\deg}_X$ is not identically zero on
	$\mathscr T^{d+1}\ad(X)$.
	
	Now let $a\in\mathbb R$ and suppose that
	$a\cdot1\in\mathscr I_{\mathrm{num}}^0(X)$.  By the definition of the
	numerical radical,
	$$
	0
	=
	\widehat{\deg}_X\bigl((a\cdot1)\Theta\bigr)
	=
	a\,\widehat{\deg}_X(\Theta).
	$$
	Since $\widehat{\deg}_X(\Theta)>0$, it follows that $a=0$.  Hence
	$$
	\mathscr I_{\mathrm{num}}^0(X)=0,
	$$
	and the unit map therefore induces the canonical isomorphism
	$\mathbb R\simeq\widehat N^0\ad(X)$.
	
	Since $\mathscr T^0\ad(X)=\mathbb R\cdot1$,
	\cref{def:numerical-radical}
	also gives
	$$
	\mathscr I_{\mathrm{num}}^{d+1}(X)
	=
	\ker\left(
	\widehat{\deg}_X:
	\mathscr T^{d+1}\ad(X)\longrightarrow\mathbb R
	\right).
	$$
	Thus $\widehat{\deg}_X$ induces an injective linear map
	$$
	\widehat N^{d+1}\ad(X)\longrightarrow\mathbb R.
	$$
	Its image contains the nonzero number
	$\widehat{\deg}_X(\Theta)$. Since the image is an
	$\mathbb R$-linear subspace of $\mathbb R$, it is all of $\mathbb R$.
	The induced degree map is therefore an isomorphism.
	
	Finally, for $p>d+1$, \cref{def:numerical-radical} gives
	$$
	\mathscr I_{\mathrm{num}}^p(X)
	=
	\mathscr T^p\ad(X),
	$$
	and hence $\widehat N^p\ad(X)=0$.
\end{proof}

\begin{proposition}[Universal numerical quotient]
	\label{prop:universal-numerical-quotient}
	Let
	$$
	q\colon\mathscr T^\bullet\ad(X)\twoheadrightarrow A^\bullet
	$$
	be a surjective homomorphism of graded $\RR$-algebras.  Suppose that there
	is an $\RR$-linear map
	$$
	\deg_A\colon A^{d+1}\longrightarrow\RR
	$$
	such that
	$$
	\deg_A(q(\alpha))=\widehat{\deg}_X(\alpha)
	\qquad
	\text{for every }\alpha\in\mathscr T^{d+1}\ad(X).
	$$
	Assume that $A^p=0$ for $p>d+1$.  For every $0\leq p\leq d+1$, suppose
	that the pairing
	$$
	A^p\times A^{d+1-p}\longrightarrow\RR,
	\qquad
	(a,b)\longmapsto\deg_A(ab),
	$$
	is nondegenerate.  Then
	$$
	\ker(q)=\mathscr I^\bullet_{\mathrm{num}}(X).
	$$
	There is therefore a unique isomorphism of graded $\RR$-algebras
	$$
	\overline q\colon
	\widehat N^\bullet\ad(X)\xrightarrow{\ \sim\ }A^\bullet,
	\qquad
	[\alpha]_{\mathrm{num}}\longmapsto q(\alpha).
	$$
\end{proposition}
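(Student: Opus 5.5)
The argument is the standard linear-algebra characterization of a numerical quotient: a surjection onto a nondegenerately paired algebra compatible with the degree must kill exactly the numerically trivial elements. I would prove the two inclusions between $\ker(q)$ and $\mathscr I^\bullet_{\mathrm{num}}(X)$ degree by degree, and then obtain the isomorphism from the first isomorphism theorem.

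For $p>d+1$ there is nothing to do. By \cref{def:numerical-radical}, $\mathscr I^p_{\mathrm{num}}(X)=\mathscr T^p\ad(X)$, and $A^p=0$ forces $\ker(q)^p=\mathscr T^p\ad(X)$.

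Now fix $0\leq p\leq d+1$.

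For $\ker(q)^p\subseteq\mathscr I^p_{\mathrm{num}}(X)$, take $\alpha$ with $q(\alpha)=0$. For every $\beta\in\mathscr T^{d+1-p}\ad(X)$, multiplicativity of $q$ and the compatibility hypothesis give
$$
\widehat{\deg}_X(\alpha\beta)=\deg_A(q(\alpha)q(\beta))=0,
$$
so $\alpha$ lies in the radical.

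For the reverse inclusion, take $\alpha\in\mathscr I^p_{\mathrm{num}}(X)$ and put $a:=q(\alpha)$. Any $b\in A^{d+1-p}$ has the form $q(\beta)$ because $q$ is surjective in each degree, and $q$ is graded, so $\beta$ can be taken homogeneous of degree $d+1-p$. Then
$$
\deg_A(ab)=\deg_A(q(\alpha\beta))=\widehat{\deg}_X(\alpha\beta)=0.
$$
Nondegeneracy of the pairing $A^p\times A^{d+1-p}\to\RR$ in the first variable gives $a=0$, so $\alpha\in\ker(q)$.

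Both $\ker(q)$ and $\mathscr I^\bullet_{\mathrm{num}}(X)$ are homogeneous: the first because $q$ is graded, the second by \cref{lem:radical-ideal}. The degreewise equalities therefore give $\ker(q)=\mathscr I^\bullet_{\mathrm{num}}(X)$. Since $q$ is a surjective homomorphism of graded $\RR$-algebras, it induces an isomorphism $\mathscr T^\bullet\ad(X)/\ker(q)\simeq A^\bullet$. This is the map $\overline q$ sending $[\alpha]_{\mathrm{num}}\mapsto q(\alpha)$, and it respects products because the multiplication in $\widehat N^\bullet\ad(X)$ is induced from $\mathscr T^\bullet\ad(X)$, as in \cref{def:numerical-adelic-chow-ring}. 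Uniqueness is immediate: the classes $[\alpha]_{\mathrm{num}}$ exhaust $\widehat N^\bullet\ad(X)$, and the formula prescribes the value of $\overline q$ on each of them.

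No step is a genuine obstacle. The only points needing care are that surjectivity is used degreewise, which is legitimate because $q$ is graded, and that only nondegeneracy in the first variable is actually used.
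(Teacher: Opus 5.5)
Your proposal is correct and follows the paper's own argument: you prove $\ker(q)\subseteq\mathscr I^\bullet_{\mathrm{num}}(X)$ degreewise from multiplicativity of $q$ and the degree compatibility, and the reverse inclusion from surjectivity of $q$ plus nondegeneracy in the first variable. The isomorphism $\overline q$ then follows from the first isomorphism theorem, as in the paper.
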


\begin{proof}
	Write $J^\bullet:=\ker(q)$.  For $p>d+1$, one has
	$J^p=\mathscr T^p\ad(X)=\mathscr I^p_{\mathrm{num}}(X)$.
	Now let $0\leq p\leq d+1$.  If $\alpha\in J^p$, then for every
	$\beta\in\mathscr T^{d+1-p}\ad(X)$,
	$$
	\widehat{\deg}_X(\alpha\beta)
	=\deg_A(q(\alpha)q(\beta))=0.
	$$
	Hence $\alpha\in\mathscr I^p_{\mathrm{num}}(X)$, so
	$J^p\subseteq\mathscr I^p_{\mathrm{num}}(X)$.

	Conversely, let $\alpha\in\mathscr I^p_{\mathrm{num}}(X)$ and let
	$b\in A^{d+1-p}$.  Since $q$ is surjective, there is
	$\beta\in\mathscr T^{d+1-p}\ad(X)$ such that $q(\beta)=b$.  Then
	$$
	\deg_A(q(\alpha)b)
	=\widehat{\deg}_X(\alpha\beta)=0.
	$$
	Since $b$ was arbitrary, nondegeneracy gives $q(\alpha)=0$.  Therefore
	$\mathscr I^p_{\mathrm{num}}(X)\subseteq J^p$.  We conclude that
	$J^\bullet=\mathscr I^\bullet_{\mathrm{num}}(X)$, and the displayed map
	$\overline q$ is an isomorphism.
\end{proof}

\subsection{Characteristic classes in the numerical ring}

The definitions below are given by the standard universal relations among
Segre, Chern, Chern-character, and Todd polynomials; see
\cite[Chapter~3]{Fulton}.

\begin{definition}[Segre and Chern classes]
	\label{def:ring-characteristic-classes}
	For an integrable adelic vector bundle $\overline E$, set
	\begin{equation}
		\widehat s_i(\overline E)
		:=[\sigma_i(\overline E)]_{\mathrm{num}},
		\qquad
		\widehat s_t(\overline E)
		:=\sum_{i\geq0}\widehat s_i(\overline E)t^i.
		\label{eq:ring-segre-class}
	\end{equation}
	If $r=\rank E$, define
	\begin{equation}
		\widehat c_i(\overline E)
		:=[t^i]\,\widehat s_{-t}(\overline E)^{-1}
		\quad(0\leq i\leq r),
		\qquad
		\widehat c_i(\overline E):=0\quad(i>r).
		\label{eq:ring-chern-class}
	\end{equation}
	The numerical Chern character and Todd class are the usual universal
	polynomials in these Chern classes:
	\begin{align}
		\widehat{\ch}(\overline E)
		&=r+\widehat c_1(\overline E)
		+\frac12\bigl(
		\widehat c_1(\overline E)^2
		-2\widehat c_2(\overline E)
		\bigr)+\cdots,
		\label{eq:ring-chern-character}\\
		\widehat{\Td}(\overline E)
		&=1+\frac12\widehat c_1(\overline E)
		+\frac1{12}\bigl(
		\widehat c_1(\overline E)^2
		+\widehat c_2(\overline E)
		\bigr)+\cdots.
		\label{eq:ring-todd-class}
	\end{align}
	All expressions are truncated in degrees greater than $d+1$.
\end{definition}

For every integrable adelic line bundle $\overline L$ on $X$, one has
\[
\widehat c_1(\overline L)
=
[\lambda(\overline L)]_{\mathrm{num}}
\in\widehat N^1\ad(X).
\]
Indeed, by
\cref{def:ring-characteristic-classes,eq:rank-one-relation},
\[
\widehat c_1(\overline L)
=
\widehat s_1(\overline L)
=
[\sigma_1(\overline L)]_{\mathrm{num}}
=
[\lambda(\overline L)]_{\mathrm{num}}.
\]

\begin{remark}[Sign convention]
	For a line bundle $\overline M$, one has
	$\widehat s_i(\overline M)=\widehat c_1(\overline M)^i$.
	With this convention, the Chern--Segre relation takes the form
	$$
	\widehat c_t(\overline E)\widehat s_{-t}(\overline E)=1
	\pmod {t^{r+1}},
	$$
	rather than
	$\widehat c_t(\overline E)\widehat s_t(\overline E)=1
	\pmod {t^{r+1}}$.
\end{remark}

\begin{proposition}
	\label{prop:ring-rank-one-twisting}
	Let $\overline E$ have rank $r$, and let $\overline M$ be an integrable
	adelic line bundle.  If $r=1$, then
	$$
	\widehat s_i(\overline E)=\widehat c_1(\overline E)^i,
	\qquad
	\widehat c_t(\overline E)=1+\widehat c_1(\overline E)t.
	$$
	For every $i\geq0$,
	\begin{equation}
		\widehat s_i(\overline E\otimes\overline M)
		=
		\sum_{k=0}^{i}
		\binom{r-1+i}{i-k}
		\widehat s_k(\overline E)
		\widehat c_1(\overline M)^{i-k}.
		\label{eq:ring-twisting}
	\end{equation}
\end{proposition}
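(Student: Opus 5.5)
We prove everything by testing against the nondegenerate pairing of \cref{prop:complementary-duality}. Two elements of $\widehat N^i\ad(X)$ coincide if and only if they have the same degree against every class of complementary degree. Since $\widehat{\deg}_X$ is linear, it suffices to test against monomials $\beta\in\mathscr T^{d+1-i}\ad(X)$ of the form \cref{eq:top-monomial}. For $i>d+1$ both sides lie in $\widehat N^i\ad(X)=0$ by \cref{prop:extreme-degrees}, so we may assume $i\leq d+1$.

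\emph{Rank one.} If $r=1$, relation \cref{eq:rank-one-relation} gives $\sigma_i(\overline E)=\lambda(\overline E)^i$ already in $\mathscr T^\bullet\ad(X)$. Passing to the quotient gives $\widehat s_i(\overline E)=\widehat c_1(\overline E)^i$, using the identification $\widehat c_1(\overline L)=[\lambda(\overline L)]_{\mathrm{num}}$ noted after \cref{def:ring-characteristic-classes}. Hence
$$
\widehat s_{-t}(\overline E)=\sum_{i\geq0}(-\widehat c_1(\overline E)t)^i=(1+\widehat c_1(\overline E)t)^{-1}
$$
as formal power series; this is legitimate because $\widehat N^\bullet\ad(X)$ is commutative. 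Inverting gives $1+\widehat c_1(\overline E)t$. The definition \cref{eq:ring-chern-class} truncates at degree $r=1$, which is consistent with this.

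\emph{Twisting.} Fix a test monomial $\beta$ with Segre factors $\sigma_{i_a}(\overline E_a)$ and divisor factors $\lambda(\overline L_j)$. Let $u\colon B\to X$ be the fibre product attached to $\beta$ as in \cref{eq:ring-fiber-product}; if $\beta$ has no Segre factors, take $B=X$. Put $P:=B\times_X\BP(E)$ with projection $p\colon P\to B$, and identify $\BP(E\otimes M)$ with $\BP(E)$ via $\tau$. The space $Y$ attached to $\sigma_i(\overline E\otimes\overline M)\beta$ is then $P$. By \cref{lem:twisting}, pulled back along \cref{lem:pullback}, its tautological factor is $\overline\xi+p^*u^*\overline M$, where $\overline\xi$ is the pullback of $\overline\xi_E$. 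We expand $(\overline\xi+p^*u^*\overline M)^{r-1+i}$ multilinearly, as in the proof of \cref{prop:twist-formula}, with $B$ playing the role of $Z^\nu$. This gives terms
$$
J_t=\widehat{\deg}_P\bigl(\widehat c_1(\overline\xi)^t\,\widehat c_1(p^*u^*\overline M)^{r-1+i-t}\cdot(\text{pullbacks of the factors of }\beta)\bigr)
$$
with coefficients $\binom{r-1+i}{t}$.

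For $t=r-1+k$ with $0\leq k\leq i$, the number $J_t$ is by \cref{def:top-degree} exactly $\widehat{\deg}_X\bigl(\sigma_k(\overline E)\lambda(\overline M)^{i-k}\beta\bigr)$, and $\binom{r-1+i}{r-1+k}=\binom{r-1+i}{i-k}$. Summing over $k$ gives the asserted identity \cref{eq:ring-twisting} after pairing with $\beta$; we first take integral divisor classes and then extend real-linearly.

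The main step is the vanishing of $J_t$ for $t<r-1$. Here I would repeat the Deligne pairing argument of \cref{prop:twist-formula}, using \cite[Lemma~4.6.1(3) and Section~4.5]{YZ} for $P\xrightarrow{p}B\to\Spec K$. We use $r-1$ relative inputs, at least one of which is $p^*u^*\overline M$. Over the generic point of $B$ the fibre of $P$ is $\mathbb P^{r-1}_{K(B)}$, and $p^*u^*M$ is trivial there, so the relative intersection number is zero. The pairing is therefore the trivial metrized line bundle on $\Spec K$, which has degree zero. The hypotheses needed for this step are that $B$ and $P$ are integral and projective, which holds as in \cref{lem:degree-dimension}, and that the structure morphisms to $\Spec K$ are flat. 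When $r=1$ there are no such terms to kill.
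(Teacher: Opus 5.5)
Your proposal is correct and follows essentially the same route as the paper's proof. The rank-one case comes from the relation \cref{eq:rank-one-relation} and formal inversion of $\widehat s_{-t}$. The twisting formula is obtained by pairing with complementary test monomials, expanding $(\overline\xi+p^*u^*\overline M)^{r-1+i}$ on $P=B\times_X\mathbb P(E)$, killing the terms with $t<r-1$ by the Deligne-pairing argument, and concluding by the nondegeneracy of \cref{prop:complementary-duality}, with $i>d+1$ handled by \cref{prop:extreme-degrees}.
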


\begin{proof}
	If $r=1$, \cref{eq:rank-one-relation} gives
	$\widehat s_i(\overline E)=\widehat c_1(\overline E)^i$.  Hence
	$$
	\widehat s_{-t}(\overline E)
	=
	\bigl(1+\widehat c_1(\overline E)t\bigr)^{-1},
	$$
	and the defining relation
	$\widehat c_t(\overline E)\widehat s_{-t}(\overline E)=1$
	yields
	$$
	\widehat c_t(\overline E)
	=
	1+\widehat c_1(\overline E)t.
	$$
	
	For the twisting formula, identify
	$\mathbb P(E\otimes M)$ with $\mathbb P(E)$.  Under this identification,
	$$
	\overline\xi_{E\otimes M}
	\simeq
	\overline\xi_E\otimes\pi^*\overline M.
	$$
	If $i>d+1$, both sides of \cref{eq:ring-twisting} vanish by
	\cref{prop:extreme-degrees}.  Assume $0\leq i\leq d+1$.  By linearity,
	it suffices to prove that both sides have the same pairing with every monomial class
	$\gamma\in\widehat N^{d+1-i}\ad(X)$.
	Choose a formal monomial $\Gamma$ representing $\gamma$.
	Form $u:B\to X$ from the Segre factors of $\Gamma$ as in
	\cref{eq:ring-fiber-product}, and put $b:=\dim B$.
	If there are no Segre factors, take $B=X$ and $u=\operatorname{id}_X$.
	Let $F_a$ denote the vector bundles in the Segre factors of $\Gamma$.
	Put $r_a:=\operatorname{rk}F_a$.
	Then $b=d+\sum_a(r_a-1)$.
	Since $\Gamma$ has degree $d+1-i$, its intersection on $B$ has
	$\sum_a(r_a-1)+d+1-i=b+1-i$ line bundle factors.
	Denote this list, including repetitions, by
	$\overline N_1,\ldots,\overline N_{b+1-i}$.
	First take divisor factors $\lambda(\overline L)$ with
	$\overline L\in\widehat{\Pic}(X)_{\intg}$.
	Put $\overline M_B:=u^*\overline M$ and set
	$$
	p:P:=B\times_X\mathbb P(E)\longrightarrow B.
	$$
	Let $\overline\xi$ be the pullback of $\overline\xi_E$ to $P$.
	By \cref{eq:degree-monomial}, the pairing of the left side of
	\cref{eq:ring-twisting} with $\gamma$ is the top
	intersection on $P$ with $r-1+i$ copies of
	$\overline\xi\otimes p^*\overline M_B$ and the factors
	$p^*\overline N_1,\ldots,p^*\overline N_{b+1-i}$.

	Expand this intersection by multilinearity.
	The term with $t$ copies of $\overline\xi$ contains
	$r-1+i-t$ copies of $p^*\overline M_B$.
	If $t<r-1$, apply
	\cite[Lemma~4.6.1(3), pp.~128--129, and Section~4.5]{YZ} to
	$P\xrightarrow{p}B\to\operatorname{Spec}K$.
	Both $P$ and $B$ are integral and projective over $K$.
	Their structure morphisms are flat, and $\operatorname{Spec}K$ is normal.
	Take $t$ copies of $\overline\xi$ and $r-1-t$ copies of
	$p^*\overline M_B$ as the relative inputs.
	The remaining inputs are pulled back from $i$ copies of $\overline M_B$
	and the adelic line bundles $\overline N_1,\ldots,\overline N_{b+1-i}$ on $B$.
	This gives exactly $b+1$ inputs from $B$.
	At least one relative input has trivial underlying line bundle on the
	generic fibre of $p$.
	Hence the generic fibre intersection number is zero.
	The Deligne pairing is therefore isometric to the trivial metrized line
	bundle on $\operatorname{Spec}K$.
	Taking normalized adelic degrees shows that this term is zero.

	For $t=r-1+k$ with $0\leq k\leq i$, there remain $i-k$ copies of
	$p^*\overline M_B$.
	By \cref{eq:degree-monomial}, this term equals
	$\widehat{\deg}_X(\widehat s_k(\overline E)
	\widehat c_1(\overline M)^{i-k}\gamma)$.
	Its binomial coefficient is
	$\binom{r-1+i}{r-1+k}=\binom{r-1+i}{i-k}$.
	Thus
	$$
	\begin{aligned}
		\widehat{\deg}_X\!\left(
		\widehat s_i(\overline E\otimes\overline M)\gamma
		\right)
		&=
		\sum_{k=0}^{i}
		\binom{r-1+i}{i-k}
		\widehat{\deg}_X\!\left(
		\widehat s_k(\overline E)
		\widehat c_1(\overline M)^{i-k}\gamma
		\right).
	\end{aligned}
	$$
	For $r=1$, there are no terms with $t<r-1$.
	Finite real linear expansion in the divisor factors extends the equality
	to every monomial class $\gamma\in\widehat N^{d+1-i}\ad(X)$.
	The nondegeneracy of the pairing in
	\cref{prop:complementary-duality} now implies
	\cref{eq:ring-twisting}.
\end{proof}

\begin{remark}
	Write $\ell:=\widehat c_1(\overline M)$.  The cases $i=1$ and $i=2$ of
	\cref{eq:ring-twisting} are
	$$
	\begin{aligned}
		\widehat s_1(\overline E\otimes\overline M)
		&=\widehat s_1(\overline E)+r\ell,\\
		\widehat s_2(\overline E\otimes\overline M)
		&=\widehat s_2(\overline E)
		+(r+1)\widehat s_1(\overline E)\ell
		+\binom{r+1}{2}\ell^2.
	\end{aligned}
	$$
	Consequently,
	$$
	\begin{aligned}
		\widehat c_1(\overline E\otimes\overline M)
		&=\widehat c_1(\overline E)+r\ell,\\
		\widehat c_2(\overline E\otimes\overline M)
		&=\widehat c_2(\overline E)
		+(r-1)\widehat c_1(\overline E)\ell
		+\binom{r}{2}\ell^2.
	\end{aligned}
	$$
	If $\overline E=\overline L$ has rank one, then
	\cref{eq:ring-twisting} reduces to the binomial identity
	$$
	\widehat s_i(\overline L\otimes\overline M)
	=
	\bigl(
	\widehat c_1(\overline L)
	+\widehat c_1(\overline M)
	\bigr)^i.
	$$
\end{remark}

\subsection{The subring generated by divisor classes and the forgetful map}

\begin{definition}[Subring generated by divisor classes]
	\label{def:divisorial-subring}
	Let $\widehat N^\bullet_{\mathrm{div},\mathrm{ad}}(X)$ be the graded
	subalgebra of $\widehat N^\bullet\ad(X)$ generated by
	$$
	\widehat c_1(\overline L)
	:=[\lambda(\overline L)]_{\mathrm{num}},
	\qquad
	\overline L\in\widehat{\Pic}(X)_{\intg,\RR}.
	$$
\end{definition}

\begin{proposition}
	\label{prop:divisorial-quotient}
	Put
	$$
	S^\bullet_X
	:=\Sym^\bullet_{\RR}
	\bigl(\widehat{\Pic}(X)_{\intg,\RR}\bigr).
	$$
	For $a\in S_X^p$, let $\widetilde a\in\mathscr T^p\ad(X)$ denote its
	image under the homomorphism determined by
	$\overline L\mapsto\lambda(\overline L)$.
	For $0\leq p\leq d+1$, let $R_X^p$ consist of those
	$a\in S_X^p$ such that
	$$
	\widehat{\deg}_X(\widetilde a\beta)=0
	\quad\text{for every }\beta\in
	\mathscr T^{d+1-p}\ad(X),
	$$
	and put $R_X^p=S_X^p$ for $p>d+1$.  Set
	$R_X^\bullet:=\bigoplus_{p\geq0}R_X^p$.  Then
	\begin{equation}
		\widehat N^\bullet_{\mathrm{div},\mathrm{ad}}(X)
		\simeq S^\bullet_X/R^\bullet_X.
		\label{eq:divisorial-quotient}
	\end{equation}
\end{proposition}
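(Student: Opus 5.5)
We consider the graded algebra homomorphism
$$
\Phi\colon S^\bullet_X\longrightarrow\widehat N^\bullet_{\mathrm{div},\mathrm{ad}}(X),
\qquad a\longmapsto[\widetilde a]_{\mathrm{num}}.
$$
The plan is to show that $\Phi$ is well defined and surjective, and that $\ker\Phi=R^\bullet_X$.

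\textbf{Construction and surjectivity.} The assignment $\overline L\mapsto\lambda(\overline L)$ is $\RR$-linear by \cref{eq:lambda-linearity}. By the universal property of the symmetric algebra it therefore extends uniquely to a homomorphism of graded commutative $\RR$-algebras $S^\bullet_X\to\mathscr T^\bullet\ad(X)$, $a\mapsto\widetilde a$. Composing with the quotient map of \cref{def:numerical-adelic-chow-ring} gives a graded algebra map into $\widehat N^\bullet\ad(X)$. Its image is a subalgebra containing every generator $\widehat c_1(\overline L)=[\lambda(\overline L)]_{\mathrm{num}}$. The image of a polynomial algebra map is the subalgebra generated by the images of the generators, so the image is exactly $\widehat N^\bullet_{\mathrm{div},\mathrm{ad}}(X)$ of \cref{def:divisorial-subring}. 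Thus $\Phi$ is surjective.

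\textbf{Kernel.} Fix a degree $p$. For $0\leq p\leq d+1$, the condition $\Phi(a)=0$ means $\widetilde a\in\mathscr I^p_{\mathrm{num}}(X)$. By \cref{def:numerical-radical}, this says $\widehat{\deg}_X(\widetilde a\beta)=0$ for every $\beta\in\mathscr T^{d+1-p}\ad(X)$, which is precisely the defining condition of $R^p_X$. For $p>d+1$, \cref{prop:extreme-degrees} gives $\widehat N^p\ad(X)=0$, so $\ker\Phi^p=S^p_X=R^p_X$. Hence $\ker\Phi=R^\bullet_X$ degreewise.

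\textbf{Conclusion and main obstacle.} Since $R^\bullet_X$ is the kernel of a ring homomorphism, it is automatically a homogeneous ideal. Indeed, it is the preimage of the ideal $\mathscr I^\bullet_{\mathrm{num}}(X)$ from \cref{lem:radical-ideal}, so no separate verification is needed. The first isomorphism theorem then gives \cref{eq:divisorial-quotient} as graded algebras. The only point requiring care is the compatibility of the formal relations with the symmetric algebra. The algebra $\mathscr T^\bullet\ad(X)$ is commutative and imposes linearity on $\lambda$, so $\widetilde a$ is well defined. The test elements $\beta$ in $R_X^p$ range over the full algebra $\mathscr T^{d+1-p}\ad(X)$, not merely over divisor monomials. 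This matches the numerical radical exactly, so no nondegeneracy statement internal to the divisorial subring is needed.
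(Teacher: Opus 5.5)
Your proposal is correct and follows essentially the same argument as the paper. Both use the map $\Phi\colon S^\bullet_X\to\widehat N^\bullet_{\mathrm{div},\mathrm{ad}}(X)$, which is well defined by the linearity relation and surjective by the definition of the divisorial subring. Both identify $\ker\Phi$ with $R^p_X$ degreewise through the numerical radical, use $\widehat N^p_{\mathrm{ad}}(X)=0$ for $p>d+1$ from the proposition on extreme degrees, and conclude with the first isomorphism theorem.
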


\begin{proof}
	Let
	$$
	\Phi\colon S^\bullet_X
	\longrightarrow
	\widehat N^\bullet_{\mathrm{div},\mathrm{ad}}(X)
	$$
	be the graded homomorphism induced by
	$$
	\overline L\longmapsto\widehat c_1(\overline L).
	$$
	It is well defined by \cref{eq:lambda-linearity}, and it is surjective by
	Definition~\ref{def:divisorial-subring}.
	
	Let $0\leq p\leq d+1$ and $a\in S_X^p$.  By construction,
	$\Phi(a)=[\widetilde a]_{\mathrm{num}}$.  Thus $\Phi(a)=0$ precisely when
	$\widetilde a\in\mathscr I^p_{\mathrm{num}}(X)$.  By
	\cref{def:numerical-radical}, this means that
	$$
	\widehat{\deg}_X(\widetilde a\beta)=0
	\quad\text{for every }\beta\in\mathscr T^{d+1-p}\ad(X).
	$$
	This is the defining condition for $a\in R_X^p$.  Hence
	$\ker(\Phi)\cap S_X^p=R_X^p$.
	For $p>d+1$, \cref{prop:extreme-degrees} gives
	$\widehat N^p_{\mathrm{div},\mathrm{ad}}(X)=0$, while
	$R_X^p=S_X^p$ by definition.
	Thus $\ker(\Phi)=R_X^\bullet$, and the first isomorphism theorem gives the
	displayed quotient description.
\end{proof}

\begin{remark}
	For $0\leq p\leq d+1$, the definition of $R_X^p$ requires
	$$
	\widehat{\deg}_X(\widetilde a\beta)=0
	\quad\text{for every }\beta\in\mathscr T^{d+1-p}\ad(X).
	$$
	If this equality is required only for $\beta=\widetilde b$, with
	$b\in S_X^{d+1-p}$, the set of elements $a$ satisfying it can be larger
	than $R_X^p$.  The quotient of $S_X^\bullet$ by the resulting larger ideal
	can therefore be smaller than
	$\widehat N^\bullet_{\mathrm{div},\mathrm{ad}}(X)$.  The two quotients
	coincide exactly when vanishing for every $\widetilde b$ implies the
	displayed equality for every $\beta\in\mathscr T^{d+1-p}\ad(X)$.
\end{remark}

\begin{proposition}[Forgetful homomorphism]
	\label{prop:algebraic-shadow}
	Forgetting adelic metrics induces a canonical surjective homomorphism of
	graded $\RR$-algebras
	\begin{equation}
		\rho_{\mathrm{alg}}\colon
		\widehat N^\bullet\ad(X)
		\longrightarrow
		N^\bullet_{\mathrm{alg}}(X)_{\RR}.
		\label{eq:algebraic-shadow}
	\end{equation}
	It is uniquely characterized by
	$$
	\rho_{\mathrm{alg}}\bigl(
	\widehat c_1(\overline L)
	\bigr)
	=
	c_1(L)
	\qquad\text{and}\qquad
	\rho_{\mathrm{alg}}\bigl(
	\widehat s_i(\overline E)
	\bigr)
	=
	s_i(E).
	$$
	In particular,
	$$
	\rho_{\mathrm{alg}}\bigl(
	\widehat c_i(\overline E)
	\bigr)
	=
	c_i(E)
	\qquad(i\geq0).
	$$
\end{proposition}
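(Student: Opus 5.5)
The forgetful map $F\colon\mathscr T^\bullet\ad(X)\to\mathscr T^\bullet_{\mathrm{alg}}(X)$ is already a graded homomorphism compatible with the defining relations. The plan is to show that the composite
$$
\mathscr T^\bullet\ad(X)\xrightarrow{F}\mathscr T^\bullet_{\mathrm{alg}}(X)\twoheadrightarrow N^\bullet_{\mathrm{alg}}(X)_{\RR}
$$
kills $\mathscr I^\bullet_{\mathrm{num}}(X)$, so that it descends to $\widehat N^\bullet\ad(X)$. Surjectivity and uniqueness will then be formal. Every generator $\lambda_{\mathrm{alg}}(L)$ and $\sigma_{i,\mathrm{alg}}(E)$ has an adelic lift: equip $L$ with a difference of nef model metrics (write $L$ as a difference of ample bundles), and equip $E$ with the quotient metric induced by an ample twist. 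Concretely, $\xi_E\otimes\pi^*A^{\otimes n}$ is ample for $n\gg0$, so $\xi_E$ carries an integrable model metric. Hence $F$ is surjective, and so is the composite. Since $\widehat N^\bullet\ad(X)$ is generated by the classes $\widehat c_1(\overline L)$ and $\widehat s_i(\overline E)$, a homomorphism with the prescribed values on them is unique. The statement about $\widehat c_i$ then follows, because both sides are defined by the same universal inverse series $[t^i]\,s_{-t}^{-1}$ and a ring homomorphism preserves it. Truncation is consistent: for $i>r$ both sides are $0$, and $\rho_{\mathrm{alg}}$ vanishes in degrees $>d$ since $N^p_{\mathrm{alg}}=0$ there.

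The key step is the inclusion $F(\mathscr I^p_{\mathrm{num}}(X))\subseteq\mathscr I^p_{\mathrm{num},\mathrm{alg}}(X)$. The tool is \cref{lem:vertical-class}, which gives $\widehat{\deg}_X(\varepsilon\,\delta)=\deg_X(F(\delta))$ for every $\delta\in\mathscr T^d\ad(X)$.

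Let $\alpha\in\mathscr I^p_{\mathrm{num}}(X)$ with $p\le d$, and let $\beta_{\mathrm{alg}}\in\mathscr T^{d-p}_{\mathrm{alg}}(X)$. Choose an adelic lift $\beta$ of $\beta_{\mathrm{alg}}$, which exists by the surjectivity of $F$. Then
$$
\deg_X\bigl(F(\alpha)\beta_{\mathrm{alg}}\bigr)=\deg_X\bigl(F(\alpha\beta)\bigr)=\widehat{\deg}_X\bigl(\widetilde\varepsilon\,\alpha\beta\bigr)=0,
$$
because $\widetilde\varepsilon\beta$ has degree $d+1-p$ and $\alpha$ lies in the numerical radical. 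Hence $F(\alpha)\in\mathscr I^p_{\mathrm{num},\mathrm{alg}}(X)$.

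Two remaining cases need care. For $p=d+1$ the target is $N^{d+1}_{\mathrm{alg}}=0$, so there is nothing to check. For $p>d+1$ both radicals are everything.

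I expect the main obstacle to be the existence of integrable adelic lifts for arbitrary vector bundles $E$, which is needed for the surjectivity of $F$. The first approach is to pick a model $(\mathcal X,\mathcal E,h)$ with smooth Hermitian $h$ and take the induced model metric on $\xi_E$. Such a model metric is integrable, because on a projective arithmetic model every Hermitian line bundle is a difference of two ample Hermitian ones. Here $\mathcal O(1)$ on $\mathbb P(\mathcal E)$ is relatively ample, and twisting by a sufficiently positive pullback from the base makes both pieces ample after rescaling the metrics. If an extension of $E$ to a vector bundle on a model is inconvenient, it suffices to extend over an open $U\subseteq\operatorname{Spec}\mathcal O_K$ and then spread out to a projective model.

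The paper identifies adelic symbols up to isometry. The algebraic side, in turn, identifies symbols up to isomorphism of the algebraic data. So $F$ respects both identifications, and the descended map $\rho_{\mathrm{alg}}$ is canonical.
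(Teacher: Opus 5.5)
Your proposal is correct and follows essentially the same route as the paper: lift every generator to get surjectivity of $F$, then use \cref{lem:vertical-class} with $\widetilde\varepsilon=\lambda(v^*\overline L)$ to show $F(\mathscr I^p_{\mathrm{num}}(X))\subseteq\mathscr I^p_{\mathrm{num},\mathrm{alg}}(X)$ for $p\leq d$, with uniqueness from generation and the Chern-class statement from the universal Segre--Chern relation. The differences are cosmetic. The paper builds the integrable lifts from globally generated bundles with pulled-back standard nef metrics on $\mathcal O(1)$, not from ample twists. Also, since the paper's convention lets one metrize $\mathcal O_{\BP(E)}(1)$ directly, your concern about extending $E$ itself to a Hermitian vector bundle on a model is unnecessary.
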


\begin{proof}
	Use the formal homomorphism
	$F:\mathscr T^\bullet\ad(X)\to\mathscr T^\bullet_{\mathrm{alg}}(X)$
	defined above.
	Every algebraic line bundle on a projective variety admits an integrable
	adelic metric.
	Indeed, write the line bundle as $B\otimes A^{-1}$ with $A$ and $B$
	generated by global sections.
	Give $A$ and $B$ the pullbacks of the standard nef adelic metrics on
	$\mathcal O(1)$ from the associated projective spaces.
	Their quotient is an integrable metric on $B\otimes A^{-1}$.
	Apply this construction also to $\mathcal O_{\BP(E)}(1)$ for every
	algebraic vector bundle $E$.
	Our convention in Section~1 allows this tautological metric as the adelic
	data for $E$.
	Real linear combinations give lifts of all elements of $\Pic(X)_{\RR}$.
	Thus every generator of $\mathscr T^\bullet_{\mathrm{alg}}(X)$ has an
	adelic lift, and $F$ is surjective.
	
	It suffices to prove that
	$$
	F\bigl(\mathscr I^\bullet_{\mathrm{num}}(X)\bigr)
	\subseteq
	\mathscr I^\bullet_{\mathrm{num},\mathrm{alg}}(X).
	$$
	For $0\leq p\leq d$, let
	$\alpha\in\mathscr I^p_{\mathrm{num}}(X)$ and let
	$\gamma\in\mathscr T^{d-p}_{\mathrm{alg}}(X)$.
	Choose a lift $\overline\gamma\in\mathscr T^{d-p}\ad(X)$ with
	$F(\overline\gamma)=\gamma$.
	Let $v:X\to\Spec K$ be the structure morphism.
	Choose $\overline L$ as in \cref{lem:vertical-class} and put
	$\widetilde\varepsilon:=\lambda(v^*\overline L)$.
	The lemma and linearity give
	$$
	\deg_X\bigl(F(\alpha)\gamma\bigr)
	=
	\widehat{\deg}_X\bigl(
	\alpha\,\overline\gamma\,\widetilde\varepsilon
	\bigr)
	=
	0.
	$$
	The last equality holds because
	$\overline\gamma\widetilde\varepsilon$ has degree $d+1-p$ and
	$\alpha$ belongs to the adelic numerical radical.
	Thus $F(\alpha)$ belongs to
	$\mathscr I^p_{\mathrm{num},\mathrm{alg}}(X)$.  For $p>d$, the same inclusion
	follows from the definition of the algebraic numerical radical.
	
	Hence $F$ descends to the surjective homomorphism
	$\rho_{\mathrm{alg}}$.  The construction determines its action on the
	tautological generators, and the formula for Chern classes follows from
	the universal Segre--Chern relation.
\end{proof}

\begin{definition}[Vertical numerical ideal]
	\label{def:vertical-ideal}
	The ideal
	$$
	\widehat N^\bullet_{\mathrm{vert},\mathrm{ad}}(X)
	:=\ker(\rho_{\mathrm{alg}})
	$$
	is called the \emph{vertical numerical ideal}.  A class lies in this ideal
	exactly when its image in the algebraic numerical ring is zero.
\end{definition}

\subsection{Functoriality within the tautological category}

For a morphism $f\colon Y\to X$, pullback of adelic line bundles and vector
bundles induces a graded homomorphism
$$
f^\sharp\colon
\mathscr T^\bullet\ad(X)
\longrightarrow
\mathscr T^\bullet\ad(Y),
$$
characterized by
$$
f^\sharp\bigl(\lambda(\overline L)\bigr)
=
\lambda(f^*\overline L),
\qquad
f^\sharp\bigl(\sigma_i(\overline E)\bigr)
=
\sigma_i(f^*\overline E).
$$
This defines $f^\sharp$ on the tautological algebra.  The following condition
ensures that $f^\sharp$ descends to the numerical quotient.

\begin{definition}[Numerically admissible morphism]
	\label{def:numerically-admissible}
	A morphism $f\colon Y\to X$ is \emph{numerically admissible} if
	$$
	f^\sharp\bigl(\mathscr I^\bullet_{\mathrm{num}}(X)\bigr)
	\subseteq
	\mathscr I^\bullet_{\mathrm{num}}(Y).
	$$
\end{definition}

\begin{lemma}[Pullback]
	\label{lem:admissible-pullback}
	If $f\colon Y\to X$ is numerically admissible, then $f^\sharp$ induces a
	unique graded ring homomorphism
	\begin{equation}
		f^*\colon
		\widehat N^\bullet\ad(X)
		\longrightarrow
		\widehat N^\bullet\ad(Y),
		\qquad
		[\alpha]_{\mathrm{num}}
		\longmapsto
		[f^\sharp(\alpha)]_{\mathrm{num}}.
		\label{eq:admissible-pullback}
	\end{equation}
	Isomorphisms are numerically admissible, and numerically admissible
	morphisms are closed under composition.
\end{lemma}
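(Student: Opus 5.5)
The plan is to verify that $f^\sharp$ is a well-defined graded ring homomorphism on the formal algebras, and then pass to the quotient. I would split this into three steps: well-definedness of $f^\sharp$, descent to the numerical quotients, and the two closure properties.

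First I check that $f^\sharp$ respects the presentation in \cref{def:tautological-algebra}. Pullback of integrable adelic line bundles is $\RR$-linear and preserves integrability, so \cref{eq:lambda-linearity} is respected. The relation \cref{eq:sigma-zero} is sent to $\sigma_0(f^*\overline E)=1$. Pullback preserves rank, so \cref{eq:rank-one-relation} is sent to $\sigma_i(f^*\overline M)=\lambda(f^*\overline M)^i$. Isometric data pull back to isometric data; for vector bundles this uses the canonical isometry \cref{eq:pullback-taut} of \cref{lem:pullback}, which also shows that $f^*\overline E$ is integrable. Hence $f^\sharp$ is a well-defined graded homomorphism of commutative $\RR$-algebras, determined by its values on generators.

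Next comes the descent. Let $\pi_X$ and $\pi_Y$ be the quotient maps onto $\widehat N^\bullet\ad(X)$ and $\widehat N^\bullet\ad(Y)$. Admissibility says that $\pi_Y\circ f^\sharp$ vanishes on $\ker\pi_X=\mathscr I^\bullet_{\mathrm{num}}(X)$. The first isomorphism theorem, applied together with \cref{lem:radical-ideal}, then gives a graded ring homomorphism $f^*$ satisfying \cref{eq:admissible-pullback}. It is unique because $\pi_X$ is surjective: any homomorphism with the displayed property is determined on every class $[\alpha]_{\mathrm{num}}$.

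For an isomorphism $f$, I would show that $f^\sharp$ preserves the degree functional, that is, $\widehat{\deg}_Y(f^\sharp M)=\widehat{\deg}_X(M)$ for every top monomial $M$. The fibre product $Y_{f^\sharp M}$ is canonically $Y\times_X Y_M$, which maps isomorphically onto $Y_M$ via the base change of $f$. Under this map the tautological bundles and the pulled-back divisors correspond isometrically, by \cref{lem:pullback}. Invariance of adelic intersection numbers under isomorphisms then gives the equality. Since $f^\sharp$ is bijective, with inverse $(f^{-1})^\sharp$, for $\alpha\in\mathscr I^p_{\mathrm{num}}(X)$ every $\beta'\in\mathscr T^{d+1-p}\ad(Y)$ has the form $f^\sharp\beta$. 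Hence
$$
\widehat{\deg}_Y(f^\sharp\alpha\,\beta')=\widehat{\deg}_X(\alpha\beta)=0.
$$
The one subtlety is that $\mathscr T\ad$ identifies isometric symbols, so $(g\circ f)^\sharp=f^\sharp g^\sharp$ holds on generators only up to the canonical isometries $(g\circ f)^*\overline E\simeq f^*g^*\overline E$. These isometries are exactly the identifications already imposed in the algebra, so the equality holds in $\mathscr T\ad$. With that, the composition of admissible maps sends $\mathscr I_{\mathrm{num}}(X)$ into $\mathscr I_{\mathrm{num}}(Z)$ in two steps, which proves closure under composition.
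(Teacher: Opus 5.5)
Your proposal is correct and follows essentially the same route as the paper: $f^\sharp$ descends through the numerical radicals, with uniqueness coming from the surjectivity of the quotient map; for an isomorphism, $f^\sharp$ and $(f^{-1})^\sharp$ preserve top degrees; and the composition identity for $\sharp$ gives closure under composition. You supply more detail than the paper, namely the check that $f^\sharp$ respects the defining relations, the identification $Y_{f^\sharp M}\simeq Y\times_X Y_M$ behind degree preservation, and the observation that the composition identity holds in $\mathscr T^\bullet\ad$ because canonically isometric symbols are identified there, but the argument is the same.
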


\begin{proof}
	Let $\alpha,\alpha'\in\mathscr T^\bullet\ad(X)$ and suppose that
	$$
	[\alpha]_{\mathrm{num}}=[\alpha']_{\mathrm{num}}.
	$$
	Then
	$$
	\alpha-\alpha'\in\mathscr I^\bullet_{\mathrm{num}}(X).
	$$
	Since $f$ is numerically admissible,
	$$
	f^\sharp(\alpha-\alpha')
	\in\mathscr I^\bullet_{\mathrm{num}}(Y).
	$$
	Therefore
	$$
	[f^\sharp(\alpha)]_{\mathrm{num}}
	=
	[f^\sharp(\alpha')]_{\mathrm{num}}.
	$$
	Thus the formula is independent of the representative.  It defines a
	graded ring homomorphism.  Every class in $\widehat N^\bullet\ad(X)$ has
	a representative in $\mathscr T^\bullet\ad(X)$.  Hence this homomorphism
	is unique.
	
	If $f$ is an isomorphism, then $f^\sharp$ and
	$(f^{-1})^\sharp$ preserve all top-degree intersection numbers; hence they
	identify the numerical radicals.  Finally, for
	$Z\xrightarrow{g}Y\xrightarrow{f}X$, one has
	$$
	(f\circ g)^\sharp=g^\sharp\circ f^\sharp,
	$$
	and composition of the corresponding radical containments proves closure
	under composition.
\end{proof}

\begin{example}[The base field]
	\label{ex:base-field}
	For $X=\Spec K$, one has $d=0$ and
	$$
	\widehat N^\bullet\ad(\Spec K)
	\simeq\RR[\varepsilon]/(\varepsilon^2),
	\qquad
	\widehat{\deg}_K(\varepsilon)=1.
	$$
	Thus the arithmetic shift from top degree $d$ to $d+1$ is already visible
	over the base field.
\end{example}

\section{The adelic Bogomolov--Gieseker inequality over curves}
\label{sec:curve-pb}

We first prove two facts.  We use them here and again in
Section~\ref{sec:controlled-numerical-bg}.  First, on every model, an
intersection number on the projective bundle equals the arithmetic
discriminant.  This equality holds in every dimension.  Second, under the
conditions stated below, the model intersection numbers converge to the
adelic intersection number.
For curves, nonnegativity follows from Moriwaki's theorem for arithmetic
surfaces.  In the next section, it follows from Moriwaki's theorem in higher
dimensions.

Let $K$ be a number field.  Let $X$ be a smooth projective variety over $K$
of dimension $d\geq1$.  Assume that $X$ is geometrically integral.  Let $E$
be a vector bundle on $X$ of rank $r\geq2$.  If $d\geq2$, choose an ample line
bundle $L$ on $X$.  If $d=1$, choose any ample line bundle $L$.  This keeps
the notation uniform.  Since $d-1=0$, the curve invariants below do not
contain $L$.

Let
$Y:=\mathbb P_X(E):=\operatorname{Proj}_X(\operatorname{Sym}E)$ and let
$\pi:Y\to X$ be the projection.  Put $\xi:=\mathcal O_Y(1)$.  There is a
canonical surjection
\begin{equation}
 \pi^*E\twoheadrightarrow\xi.
 \label{eq:bg-projective-bundle}
\end{equation}
Let $W:=Y\times_XY$, let $p_1,p_2:W\to Y$ be the projections, and let
$q:=\pi p_1=\pi p_2$.  Thus $\dim Y=d+r-1$ and
$\dim W=d+2r-2$.

Let $\overline L$ be a nef adelic metric on $L$, and let
$\overline\xi$ be an integrable adelic metric on $\xi$.  All arithmetic
degrees and adelic intersections use the normalization fixed in
\cref{sec:numerical-adelic-chow-ring}; in particular, arithmetic degrees over
$\mathcal O_K$ are divided by $[K:\mathbb Q]$.  Only top intersection
numbers are used.  On each regular arithmetic model, we use the arithmetic
Chern classes of Gillet and Soul\'e.

We use \cref{def:cauchy-boundary-topology} for sequences that are Cauchy for
the supremum norm.
We compare model metrics through the fixed identifications on generic fibre.

Suppose that $t$ sequences satisfy this definition on the same variety $V$.
We can use one set of control data for all of them.
Let $S_a$ and $(\epsilon_{a,m})_m$ be the finite set of places and the
error bounds for the $a$-th sequence.  Set
$$
 S:=\bigcup_{a=1}^t S_a,
 \qquad \epsilon_m:=\max_{1\leq a\leq t}\epsilon_{a,m}.
$$
The set $S$ is finite, and $\epsilon_m\to0$.
For the $a$-th sequence, the logarithmic metric ratios vanish at every
place in $S\setminus S_a$.
Thus every sequence satisfies \cref{eq:cauchy-boundary-bounds} with the
same $S$ and $\epsilon_m$.
Restrict the fixed models to the common open subset $U$ of
$\operatorname{Spec}\mathcal O_K$ obtained by omitting the finite primes
in $S$.
Take the closure of the generic diagonal in their product over $U$, with
its reduced structure.
This gives an integral projective model dominating all the fixed models.
It is flat over $U$ because it is integral and dominates the Dedekind
scheme $U$.
Pull back the fixed rational line bundles to this model.
These pullbacks induce the same metrics outside $S$.

\noindent\textbf{Arithmetic ampleness.}
We use Zhang's notion of arithmetic ampleness \cite{ZhangPositive}.
We state its conditions in the form used by Moriwaki
\cite[p.~1326]{MoriwakiHigherBG}.  Let
$\mathcal X$ be a regular projective arithmetic model.  Let
$\overline{\mathcal H}$ be a smooth Hermitian line bundle on $\mathcal X$.
We call $\overline{\mathcal H}$ \emph{arithmetically ample} if the following
conditions hold.
\begin{enumerate}[label=\textup{(A\arabic*)},leftmargin=2.8em]
\item The line bundle $\mathcal H$ is relatively ample over
$\operatorname{Spec}\mathcal O_K$.
\item For every embedding $\sigma:K\hookrightarrow\mathbb C$, the Chern form
$c_1(\overline{\mathcal H}_\sigma)$ is a K\"ahler form.
\item Let $\mathcal Z\subseteq\mathcal X$ be an integral horizontal
subvariety.  Put $q:=\dim\mathcal Z$.  Then
\begin{equation*}
 \whdeg\!\left(\whc_1(\overline{\mathcal H}|_{\mathcal Z})^q\right)>0.
\end{equation*}
\end{enumerate}
The number in \textup{(A3)} is the height of $\mathcal Z$ with respect to
$\overline{\mathcal H}$.  A smooth Hermitian $\mathbb Q$-line bundle is
arithmetically ample if some positive integral multiple is an arithmetically
ample Hermitian line bundle.

Arithmetic ampleness supplies the model polarizations used in
Section~\ref{sec:controlled-numerical-bg} to apply Moriwaki's inequality
when $d\geq2$.

\noindent\textbf{Nef Hermitian line bundles.}
We use the definition in \cite[Definition~5.3(1), p.~1159]{YuanZhangHodge}.  Let
$\mathcal Y$ be a projective arithmetic model.  Let $\overline{\mathcal M}$
be a smooth Hermitian line bundle on $\mathcal Y$.  It is \emph{nef} if two
conditions hold.  First, its metric is semipositive.  Second, let
$\mathcal Z\subseteq\mathcal Y$ be any integral closed subvariety.  Put
$q:=\dim\mathcal Z$.  Then
\begin{equation*}
 \whdeg\!\left(\whc_1(\overline{\mathcal M}|_{\mathcal Z})^q\right)\geq0.
\end{equation*}
A smooth Hermitian $\mathbb Q$-line bundle is nef if some positive integral
multiple is nef.

\subsection{Controlled metric model systems}

\begin{definition}[BG-admissible model system]
\label{def:bg-admissible-model-system}
The pair $(\overline L,\overline\xi)$ is said to admit a
\emph{BG-admissible model system} if the following data and conditions are
given.

\begin{enumerate}[label=\textup{(BG\arabic*)},leftmargin=3.6em]
\item For every $m\geq1$, choose a regular, integral, projective, and flat
arithmetic model $\mathcal X_m$ of $X$.  Choose a vector bundle $\mathcal E_m$
on $\mathcal X_m$ with generic fibre $E$.  Equip $\mathcal E_m$ with a smooth
Hermitian metric $h_m$.  We require $h_m$ to be invariant under complex
conjugation.  Write
$\overline{\mathcal E}_m=(\mathcal E_m,h_m)$.  Put
\begin{equation}
 \pi_m:\mathcal Y_m=\mathbb P_{\mathcal X_m}(\mathcal E_m)
 \longrightarrow\mathcal X_m,
 \qquad
 \overline{\boldsymbol\xi}_m
 =\overline{\mathcal O}_{\mathcal Y_m}(1),
 \label{eq:bg-model-projective-bundle}
\end{equation}
Here $\overline{\boldsymbol\xi}_m$ carries the smooth quotient metric.  For
every $m$, fix isomorphisms
\begin{equation*}
 \iota_m:(\mathcal X_m)_K\xrightarrow{\sim}X,
 \qquad
 \varphi_m:(\mathcal E_m)_K\xrightarrow{\sim}\iota_m^*E.
\end{equation*}
They induce isomorphisms
\begin{equation*}
 j_m:(\mathcal Y_m)_K\xrightarrow{\sim}Y,
 \qquad
 \mathcal O_{\mathcal Y_m}(1)|_{(\mathcal Y_m)_K}
 \xrightarrow{\sim}j_m^*\xi.
\end{equation*}
We keep these maps fixed in all later comparisons.  Under the second induced
isomorphism, the quotient metric defines a model adelic metric
$\overline\xi_m$ on $\xi$.

\item If $d\geq2$, there is an arithmetically ample smooth Hermitian
$\mathbb Q$-line bundle $\overline{\mathcal L}_m$ on $\mathcal X_m$ whose
generic fibre is $L$.
The induced nef model adelic line bundles $\overline L_m$ form a Cauchy
sequence for the supremum norm.
This sequence converges to $\overline L$.

If $d=1$, no model polarization is required: the exponent $d-1$ in every
discriminant degree is zero.  In this case the regular arithmetic surfaces
$\mathcal X_m$ may be replaced, whenever two model systems are compared, by
regular common dominating models.  Pulling back the specified Hermitian
bundle and the line classes occurring below does not change their arithmetic
degrees.  We keep the chosen extension $\mathcal E_m$ fixed.

\item There are fixed $\mathbb Q$-line bundles $M^+$ and $M^-$ on $Y$ with
$\xi=M^+-M^-$ in $\Pic(Y)_{\mathbb Q}$.  For every $m$, choose a projective
arithmetic model
$\rho_m:\mathcal Y'_m\to\mathcal Y_m$.  The map $\rho_m$ induces the identity
on $Y$.  Choose smooth Hermitian $\mathbb Q$-line bundles
$\overline{\mathcal M}_m^\pm$ on $\mathcal Y'_m$.  They are nef.  Their generic
fibres are $M^\pm$.  We require
\begin{equation}
 \rho_m^*\overline{\boldsymbol\xi}_m
 =\overline{\mathcal M}_m^+-\overline{\mathcal M}_m^-
 \quad\text{in }\widehat{\Pic}(\mathcal Y'_m)_{\mathbb Q}.
 \label{eq:bg-fixed-nef-decomposition}
\end{equation}
For each sign, the associated nef model adelic line bundles
$\overline M_m^\pm$ form a Cauchy sequence for the supremum norm.
Write $\overline M^\pm$ for its limit.
Each limit $\overline M^\pm$ is nef by the definition of nefness.
Require $\overline\xi=\overline M^+-\overline M^-$.
The models
$\mathcal Y'_m$ and the models carrying the pullbacks of
$\overline L_m$ may be replaced by common dominating models in all
intersection computations.
\end{enumerate}
\end{definition}

The last condition is stronger than integrability of $\overline\xi$.  It is
the uniform control that permits every mixed term arising from
\cref{eq:bg-fixed-nef-decomposition} to pass to the limit.

\subsection{The tautological numerical discriminant}

\begin{definition}[Tautological discriminant number]
\label{def:bg-tautological-discriminant}
For a BG-admissible model system, define
\begin{equation}
\Delta^\tau_{\overline L}(E,\overline\xi)
:=(r+1)(p_1^*\overline\xi)^r(p_2^*\overline\xi)^r(q^*\overline L)^{d-1}
-2r\,\overline\xi^{\,r+1}(\pi^*\overline L)^{d-1}.
\label{eq:bg-tautological-discriminant}
\end{equation}
The first intersection is taken on $W$ and the second on $Y$.  When $d=1$,
the factors involving $\overline L$ are absent.
\end{definition}

We use the superscript $\tau$ because this number comes from the tautological
metric $\overline\xi$.

\begin{lemma}
\label{lem:bg-numerical-bridge}
Let $\overline E:=(E,\overline\xi)$ be the adelic vector bundle.  Here
$\overline\xi$ is the given integrable adelic metric on
$\xi=\mathcal O_Y(1)$.  Let $\widehat s_i(\overline E)$ be the numerical
tautological Segre class from \cref{def:segre}.  Let
$\widehat c_i(\overline E)$ be the corresponding numerical Chern class from
\cref{def:ring-characteristic-classes}.  In particular,
\begin{equation}
 \widehat c_1(\overline E)=\widehat s_1(\overline E),
 \qquad
 \widehat c_2(\overline E)
 =\widehat s_1(\overline E)^2-\widehat s_2(\overline E).
 \label{eq:bg-tautological-chern-polynomials}
\end{equation}
Then, in the numerical pairing of \cref{sec:numerical-adelic-chow-ring},
\begin{equation}
\left\langle 2r\widehat c_2(\overline E)
-(r-1)\widehat c_1(\overline E)^2,
\overline L^{d-1}\mid X\right\rangle
=\Delta^\tau_{\overline L}(E,\overline\xi).
\label{eq:bg-numerical-bridge}
\end{equation}
\end{lemma}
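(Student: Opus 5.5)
The identity is a bookkeeping computation: expand the left side into Segre monomials with \cref{eq:bg-tautological-chern-polynomials}, then read off each monomial's pairing from \cref{def:segre-monomial} (equivalently \cref{def:top-degree}). By \cref{eq:bg-tautological-chern-polynomials},
$$
2r\widehat c_2(\overline E)-(r-1)\widehat c_1(\overline E)^2
=2r\bigl(\widehat s_1^2-\widehat s_2\bigr)-(r-1)\widehat s_1^2
=(r+1)\widehat s_1(\overline E)^2-2r\,\widehat s_2(\overline E).
$$
Since the evaluation \cref{eq:polynomial-evaluation} is $\QQ$-linear in the universal polynomial, the left side of \cref{eq:bg-numerical-bridge} is $(r+1)$ times the pairing of the monomial $S_1^2$ minus $2r$ times the pairing of $S_2$, each against $\overline L^{d-1}$ on $Z=X$. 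Because $X$ is smooth it is normal, so $X^\nu=X$, $P_X=Y$ and $\overline\xi_X=\overline\xi$. If one prefers to work in the ring $\widehat N^\bullet\ad(X)$ of \cref{def:ring-characteristic-classes}, the same expansion holds there. The pairing of the class with $[\lambda(\overline L)]^{d-1}$ is then by definition the top degree \cref{eq:degree-monomial} of a representative, so it suffices to evaluate the two monomials.

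For the partition $\lambda=(2)$ we have $\ell=1$, and $P_{X,\lambda}=Y$ with $q=\pi$. The exponent is $r-1+2=r+1$, so \cref{eq:segre-monomial} gives $\overline\xi^{\,r+1}(\pi^*\overline L)^{d-1}$ on $Y$. For the partition $\lambda=(1,1)$ we have $\ell=2$, and $P_{X,\lambda}=Y\times_XY=W$ with projections $p_1,p_2$ and structure map $q$. Each exponent is $r-1+1=r$, so the pairing is $(p_1^*\overline\xi)^r(p_2^*\overline\xi)^r(q^*\overline L)^{d-1}$ on $W$. As a check, the factor counts are $(r+1)+(d-1)=\dim Y+1$ and $2r+(d-1)=\dim W+1$. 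Substituting these two values into the linear combination above gives exactly \cref{eq:bg-tautological-discriminant}. When $d=1$ there are no $\overline L$ factors on either side, and the same computation applies.

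The main point to check is that the ring-level product $\widehat s_1^2$ is really computed on the fibre product $W$, and not as some other product on $Y$. This holds by \cref{def:top-degree}: $\sigma_1(\overline E)^2$ is a monomial with two Segre factors, so its degree is taken on $\BP(E)\times_X\BP(E)$. Lemma~\ref{lem:fiber-product} and the permutation invariance in \cref{prop:degree-well-defined} make the ordering of the two factors irrelevant. I also need to confirm that the sign convention $c_t s_{-t}=1$ gives $\widehat c_2=\widehat s_1^2-\widehat s_2$ rather than $\widehat s_1^2+\widehat s_2$. From \cref{eq:chern-segre}, $C_2=C_1S_1-S_2=S_1^2-S_2$, which matches. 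This is the only place a sign error could enter.
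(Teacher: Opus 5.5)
Your proposal is correct and follows the paper's proof essentially step for step. You rewrite the discriminant as $(r+1)\widehat s_1^2-2r\widehat s_2$ using the Chern--Segre relation, then evaluate $\widehat s_1^2$ on $W=P_{X,(1,1)}$ and $\widehat s_2$ on $Y$ via \cref{eq:segre-monomial} and \cref{eq:segre-definition}; your additional checks (normality of $X$, the factor counts, the sign of $C_2$, and that the ring-level square is computed on the fibre product by \cref{def:top-degree}) are all consistent with the paper's definitions.
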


\begin{proof}
We omit $\overline E$ from the notation.  The universal relation in
\cref{eq:bg-tautological-chern-polynomials} gives
\begin{equation}
 2r\widehat c_2-(r-1)\widehat c_1^2
 =(r+1)\widehat s_1^2-2r\widehat s_2.
 \label{eq:bg-formal-discriminant}
\end{equation}
In \cref{eq:segre-monomial}, take $Z=X$ and $\lambda=(1,1)$.  Then
$P_{X,\lambda}=W$.  The two tautological line bundles are
$p_1^*\overline\xi$ and $p_2^*\overline\xi$.  Each tautological exponent is
$r-1+1=r$.  Therefore
\begin{equation*}
\left\langle \widehat s_1(\overline E)^2,\overline L^{d-1}\mid X\right\rangle
=\widehat{\deg}_W\!\left(\widehat c_1(p_1^*\overline\xi)^r\widehat c_1(p_2^*\overline\xi)^r\widehat c_1(q^*\overline L)^{d-1}\right).
\end{equation*}
In \cref{eq:segre-definition}, take $Z=X$ and $i=2$.  Then
\begin{equation*}
\left\langle \widehat s_2(\overline E),\overline L^{d-1}\mid X\right\rangle
=\widehat{\deg}_Y\!\left(\widehat c_1(\overline\xi)^{r+1}\widehat c_1(\pi^*\overline L)^{d-1}\right).
\end{equation*}
Apply the pairing with $\overline L^{d-1}$ to
\cref{eq:bg-formal-discriminant}.  Substitute the two identities above.  By
\cref{eq:bg-tautological-discriminant}, the resulting expression is
$\Delta^\tau_{\overline L}(E,\overline\xi)$.  This proves
\cref{eq:bg-numerical-bridge}.
\end{proof}

The model calculation below proves that the first two Bott--Chern
correction terms cancel in the discriminant.

\subsection{The model Bott--Chern identity}

Fix an index $m$ and temporarily omit it.  Thus $\mathcal X$ is regular,
$\overline{\mathcal E}=(\mathcal E,h)$ is a smooth Hermitian bundle of rank
$r$, and
\begin{equation}
 \widehat\pi:\mathcal Y=\mathbb P_{\mathcal X}(\mathcal E)
 \longrightarrow\mathcal X,
 \qquad
 \widehat\xi=\widehat c_1(\overline{\mathcal O}_{\mathcal Y}(1)).
 \label{eq:bg-fixed-model}
\end{equation}
Define the tautological arithmetic Segre classes on the model by
\begin{equation}
 \widehat s_i(\overline{\mathcal E})
 :=\widehat\pi_*(\widehat\xi^{\,r-1+i})
 \in\widehat{\mathrm{CH}}^i(\mathcal X)_{\mathbb Q}.
 \label{eq:bg-raw-segre}
\end{equation}
We use the Segre notation from \cref{def:segre} for these model classes.
When the adelic line bundles used in the pairing come from Hermitian line
bundles on $\mathcal X$, the arithmetic projection formula identifies
the normalized top intersections of $\widehat s_i(\overline{\mathcal E})$
with the evaluations on $X$ in \cref{def:segre} for the induced adelic metrics.
Let $F_\infty$ denote complex conjugation on $\mathcal X(\mathbb C)$.  For
$p\geq0$, let $A^{p,p}(\mathcal X)$ be the space of smooth real differential
forms $\alpha$ of type $(p,p)$ satisfying
$F_\infty^*\alpha=(-1)^p\alpha$.  These are the differential forms invariant
under complex conjugation in the real structure used here.  Define
\begin{equation*}
 \widetilde A^{p,p}(\mathcal X)
 :=A^{p,p}(\mathcal X)/(\operatorname{im}\partial+\operatorname{im}\overline\partial).
\end{equation*}
The two images in the denominator are taken in bidegree $(p,p)$.  For
$p\geq1$, define
\begin{equation*}
 a_p:\widetilde A^{p-1,p-1}(\mathcal X)
 \longrightarrow\widehat{\mathrm{CH}}^p(\mathcal X)_{\mathbb Q}.
\end{equation*}
For $\eta\in\widetilde A^{p-1,p-1}(\mathcal X)$, put
$a_p(\eta):=[(0,\eta)]$.  Define
\begin{equation*}
 \omega_p:\widehat{\mathrm{CH}}^p(\mathcal X)_{\mathbb Q}
 \longrightarrow A^{p,p}(\mathcal X).
\end{equation*}
If $x$ is represented by an arithmetic cycle $(Z,g_Z)$, put
\begin{equation*}
 \omega_p(x):=\delta_Z+dd^c g_Z.
\end{equation*}
Here $\delta_Z$ is the current of integration over $Z(\mathbb C)$.  The Green
current equation shows that $\omega_p(x)$ is a smooth differential form.  Let
$p,q\geq1$.  Let
$x\in\widehat{\mathrm{CH}}^p(\mathcal X)_{\mathbb Q}$.  Let
$\theta\in\widetilde A^{p-1,p-1}(\mathcal X)$.  Let
$\eta\in\widetilde A^{q-1,q-1}(\mathcal X)$.  Then
\begin{equation}
 x\,a_q(\eta)=a_{p+q}\bigl(\omega_p(x)\wedge\eta\bigr),\qquad
 a_p(\theta)a_q(\eta)=a_{p+q}\bigl(dd^c\theta\wedge\eta\bigr),
 \label{eq:bg-form-ideal-product}
\end{equation}
These identities are the product formulas in \cite[Section~4.2.11]{GS}.

Put $h_j=\sum_{\ell=1}^j\ell^{-1}$ and
$\kappa_r=\sum_{j=1}^{r-1}h_j$.  Here
$\widehat c_t^{\mathrm{GS}}(\overline{\mathcal E})$ is the Chern
polynomial defined in \cref{sec:bott-chern}.  Its coefficient of $t^i$ is the arithmetic
Chern class of degree $i$ defined by Gillet and Soul\'e.  The superscript
$\mathrm{GS}$ distinguishes these classes on $\mathcal X$ from the numerical
classes $\widehat c_i(\overline E)$ defined earlier.
Mourougane constructs corrected Segre classes
$\widehat s_i^{\mathrm{GS}}=\widehat s_i+a(R_i)$ for $i\geq1$.
Set $\widehat s_0^{\mathrm{GS}}=1$.
Define $\widehat s_t^{\mathrm{GS}}(\overline{\mathcal E}):=\sum_{i\geq0}\widehat s_i^{\mathrm{GS}}t^i$.
We use the dual metric on $\mathcal E^{\vee}$.  Then
$\widehat c_i^{\mathrm{GS}}(\overline{\mathcal E}^{\vee})
=(-1)^i\widehat c_i^{\mathrm{GS}}(\overline{\mathcal E})$.
We replace $t$ by $-t$ in \cite[Theorem~4 in Section~8.1]{Mourougane}.
The duality identity gives
\begin{equation}
 \widehat c_t^{\mathrm{GS}}(\overline{\mathcal E})
 \widehat s_{-t}^{\mathrm{GS}}(\overline{\mathcal E})=1.
 \label{eq:bg-mourougane-inverse}
\end{equation}
Mourougane computes the first two correction forms as
\begin{equation}
 R_1=-\kappa_r,
 \qquad
 R_2=-\left(1+\frac1r\right)\kappa_r c_1(\mathcal E,h);
 \label{eq:bg-mourougane-corrections}
\end{equation}
see \cite[Section~8.2]{Mourougane}.  We compare the coefficients of $t$ and $t^2$
in \cref{eq:bg-mourougane-inverse}.  We then substitute the correction forms
from \cref{eq:bg-mourougane-corrections}.  This gives
\begin{align}
 \widehat s_1
 &=\widehat c_1^{\mathrm{GS}}(\overline{\mathcal E})+a(\kappa_r),
 \label{eq:bg-raw-segre-one}\\
 \widehat s_2
 &=\widehat c_1^{\mathrm{GS}}(\overline{\mathcal E})^2
 -\widehat c_2^{\mathrm{GS}}(\overline{\mathcal E})
 +a\left(\left(1+\frac1r\right)
 \kappa_r c_1(\mathcal E,h)\right).
 \label{eq:bg-raw-segre-two}
\end{align}

\begin{proposition}[Bott--Chern cancellation]
\label{prop:bg-bott-chern-cancellation}
In $\widehat{\mathrm{CH}}^2(\mathcal X)_{\mathbb Q}$ one has
\begin{equation}
 (r+1)(\widehat s_1)^2-2r\widehat s_2
 =2r\widehat c_2^{\mathrm{GS}}(\overline{\mathcal E})
 -(r-1)\widehat c_1^{\mathrm{GS}}(\overline{\mathcal E})^2.
 \label{eq:bg-bott-chern-cancellation}
\end{equation}
\end{proposition}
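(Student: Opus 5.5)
The plan is a direct computation in $\widehat{\mathrm{CH}}^\bullet(\mathcal X)_{\mathbb Q}$. I will substitute the expressions \cref{eq:bg-raw-segre-one} and \cref{eq:bg-raw-segre-two} for $\widehat s_1$ and $\widehat s_2$ into the left-hand side of \cref{eq:bg-bott-chern-cancellation}. Then I will expand, using the product rules \cref{eq:bg-form-ideal-product} to evaluate every product that involves a class of the form $a(\cdot)$. Write $\widehat c_i:=\widehat c_i^{\mathrm{GS}}(\overline{\mathcal E})$. Here $\kappa_r$ is a constant function, so $\kappa_r\in\widetilde A^{0,0}(\mathcal X)$.

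First I compute $(\widehat s_1)^2=\widehat c_1^2+2\,\widehat c_1\,a(\kappa_r)+a(\kappa_r)^2$. The first product rule in \cref{eq:bg-form-ideal-product}, with $p=q=1$, gives
$$\widehat c_1\,a(\kappa_r)=a\bigl(\omega_1(\widehat c_1)\,\kappa_r\bigr).$$
I then need the standard fact that $\omega_1(\widehat c_1^{\mathrm{GS}}(\overline{\mathcal E}))=c_1(\mathcal E,h)$. This holds because $\omega$ of an arithmetic Chern class is the corresponding Chern–Weil form of the Hermitian metric. The second product rule gives
$$a(\kappa_r)^2=a\bigl(dd^c\kappa_r\wedge\kappa_r\bigr)=0,$$
since $dd^c$ kills constants. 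Hence
$$(r+1)(\widehat s_1)^2=(r+1)\widehat c_1^2+2(r+1)\,a\bigl(\kappa_r c_1(\mathcal E,h)\bigr).$$

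Second, I compute from \cref{eq:bg-raw-segre-two}:
$$2r\,\widehat s_2=2r\,\widehat c_1^2-2r\,\widehat c_2+2r\Bigl(1+\frac1r\Bigr)a\bigl(\kappa_r c_1(\mathcal E,h)\bigr).$$
The coefficient of the correction term is $2r(1+1/r)=2(r+1)$. It is the same coefficient as in the first step, so the two Bott–Chern contributions cancel in the difference. What remains is
$$(r+1-2r)\,\widehat c_1^2+2r\,\widehat c_2=2r\,\widehat c_2-(r-1)\,\widehat c_1^2,$$
which is the claimed identity. I also use that $a$ is additive and $\mathbb Q$-linear, so that the coefficients combine inside $a(\cdot)$.

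The only step needing care is the identification $\omega_1(\widehat c_1^{\mathrm{GS}})=c_1(\mathcal E,h)$, together with the normalization of $c_1(\mathcal E,h)$ in \cref{eq:bg-mourougane-corrections}. These must use the same convention for $dd^c$ and the same real structure, so that the two correction forms are literally equal multiples of one class in $\widetilde A^{1,1}(\mathcal X)$. I would check this against the conventions of \cite[Section~4.2.11]{GS} and \cite[Section~8.2]{Mourougane}. The rest of the argument is bookkeeping of coefficients.
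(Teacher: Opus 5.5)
Your proposal is correct and matches the paper's proof step by step. You substitute \cref{eq:bg-raw-segre-one,eq:bg-raw-segre-two}, kill $a(\kappa_r)^2$ because $\kappa_r$ is constant, and rewrite $\widehat c_1^{\mathrm{GS}}(\overline{\mathcal E})\,a(\kappa_r)=a(\kappa_r c_1(\mathcal E,h))$ via the first product rule. Both secondary terms then carry the coefficient $2(r+1)$ and cancel. Your caveat about matching the normalization of $c_1(\mathcal E,h)$ in Mourougane's $R_2$ with $\omega_1$ is reasonable; the paper takes this consistency for granted.
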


\begin{proof}
Since $\kappa_r$ is constant, \cref{eq:bg-form-ideal-product} gives
$a(\kappa_r)^2=0$.
The map $\omega$ sends the first arithmetic Chern class to the first Chern form:
$$
 \omega\bigl(\widehat c_1^{\mathrm{GS}}(\overline{\mathcal E})\bigr)
 =c_1(\mathcal E,h).
$$
Apply the first identity in \cref{eq:bg-form-ideal-product} with
$p=q=1$, $x=\widehat c_1^{\mathrm{GS}}(\overline{\mathcal E})$, and
$\eta=\kappa_r$. This gives
\begin{equation}
 \widehat c_1^{\mathrm{GS}}(\overline{\mathcal E})a(\kappa_r)
 =a(\kappa_r c_1(\mathcal E,h)).
 \label{eq:bg-constant-form-product}
\end{equation}
Substitution of \cref{eq:bg-raw-segre-one,eq:bg-raw-segre-two} into the left
side of \cref{eq:bg-bott-chern-cancellation} shows that the secondary term
from the square has coefficient $2(r+1)\kappa_r$, while the secondary term
from $2r\widehat s_2$ has coefficient
\begin{equation}
 2r\left(1+\frac1r\right)\kappa_r=2(r+1)\kappa_r.
 \label{eq:bg-correction-coefficient}
\end{equation}
The signs are opposite.  The remaining primary terms equal
$$
(r+1)\widehat c_1^{\mathrm{GS}}(\overline{\mathcal E})^2
-2r\left(\widehat c_1^{\mathrm{GS}}(\overline{\mathcal E})^2
-\widehat c_2^{\mathrm{GS}}(\overline{\mathcal E})\right)
=2r\widehat c_2^{\mathrm{GS}}(\overline{\mathcal E})
-(r-1)\widehat c_1^{\mathrm{GS}}(\overline{\mathcal E})^2,
$$
which proves the proposition.
\end{proof}

\subsection{Fiber-square realization and the exact model identity}

Return to the $m$-th model.  Let
$\mathcal W_m:=\mathcal Y_m\times_{\mathcal X_m}\mathcal Y_m$, let
$p_{1,m},p_{2,m}:\mathcal W_m\to\mathcal Y_m$ be the projections, and let
$q_m:=\pi_mp_{1,m}=\pi_mp_{2,m}$.  Projective bundles are smooth over their
bases, so $\mathcal Y_m$ and $\mathcal W_m$ are regular.

\begin{lemma}
\label{lem:bg-fiber-square-realization}
With $\widehat\xi_m=\widehat c_1(\overline{\boldsymbol\xi}_m)$, one has
\begin{equation}
 (q_m)_*\left(
 (p_{1,m}^*\widehat\xi_m)^r(p_{2,m}^*\widehat\xi_m)^r
 \right)
 =(\widehat s_{1,m})^2,
 \qquad
 (\pi_m)_*(\widehat\xi_m^{\,r+1})=\widehat s_{2,m}.
 \label{eq:bg-fiber-square-realization}
\end{equation}
\end{lemma}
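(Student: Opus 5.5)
The second identity is the definition \eqref{eq:bg-raw-segre} of the model Segre class on the $m$-th model, so there is nothing to prove there. All the work is in the first identity. The plan is to decompose $q_m$ through one factor and use flat base change together with the arithmetic projection formula for the smooth projective morphism $\pi_m$.

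Write $q_m=\pi_m\circ p_{1,m}$. First, by the projection formula for $p_{1,m}$,
\[
(q_m)_*\bigl((p_{1,m}^*\widehat\xi_m)^r(p_{2,m}^*\widehat\xi_m)^r\bigr)
=(\pi_m)_*\Bigl(\widehat\xi_m^{\,r}\cdot(p_{1,m})_*\bigl(p_{2,m}^*\widehat\xi_m^{\,r}\bigr)\Bigr).
\]
This uses that $p_{1,m}$ is a smooth projective morphism between regular arithmetic varieties, so Gillet--Soul\'e pushforward is defined and satisfies the projection formula.

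Second, the fibre square $\mathcal W_m=\mathcal Y_m\times_{\mathcal X_m}\mathcal Y_m$ has $p_{1,m}$ and $\pi_m$ smooth. Arithmetic base change for smooth proper morphisms then gives
\[
(p_{1,m})_*p_{2,m}^*=\pi_m^*(\pi_m)_*
\]
on $\widehat{\mathrm{CH}}^\bullet(\mathcal Y_m)_{\mathbb Q}$. On the cycle part this is ordinary flat base change. On the Green current part it is the fact that fibre integration commutes with pullback along the smooth map $\pi_{m,\mathbb C}$, because the complex fibres of $p_{1,m}$ are identified with those of $\pi_m$. Applying it yields $(p_{1,m})_*p_{2,m}^*\widehat\xi_m^{\,r}=\pi_m^*\widehat s_{1,m}$.

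Third, the projection formula for $\pi_m$ gives
\[
(\pi_m)_*\bigl(\widehat\xi_m^{\,r}\cdot\pi_m^*\widehat s_{1,m}\bigr)=\widehat s_{1,m}\cdot(\pi_m)_*\widehat\xi_m^{\,r}=(\widehat s_{1,m})^2,
\]
which is the first identity.

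The main obstacle is the arithmetic base change in the second step, since the Gillet--Soul\'e literature states it less often than the projection formula. I would first try to verify it on representatives. Represent $\widehat\xi_m^{\,r}$ by an arithmetic cycle $(Z,g)$ with $Z$ flat over $\mathcal X_m$ where possible. Then $p_{2,m}^*(Z,g)=(p_{2,m}^{-1}Z,p_{2,m}^*g)$, and its pushforward is $(\pi_m^{-1}\pi_{m*}Z,\ \pi_{m,\mathbb C}^*\pi_{m,\mathbb C*}g)$, using the fibrewise identification of currents on a product-like smooth family. If this is delicate, an alternative is to expand $\widehat\xi_m^{\,r}$ via the arithmetic Chern relation from the proof of \cref{prop:bott-chern}. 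That relation expresses $\widehat\xi_m^{\,r}$ as a combination of $\pi_m^*$-classes times lower powers $\widehat\xi_m^{\,j}$, $j<r$, plus an $a(\cdot)$ term. Each piece then commutes with base change directly: pullback classes by functoriality, and form terms by fibre integration.
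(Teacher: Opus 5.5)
Your proposal is correct and matches the paper's proof step for step. Factor $q_m=\pi_m\circ p_{1,m}$ and apply the projection formula; prove the base change $(p_{1,m})_*p_{2,m}^*=\pi_m^*(\pi_m)_*$ on representatives, by flat base change on cycles and fibre integration on Green currents; finish with the projection formula for $\pi_m$. Your hedge about choosing $Z$ flat over $\mathcal X_m$ is unnecessary, since the paper checks the cycle identity for an arbitrary integral $Z$ of codimension $r$, splitting only on whether $\dim\pi_m(Z)<\dim Z$.
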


\begin{proof}
The second identity is the definition of $\widehat s_{2,m}$.  Put
$\widehat\alpha_m=\widehat\xi_m^r$.  Consider the Cartesian square
\[
\begin{array}{ccc}
 \mathcal W_m&\xrightarrow{\ p_{2,m}\ }&\mathcal Y_m\\
 {\scriptstyle p_{1,m}}\downarrow&&\downarrow{\scriptstyle\pi_m}\\
 \mathcal Y_m&\xrightarrow{\ \pi_m\ }&\mathcal X_m
\end{array}
\]
The morphism $\pi_m$ is projective and smooth.
Thus it is proper and flat.
We will prove the identity
\begin{equation}
 (p_{1,m})_*p_{2,m}^*(\widehat\alpha_m)
 =\pi_m^*(\pi_m)_*(\widehat\alpha_m)
 \quad\text{in }\widehat{\mathrm{CH}}^1(\mathcal Y_m)_{\mathbb Q}.
 \label{eq:bg-arithmetic-base-change}
\end{equation}
Choose an arithmetic cycle $(A,g_A)$ representing $\widehat\alpha_m$.
We first check the equality on the algebraic cycle $A$.
By linearity, it suffices to consider an integral closed subscheme
$Z\subset\mathcal Y_m$ of codimension $r$.
Let $V$ be the image $\pi_m(Z)$ with its reduced structure.
The image is closed because $\pi_m$ is proper.
Put
$$
 Z':=\mathcal Y_m\times_{\mathcal X_m}Z,\qquad
 V':=\mathcal Y_m\times_{\mathcal X_m}V.
$$
The morphisms $Z'\to Z$ and $V'\to V$ are projective bundles of relative
dimension $r-1$.
In particular, $Z'$ and $V'$ are integral.
The definition of flat pullback gives
$$
 p_{2,m}^*[Z]=[Z'],\qquad \pi_m^*[V]=[V'].
$$
The map $p_{1,m}$ sends $Z'$ onto $V'$.
If $\dim V<\dim Z$, then $\dim V'<\dim Z'$.
The definition of proper pushforward therefore gives
$(\pi_m)_*[Z]=0$ and $(p_{1,m})_*[Z']=0$.
Thus both sides of the required equality vanish.

Suppose now that $\dim V=\dim Z$.
Set $n=[k(Z):k(V)]$.
The definition of proper pushforward gives $(\pi_m)_*[Z]=n[V]$.
Choose a trivialization of $\mathcal E_m$ near the generic point of $V$.
For indeterminates $t_1,\ldots,t_{r-1}$, this identifies the induced
extension of function fields with
$$
 k(V')=k(V)(t_1,\ldots,t_{r-1})
 \ \subset\
 k(Z')=k(Z)(t_1,\ldots,t_{r-1}).
$$
A purely transcendental extension preserves the degree $n$.
Hence $(p_{1,m})_*[Z']=n[V']$, and
$$
 (p_{1,m})_*p_{2,m}^*[Z]
 =n[V']
 =\pi_m^*(\pi_m)_*[Z].
$$
This proves the equality on $A$.
The argument also applies when $Z$ lies in a fibre over a closed point of
$\operatorname{Spec}\mathcal O_K$.
Thus it includes vertical cycles.

We next check the equality on the Green current $g_A$.
This current has bidegree $(r-1,r-1)$.
Choose an open set $U\subset\mathcal X_m(\mathbb C)$ on which
$\mathcal E_m$ is trivial.
Put $P=\mathbb P^{r-1}(\mathbb C)$.
The trivialization gives
$$
 Y_U:=\pi_m^{-1}(U)\simeq U\times P,\qquad
 W_U:=q_m^{-1}(U)\simeq U\times P\times P.
$$
We use the same symbols for the induced maps of complex manifolds.
In these coordinates,
$$
 \pi_m(x,u)=x,\qquad
 p_{1,m}(x,u,v)=(x,u),\qquad
 p_{2,m}(x,u,v)=(x,v).
$$

First let $\eta$ be a smooth form of bidegree $(r-1,r-1)$ on $Y_U$.
For $x\in U$, define $i_x:P\to Y_U$ by $i_x(v)=(x,v)$.
The fibre of $p_{1,m}$ over $(x,u)$ is $\{x\}\times\{u\}\times P$.
The restriction of $p_{2,m}^*\eta$ to this fibre is $i_x^*\eta$.
Integration over this fibre therefore gives
$$
 \bigl((p_{1,m})_*p_{2,m}^*\eta\bigr)(x,u)
 =\int_P i_x^*\eta
 =\bigl((\pi_m)_*\eta\bigr)(x).
$$
The last expression does not depend on $u$.
It is the value of $\pi_m^*(\pi_m)_*\eta$ at $(x,u)$.
This proves the required equality for smooth forms.

We now pass to currents.
We identify each smooth form with its associated current.
Choose smooth forms $\eta_j$ of bidegree $(r-1,r-1)$ that converge to
$g_A|_{Y_U}$ in the space of currents.
Such forms exist by \cite[Section~1.1.3]{GS}.
The maps $p_{2,m}$ and $\pi_m$ are submersions.
Their pullbacks on currents are continuous.
The maps $p_{1,m}$ and $\pi_m$ are projections from products with the compact space $P$.
Hence they are proper.
Their pushforwards on currents are continuous.
These continuity properties follow from the definitions by duality in
\cite[Section~1.1.4]{GS}.
Apply the equality for smooth forms to each $\eta_j$.
Passing to the limit gives
$$
 (p_{1,m})_*p_{2,m}^*(g_A|_{Y_U})
 =\pi_m^*(\pi_m)_*(g_A|_{Y_U}).
$$
The open sets $U$ cover $\mathcal X_m(\mathbb C)$.
Hence the same equality holds on $\mathcal Y_m(\mathbb C)$.
By \cite[Theorem~3.6.1]{GS}, these operations respect the relations defining
arithmetic Chow groups.
Together with the equality on $A$, this proves
\cref{eq:bg-arithmetic-base-change}.

Functoriality, \cref{eq:bg-arithmetic-base-change}, and the arithmetic
projection formula now give
\begin{align}
 &(q_m)_*\left(
 p_{1,m}^*\widehat\alpha_m\,p_{2,m}^*\widehat\alpha_m
 \right)\notag\\
 &\quad=(\pi_m)_*\left(
 \widehat\alpha_m\,(p_{1,m})_*p_{2,m}^*\widehat\alpha_m
 \right)\notag\\
 &\quad=(\pi_m)_*\left(
 \widehat\alpha_m\,\pi_m^*(\pi_m)_*\widehat\alpha_m
 \right)
 =\left((\pi_m)_*\widehat\alpha_m\right)^2
 =(\widehat s_{1,m})^2.
 \label{eq:bg-fiber-square-proof}
\end{align}
\end{proof}

Define a class of codimension $d-1$ on $\mathcal X_m$ by
\begin{equation}
 \widehat\Lambda_m=
 \begin{cases}
 \widehat c_1(\overline{\mathcal L}_m)^{d-1},&d\geq2,\\
 1,&d=1.
 \end{cases}
 \label{eq:bg-model-polarization-factor}
\end{equation}
The arithmetic model $\mathcal X_m$ has dimension $d+1$.
Multiplying a class of codimension $2$ by $\widehat\Lambda_m$ gives a class
of codimension $2+(d-1)=d+1$.
We can therefore take the arithmetic degree of the product.
Define
\begin{equation}
 D_m:=\whdeg_K\left(
 \left(
 2r\widehat c_2^{\mathrm{GS}}(\overline{\mathcal E}_m)
 -(r-1)\widehat c_1^{\mathrm{GS}}(\overline{\mathcal E}_m)^2
 \right)\widehat\Lambda_m
 \right)\in\mathbb R.
 \label{eq:bg-model-discriminant}
\end{equation}
We obtain $\whdeg_K$ by dividing the arithmetic degree over
$\operatorname{Spec}\mathcal O_K$ by $[K:\mathbb Q]$.
Let $B_m$ be the model version of
\cref{eq:bg-tautological-discriminant}, namely
\begin{equation}
B_m:=(r+1)\whdeg_K\!\left((p_{1,m}^*\widehat\xi_m)^r
(p_{2,m}^*\widehat\xi_m)^r q_m^*\widehat\Lambda_m\right)
-2r\whdeg_K\!\left(\widehat\xi_m^{\,r+1}\pi_m^*\widehat\Lambda_m\right).
\label{eq:bg-model-tautological-number}
\end{equation}

\begin{proposition}[Exact model identity]
\label{prop:bg-exact-model-identity}
For every $m$, one has
\begin{equation}
 B_m=D_m.
 \label{eq:bg-exact-model-identity}
\end{equation}
\end{proposition}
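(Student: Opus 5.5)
The plan is to push both intersection numbers in $B_m$ down to $\mathcal X_m$ with the arithmetic projection formula. There we can use \cref{lem:bg-fiber-square-realization} and \cref{prop:bg-bott-chern-cancellation}. Fix $m$ and drop it from the notation.

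For the first term of $B_m$ we use the flat proper morphism $q:\mathcal W\to\mathcal X$. It is smooth, being a composite of projective bundle projections, and $\mathcal W$ is regular. Since the $\widehat\Lambda$-factor is pulled back along $q$, the arithmetic projection formula gives
\[
\widehat{\deg}_K\bigl((p_1^*\widehat\xi)^r(p_2^*\widehat\xi)^r q^*\widehat\Lambda\bigr)
=\widehat{\deg}_K\bigl(q_*\bigl((p_1^*\widehat\xi)^r(p_2^*\widehat\xi)^r\bigr)\,\widehat\Lambda\bigr).
\]
By \cref{eq:bg-fiber-square-realization}, the right-hand side equals $\widehat{\deg}_K\bigl(\widehat s_1^{\,2}\widehat\Lambda\bigr)$.

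For the second term we apply the projection formula along $\pi$ and use $\pi_*(\widehat\xi^{\,r+1})=\widehat s_2$. This gives $\widehat{\deg}_K(\widehat\xi^{\,r+1}\pi^*\widehat\Lambda)=\widehat{\deg}_K(\widehat s_2\widehat\Lambda)$.

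Both steps need arithmetic degree to be compatible with proper pushforward, $\widehat{\deg}_{\mathcal W}=\widehat{\deg}_{\mathcal X}\circ q_*$ on top-degree classes. This holds in Gillet--Soul\'e theory for proper morphisms between regular arithmetic varieties that are smooth on generic fibres \cite{GS}. Our maps are smooth everywhere, so the hypothesis is satisfied. Dividing by $[K:\mathbb Q]$ on both sides keeps the normalization consistent.

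Combining the two identities, we get
\[
B_m=\widehat{\deg}_K\Bigl(\bigl((r+1)\widehat s_1^{\,2}-2r\widehat s_2\bigr)\widehat\Lambda\Bigr).
\]
Substituting \cref{eq:bg-bott-chern-cancellation} inside the parentheses, which is an identity in $\widehat{\mathrm{CH}}^2(\mathcal X)_{\QQ}$, turns this into exactly $D_m$. Multiplication by $\widehat\Lambda$ and taking the degree are linear, so the substitution is legitimate. When $d=1$ we have $\widehat\Lambda=1$ and the argument is the same without the polarization factor.

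The main obstacle is to justify the projection formula $q_*(\alpha\cdot q^*\beta)=q_*(\alpha)\beta$ in $\widehat{\mathrm{CH}}_{\QQ}$ when $\beta=\widehat c_1(\overline{\mathcal L})^{d-1}$ with $\overline{\mathcal L}$ only a Hermitian $\mathbb Q$-line bundle. I would reduce to integral multiples by clearing denominators and multilinearity. Then I would invoke the Gillet--Soul\'e projection formula for smooth projective morphisms, for which pullback of arithmetic classes is defined because the morphism is flat. A further point is that $\widehat s_1^{\,2}$ in the lemma is the Chow-ring product on $\mathcal X$, and it is this same product that appears in \cref{prop:bg-bott-chern-cancellation}. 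Once these are in place, the identity is purely formal.
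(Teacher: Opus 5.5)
Your proposal is correct and follows the paper's proof essentially step for step: push both terms of $B_m$ down to $\mathcal X_m$ with the arithmetic projection formula for $q_m$ and $\pi_m$, use the fibre-square lemma and the compatibility of arithmetic degree with pushforward, and then substitute the Bott--Chern cancellation in $\widehat{\mathrm{CH}}^2(\mathcal X_m)_{\mathbb Q}$. Your remark about clearing denominators for the Hermitian $\mathbb Q$-line bundle $\overline{\mathcal L}_m$ is a detail that the paper leaves implicit.
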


\begin{proof}
Put
$$
 \widehat\gamma_m:=
 (p_{1,m}^*\widehat\xi_m)^r(p_{2,m}^*\widehat\xi_m)^r
 \in\widehat{\mathrm{CH}}^{2r}(\mathcal W_m)_{\mathbb Q}.
$$
The first identity in \cref{lem:bg-fiber-square-realization} gives
$(q_m)_*\widehat\gamma_m=(\widehat s_{1,m})^2$.
Apply the arithmetic projection formula to $q_m$.
We obtain
$$
 (q_m)_*(\widehat\gamma_m q_m^*\widehat\Lambda_m)
 =\bigl((q_m)_*\widehat\gamma_m\bigr)\widehat\Lambda_m
 =(\widehat s_{1,m})^2\widehat\Lambda_m.
$$
The second identity in \cref{lem:bg-fiber-square-realization} gives
$(\pi_m)_*(\widehat\xi_m^{\,r+1})=\widehat s_{2,m}$.
The arithmetic projection formula for $\pi_m$ therefore gives
$$
 (\pi_m)_*(\widehat\xi_m^{\,r+1}\pi_m^*\widehat\Lambda_m)
 =\bigl((\pi_m)_*\widehat\xi_m^{\,r+1}\bigr)\widehat\Lambda_m
 =\widehat s_{2,m}\widehat\Lambda_m.
$$
The classes inside these pushforwards have top codimension on
$\mathcal W_m$ and $\mathcal Y_m$, respectively.
Their images have codimension $d+1$ on $\mathcal X_m$.
Functoriality of pushforward to $\operatorname{Spec}\mathcal O_K$ shows
that these pushforwards preserve arithmetic degree.
Dividing every degree by the same number $[K:\mathbb Q]$ preserves these
equalities.
Taking normalized arithmetic degrees and using
\cref{eq:bg-model-tautological-number}, we obtain
$$
 B_m=(r+1)\whdeg_K\!\left((\widehat s_{1,m})^2\widehat\Lambda_m\right)
 -2r\whdeg_K\!\left(\widehat s_{2,m}\widehat\Lambda_m\right).
$$
The arithmetic degree is linear.
Hence
$$
 B_m=\whdeg_K\!\left(
 \bigl((r+1)(\widehat s_{1,m})^2-2r\widehat s_{2,m}\bigr)
 \widehat\Lambda_m\right).
$$
Apply \cref{eq:bg-bott-chern-cancellation} to
$\overline{\mathcal E}_m$ on $\mathcal X_m$.
Substitution into the preceding formula gives
$$
 B_m=\whdeg_K\!\left(
 \bigl(2r\widehat c_2^{\mathrm{GS}}(\overline{\mathcal E}_m)
 -(r-1)\widehat c_1^{\mathrm{GS}}(\overline{\mathcal E}_m)^2\bigr)
 \widehat\Lambda_m\right)=D_m.
$$
The last equality is the definition in \cref{eq:bg-model-discriminant}.
\end{proof}

This proposition is purely an identity.  Positivity is deliberately kept
separate: it comes from Moriwaki's arithmetic-surface theorem below when
$d=1$, and from his higher-dimensional theorem in Section~\ref{sec:controlled-numerical-bg} when $d\geq2$.

\subsection{Passage to the adelic limit}

\begin{proposition}
\label{prop:bg-controlled-convergence}
For every BG-admissible model system, one has
\begin{equation}
 \lim_{m\to\infty}B_m
 =\Delta^\tau_{\overline L}(E,\overline\xi).
 \label{eq:bg-controlled-limit}
\end{equation}
\end{proposition}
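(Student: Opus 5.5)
The plan is to identify each of the two arithmetic degrees in $B_m$ with an adelic intersection number of model adelic line bundles on $W$ and $Y$. We then pass to the limit term by term using the controlled continuity \cref{eq:controlled-intersection-continuity}. Finally we recognize the limit as \cref{eq:bg-tautological-discriminant}.

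\emph{Model terms as adelic intersections.} Fix $m$. The classes $\widehat\xi_m$ and $\widehat c_1(\overline{\mathcal L}_m)$ are first arithmetic Chern classes of smooth Hermitian ($\mathbb Q$-)line bundles on $\mathcal Y_m$ and $\mathcal X_m$. Their pullbacks to $\mathcal W_m$ (respectively $\mathcal Y_m$) are again Hermitian line bundles. Since $\mathcal W_m$ and $\mathcal Y_m$ are regular, the Gillet--Soul\'e arithmetic degree of a top product of such classes equals the model intersection number of the induced model adelic line bundles. By \cite[Proposition~4.1.1]{YZ} this is the adelic intersection. Thus
$$
\whdeg_K\!\left((p_{1,m}^*\widehat\xi_m)^r(p_{2,m}^*\widehat\xi_m)^r q_m^*\widehat\Lambda_m\right)
=(p_1^*\overline\xi_m)^r(p_2^*\overline\xi_m)^r(q^*\overline L_m)^{d-1},
$$
and similarly on $Y$. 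Here we use the fixed identifications $j_m$ to view everything on the fixed generic fibres $W=\mathcal W_{m,K}$ and $Y$. For $d=1$ the $\overline L_m$ factors are absent.

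\emph{Simultaneous control.} By (BG3) and the pullback invariance of intersection numbers along $\rho_m$ (projection formula for the birational $\rho_m$, \cite[Proposition~4.1.2]{YZ}), we may replace $\overline\xi_m$ by $\overline M_m^+-\overline M_m^-$. Each of the sequences $(\overline M_m^\pm)$ and $(\overline L_m)$ is Cauchy for the supremum norm. We merge their finite sets $S$ and error bounds $\epsilon_m$ and take a common dominating model over $U$, exactly as in the paragraph preceding \emph{Arithmetic ampleness}. This yields one simultaneously controlled family on $Y$. The pullbacks $p_a^*\overline M_m^\pm$ and $q^*\overline L_m$ to $W$ stay simultaneously controlled, since pullback preserves control and does not increase sup norms of log-ratios after enlarging $S$. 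Hence $p_1^*\overline\xi_m,\ p_2^*\overline\xi_m,\ q^*\overline L_m$ converge in the controlled sense on $W$ to $p_1^*\overline\xi,\ p_2^*\overline\xi,\ q^*\overline L$, using $\overline\xi=\overline M^+-\overline M^-$. The same holds on $Y$.

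\emph{Limit.} Expand each top intersection multilinearly in the $\pm$ summands. Each mixed term is an intersection of nef controlled sequences, so \cref{eq:controlled-intersection-continuity} applies to every term. The finite sum gives convergence of both summands of $B_m$, and hence $\lim B_m=\Delta^\tau_{\overline L}(E,\overline\xi)$.

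The main obstacle is the first step. One must ensure that the arithmetic degree on the regular model $\mathcal W_m$ with smooth Hermitian data agrees with the Yuan--Zhang model intersection after passing to the dominating models $\mathcal Y'_m$. The $\overline L_m$ live on $\mathcal X_m$, and the decomposition lives on $\mathcal Y'_m$. I would handle this by pulling everything to a common normal dominating model of $\mathcal W_m$. There I would invoke the projection formula for proper birational morphisms, together with the independence of model choice from \cref{prop:model-independence}.
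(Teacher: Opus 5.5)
Your proposal is correct and follows essentially the same route as the paper's proof. You identify $B_m$ with $(r+1)I_m-2rJ_m$ as model adelic intersections computed on common dominating models, substitute $\rho_m^*\overline{\boldsymbol\xi}_m=\overline{\mathcal M}_m^+-\overline{\mathcal M}_m^-$ from \textup{(BG3)}, merge the places and error bounds into one set of control data preserved by pullback to $W$ and $Y$, and apply controlled continuity to each term of the finite multilinear expansion; the model-comparison step you flag is handled in the paper exactly as you suggest, by a common dominating model and the projection formula.
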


\begin{proof}
When $d=1$, omit all factors involving $\overline L_m$ or $\overline L$
in the formulas below.
No choice of $\overline L_m$ is needed in this case.
On $W$, put
$$
 I_m:=(p_1^*\overline\xi_m)^r(p_2^*\overline\xi_m)^r
 (q^*\overline L_m)^{d-1}.
$$
On $Y$, put
$$
 J_m:=\overline\xi_m^{\,r+1}(\pi^*\overline L_m)^{d-1}.
$$
For model adelic line bundles, these intersections equal the normalized
arithmetic degrees on the models that define them.
Thus \cref{eq:bg-model-tautological-number} gives
$$
 B_m=(r+1)I_m-2rJ_m.
$$
For each finite comparison of models, choose a common dominating model
carrying all the line bundles in that comparison.
The projection formula preserves $I_m$ and $J_m$ under these pullbacks.

Use the equality of arithmetic Picard classes in \textup{(BG3)} to replace
each occurrence of $\overline\xi_m$ in $I_m$ and $J_m$ by
$\overline M_m^+-\overline M_m^-$.
This substitution preserves the numerical intersections.
Expand each intersection by multilinearity.
The expansion of $I_m$ contains mixed intersections of
$p_1^*\overline M_m^\pm$, $p_2^*\overline M_m^\pm$, and
$q^*\overline L_m$.
The expansion of $J_m$ contains mixed intersections of
$\overline M_m^\pm$ and $\pi^*\overline L_m$.
Both expansions are finite, and their coefficients do not depend on $m$.

Condition \textup{(BG3)} gives Cauchy sequences of nef model metrics on the
fixed line bundles $M^+$ and $M^-$.
Their limits are $\overline M^+$ and $\overline M^-$.
When $d\geq2$, condition \textup{(BG2)} gives a Cauchy sequence of nef model
metrics $\overline L_m$ with limit $\overline L$.
Pullback preserves nefness.
After enlarging the finite set of places to extend the morphisms to the
fixed models, pullback preserves the model metrics outside that set.
Pullback does not increase the supremum of a logarithmic metric ratio.
It therefore preserves the bounds in \cref{eq:cauchy-boundary-bounds}.
For each intersection, use the common finite set of places and error bounds
constructed at the beginning of \cref{sec:curve-pb}.
These choices give simultaneous control of the factors in every mixed term.

Every mixed term on $W$ has $2r+d-1=\dim W+1$ factors.
Every mixed term on $Y$ has $r+d=\dim Y+1$ factors.
Apply \cref{eq:controlled-intersection-continuity} to each mixed term.
Then take the finite linear combinations of these limits.
Condition \textup{(BG3)} also gives
$$
 \overline\xi=\overline M^+-\overline M^-.
$$
Multilinearity therefore identifies the first limit as
$$
 \lim_{m\to\infty}I_m
 =(p_1^*\overline\xi)^r(p_2^*\overline\xi)^r(q^*\overline L)^{d-1}.
$$
It identifies the second limit as
$$
 \lim_{m\to\infty}J_m
 =\overline\xi^{\,r+1}(\pi^*\overline L)^{d-1}.
$$
Both limits exist.
We can therefore take the limit of their fixed linear combination.
By \cref{eq:bg-tautological-discriminant},
\begin{equation*}
 \lim_{m\to\infty}B_m
 =(r+1)\lim_{m\to\infty}I_m-2r\lim_{m\to\infty}J_m
 =\Delta^\tau_{\overline L}(E,\overline\xi).\qedhere
\end{equation*}
\end{proof}

\begin{proposition}
\label{prop:bg-controlled-transfer}
\label{cor:bg-equality-gap}
For every BG-admissible model system, one has
\begin{equation}
 \Delta^\tau_{\overline L}(E,\overline\xi)
 =\lim_{m\to\infty}B_m
 =\lim_{m\to\infty}D_m.
 \label{eq:bg-controlled-transfer}
\end{equation}
If $D_m\geq0$ for every $m$, then
\begin{equation}
 \Delta^\tau_{\overline L}(E,\overline\xi)\geq0.
 \label{eq:bg-transfer-nonnegative}
\end{equation}
Under the same nonnegativity hypothesis, equality holds in
\cref{eq:bg-transfer-nonnegative} if and only if $D_m\to0$.  If there are
$\varepsilon>0$ and $m_0$ such that $D_m\geq\varepsilon$ for every
$m\geq m_0$, then
\begin{equation}
 \Delta^\tau_{\overline L}(E,\overline\xi)
 \geq\varepsilon>0.
 \label{eq:bg-uniform-gap}
\end{equation}
\end{proposition}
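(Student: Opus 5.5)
The plan is to combine \cref{prop:bg-exact-model-identity} with \cref{prop:bg-controlled-convergence}; everything else is elementary facts about limits of real sequences.

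First, for every $m$, \cref{prop:bg-exact-model-identity} gives $B_m=D_m$. Hence the sequences $(B_m)$ and $(D_m)$ coincide term by term. By \cref{prop:bg-controlled-convergence}, $(B_m)$ converges to $\Delta^\tau_{\overline L}(E,\overline\xi)$. Therefore $(D_m)$ converges to the same number, which proves \cref{eq:bg-controlled-transfer}. The point to stress is that the existence of $\lim D_m$ is not assumed. It is inherited from the controlled convergence of the tautological numbers $B_m$, which in turn rests on the fixed nef decompositions in \textup{(BG3)} and the Cauchy polarizations in \textup{(BG2)}.

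For the inequality, assume $D_m\geq0$ for all $m$. A limit of nonnegative reals is nonnegative, so \cref{eq:bg-transfer-nonnegative} follows. For the equality criterion, note that $\Delta^\tau_{\overline L}(E,\overline\xi)=\lim D_m$ by \cref{eq:bg-controlled-transfer}. So $\Delta^\tau=0$ holds exactly when $D_m\to0$; this direction does not even need the sign hypothesis, which only serves to place the statement in the nonnegative setting. For the uniform gap, suppose $D_m\geq\varepsilon$ for $m\geq m_0$. Discarding finitely many terms does not change the limit, and closedness of $[\varepsilon,\infty)$ gives \cref{eq:bg-uniform-gap}.

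I expect no real obstacle: all analytic content lies in \cref{prop:bg-controlled-convergence} and all Bott--Chern content in \cref{prop:bg-bott-chern-cancellation}. The only check needed is that the $D_m$ of \cref{eq:bg-model-discriminant} and the $B_m$ of \cref{eq:bg-model-tautological-number} use the same normalized degree $\whdeg_K$ and the same polarization factor $\widehat\Lambda_m$. When $d=1$, one must also check that replacing $\mathcal X_m$ by common dominating models, as permitted in \textup{(BG2)}, changes neither quantity; this holds by the projection formula.
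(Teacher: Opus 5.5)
Your proposal is correct and follows the paper's own argument: you get $B_m=D_m$ from \cref{prop:bg-exact-model-identity}, transfer the convergence $B_m\to\Delta^\tau_{\overline L}(E,\overline\xi)$ from \cref{prop:bg-controlled-convergence}, and then use elementary limit facts for the inequality, the equality criterion and the uniform gap. Your remarks that the equality criterion does not need the sign hypothesis, and that the common-dominating-model check is not needed for this statement, are also accurate.
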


\begin{proof}
\Cref{prop:bg-controlled-convergence} shows that $B_m$ converges to
$\Delta^\tau_{\overline L}(E,\overline\xi)$.
For every $m$, \cref{prop:bg-exact-model-identity} gives $D_m=B_m$.
Hence $D_m$ also converges.
Its limit satisfies
$$
 \lim_{m\to\infty}D_m
 =\lim_{m\to\infty}B_m
 =\Delta^\tau_{\overline L}(E,\overline\xi).
$$
This proves \cref{eq:bg-controlled-transfer}.

Assume that $D_m\geq0$ for every $m$.
The limit of a convergent sequence of nonnegative real numbers is nonnegative.
Therefore,
$$
 \Delta^\tau_{\overline L}(E,\overline\xi)
 =\lim_{m\to\infty}D_m\geq0.
$$
If $\Delta^\tau_{\overline L}(E,\overline\xi)=0$, the limit identity above
gives $D_m\to0$.
Conversely, suppose that $D_m\to0$.
The uniqueness of the limit gives
$\Delta^\tau_{\overline L}(E,\overline\xi)=0$.

Finally, assume that $D_m\geq\varepsilon>0$ for every $m\geq m_0$.
Then $D_m-\varepsilon\geq0$ for every $m\geq m_0$.
Removing the first $m_0-1$ terms does not change the limit.
Applying the same fact about nonnegative sequences gives
$$
 \Delta^\tau_{\overline L}(E,\overline\xi)
 =\varepsilon+\lim_{m\to\infty}(D_m-\varepsilon)
 \geq\varepsilon>0.
$$
\end{proof}

\subsection{Moriwaki's inequality on arithmetic surfaces}

We now assume $d=1$.
Write $C=X$.
The exponent $d-1$ in \cref{eq:bg-tautological-discriminant} is zero.
We therefore take each factor involving $\overline L$ to be the
multiplicative unit $1$ in the intersection product.
Thus $\Delta^\tau_{\overline L}(E,\overline\xi)$ does not depend on $\overline L$.
On each model, \cref{eq:bg-model-polarization-factor} sets $\widehat\Lambda_m=1$.
Hence the definitions of $B_m$ and $D_m$ require no Hermitian model line
bundle $\overline{\mathcal L}_m$.
Condition \textup{(BG2)} requires no such line bundle when $d=1$.
Conditions \textup{(BG1)} and \textup{(BG3)} do not involve $L$.
For a curve, we can therefore define a BG admissible model system without
choosing a polarization line bundle $L$.

\begin{theorem}[Moriwaki's inequality on arithmetic surfaces]
\label{thm:bg-moriwaki-surface-input}
Let $\mathcal C$ be regular, integral, projective, and flat over
$\mathcal O_K$, and let $\overline{\mathcal E}$ be a smooth Hermitian vector
bundle of rank $r$ on $\mathcal C$.  If the geometric generic fibre of
$\mathcal E$ is slope semistable, then
\begin{equation}
 \whdeg_K\left(
 2r\widehat c_2^{\mathrm{GS}}(\overline{\mathcal E})
 -(r-1)\widehat c_1^{\mathrm{GS}}(\overline{\mathcal E})^2
 \right)\geq0.
 \label{eq:bg-moriwaki-surface-input}
\end{equation}
\end{theorem}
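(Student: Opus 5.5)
The proof reduces to Moriwaki's Bogomolov--Gieseker inequality for Hermitian vector bundles on arithmetic surfaces \cite{MoriwakiSurface}; nothing about adelic limits is needed. The left side of \cref{eq:bg-moriwaki-surface-input} is the arithmetic degree over $\operatorname{Spec}\mathcal O_K$ of the Gillet--Soul\'e class $2r\widehat c_2^{\mathrm{GS}}(\overline{\mathcal E})-(r-1)\widehat c_1^{\mathrm{GS}}(\overline{\mathcal E})^2\in\widehat{\mathrm{CH}}^2(\mathcal C)_{\QQ}$, divided by $[K:\QQ]$. Dividing by a positive constant does not change the sign. It therefore suffices to prove that the unnormalized arithmetic degree is nonnegative.

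First I would check that $(\mathcal C,\overline{\mathcal E})$ satisfies Moriwaki's standing hypotheses. The scheme $\mathcal C$ is regular, integral, projective and flat of relative dimension one over $\mathcal O_K$, so it is an arithmetic surface. The metric $h$ is smooth and invariant under $F_\infty$, by the convention of \cref{def:adelic-vector-bundle}(ii) and (BG1). The classes $\widehat c_i^{\mathrm{GS}}$ are the arithmetic Chern classes that Moriwaki uses, and our arithmetic degree is computed with the same normalization of Green currents ($dd^c$ versus $\frac{\sqrt{-1}}{2\pi}\partial\overline\partial$) as in \cref{sec:bott-chern}. If Moriwaki states his result over $\operatorname{Spec}\ZZ$, I would regard $\mathcal C$ as an arithmetic surface over $\ZZ$ through $\operatorname{Spec}\mathcal O_K\to\operatorname{Spec}\ZZ$. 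The arithmetic degree over $\mathcal O_K$ agrees with the degree computed over $\ZZ$, since the pushforward to $\operatorname{Spec}\ZZ$ is the composite of the two pushforwards.

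Next I would match the semistability hypothesis. Moriwaki requires semistability on the geometric generic fibre, or on the generic fibre after a suitable finite extension. Over $\overline{\QQ}$, the curve $\mathcal C_{\overline\QQ}$ is a disjoint union of the $[K:\QQ]$ Galois conjugates of $C_{\overline K}$. The restriction of $\mathcal E$ to each component is a conjugate of $E_{\overline K}$, and conjugation preserves slope semistability. If his hypothesis is phrased over the generic fibre $C=\mathcal C_K$ itself, I would use that slope semistability is invariant under separable field extensions, since the Harder--Narasimhan filtration descends. Either way, the hypothesis on $E_{\overline K}$ gives what Moriwaki needs. His theorem then yields the nonnegativity of the arithmetic discriminant degree, and dividing by $[K:\QQ]$ gives \cref{eq:bg-moriwaki-surface-input}.

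The main obstacle is the bookkeeping of conventions rather than any new argument. Moriwaki may write the discriminant as $\widehat{\deg}(\widehat c_2-\frac{r-1}{2r}\widehat c_1^2)$, which is the same quantity up to the positive factor $2r$. He may also use a different sign or normalization for Green currents, or require that the metric be only $C^\infty$ and conjugation invariant. I would first check each of these against \cite[Section~4.2.11]{GS} and the conventions of \cref{sec:bott-chern}, and only then quote the theorem. If his statement requires the generic fibre to be geometrically connected, I would pass to the Stein factorization $\mathcal C\to\operatorname{Spec}\mathcal O_{K'}$. Because $\mathcal C$ is regular and integral, $\mathcal O_{K'}$ is a normal order, so it is the ring of integers of $K'$. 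The arithmetic degree is unchanged by viewing $\mathcal C$ over $\mathcal O_{K'}$, up to the positive normalizing factor.
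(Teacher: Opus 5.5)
Your proposal is correct and follows the paper's proof. The paper simply cites the main theorem of Moriwaki's arithmetic-surface paper, multiplies by the positive factor $2r$, and divides by $[K:\QQ]$; your extra checks (a smooth conjugation-invariant metric, semistability on each conjugate component of the geometric generic fibre, matching Green-current normalizations) only make that citation's bookkeeping explicit.
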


\begin{proof}
This is the main theorem of \cite{MoriwakiSurface}, multiplied by $2r$ and
divided by the positive number $[K:\mathbb Q]$.  Regularity is imposed here
in order to use the Gillet--Soul\'e arithmetic Chow ring and the Bott--Chern
comparison above.  All sheaves used here are locally free.
\end{proof}

\begin{proposition}
\label{prop:bg-curve-model-nonnegativity}
Assume that $E_{\overline K}$ is slope semistable.  For every model in a BG admissible model system over a curve, one has
\begin{equation}
 B_m=D_m\geq0.
 \label{eq:bg-curve-model-nonnegative}
\end{equation}
\end{proposition}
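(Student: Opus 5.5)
The equality $B_m=D_m$ is exactly \cref{prop:bg-exact-model-identity}, applied to the $m$-th model of the system. That proposition holds in every dimension, and with $d=1$ we have $\widehat\Lambda_m=1$. So it remains to show $D_m\geq0$. When $d=1$,
$$
D_m=\whdeg_K\!\left(2r\widehat c_2^{\mathrm{GS}}(\overline{\mathcal E}_m)-(r-1)\widehat c_1^{\mathrm{GS}}(\overline{\mathcal E}_m)^2\right),
$$
which is the quantity in \cref{thm:bg-moriwaki-surface-input} for $\mathcal C=\mathcal X_m$ and $\overline{\mathcal E}=\overline{\mathcal E}_m$. The plan is to check the hypotheses of that theorem and then apply it.

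By (BG1), $\mathcal X_m$ is regular, integral, projective and flat over $\mathcal O_K$, and $\overline{\mathcal E}_m$ is a smooth Hermitian vector bundle of rank $r$ that is invariant under conjugation. The remaining hypothesis is slope semistability of the geometric generic fibre of $\mathcal E_m$.

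The fixed identifications $\iota_m$ and $\varphi_m$ give $(\mathcal X_m)_K\simeq C$ and $(\mathcal E_m)_K\simeq\iota_m^*E$. Base change to $\overline K$ then identifies the geometric generic fibre of $\mathcal E_m$ with $E_{\overline K}$ on $C_{\overline K}$. Since $C$ is geometrically integral, $C_{\overline K}$ is a smooth projective connected curve. On a curve, slope semistability does not depend on a polarization, because every ample class is a positive multiple of the degree. So the assumption that $E_{\overline K}$ is slope semistable is exactly the hypothesis Moriwaki's theorem needs. The only point to watch is that the rank-$r$ vector bundle in Moriwaki's theorem on the regular arithmetic surface is locally free, which (BG1) provides.

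Applying \cref{thm:bg-moriwaki-surface-input} then gives $D_m\geq0$, and combining this with $B_m=D_m$ proves the proposition. I do not expect a serious obstacle. The step needing the most care is matching conventions: Moriwaki states his inequality for unnormalized arithmetic degrees, and our $\whdeg_K$ divides by $[K:\mathbb Q]>0$, which does not change the sign. His discriminant is also a positive multiple of ours; both points are already absorbed in the statement of \cref{thm:bg-moriwaki-surface-input}.
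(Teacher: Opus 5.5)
Your proposal is correct and follows the paper's proof. $B_m=D_m$ comes from the exact model identity with $\widehat\Lambda_m=1$, and $D_m\geq0$ comes from Moriwaki's arithmetic-surface theorem after checking (BG1) and identifying the geometric generic fibre of $\mathcal E_m$ with the semistable bundle $E_{\overline K}$. The paper additionally checks that $D_m$ is unchanged when $\overline{\mathcal E}_m$ is pulled back to a regular model dominating $\mathcal X_m$, which justifies the model replacement allowed in (BG2) for $d=1$, but this is not needed for the inequality itself.
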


\begin{proof}
Fix $m$.
Condition \textup{(BG1)} gives a regular, integral, projective, and flat
arithmetic model $\mathcal X_m$ of $C$.
Since $\dim C=1$, the model $\mathcal X_m$ has dimension $2$.
Condition \textup{(BG1)} also gives a smooth Hermitian vector bundle
$\overline{\mathcal E}_m$ on $\mathcal X_m$.
Its geometric generic fibre is $E_{\overline K}$.
Our hypothesis says that $E_{\overline K}$ is slope semistable.
Put
$$
 \widehat\beta_m
 :=2r\widehat c_2^{\mathrm{GS}}(\overline{\mathcal E}_m)
 -(r-1)\widehat c_1^{\mathrm{GS}}(\overline{\mathcal E}_m)^2
 \in\widehat{\mathrm{CH}}^{\,2}(\mathcal X_m)_{\mathbb Q}.
$$
Since $d=1$, \cref{eq:bg-model-polarization-factor} gives $\widehat\Lambda_m=1$.
Thus \cref{eq:bg-model-discriminant} gives
$D_m=\whdeg_K(\widehat\beta_m)$.
We apply \cref{thm:bg-moriwaki-surface-input} to $\overline{\mathcal E}_m$.
We obtain $D_m\geq0$.
Now \cref{prop:bg-exact-model-identity} gives $B_m=D_m\geq0$.

To compare models, let $f:\mathcal Z\to\mathcal X_m$ be a projective
morphism between regular arithmetic models of $C$.
Assume that $f_K=\mathrm{id}_C$ under the fixed isomorphisms on the generic fibres.
Equip $f^*\mathcal E_m$ with the metric $f^*h_m$.
Arithmetic Chern classes commute with pullback.
Hence the discriminant class of $f^*\overline{\mathcal E}_m$ is
$f^*\widehat\beta_m$.
The identity $f_K=\mathrm{id}_C$ shows that $f$ is smooth on the generic fibres.
It also shows that $f$ has generic degree one.
Consequently, $f_*(1)=1$.
The arithmetic projection formula
\cite[Theorem~4.4.3(7)]{GS} gives
$$
 f_*f^*\widehat\beta_m
 =\widehat\beta_m\,f_*(1)
 =\widehat\beta_m.
$$
We push this identity forward to $\operatorname{Spec}\mathcal O_K$.
We divide the resulting arithmetic degrees by $[K:\mathbb Q]$.
Functoriality of pushforward gives
$$
 \whdeg_K(f^*\widehat\beta_m)
 =\whdeg_K(\widehat\beta_m)
 =D_m.
$$
Thus pulling back the specified Hermitian bundle preserves $D_m$.
\end{proof}

Define the curve discriminant by
\begin{equation}
 \Delta_C^\tau(E,\overline\xi)
 :=\Delta^\tau_{\overline L}(E,\overline\xi);
 \label{eq:curve-numerical-discriminant}
\end{equation}
the right side is independent of $\overline L$ because $d-1=0$.

\begin{theorem}[Controlled adelic Bogomolov--Gieseker inequality over a curve]
\label{thm:curve-controlled-bg}
Let $C/K$ be smooth, projective, and geometrically connected.  Let $E$ have
rank $r\geq2$ and assume that $E_{\overline K}$ is slope semistable.  Let
$\overline\xi$ be an integrable adelic quotient metric on
$\mathcal O_{\mathbb P_C(E)}(1)$.  To use the notation of
\cref{def:bg-admissible-model-system}, choose an ample line bundle $L$ on $C$.
Equip $L$ with a nef adelic metric $\overline L$.  If
$(\overline L,\overline\xi)$ admits a BG admissible model system over $C$, then
\begin{equation}
 \Delta_C^\tau(E,\overline\xi)
 =\lim_{m\to\infty}D_m\geq0.
 \label{eq:curve-controlled-bg}
\end{equation}
Equivalently, the specialization to $d=1$ of the numerical pairing in
\cref{eq:bg-numerical-bridge} is nonnegative.
\end{theorem}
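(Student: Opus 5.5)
The theorem follows by assembling the results already proved in this section; no new estimate is needed. Fix a BG admissible model system for $(\overline L,\overline\xi)$ over $C$. Since $d=1$, condition \textup{(BG2)} imposes nothing. By \cref{eq:bg-model-polarization-factor} we have $\widehat\Lambda_m=1$, so every quantity below is independent of the auxiliary choice of $\overline L$.

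The first step is to invoke \cref{prop:bg-curve-model-nonnegativity}, which uses the slope semistability of $E_{\overline K}$. For every $m$ it gives $B_m=D_m\geq0$. Its proof applies Moriwaki's theorem (\cref{thm:bg-moriwaki-surface-input}) to the regular arithmetic surface $\mathcal X_m$ and the smooth Hermitian bundle $\overline{\mathcal E}_m$. It then transfers the result to the tautological number $B_m$ by the exact model identity \cref{prop:bg-exact-model-identity}, which rests on the Bott--Chern cancellation \cref{prop:bg-bott-chern-cancellation}.

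The second step is to apply \cref{prop:bg-controlled-transfer}. Through \cref{prop:bg-controlled-convergence}, it shows that $B_m$ converges to $\Delta^\tau_{\overline L}(E,\overline\xi)$. This holds because the fixed nef decompositions of \textup{(BG3)} make every mixed term in the multilinear expansion simultaneously controlled. Since $D_m=B_m$, we obtain $\Delta^\tau_{\overline L}(E,\overline\xi)=\lim_m D_m$. The nonnegativity clause of the same proposition, with the hypothesis $D_m\geq0$ supplied by the first step, gives $\lim_m D_m\geq0$. By the definition \cref{eq:curve-numerical-discriminant}, the left side is $\Delta^\tau_C(E,\overline\xi)$, and this proves \cref{eq:curve-controlled-bg}.

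The final sentence of the theorem follows from \cref{lem:bg-numerical-bridge} with $d=1$. That lemma identifies $\Delta^\tau_{\overline L}(E,\overline\xi)$ with the pairing $\langle 2r\widehat c_2(\overline E)-(r-1)\widehat c_1(\overline E)^2\mid C\rangle$, where the list of $\overline L$ factors is empty.

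The only point that needs care is that the statement assumes $C$ is geometrically connected, while \cref{sec:curve-pb} assumes geometric integrality. Since $C$ is smooth and geometrically connected, it is geometrically integral, so all earlier results apply without change. Beyond this, I expect no real obstacle. The analytic difficulty, passing to the limit, is already settled by \cref{prop:bg-controlled-convergence}.
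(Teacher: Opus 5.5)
Your proposal is correct and follows the paper's proof essentially step for step. Like the paper, you take $D_m\geq0$ from \cref{prop:bg-curve-model-nonnegativity}, identify $\lim_m D_m$ with $\Delta^\tau_{\overline L}(E,\overline\xi)=\Delta^\tau_C(E,\overline\xi)$ via \cref{prop:bg-controlled-transfer}, and obtain the reformulation from \cref{lem:bg-numerical-bridge}; your extra remark that smooth and geometrically connected implies geometrically integral is correct and supplies a point the paper leaves implicit.
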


\begin{proof}
Choose a BG admissible model system as in the hypothesis.
\Cref{prop:bg-curve-model-nonnegativity} gives $D_m\geq0$ for every $m$.
\Cref{prop:bg-controlled-transfer} shows that the real sequence $(D_m)$
converges to $\Delta^\tau_{\overline L}(E,\overline\xi)$.
The definition in \cref{eq:curve-numerical-discriminant} identifies this
limit with $\Delta_C^\tau(E,\overline\xi)$.
The limit of a convergent sequence of nonnegative real numbers is nonnegative.
Therefore,
$$
 \Delta_C^\tau(E,\overline\xi)=\lim_{m\to\infty}D_m\geq0.
$$
For $d=1$, \cref{eq:bg-numerical-bridge} identifies this number with the
stated numerical pairing.
\end{proof}

\subsection{The Deligne line and its algebraic normalization}

Retain the assumptions of \cref{thm:curve-controlled-bg}.
We recall the adelic Deligne pairing needed for the curve refinement.
Let $V$ and $B$ be projective integral $K$-varieties, with $B$ normal.
Let $f:V\to B$ be projective and flat of pure relative dimension $n$.
For integrable adelic line bundles $\overline M_0,\ldots,\overline M_n$ on
$V$, Yuan and Zhang construct an integrable adelic line bundle
$$
 \left\langle\overline M_0,\ldots,\overline M_n\right\rangle_f
 \in\widehat{\Pic}(B)_{\intg,\RR};
$$
see \cite[Theorem~4.1.3 and Section~4.5]{YZ}.
These pairings are symmetric and multilinear.
They commute with base change to a normal integral base when the new total
space is integral.
The base change isomorphisms are isometries.
We use the following notation for the first Chern class of a Deligne pairing:
\begin{equation}
 f_*\!\left(\prod_{j=0}^{n}\widehat c_1(\overline M_j)\right)
 :=\widehat c_1\!\left(\left\langle\overline M_0,\ldots,\overline M_n\right\rangle_f\right).
 \label{eq:curve-deligne-pushforward}
\end{equation}
For top intersection numbers, we use the projection formula in
\cite[Lemma~4.6.1(1)]{YZ}.

Define
\begin{equation}
 \overline A
 :=\left\langle
 \underbrace{\overline\xi,\ldots,\overline\xi}_{r\ \mathrm{factors}}
 \right\rangle_{Y/C}
 \in\widehat{\Pic}(C)_{\intg,\RR},
 \label{eq:curve-deligne-line}
\end{equation}
and let $A$ be its underlying algebraic line bundle.

\begin{proposition}[Algebraic normalization of the Deligne line]
 \label{prop:curve-deligne-determinant}
 There is an isomorphism $A\simeq\det E$.  In particular, if
 $\det E\simeq\mathcal O_C$, then $\deg A=0$.
\end{proposition}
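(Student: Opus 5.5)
The claim is purely algebraic: the underlying line bundle of a Deligne pairing depends only on the underlying line bundles, not on the adelic metrics. So we may forget $\overline\xi$ and compute $A=\langle\xi,\ldots,\xi\rangle_{Y/C}$ for the smooth projective morphism $\pi:Y=\mathbb P_C(E)\to C$ of relative dimension $r-1$. The plan is to identify this algebraic Deligne pairing through its first Chern class and then upgrade the result to an isomorphism.

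\textbf{Step 1 (degree).} By the algebraic Deligne--Riemann--Roch / Elkik description, $c_1(\langle M_0,\ldots,M_{r-1}\rangle_{Y/C})=\pi_*\bigl(c_1(M_0)\cdots c_1(M_{r-1})\bigr)$ in $\mathrm{CH}^1(C)$. Taking $M_j=\xi$ for all $j$ gives
$$
c_1(A)=\pi_*(\xi^{r})=s_1(E)=c_1(E)=c_1(\det E).
$$
Here the middle equalities use \cref{prop:algebraic-component} with $\alpha=[C]$. The relation $C_1=S_1$ follows from \cref{eq:chern-segre}, and the sign convention is the quotient one, consistent with \cref{prop:algebraic-component}. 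This already gives $\deg A=\deg\det E$, which proves the ``in particular'' clause.

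\textbf{Step 2 (isomorphism).} On a curve, equality in $\mathrm{CH}^1(C)=\Pic(C)$ is already an isomorphism $A\simeq\det E$, since rational equivalence classes of divisors are isomorphism classes of line bundles. It therefore suffices to know that the Deligne pairing's class in $\Pic(C)$ is the pushforward of the intersection cycle. This holds because Deligne pairings are computed by successive norms of divisors: choose sections of $\xi^{\otimes N}$ whose divisors meet properly (possible since $\xi$ is relatively ample), so that $\langle\xi,\ldots,\xi\rangle$ is the norm along a finite cover cut out by $r-1$ such divisors. Alternatively, one can invoke the canonical isomorphism $\langle\xi,\ldots,\xi\rangle_{\mathbb P(E)/C}\simeq\det E$ from Elkik/Deligne, which is compatible with base change. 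Since $\xi$ may only be rational, clear denominators with a tensor power and use multilinearity.

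\textbf{Main obstacle.} The main difficulty is justifying that the underlying line bundle of the Yuan--Zhang adelic pairing \cite[Theorem~4.1.3]{YZ} agrees with the classical algebraic Deligne pairing, which is part of their construction. It also requires checking the sign and dual conventions, quotients versus lines, so that one gets $\det E$ rather than $\det E^\vee$. I would verify the convention with $E=M^{\oplus r}$. There $Y=\mathbb P^{r-1}\times C$ and $\xi=\mathcal O(1)\boxtimes M$, and multilinearity gives $\langle\xi^r\rangle=M^{\otimes r}=\det E$, since only terms containing $r-1$ copies of $\mathcal O(1)$ contribute, each with fibre degree $1$, and there are $r$ of them.
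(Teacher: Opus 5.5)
Your argument is the paper's proof: you compute $c_1(A)=\pi_*\bigl(c_1(\xi)^r\bigr)$ from the first Chern class formula for the Deligne pairing, identify this with $c_1(E)=c_1(\det E)$ by the projective bundle formula for line quotients, and conclude via $c_1\colon\Pic(C)\xrightarrow{\sim}\mathrm{CH}^1(C)$ on the smooth curve $C$. One small fix: \cref{prop:algebraic-component} is stated with $\QQ$-coefficients, which determines $A$ only up to torsion in $\Pic(C)$, so run that projective bundle computation in integral Chow groups, as the paper does (the same argument works verbatim there); the same torsion loss affects your ``clear denominators'' and $\xi^{\otimes N}$ norm variants, which you do not need anyway, since $\xi=\mathcal O_Y(1)$ is an honest line bundle.
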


\begin{proof}
 We work in ordinary Chow groups with integral coefficients.
 The first Chern class formula for the algebraic Deligne pairing gives
 $$
  c_1(A)=\pi_*\!\left(c_1(\xi)^r\right)
  \quad\text{in }\mathrm{CH}^1(C);
 $$
 see \cite[Sections~4.1.2 and~4.2.1]{YZ}.
 The projective bundle formula for line quotients gives
 $$
  \pi_*\!\left(c_1(\xi)^r\right)=c_1(E)=c_1(\det E);
 $$
 see \cite[Chapter~3]{Fulton}.
 Thus $c_1(A)=c_1(\det E)$ in $\mathrm{CH}^1(C)$.
 Since $C$ is smooth, the first Chern class map is an isomorphism
 $$
  c_1:\Pic(C)\xrightarrow{\ \sim\ }\mathrm{CH}^1(C).
 $$
 Hence $A$ and $\det E$ have the same class in $\Pic(C)$.
 This proves $A\simeq\det E$.
 If $\det E\simeq\mathcal O_C$, then $A\simeq\mathcal O_C$ and $\deg A=0$.
\end{proof}

\subsection{Intersection on the fibre product}

Put $W=Y\times_CY$, let $p_1,p_2:W\to Y$ be the projections, and let
$q=\pi\circ p_1=\pi\circ p_2$.  We express the first intersection in the curve discriminant as
$\overline A^2$.

\begin{proposition}
 \label{prop:curve-fibre-square}
 One has
 \begin{equation}
 \overline A^2
 =\widehat{\deg}_W\left(
 \widehat c_1(p_1^*\overline\xi)^r
 \widehat c_1(p_2^*\overline\xi)^r
 \right).
 \label{eq:curve-fibre-square}
 \end{equation}
 Consequently,
 \begin{equation}
 \Delta_C^\tau(E,\overline\xi)
 =(r+1)\overline A^2-2r\,\overline\xi^{r+1}.
 \label{eq:curve-discriminant-deligne-form}
 \end{equation}
\end{proposition}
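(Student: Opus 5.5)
We prove \cref{eq:curve-fibre-square} by computing the intersection on $W$ in two stages of Deligne pairings, first along $p_1\colon W\to Y$ and then along $\pi\colon Y\to C$. Afterwards we substitute into \cref{eq:bg-tautological-discriminant} with $d=1$. The key input is the base-change compatibility of the adelic Deligne pairing recalled above. The morphism $\pi\colon Y\to C$ is projective and flat of relative dimension $r-1$, and $C$ is a smooth curve, hence normal. Its base change along $\pi$ itself is $p_1\colon W\to Y$. The base $Y$ is smooth, hence normal, and the total space $W$ is integral, being a projective bundle over the integral scheme $Y$. Base change therefore gives an isometry
$$
 \left\langle p_2^*\overline\xi,\ldots,p_2^*\overline\xi\right\rangle_{W/Y}
 \simeq\pi^*\left\langle\overline\xi,\ldots,\overline\xi\right\rangle_{Y/C}
 =\pi^*\overline A,
$$
with $r$ factors on the left. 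This is the adelic analogue of the identity $(p_1)_*p_2^*=\pi^*\pi_*$ established at the model level in \cref{lem:bg-fiber-square-realization}.

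Next we apply the projection formula for top intersection numbers \cite[Lemma~4.6.1(1)]{YZ} to $p_1$. The factors $p_1^*\overline\xi$ are pulled back from the base $Y$, and we take the $r$ copies of $p_2^*\overline\xi$ as the relative inputs. This gives
$$
 \widehat{\deg}_W\!\left(\widehat c_1(p_1^*\overline\xi)^r\widehat c_1(p_2^*\overline\xi)^r\right)
 =\widehat{\deg}_Y\!\left(\widehat c_1(\overline\xi)^r\,\widehat c_1(\pi^*\overline A)\right).
$$
We then apply the same projection formula to $\pi\colon Y\to C$, with the $r$ copies of $\overline\xi$ as relative inputs and $\overline A$ as the single input from the base. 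This yields $\widehat{\deg}_C(\widehat c_1(\overline A)\,\widehat c_1(\overline A))=\overline A^2$. The factor counts match: $W$ has dimension $2r-1$ and carries $2r$ factors, $Y$ has dimension $r$ and carries $r+1$ factors, and $C$ carries $2$. If the cited lemma is stated only for integral line bundles, we first prove the identity in that case and then extend to $\widehat{\Pic}_{\intg,\RR}$ by multilinearity, as in the proof of \cref{thm:segre-basic}(e).

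Finally, set $d=1$ in \cref{eq:bg-tautological-discriminant}. This removes the $\overline L$ factors and gives $\Delta_C^\tau=(r+1)(p_1^*\overline\xi)^r(p_2^*\overline\xi)^r-2r\,\overline\xi^{\,r+1}$. Substituting \cref{eq:curve-fibre-square} into this expression yields \cref{eq:curve-discriminant-deligne-form}.

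The main obstacle is justifying the base-change isometry in the adelic setting. We must check that the hypotheses of \cite[Section~4.5]{YZ} hold: flatness of $\pi$, normality of $Y$, and integrality of $W$. We must also check that the isometry is compatible with the Deligne pairings appearing inside the projection formula. If the direct citation is not available in this form, the fallback is to verify the isometry on model approximations: on $\mathcal W_m=\mathcal Y_m\times_{\mathcal X_m}\mathcal Y_m$, the algebraic identity is the content of \cref{lem:bg-fiber-square-realization}. One would then pass to the limit using the controlled continuity \cref{eq:controlled-intersection-continuity}, with the fixed nef decompositions supplied by (BG3).
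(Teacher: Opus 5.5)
Your proposal is correct and follows the paper's own proof. You use the base-change isometry $\pi^*\overline A\simeq\langle p_2^*\overline\xi,\ldots,p_2^*\overline\xi\rangle_{W/Y}$, then the projection formula \cite[Lemma~4.6.1(1)]{YZ} along $p_1$ and along $\pi$, then substitute into the $d=1$ case of \cref{eq:bg-tautological-discriminant}; the paper cites \cite[Theorem~4.1.3]{YZ} directly for the base change, so your model-level fallback is not needed.
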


\begin{proof}
 The morphism $\pi:Y\to C$ is smooth and projective of relative dimension $r-1$.
 Both $C$ and $Y$ are normal and integral.
 We have $W\simeq\mathbb P_Y(\pi^*E)$, so $W$ is integral.
 Thus \cite[Theorem~4.1.3]{YZ} applies to base change by $\pi:Y\to C$.
 It gives an isometry
 \begin{equation}
 \pi^*\overline A
 \simeq
 \left\langle
 \underbrace{p_2^*\overline\xi,\ldots,
 p_2^*\overline\xi}_{r\ \mathrm{factors}}
 \right\rangle_{W/Y},
 \label{eq:curve-deligne-base-change}
 \end{equation}
 Here the pairing uses the morphism $p_1:W\to Y$.
 We use the pushforwards defined by Deligne pairings in
 \cref{eq:curve-deligne-pushforward}.
 The definition of $\overline A$ gives
 $$
 \pi_*\!\left(\widehat c_1(\overline\xi)^r\right)
 =\widehat c_1(\overline A).
 $$
 Apply \cref{eq:curve-deligne-pushforward} to
 \cref{eq:curve-deligne-base-change} to obtain
 $$
 (p_1)_*\!\left(\widehat c_1(p_2^*\overline\xi)^r\right)
 =\widehat c_1(\pi^*\overline A).
 $$
 All the line bundles in these formulas are integrable.
 Hence \cite[Proposition~4.1.1]{YZ} defines their top intersection numbers.

 Apply the projection formula in \cite[Lemma~4.6.1(1)]{YZ} to $\pi$.
 Substitute $\pi_*(\widehat c_1(\overline\xi)^r)=\widehat c_1(\overline A)$.
 We obtain
 $$
 \widehat{\deg}_Y\!\left(
 \widehat c_1(\pi^*\overline A)\widehat c_1(\overline\xi)^r\right)
 =\widehat{\deg}_C\!\left(\widehat c_1(\overline A)^2\right)
 =\overline A^2.
 $$
 Now apply the same projection formula to $p_1$.
 Substitute $(p_1)_*(\widehat c_1(p_2^*\overline\xi)^r)
 =\widehat c_1(\pi^*\overline A)$.
 This gives
 \begin{equation}
 \widehat{\deg}_W\!\left(\widehat c_1(p_1^*\overline\xi)^r
 \widehat c_1(p_2^*\overline\xi)^r\right)
 =\widehat{\deg}_Y\!\left(
 \widehat c_1(\pi^*\overline A)\widehat c_1(\overline\xi)^r\right)
 =\overline A^2.
 \label{eq:curve-fibre-square-proof}
 \end{equation}
 The last equality follows from the preceding calculation for $\pi$.
 This proves \cref{eq:curve-fibre-square}.

 The specialization to $d=1$ of
 \cref{eq:bg-tautological-discriminant} is
 \begin{equation}
 (r+1)\widehat{\deg}_W\left(
 \widehat c_1(p_1^*\overline\xi)^r
 \widehat c_1(p_2^*\overline\xi)^r
 \right)
 -2r\,\overline\xi^{r+1}.
 \label{eq:curve-specialized-fibre-discriminant}
 \end{equation}
 Substitution of \eqref{eq:curve-fibre-square} proves
 \eqref{eq:curve-discriminant-deligne-form}.
\end{proof}

\subsection{The curve refinement}

\begin{theorem}[Refinement for a projective bundle over a curve]
 \label{thm:curve-pb}
 In the setting above, assume in addition that
 $\det E\simeq\mathcal O_C$.  Then
 \begin{equation}
 \Delta_C^\tau(E,\overline\xi)\geq0,
 \qquad
 \overline A^2\leq0,
 \qquad
 \overline\xi^{r+1}\leq0.
 \label{eq:curve-three-signs}
 \end{equation}
 More precisely,
 \begin{equation}
 \overline\xi^{r+1}
 =\frac{r+1}{2r}\overline A^2
 -\frac{1}{2r}\Delta_C^\tau(E,\overline\xi).
 \label{eq:curve-main-identity}
 \end{equation}

 The following equality conditions are equivalent:
 \begin{enumerate}[label=\textup{(\roman*)}]
  \item $\overline\xi^{r+1}=0$;
  \item $\overline A^2=0$ and
  $\Delta_C^\tau(E,\overline\xi)=0$;
  \item a positive multiple of $\overline A$ is pulled back from
  $\operatorname{Spec}K$ and $D_m\to0$.
 \end{enumerate}
 In particular, $\overline\xi^{r+1}<0$ if and only if
 $\overline A^2<0$ or
 $\Delta_C^\tau(E,\overline\xi)>0$.

 If the chosen BG admissible model system has a uniform discriminant gap,
 namely if there are $\varepsilon>0$ and $m_0$ such that $D_m\geq\varepsilon$
 for every $m\geq m_0$, then
 \begin{equation}
 \Delta_C^\tau(E,\overline\xi)\geq\varepsilon,
 \qquad
 \overline\xi^{r+1}
 \leq-\frac{\varepsilon}{2r}<0.
 \label{eq:curve-uniform-gap}
 \end{equation}
\end{theorem}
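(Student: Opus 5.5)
The plan is to combine the three ingredients already in place: the curve discriminant inequality, the Deligne-pairing form of the discriminant, and the adelic Hodge index theorem applied to $\overline A$.

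First, \cref{thm:curve-controlled-bg} gives $\Delta_C^\tau(E,\overline\xi)=\lim_m D_m\geq0$. Next, I solve \cref{eq:curve-discriminant-deligne-form} of \cref{prop:curve-fibre-square} for $\overline\xi^{r+1}$. Dividing $\Delta_C^\tau=(r+1)\overline A^2-2r\,\overline\xi^{r+1}$ by $2r$ yields the identity \cref{eq:curve-main-identity} directly.

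For $\overline A^2\leq0$, \cref{prop:curve-deligne-determinant} gives $A\simeq\mathcal O_C$, so $\deg A=0$. The line bundle $\overline A$ is integrable, being a Deligne pairing of integrable bundles. I then invoke the adelic Hodge index theorem of \cite{YuanZhangHodge} on the curve $C$: an integrable adelic line bundle of degree zero on the generic fibre satisfies $\overline A^2\leq0$. Equality holds if and only if $\overline A$ is numerically pulled back from $\operatorname{Spec}K$; more precisely, a positive multiple is isometric to such a pullback, up to the flat or numerically trivial part that the theorem allows. With both signs known, the identity shows $\overline\xi^{r+1}$ is a sum of two nonpositive terms. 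This gives \cref{eq:curve-three-signs}.

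The equivalences follow from the same decomposition:
\begin{enumerate}[label=\textup{(\alph*)}]
\item (i)$\Leftrightarrow$(ii): a sum of two nonpositive numbers vanishes if and only if both vanish. The strict-inequality statement is the contrapositive.
\item (ii)$\Leftrightarrow$(iii): \cref{prop:bg-controlled-transfer} shows $\Delta_C^\tau=0$ if and only if $D_m\to0$. The equality case of Hodge index shows $\overline A^2=0$ if and only if a positive multiple of $\overline A$ is pulled back from $\operatorname{Spec}K$.
\end{enumerate}

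For the uniform gap, \cref{eq:bg-uniform-gap} gives $\Delta_C^\tau\geq\varepsilon$. Combined with $\overline A^2\leq0$ in the main identity, this gives $\overline\xi^{r+1}\leq-\varepsilon/(2r)$.

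The main obstacle is the equality case of the Hodge index theorem in the form needed for (iii). I must check that the hypotheses of \cite{YuanZhangHodge} apply to the integrable adelic $\overline A$ on a geometrically connected curve over $K$. I must also pin down the precise equality statement, namely whether it gives a multiple of $\overline A$ pulled back from $\operatorname{Spec}K$ or only a statement modulo numerically trivial classes. Since $\deg A=0$ and $A$ is trivial, I would first try the version for adelic line bundles with torsion, here trivial, underlying bundle. In that version, $\overline A^2=0$ forces the metric to differ from a constant by nothing, so $\overline A=\pi_K^*\overline N$ for some $\overline N\in\widehat{\Pic}(\operatorname{Spec}K)$. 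If only the numerical version is available, I would restate (iii) accordingly.
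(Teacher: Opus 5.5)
Your proposal is correct and follows the paper's proof essentially step for step. The ingredients are the same: \cref{thm:curve-controlled-bg} for $\Delta_C^\tau\geq0$; solving \cref{eq:curve-discriminant-deligne-form} for $\overline\xi^{r+1}$; $\deg A=0$ from \cref{prop:curve-deligne-determinant} together with the adelic Hodge index theorem for $\overline A^2\leq0$; the vanishing of a sum of two nonpositive terms for (i)$\Leftrightarrow$(ii); and \cref{prop:bg-controlled-transfer} for the $D_m$ criterion and the uniform gap.

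On the obstacle you flag, the paper takes the curve case of \cite[Theorem~3.2 and Section~3.2]{YuanZhangHodge} to give exactly $q\overline A=\pi_C^*\overline N$ in $\widehat{\Pic}(C)_{\intg,\QQ}$ for some integer $q>0$. So (iii) needs no weakening to a numerical-triviality version.
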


\begin{proof}
 By \cref{thm:curve-controlled-bg}, one has
 $\Delta_C^\tau(E,\overline\xi)=\lim_mD_m\geq0$.  By
\cref{prop:curve-deligne-determinant}, the underlying algebraic
 line bundle of $\overline A$ has degree zero.  The adelic Hodge index theorem
 on $C$ therefore gives $\overline A^2\leq0$; see
 \cite[Theorem~3.2 and Section~3.2]{YuanZhangHodge}.  Solving
 \cref{eq:curve-discriminant-deligne-form} for
 $\overline\xi^{r+1}$ proves \eqref{eq:curve-main-identity}, and both terms on
 its right side are nonpositive.  This proves
 \eqref{eq:curve-three-signs}.

 The same observation shows that the right side of
 \eqref{eq:curve-main-identity} vanishes if and only if both of its summands
 vanish.  The equality statement in the curve case of the adelic Hodge index
 theorem \cite[Theorem~3.2 and Section~3.2]{YuanZhangHodge} says that
 $\overline A^2=0$ if and only if there are an integer
 $q>0$ and an adelic line bundle $\overline N$ on $\operatorname{Spec}K$ such
 that
 \begin{equation}
 q\overline A=\pi_C^*\overline N
 \quad\text{in }\widehat{\Pic}(C)_{\intg,\QQ},
 \label{eq:curve-hodge-equality}
 \end{equation}
 where $\pi_C:C\to\operatorname{Spec}K$ is the structure morphism.  The
 equality criterion in \cref{prop:bg-controlled-transfer} gives
 $\Delta_C^\tau(E,\overline\xi)=0$ if and only if $D_m\to0$.  This proves
 the three equivalent conditions and the strictness criterion.

 Finally, a uniform gap remains a gap after taking the limit, so
 $\Delta_C^\tau(E,\overline\xi)\geq\varepsilon$.  Substitution into
 \eqref{eq:curve-main-identity}, together with $\overline A^2\leq0$, gives
 \eqref{eq:curve-uniform-gap}.
\end{proof}

\begin{remark}
\label{rem:curve-pb-scope}
 We use $\det E\simeq\mathcal O_C$ and
 \cref{prop:curve-deligne-determinant} to obtain $A\simeq\mathcal O_C$.
 Hence $\deg A=0$.
 We then apply the adelic Hodge index theorem to obtain $\overline A^2\leq0$.
 \Cref{thm:curve-controlled-bg} proves
 $\Delta_C^\tau(E,\overline\xi)\geq0$ without assuming
 $\det E\simeq\mathcal O_C$.

 For the chosen BG admissible model system,
 \cref{prop:bg-controlled-transfer} gives
 $$
 \Delta_C^\tau(E,\overline\xi)=0
 \quad\Longleftrightarrow\quad
 \lim_{m\to\infty}D_m=0.
 $$
 The limit condition does not imply $D_m=0$ for any fixed $m$.

 Semistability and $\det E\simeq\mathcal O_C$ alone do not imply
 $\overline\xi^{r+1}<0$.
 To see this, take $E=\mathcal O_C^{\oplus r}$.
 Choose a regular arithmetic model $\mathcal C$ of $C$.
 For every $m$, take $\mathcal E_m=\mathcal O_{\mathcal C}^{\oplus r}$.
 Make the standard basis orthonormal at every complex embedding.
 Let $\overline\xi$ be the resulting model adelic quotient metric.
 The standard quotient line bundle $\overline{\boldsymbol\xi}_m$ is nef.
 In \textup{(BG3)} of \cref{def:bg-admissible-model-system}, take
 $$
 \overline{\mathcal M}_m^+=\overline{\boldsymbol\xi}_m,
 \qquad
 \overline{\mathcal M}_m^-=\overline{\mathcal O}_{\mathcal Y_m}.
 $$
 These constant nef sequences give a BG admissible model system.
 Each $\overline{\mathcal E}_m$ is an orthogonal direct sum of trivial
 Hermitian line bundles.
 The arithmetic Whitney formula gives
 $$
 \widehat c_i^{\mathrm{GS}}(\overline{\mathcal E}_m)=0,
 \qquad i=1,2.
 $$
 Thus \cref{eq:bg-model-discriminant} gives $D_m=0$ for every $m$.
 \Cref{thm:curve-controlled-bg} then gives
 $\Delta_C^\tau(E,\overline\xi)=0$.
 The quotient metric on $Y=C\times_K\mathbb P_K^{r-1}$ comes from
 $\mathbb P_K^{r-1}$.
 Base change for the Deligne pairing gives
 $\overline A\simeq\pi_C^*\overline N$ for an adelic line bundle
 $\overline N$ on $\operatorname{Spec}K$.
 Here $\pi_C:C\to\operatorname{Spec}K$ is the structure morphism.
 The projection formula gives $\overline A^2=0$.
 Equation~\eqref{eq:curve-main-identity} now gives $\overline\xi^{r+1}=0$.

 In general, \cref{eq:curve-main-identity} and the signs in
 \cref{eq:curve-three-signs} give
 $$
 \overline\xi^{r+1}<0
 \quad\Longleftrightarrow\quad
 \overline A^2<0\ \text{or}\ \Delta_C^\tau(E,\overline\xi)>0.
 $$
 For the chosen model system, \cref{thm:curve-controlled-bg} gives
 $D_m\to\Delta_C^\tau(E,\overline\xi)$.
 If $\Delta_C^\tau(E,\overline\xi)>0$, set
 $\varepsilon=\Delta_C^\tau(E,\overline\xi)/2$.
 Convergence gives an index $m_0$ such that $D_m\geq\varepsilon$ for every
 $m\geq m_0$.
 Conversely, suppose $D_m\geq\varepsilon>0$ for every $m\geq m_0$.
 Taking the limit gives $\Delta_C^\tau(E,\overline\xi)\geq\varepsilon>0$.
 Thus a positive lower bound for these $D_m$ suffices to prove
 $\overline\xi^{r+1}<0$.
 If $\overline A^2<0$, \cref{eq:curve-main-identity} already gives
 $\overline\xi^{r+1}<0$.
\end{remark}

\section{Controlled numerical adelic Bogomolov--Gieseker inequality
in higher dimension}
\label{sec:controlled-numerical-bg}

Retain the notation fixed at the beginning of \cref{sec:curve-pb}.
Fix a BG admissible model system as in \cref{def:bg-admissible-model-system}.
Assume throughout this section that $d\geq2$.
Assume in addition throughout this section that $E_{\overline K}$ is
slope semistable with respect to $L_{\overline K}$.
\Cref{prop:bg-exact-model-identity} proves $B_m=D_m$ for every $m$.
\Cref{prop:bg-controlled-transfer} gives the limit of the real sequence $(D_m)$
defined in \cref{eq:bg-model-discriminant}.
We now prove nonnegativity on each arithmetic model.
Its generic fibre has dimension $d\geq2$.

\begin{theorem}[Moriwaki's arithmetic inequality in higher dimension]
\label{thm:bg-moriwaki-input}
Let $\mathcal X$ be regular, integral, projective, and flat over
$\mathcal O_K$, with generic fibre of dimension $d\geq2$.  Let
$\overline{\mathcal H}$ be an arithmetically ample smooth Hermitian line
bundle and let $\overline{\mathcal E}$ be a smooth Hermitian vector bundle
of rank $r$.  Assume that $\mathcal E_\sigma$ is slope semistable with
respect to $c_1(\overline{\mathcal H}_\sigma)$ on every connected complex
component.  Then
\begin{equation}
 \whdeg_K\left(
 \left(2r\widehat c_2^{\mathrm{GS}}(\overline{\mathcal E})
 -(r-1)\widehat c_1^{\mathrm{GS}}(\overline{\mathcal E})^2\right)
 \widehat c_1(\overline{\mathcal H})^{d-1}
 \right)\geq0.
 \label{eq:bg-moriwaki-higher-input}
\end{equation}
\end{theorem}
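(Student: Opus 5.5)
This statement is essentially Moriwaki's theorem \cite{MoriwakiHigherBG}, so the proof I plan is a careful reduction to it, in the same style as \cref{thm:bg-moriwaki-surface-input}.

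\emph{Step 1: normalisation.} Moriwaki states his inequality in the form
\[
\widehat{\deg}\bigl(\widehat{\mathrm{dis}}(\overline{\mathcal E})\,\widehat c_1(\overline{\mathcal H})^{d-1}\bigr)\geq0,
\qquad
\widehat{\mathrm{dis}}(\overline{\mathcal E})=2r\widehat c_2^{\mathrm{GS}}(\overline{\mathcal E})-(r-1)\widehat c_1^{\mathrm{GS}}(\overline{\mathcal E})^2 .
\]
Here the unnormalised arithmetic degree is taken over $\operatorname{Spec}\mathbb Z$. Some formulations use the discriminant divided by $2r$, or the opposite sign convention for $c_2$ and $c_1^2$. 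I will check that his class agrees with ours up to a positive constant. Dividing by the positive number $[K:\mathbb Q]$ then gives \eqref{eq:bg-moriwaki-higher-input}.

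\emph{Step 2: matching the hypotheses.} I will verify that our standing assumptions are exactly his.
\begin{itemize}
\item Regularity and flatness of $\mathcal X$ are assumed. They make the Gillet--Soul\'e arithmetic Chow ring available.
\item Arithmetic ampleness in our sense, (A1)--(A3), is the form quoted from \cite[p.~1326]{MoriwakiHigherBG}.
\item Semistability is required with respect to the K\"ahler form $c_1(\overline{\mathcal H}_\sigma)$ on every connected component of $\mathcal X(\mathbb C)$. This matches his requirement on the geometric generic fibre. On a compact K\"ahler manifold, slope with respect to the K\"ahler class depends only on its cohomology class $c_1(\mathcal H_\sigma)$. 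So semistability here is algebraic semistability with respect to $\mathcal H_\sigma$.
\item The metric on $\mathcal E$ plays no role in the hypothesis.
\end{itemize}

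\emph{Step 3: $\mathbb Q$-line bundles.} If $\overline{\mathcal H}$ is only a $\mathbb Q$-line bundle, some positive multiple $n\overline{\mathcal H}$ is integral and arithmetically ample. Both sides of the inequality scale by $n^{d-1}>0$, and semistability is unchanged. So the statement reduces to the integral case.

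\emph{Main obstacle.} The only real work is in Step 1: making sure that Moriwaki's sign and normalisation of the discriminant class agree with ours. If his theorem is stated with the Bogomolov form $\bigl(c_2-\tfrac{r-1}{2r}c_1^2\bigr)$, I will multiply it by $2r$.

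A secondary subtlety is Moriwaki's dimension induction, which is the origin of the requirement $d\geq2$ in his higher-dimensional statement. I take his result as given for $d\geq2$, so no further argument is needed there.
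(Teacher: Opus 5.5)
Your proposal is correct and matches the paper's proof, which simply cites the main theorem of \cite{MoriwakiHigherBG}. That theorem is stated for $\widehat c_2-\frac{r-1}{2r}\widehat c_1^2$ with the unnormalized arithmetic degree, so one multiplies by $2r$ and divides by $[K:\mathbb Q]$; your Step 3 is harmless but unnecessary here, since $\overline{\mathcal H}$ is an honest line bundle in this statement (the paper does that rescaling later, in \cref{prop:bg-model-nonnegativity}).
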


\begin{proof}
This is the main theorem of \cite{MoriwakiHigherBG}, multiplied by $2r$ and
divided by the positive number $[K:\mathbb Q]$.  The regularity imposed here
allows us to use the Gillet--Soul\'e arithmetic Chow ring in the common model
identity of Section~\ref{sec:curve-pb}.
\end{proof}

\begin{remark}
\label{rem:bg-model-dimension-reduction}
Moriwaki proves \cref{thm:bg-moriwaki-input} by induction on the absolute
dimension of the arithmetic model.
The base case is his theorem for arithmetic surfaces,
\cref{thm:bg-moriwaki-surface-input}; see
\cite[Sections~3--6]{MoriwakiHigherBG}.
Fix an arithmetic model from this section.
Write $\overline{\mathcal E}=(\mathcal E,h)$.
Choose a section $s\in H^0(\mathcal X,\mathcal H^{\otimes N})$ as in
\cite[Section~6]{MoriwakiHigherBG}, with $N>0$ and $\|s\|_{\sup}<1$.
Write
$$
 \operatorname{div}(s)=\mathcal Z+\sum_i a_i\mathcal F_i.
$$
Here $\mathcal Z$ is the horizontal divisor.
Each $\mathcal F_i$ is a smooth fibre over a finite prime $\mathfrak p_i$.
Its multiplicity $a_i$ is positive.
For every embedding $\sigma:K\hookrightarrow\mathbb C$, the choice of $s$
makes $\mathcal Z_\sigma$ smooth.
It also makes $\mathcal E_\sigma|_{\mathcal Z_\sigma}$ semistable with
respect to $\mathcal H_\sigma|_{\mathcal Z_\sigma}$.
Set $q_i:=\#\kappa(\mathfrak p_i)$.
Define the algebraic class
$$
 \beta:=2r c_2(\mathcal E)-(r-1)c_1(\mathcal E)^2.
$$
For every embedding $\sigma:K\hookrightarrow\mathbb C$, put
$\omega_\sigma:=c_1(\overline{\mathcal H}_\sigma)$.
Define the form
$$
 \Phi_\sigma(h):=
 \left(2r c_2(\mathcal E_\sigma,h_\sigma)
 -(r-1)c_1(\mathcal E_\sigma,h_\sigma)^2\right)\omega_\sigma^{d-2}.
$$
Let $D_{\mathcal X}(h)$ denote the left side of
\cref{eq:bg-moriwaki-higher-input}.
Write $\overline{\mathcal E}_{\mathcal Z}$ and
$\overline{\mathcal H}_{\mathcal Z}$ for the restricted Hermitian bundles.
Define their arithmetic Chern number by
$$
 D_{\mathcal Z}(h):=\whdeg_K\!\left(
 \left(2r\widehat c_2(\overline{\mathcal E}_{\mathcal Z})
 -(r-1)\widehat c_1(\overline{\mathcal E}_{\mathcal Z})^2\right)
 \widehat c_1(\overline{\mathcal H}_{\mathcal Z})^{d-2}\right).
$$
We compute this arithmetic Chern number as in
\cite[Section~6]{MoriwakiHigherBG}.
Define
$$
 V(s):=\frac{1}{[K:\mathbb Q]}\sum_i a_i\log q_i\,
 \deg_{\kappa(\mathfrak p_i)}\!\left(
 \beta|_{\mathcal F_i}\,c_1(\mathcal H|_{\mathcal F_i})^{d-2}\right)
$$
and
$$
 I(s,h):=-\frac{1}{[K:\mathbb Q]}
 \sum_{\sigma:K\hookrightarrow\mathbb C}
 \int_{\mathcal X_\sigma(\mathbb C)}\log\|s_\sigma\|\,\Phi_\sigma(h).
$$
The sum defining $I(s,h)$ runs over all embeddings, including conjugate pairs.
The arithmetic intersection formula in \cite[Section~6]{MoriwakiHigherBG}
gives
$$
 N D_{\mathcal X}(h)=D_{\mathcal Z}(h)+V(s)+I(s,h).
$$
The term $V(s)$ records the contributions of the vertical divisors
$\mathcal F_i$.
We must retain this term when we separate the horizontal divisor
$\mathcal Z$ from $\operatorname{div}(s)$.
Moriwaki obtains $V(s)\geq0$ from constancy of the algebraic Chern numbers
on smooth fibres and the inequality on the generic fibre.
His induction gives $D_{\mathcal Z}(h)\geq0$.

The function $-\log\|s_\sigma\|^2$ is a Green function for
$\operatorname{div}(s_\sigma)$.
The condition $\|s\|_{\sup}<1$ makes $-\log\|s_\sigma\|$ positive away
from this divisor.
It does not imply that $\Phi_\sigma(h)$ is a nonnegative form.
Thus we cannot infer $I(s,h)\geq0$ from the condition on $s$ alone.
The heat flow estimates in \cite[Section~6]{MoriwakiHigherBG} give metrics
$h_t$ such that
$$
 D_{\mathcal X}(h)\geq D_{\mathcal X}(h_t),
 \qquad
 \liminf_{t\to\infty}I(s,h_t)\geq0.
$$
The induction also gives $D_{\mathcal Z}(h_t)\geq0$ for every $t$.
Together with $V(s)\geq0$, the comparison formula gives
$N D_{\mathcal X}(h_t)\geq I(s,h_t)$.
Taking the lower limit proves $D_{\mathcal X}(h)\geq0$.

Moriwaki's proof on arithmetic models uses his theorem for arithmetic
surfaces.
In this section, we first apply Moriwaki's theorem to the arithmetic models.
We then apply \cref{prop:bg-controlled-transfer} to take the adelic limit.
\end{remark}

\subsection{Nonnegativity on arithmetic models}

\begin{proposition}[Nonnegativity on arithmetic models in higher dimension]
\label{prop:bg-model-nonnegativity}
Assume that $E_{\overline K}$ is slope semistable with respect to
$L_{\overline K}$.
For every model in a BG admissible model system in dimension $d\geq2$, one
has
\begin{equation}
 B_m=D_m\geq0.
 \label{eq:bg-higher-model-nonnegative}
\end{equation}
\end{proposition}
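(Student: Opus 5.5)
We fix $m$ and apply \cref{thm:bg-moriwaki-input} to the data of (BG1)--(BG2): the regular model $\mathcal X_m$, the smooth Hermitian bundle $\overline{\mathcal E}_m$, and the arithmetically ample smooth Hermitian $\mathbb Q$-line bundle $\overline{\mathcal L}_m$. Then $B_m=D_m$ follows from \cref{prop:bg-exact-model-identity}. This leaves two reductions: clearing the $\mathbb Q$-coefficients of the polarization, and transporting the semistability hypothesis from $\overline K$ to every complex embedding.

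\textbf{Clearing coefficients.} By the definition of arithmetic ampleness for $\mathbb Q$-line bundles, there is an integer $N>0$ such that $\overline{\mathcal H}:=N\overline{\mathcal L}_m$ is an arithmetically ample smooth Hermitian line bundle. Its generic fibre is $L^{\otimes N}$. Multilinearity of arithmetic intersection numbers on $\mathcal X_m$ gives
$$
\whdeg_K\!\left(\widehat\beta_m\,\widehat c_1(\overline{\mathcal H})^{d-1}\right)=N^{d-1}D_m,
\qquad
\widehat\beta_m:=2r\widehat c_2^{\mathrm{GS}}(\overline{\mathcal E}_m)-(r-1)\widehat c_1^{\mathrm{GS}}(\overline{\mathcal E}_m)^2 .
$$
Since $N^{d-1}>0$, it suffices to show that the left side is nonnegative.

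\textbf{Transporting semistability.} Fix an embedding $\sigma:K\hookrightarrow\mathbb C$. Because $X$ is geometrically integral, $X_\sigma$ is connected, so there is a single component to treat. The slope with respect to the Kähler form $c_1(\overline{\mathcal H}_\sigma)$ depends only on its cohomology class. That class is $N c_1(L_\sigma)$, so semistability with respect to $c_1(\overline{\mathcal H}_\sigma)$ is the same as slope semistability of $E_\sigma$ with respect to $L_\sigma$. It remains to pass from $E_{\overline K}$ to $E_\sigma$, where $\sigma$ is extended to an embedding $\overline K\hookrightarrow\mathbb C$.

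Suppose $E_\sigma$ were not semistable. Its maximal destabilizing subsheaf, the first step of the Harder--Narasimhan filtration, is unique. It is therefore stable under all automorphisms of $\mathbb C$ over $\overline K$. A standard spreading-out and specialization argument over a finitely generated $\overline K$-algebra then produces a destabilizing subsheaf of $E_{\overline K}$ of the same slope. Equivalently, one may invoke the known invariance of the Harder--Narasimhan filtration under extension of algebraically closed fields. This contradicts the hypothesis, so $E_\sigma$ is semistable with respect to $c_1(\overline{\mathcal H}_\sigma)$.

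\textbf{Conclusion.} With both reductions in place, the hypotheses of \cref{thm:bg-moriwaki-input} hold for $(\mathcal X_m,\overline{\mathcal H},\overline{\mathcal E}_m)$. The theorem gives $N^{d-1}D_m\geq0$, hence $D_m\geq0$, and \cref{prop:bg-exact-model-identity} gives $B_m=D_m$.

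I expect the main obstacle to be the field-extension step for semistability. I would cite the standard result that semistability is invariant under extension of algebraically closed base fields rather than redo the descent. A smaller point to check is that Moriwaki's notion of ``semistable with respect to the Kähler form'' agrees with algebraic slope semistability for the class $c_1(L_\sigma)$. This holds because both slopes are computed from intersection numbers that depend only on that class.
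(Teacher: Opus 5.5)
Your proposal is correct and follows essentially the same route as the paper. Both arguments take $\overline{\mathcal H}=N\overline{\mathcal L}_m$ so that the left side of Moriwaki's inequality equals $N^{d-1}D_m$, and both pass semistability from $E_{\overline K}$ to $E_\sigma$ by the standard field-extension invariance (the paper cites Huybrechts--Lehn, Corollary~1.3.8 and Theorem~1.6.6). They then identify the slopes through the cohomology class of the K\"ahler form on the connected $X_\sigma(\mathbb C)$, and conclude with Moriwaki's theorem and $B_m=D_m$.

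The paper makes one further point explicit that your closing remark does not address. Semistability with respect to a K\"ahler form tests coherent \emph{analytic} subsheaves, so matching slope values does not settle which subsheaves are tested. GAGA on the projective $X_\sigma$ is what reduces these subsheaves to algebraic ones.
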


\begin{proof}
\Cref{prop:bg-exact-model-identity} gives $B_m=D_m$.
Condition \textup{(BG1)} in \cref{def:bg-admissible-model-system} gives a
regular arithmetic model $\mathcal X_m$ of absolute dimension $d+1$.
By \textup{(BG2)}, choose a positive integer $N_m$ such that
$\overline{\mathcal H}_m:=N_m\overline{\mathcal L}_m$ is a smooth
Hermitian line bundle satisfying \textup{(A1)}, \textup{(A2)}, and
\textup{(A3)} stated at the beginning of Section~\ref{sec:curve-pb}.
Thus $\mathcal H_m$ is relatively ample over $\operatorname{Spec}\mathcal O_K$.
For every embedding $\sigma:K\hookrightarrow\mathbb C$, the form
$c_1(\overline{\mathcal H}_{m,\sigma})$ is a K\"ahler form.
For every integral horizontal subvariety $\mathcal Z\subseteq\mathcal X_m$,
put $q:=\dim\mathcal Z$.
The third condition gives
$$
 \whdeg\!\left(
 \whc_1(\overline{\mathcal H}_m|_{\mathcal Z})^q\right)>0.
$$
For every embedding $\sigma:K\hookrightarrow\mathbb C$, the fixed
isomorphisms on the generic fibres give
\begin{equation}
 (\mathcal E_m)_\sigma\simeq E_\sigma,
 \qquad
 (\mathcal L_m)_\sigma\simeq L_\sigma.
 \label{eq:bg-complex-fibre-identifications}
\end{equation}
Fix an embedding $\sigma:K\hookrightarrow\mathbb C$.
Extend it to an embedding $\overline K\hookrightarrow\mathbb C$.
Our hypothesis gives slope semistability of $E_{\overline K}$ with
respect to $L_{\overline K}$.
Apply the preservation of slope semistability under extension of the
base field; see \cite[Corollary~1.3.8 and Theorem~1.6.6]{HuybrechtsLehn}.
We obtain slope semistability of $E_\sigma$ with respect to $L_\sigma$.

Put $\omega_{m,\sigma}:=c_1(\overline{\mathcal L}_{m,\sigma})$.
The equality $N_m\omega_{m,\sigma}
=c_1(\overline{\mathcal H}_{m,\sigma})$ shows that
$\omega_{m,\sigma}$ is a K\"ahler form.
The second isomorphism in \cref{eq:bg-complex-fibre-identifications}
gives $[\omega_{m,\sigma}]=c_1(L_\sigma)$ in
$H^2(X_\sigma(\mathbb C),\mathbb R)$.

Let $G$ be a coherent sheaf on $X_\sigma$ without torsion and of positive rank.
Write $c_1(G):=c_1(\det G)$.
Define its algebraic slope by
$$
 \mu_{L_\sigma}(G):=
 \frac{\deg\!\left(c_1(G)c_1(L_\sigma)^{d-1}\right)}{\operatorname{rk}G}.
$$
In the integral below, represent $c_1(G)$ by any smooth closed form.
Define the slope with respect to $\omega_{m,\sigma}$ by
$$
 \mu_{\omega_{m,\sigma}}(G):=
 \frac{\int_{X_\sigma(\mathbb C)}c_1(G)\,\omega_{m,\sigma}^{d-1}}
 {\operatorname{rk}G}
 =\mu_{L_\sigma}(G).
$$
The equality follows from $[\omega_{m,\sigma}]=c_1(L_\sigma)$ and the
comparison of algebraic and complex Chern classes.
Let $F\subset E_\sigma$ be a coherent subsheaf with
$0<\operatorname{rk}F<r$.
The semistability of $E_\sigma$ gives
$$
 \mu_{\omega_{m,\sigma}}(F)=\mu_{L_\sigma}(F)
 \leq\mu_{L_\sigma}(E_\sigma)
 =\mu_{\omega_{m,\sigma}}(E_\sigma).
$$
The variety $X_\sigma$ is projective.
By GAGA, every coherent analytic subsheaf of $E_\sigma^{\mathrm{an}}$
comes from a coherent algebraic subsheaf of $E_\sigma$.
Thus the same inequality proves analytic slope semistability.
The first isomorphism in \cref{eq:bg-complex-fibre-identifications}
identifies $E_\sigma$ with $(\mathcal E_m)_\sigma$.
Since $X$ is geometrically integral, $X_\sigma(\mathbb C)$ is connected.
We have therefore proved the required semistability on each complex component.
Replacing $\omega_{m,\sigma}$ by $N_m\omega_{m,\sigma}$ multiplies both
sides of the slope inequality by $N_m^{d-1}>0$.
Hence $(\mathcal E_m)_\sigma$ is also semistable with respect to
$c_1(\overline{\mathcal H}_{m,\sigma})$.

Apply \cref{thm:bg-moriwaki-input} to
$\overline{\mathcal E}_m$ and $\overline{\mathcal H}_m$.
In Moriwaki's original normalization, the resulting inequality reads
\begin{equation}
 \frac{N_m^{d-1}[K:\mathbb Q]}{2r}\,D_m\geq0.
 \label{eq:bg-moriwaki-normalization}
\end{equation}
The factor $N_m^{d-1}$ comes from
$\widehat c_1(\overline{\mathcal H}_m)
=N_m\widehat c_1(\overline{\mathcal L}_m)$.
Moriwaki uses $\widehat c_2-(r-1)\widehat c_1^2/(2r)$, which gives
the factor $1/(2r)$.
He uses the arithmetic degree before division by $[K:\mathbb Q]$,
which gives the remaining factor $[K:\mathbb Q]$.
All three factors are positive, so $D_m\geq0$.
\end{proof}

\subsection{The adelic inequality in higher dimension}

\begin{theorem}[Controlled numerical adelic Bogomolov--Gieseker inequality
in higher dimension]
\label{thm:controlled-numerical-bg}
Let $X/K$ be geometrically integral, smooth, and projective of dimension
$d\geq2$.  Let $L$ be ample, let $E$ have rank $r\geq2$, and assume that
$E_{\overline K}$ is slope semistable with respect to $L_{\overline K}$.
Let $\overline L$ be nef and let $\overline\xi$ be an integrable
adelic metric on $\mathcal O_{\mathbb P_X(E)}(1)$.  If
$(\overline L,\overline\xi)$ admits a BG admissible model system, then
\begin{equation}
 \Delta^\tau_{\overline L}(E,\overline\xi)\geq0.
 \label{eq:bg-main-inequality}
\end{equation}
More precisely,
\begin{equation}
 \Delta^\tau_{\overline L}(E,\overline\xi)
 =\lim_{m\to\infty}D_m,
 \qquad D_m\geq0.
 \label{eq:bg-main-limit}
\end{equation}
Equivalently, the numerical pairing in
\cref{eq:bg-numerical-bridge} is nonnegative.  Equality holds if and only if
$D_m\to0$.  If $D_m\geq\varepsilon>0$ for every sufficiently large $m$,
then
\begin{equation}
 \Delta^\tau_{\overline L}(E,\overline\xi)
 \geq\varepsilon>0.
 \label{eq:bg-higher-uniform-gap}
\end{equation}
\end{theorem}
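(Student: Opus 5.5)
The proof is an assembly of results already established, so the plan is to chain them in order. First I fix a BG admissible model system for $(\overline L,\overline\xi)$ as in \cref{def:bg-admissible-model-system}, with $d\geq2$. On each model, \cref{prop:bg-exact-model-identity} gives $B_m=D_m$. This identity rests on the fibre-square realization of \cref{lem:bg-fiber-square-realization} and on the Bott--Chern cancellation of \cref{prop:bg-bott-chern-cancellation}. Next, \cref{prop:bg-model-nonnegativity} gives $D_m\geq0$ for every $m$. The semistability hypothesis on $E_{\overline K}$ enters only at this point. It is transported to each complex fibre $(\mathcal E_m)_\sigma$ with respect to the K\"ahler class of $\overline{\mathcal L}_m$, and then Moriwaki's theorem \cref{thm:bg-moriwaki-input} applies to $\overline{\mathcal E}_m$ and $N_m\overline{\mathcal L}_m$.

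Second, I pass to the limit. \Cref{prop:bg-controlled-convergence} shows that $B_m\to\Delta^\tau_{\overline L}(E,\overline\xi)$. This uses the fixed nef decompositions in (BG3) and the Cauchy polarizations in (BG2), and it expands each intersection on $W$ and on $Y$ multilinearly into finitely many simultaneously controlled mixed terms. Together with $B_m=D_m$, this yields \cref{eq:bg-main-limit}. Then \cref{prop:bg-controlled-transfer} gives the remaining conclusions directly:
\begin{itemize}
\item nonnegativity, as the limit of a sequence of nonnegative reals;
\item the equality criterion $\Delta^\tau=0$ if and only if $D_m\to0$, by uniqueness of limits;
\item the uniform gap estimate \cref{eq:bg-higher-uniform-gap}, by applying the same argument to the tail sequence $D_m-\varepsilon\geq0$.
\end{itemize}

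Finally, \cref{lem:bg-numerical-bridge} identifies $\Delta^\tau_{\overline L}(E,\overline\xi)$ with the numerical pairing $\langle 2r\widehat c_2(\overline E)-(r-1)\widehat c_1(\overline E)^2,\overline L^{d-1}\mid X\rangle$. This gives the stated equivalent formulation.

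The only substantive point is that the limit in \cref{prop:bg-controlled-convergence} really is the adelic intersection with the given $\overline L$ and $\overline\xi$. That requires the simultaneous control data on $W$ and on $Y$: one common finite set of places, one common dominating model, and preservation of the sup-norm bounds under pullback by $p_1$, $p_2$, $q$ and $\pi$. This was arranged at the start of \cref{sec:curve-pb} and is already proved, so in this theorem I only need to check that the hypotheses match. In particular, $\overline L$ is the (BG2) limit and $\overline\xi=\overline M^+-\overline M^-$.
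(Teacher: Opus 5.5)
Your proposal is correct and follows the paper's proof essentially step for step. You obtain $B_m=D_m\geq0$ on each model from \cref{prop:bg-exact-model-identity} and \cref{prop:bg-model-nonnegativity}, pass to the limit and get the equality and gap criteria from \cref{prop:bg-controlled-transfer}, and translate to the numerical pairing by \cref{lem:bg-numerical-bridge}; the only cosmetic difference is that you cite \cref{prop:bg-controlled-convergence} separately, whereas the paper uses it only through \cref{prop:bg-controlled-transfer}.
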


\begin{proof}
Fix a BG admissible model system for $(\overline L,\overline\xi)$.
Let $D_m$ be the real number defined in \cref{eq:bg-model-discriminant}.
Condition \textup{(BG1)} in \cref{def:bg-admissible-model-system} supplies
the regular arithmetic models $\mathcal X_m$ and the smooth Hermitian
bundles $\overline{\mathcal E}_m$ extending $E$.
Condition \textup{(BG2)} supplies the arithmetically ample model
polarizations $\overline{\mathcal L}_m$ extending $L$.
The theorem assumes $d\geq2$ and slope semistability of
$E_{\overline K}$ with respect to $L_{\overline K}$.
Thus every chosen model satisfies the hypotheses of
\cref{prop:bg-model-nonnegativity}.
That proposition gives $B_m=D_m\geq0$ for every $m$.

Apply \cref{prop:bg-controlled-transfer} to this model system.
It identifies $\lim_{m\to\infty}D_m$ with
$\Delta^\tau_{\overline L}(E,\overline\xi)$ and proves
\cref{eq:bg-main-inequality,eq:bg-main-limit}.
Its equality criterion gives the assertion that equality holds if and only
if $D_m\to0$.
Its uniform lower bound gives \cref{eq:bg-higher-uniform-gap}.

Finally, put $\overline E:=(E,\overline\xi)$.
Use its numerical Chern classes from \cref{lem:bg-numerical-bridge}.
That lemma identifies the numerical pairing with the limit just obtained:
$$
\left\langle 2r\widehat c_2(\overline E)-(r-1)\widehat c_1(\overline E)^2,\overline L^{d-1}\mid X\right\rangle=\Delta^\tau_{\overline L}(E,\overline\xi)=\lim_{m\to\infty}D_m\geq0.
$$
This proves the stated inequality for the numerical pairing.
\end{proof}

\begin{corollary}[All dimensions]
\label{cor:bg-all-dimensions}
The controlled numerical adelic Bogomolov--Gieseker inequality holds in every
dimension $d\geq1$: the case $d=1$ is
\cref{thm:curve-controlled-bg}, and the case $d\geq2$ is
\cref{thm:controlled-numerical-bg}.
\end{corollary}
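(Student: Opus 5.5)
The statement is a packaging of the two dimension-specific results. I would prove it by a case split on $d$, checking for each case that the hypotheses of the corollary supply exactly what the relevant theorem needs.

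Fix $X$, $E$, $L$, $\overline L$, $\overline\xi$ as at the beginning of \cref{sec:curve-pb}. Assume that $E_{\overline K}$ is slope semistable with respect to $L_{\overline K}$ and that $(\overline L,\overline\xi)$ admits a BG admissible model system. The common mechanism in both cases has three steps.
\begin{enumerate}[label=\textup{(\arabic*)}]
\item \Cref{prop:bg-exact-model-identity} gives $B_m=D_m$ on every model. This identity holds in every dimension, because the Bott--Chern cancellation of \cref{prop:bg-bott-chern-cancellation} and \cref{lem:bg-fiber-square-realization} are independent of $d$.
\item \Cref{prop:bg-controlled-transfer} gives $\Delta^\tau_{\overline L}(E,\overline\xi)=\lim_m D_m$.
\item Nonnegativity of the limit follows once $D_m\geq0$ is known for every $m$.
\end{enumerate}
Only step (3) depends on $d$.

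For $d=1$, the model polarization factor $\widehat\Lambda_m$ is $1$ and condition (BG2) is vacuous. The needed model nonnegativity is therefore \cref{prop:bg-curve-model-nonnegativity}, which rests on Moriwaki's surface theorem \cref{thm:bg-moriwaki-surface-input}. \Cref{thm:curve-controlled-bg} then gives $\Delta_C^\tau(E,\overline\xi)\geq0$.

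One small point needs checking here. For a curve, slope semistability with respect to $L_{\overline K}$ is the same as slope semistability without a choice of polarization, so the single hypothesis of the corollary suffices. Also, $\Delta_C^\tau$ is by definition $\Delta^\tau_{\overline L}$, so the curve statement is the uniform statement for $d=1$.

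For $d\geq2$, I would invoke \cref{thm:controlled-numerical-bg} directly. Its proof uses \cref{prop:bg-model-nonnegativity}: one transfers semistability from $E_{\overline K}$ to each $(\mathcal E_m)_\sigma$ with respect to the Kähler form $c_1(\overline{\mathcal L}_{m,\sigma})$, and then applies \cref{thm:bg-moriwaki-input} after accounting for the positive normalization factors.

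In both cases, \cref{lem:bg-numerical-bridge} rewrites the tautological number as the numerical pairing $\left\langle 2r\widehat c_2(\overline E)-(r-1)\widehat c_1(\overline E)^2,\overline L^{d-1}\mid X\right\rangle$. So the inequality holds uniformly in the form stated in the introduction.

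I expect no real obstacle, since the substantive work is in the two cited theorems. The only step needing care is confirming that the hypotheses line up in the curve case. The corollary is phrased with an ample $L$ and a nef $\overline L$ for all $d$, whereas \cref{thm:curve-controlled-bg} treats $L$ as an auxiliary choice. I would remark that when $d=1$ the exponent $d-1$ is zero, so neither $\overline L$ nor (BG2) enters $B_m$, $D_m$, or $\Delta^\tau$. Any BG admissible system for $(\overline L,\overline\xi)$ is therefore one for the curve theorem, and the union of the two cases covers every $d\geq1$.
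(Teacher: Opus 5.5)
Your proposal is correct and matches the paper's argument: the corollary is just the case split with $d=1$ given by \cref{thm:curve-controlled-bg} and $d\geq2$ given by \cref{thm:controlled-numerical-bg}, and both rest on $B_m=D_m$, \cref{prop:bg-controlled-transfer}, and Moriwaki's model inequalities. Your hypothesis checks for $d=1$ are accurate: slope semistability on a curve does not depend on the polarization, $\Delta_C^\tau=\Delta^\tau_{\overline L}$ by definition, and (BG2) is vacuous. These checks are what makes the hypotheses of the two theorems line up.
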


\begin{remark}[Relation to classical inequalities]
\label{rem:bg-classical-shadow}
On a fixed arithmetic model, the identity $B_m=D_m$ identifies
$B_m\geq0$ with Moriwaki's arithmetic inequality for
$\overline{\mathcal E}_m$.  At every embedding
$\sigma:K\hookrightarrow\mathbb C$, geometric semistability gives the usual
complex Bogomolov--Gieseker inequality
\begin{equation}
 \int_{X_\sigma}
 \left(2r c_2(E_\sigma)-(r-1)c_1(E_\sigma)^2\right)
 c_1(L_\sigma)^{d-2}\geq0.
 \label{eq:bg-classical-shadow}
\end{equation}
For $d=2$ this is the classical surface inequality; for $d>2$ it follows by
successive semistable restriction to a general surface obtained as a complete intersection
\cite{HuybrechtsLehn,MehtaRamanathan}.
\end{remark}

\renewcommand{\refname}{References}

\end{document}